\newtheorem{theorem}{Theorem}[section]
\newtheorem{lemma}[theorem]{Lemma}
\newtheorem{proposition}[theorem]{Proposition}
\newtheorem{corollary}[theorem]{Corollary}
\theoremstyle{definition}
\newtheorem{definition}[theorem]{Definition}
\newtheorem{question}[theorem]{Question}
\theoremstyle{remark}
\newtheorem{remark}[theorem]{Remark}
\newtheorem{example}[theorem]{Example}
\def\F{\mathbb{F}}
\def\N{\mathbb{N}}
\def\Q{\mathbb{Q}}
\def\Z{\mathbb{Z}}
\def\bbT{\mathbb{T}}
\def\cC{\mathcal{C}}
\def\cH{\mathcal{H}}
\def\cR{\mathcal R}
\def\cB{\mathcal{B}}
\def\Sym{\mathrm{Sym}}
\def\CFKi{\CFK^{\infty}}
\def\Im{\operatorname{im}}
\def \gr {\operatorname{gr}}
\def\d{\partial}
\def\varep{\varepsilon}
\def\co{\colon}
\def\spinc{\textrm{Spin}^c}
\def\im{\operatorname{im}}
\def\HF {\mathit{HF}}
\def\HFK {\mathit{HFK}}
\newcommand\HFhat{\widehat{\HF}}
\def\CFK{\mathit{CFK}}
\def\spinc {{\operatorname{spin^c}}}
\def\s{\mathfrak s}
\def\Hom{\mathrm{Hom}}
\def\cCFK{\mathcal{CFK}}
\newcommand{\shift}{\mathrm{sh}} 
\def\Zbang{\Z^!}
\def\lebang{<^!}
\def\gebang{>^!}
\def\leqbang{\leq^!}
\def\geqbang{\geq^!}
\def\bang{\!{}^!}
\def\KL{\mathfrak{K}} 
\def\sgn{\operatorname{sgn}}
\def\Cont{\operatorname{Cont}}
\def\cCFK{\mathcal{CFK}}
\def\CTS{\cC_{\mathit{TS}}}
\def\bfalpha{\boldsymbol{\alpha}}
\def\bfbeta{\boldsymbol{\beta}}
\def\CFKUV{\CFK_\cR} 
\def\bfx{\mathbf{x}}
\def\bfy{\mathbf{y}}
\def\Tors{\operatorname{Tors}}
\def\Span{\operatorname{Span}}
\author[I. Dai]{Irving Dai}
\thanks{The first author was partially supported by NSF grant DGE-1148900.}
\address {Department of Mathematics, Princeton University, Princeton, NJ 08540}
\email{idai@math.princeton.edu}
\author[J. Hom]{Jennifer Hom}
\thanks{The second author was partially supported by NSF grant DMS-1552285 and a Sloan Research Fellowship.}
\address {School of Mathematics, Georgia Institute of Technology, Atlanta, GA 30332}
\email{hom@math.gatech.edu}
\author[M. Stoffregen]{Matthew Stoffregen}
\thanks{The third author was partially supported by NSF grant DMS-1702532.}
\address {Department of Mathematics, Massachusetts Institute of Technology, Cambridge, MA 02142}
\email{mstoff@mit.edu}
\author[L. Truong]{Linh Truong}
\thanks{The fourth author was partially supported by NSF grant DMS-1606451.}
\address {Department of Mathematics, Columbia University, New York, NY 10027}
\email{ltruong@math.columbia.edu}
\numberwithin{equation}{section}
\title[More concordance homomorphisms]{More concordance homomorphisms from knot Floer homology}
\begin{document}

\begin{abstract}
We define an infinite family of linearly independent, integer-valued smooth concordance homomorphisms. Our homomorphisms are explicitly computable and rely on local equivalence classes of knot Floer complexes over the ring $\F[U, V]/(UV=0)$. We compare our invariants to other concordance homomorphisms coming from knot Floer homology, and discuss applications to topologically slice knots, concordance genus, and concordance unknotting number.
\end{abstract}

\maketitle

\section{Introduction}\label{sec:intro}
Beginning with the $\tau$-invariant \cite{OS4ball}, the knot Floer homology package of \linebreak Ozsv\'ath-Szab\'o \cite{OSknots} and independently J. Rasmussen \cite{RasmussenThesis} has had numerous applications to the study of smooth knot concordance. See \cite{Homsurvey} for a survey of such applications.

The goal of this paper is to add to the (already infinite) list of explicitly computable homomorphisms from the smooth knot concordance group $\cC$ to $\Z$:

\begin{theorem}\label{thm:main}
For each $j \in \N$, there is a surjective homomorphism
\[ \varphi_j \co \cC \to \Z. \]
Moreover, 
\[ \bigoplus_{j=1}^\infty  \varphi_j \co \cC \to \bigoplus_{j=1}^\infty  \Z \]
is surjective. In particular, the $\varphi_j$ are linearly independent.
\end{theorem}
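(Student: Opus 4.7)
The plan is to construct the homomorphisms $\varphi_j$ in three stages: first pass from knots to local equivalence classes of the knot Floer complex over $\cR = \F[U,V]/(UV=0)$; second define numerical invariants on the resulting group; and third exhibit an explicit family of knots witnessing linear independence.

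In the first stage, I would recall the definition of $\CFKUV(K)$ as a finitely generated, bigraded chain complex over $\cR$ and introduce local equivalence: two complexes are locally equivalent if there exist grading-preserving chain maps in both directions inducing isomorphisms after $U,V$-localization. The set of local equivalence classes forms an abelian group under tensor product, and following the standard arguments for knot Floer local equivalence, the assignment $K \mapsto [\CFKUV(K)]$ descends to a group homomorphism from $\cC$ to this local equivalence group. It therefore suffices to construct the $\varphi_j$ as homomorphisms on the local equivalence group.

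In the second stage, I would define each $\varphi_j$ as a numerical invariant of the local equivalence class measuring a structural feature, such as a ``depth'' along a vertically or horizontally simplified basis, or a weighted count of arrows of a specified type in a reduced model. The homomorphism property reduces to additivity under tensor product via the identification $\CFKUV(K_1 \# K_2) \simeq \CFKUV(K_1) \otimes_\cR \CFKUV(K_2)$, so the invariants must be designed with this additivity in mind. A finiteness condition is also needed, namely that each knot has $\varphi_j(K) = 0$ for all but finitely many $j$, so that $\bigoplus \varphi_j$ lands in the direct sum rather than the direct product; this should follow if $\varphi_j$ extracts a $j$-indexed feature that vanishes once $\CFKUV(K)$ has bounded complexity.

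In the third stage, I would produce an explicit sequence of knots $\{K_i\}_{i \geq 1}$ whose knot Floer complexes are computable---for instance cables of the trefoil, torus knots $T_{p,q}$, or suitable connected sums thereof---and arrange a triangular pattern $\varphi_j(K_i) = \delta_{ij}$, or more generally an invertible upper-triangular matrix. Surjectivity of each $\varphi_j$ onto $\Z$ follows immediately, and surjectivity of $\bigoplus \varphi_j$ onto $\bigoplus \Z$ follows from finite $\Z$-linear combinations of the $K_i$, with linear independence as a formal consequence.

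The main obstacle is the second stage. The difficulty is not in writing down integer-valued invariants of $\CFKUV(K)$ but in finding infinitely many that are genuinely additive under tensor product and $\Z$-linearly independent. Most naive complexity measures fail to be additive because tensor products of staircase-type complexes exhibit nontrivial cancellation, and previously known invariants have been difficult to generalize to an independent infinite family. Identifying features of the local equivalence class that are both preserved under tensor and rich enough to generate an infinite independent family is the conceptual heart of the argument; producing the explicit family of knots in the third stage is then expected to be a matter of direct computation on known staircase complexes.
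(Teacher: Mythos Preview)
Your three-stage outline matches the paper's architecture exactly, and you correctly locate the entire difficulty in stage two. But as written this is a plan rather than a proof: you never commit to a definition of $\varphi_j$, and you explicitly flag the additivity under tensor product as an unsolved obstacle. That gap is precisely the content of the theorem.

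Here is what the paper does to close it. First, it proves a classification result (Theorem~\ref{thm:char}): every knot-like complex over $\cR$ is locally equivalent to a unique \emph{standard complex} $C(a_1,\dots,a_n)$, parametrized by a finite sequence of nonzero integers. This makes ``weighted count of arrows'' precise: $\varphi_j(C)$ is defined as the signed count of odd-indexed parameters $a_{2k+1}$ equal to $\pm j$ (equivalently, horizontal arrows of length $j$) in the standard representative. Second, and this is the key idea you are missing, additivity is \emph{not} proved by analyzing the standard representative of a tensor product directly; there is no known closed formula for the group law on standard complexes. Instead the paper introduces an auxiliary grading homomorphism $P$ (the $U$-grading of a $U$-tower generator, automatically additive by K\"unneth) and a family of \emph{shift endomorphisms} $\shift_m\colon\KL\to\KL$ that lengthen all arrows of length $\geq m$ by one. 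Proving each $\shift_m$ is a group homomorphism is the technical core (Theorem~\ref{thm:shift}); once that is done, the identity
\[
P(\shift_m(C)) - P(C) = -2\sum_{j\geq m}\varphi_j(C)
\]
expresses each $\varphi_j$ as a linear combination of known homomorphisms, and additivity follows by downward induction on $j$.

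Your stage three is fine and agrees with the paper: the knots $T_{i+1,i+2}\# -T_{i,i+1}$ satisfy $\varphi_j = \delta_{ij}$ by the L-space knot computation, giving the required triangular system.
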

\noindent
Our homomorphisms are similar in spirit to Ozsv\'ath-Stipsicz-Szab\'o's $\Upsilon$-invariant, which gives a homomorphism
\[ \Upsilon_K \co \cC \to \Cont([0,2]), \]
where $\Cont([0,2])$ denotes the vector space of piecewise-linear functions on $[0,2]$. Indeed, $\Upsilon$ is defined using $t$-modified knot Floer homology and can be thought of as a generalization of $\tau$ to the $t$-modified knot Floer homology setting. A slight repackaging (by considering the slopes of $\Upsilon_K(t)$) yields a $\Z$-valued homomorphism for each rational value of $t$. Similarly, our invariants can be thought of as a generalization of $\tau$ to a shifted version of knot Floer homology. The homomorphisms $\varphi_j$ are then certain linear combinations of $\tau$ associated to shifted knot Floer homology. Just as $\tau$ can be recovered from $\Upsilon(t)$, it can also be recovered from $\varphi_j$:

\begin{restatable}{proposition}{proptau}\label{prop:tau}
Let $K$ be a knot in $S^3$. Then we have the following equality relating the Ozsv\'ath-Szab\'o $\tau$-invariant with $\varphi_j$:
\[ \tau(K) = \sum_{j \in \N} j\varphi_j(K). \]
\end{restatable}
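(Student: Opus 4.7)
The plan is to unwind the definition of $\varphi_j$ as a combination of $\tau$-invariants of shifted knot Floer complexes and then collapse the resulting sum by telescoping. Since the excerpt advertises $\varphi_j$ as a linear combination of $\tau$'s of shifted versions of $\CFK_\cR(K)$, and since $\tau(K)$ is by definition (Ozsv\'ath--Szab\'o) the $\tau$-invariant of the unshifted complex, the identity $\tau(K)=\sum j\,\varphi_j(K)$ should be a formal consequence of the definitions, with no new geometric input needed.

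First, I would recall the \emph{shift} construction $\shift$ on $\CFK_\cR$-complexes and the associated shifted $\tau$-invariants; write $\tau_j(K)$ for $\tau$ applied to the $j$-fold shifted complex, so in particular $\tau_0(K)=\tau(K)$. The key structural input will be that $\{\tau_j(K)\}_{j\ge 0}$ is an eventually zero (in fact eventually stationary, with stationary value $0$) sequence, because the reduced local equivalence class of $\CFK_\cR(K)$ is finitely generated in the $j$-grading; hence $\tau_j(K)=0$ for all $j$ sufficiently large. Second, I would verify directly from the definition of $\varphi_j$ that
\[
\varphi_j(K) \;=\; \tau_{j-1}(K)-\tau_j(K),
\]
or the analogous first-difference formula dictated by the precise definition given later in the paper. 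This is the step where the ``linear combination of $\tau$'s of shifted knot Floer homology'' referenced in the introduction becomes concrete, and it is essentially bookkeeping once the shift is pinned down.

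With both ingredients in hand, the proposition follows by Abel summation:
\[
\sum_{j=1}^{\infty} j\,\varphi_j(K) \;=\; \sum_{j=1}^{\infty} j\bigl(\tau_{j-1}(K)-\tau_j(K)\bigr) \;=\; \sum_{j=0}^{\infty}\tau_j(K) - \lim_{N\to\infty}N\,\tau_N(K).
\]
The limit term vanishes by the finite-support statement of Step~1, and the remaining $\tau_j$'s for $j\ge 1$ cancel against the (reindexed) telescoping contributions, leaving $\tau_0(K)=\tau(K)$. (If the exact first-difference formula carries an opposite sign convention or an offset, the same Abel-summation identity produces the claim after trivial modification.)

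The step most likely to require care is the second one: matching the author's precise definition of $\varphi_j$ to a clean first-difference of shifted $\tau$'s, and in particular making sure the indexing conventions for the shift and the normalization $\tau_0=\tau$ line up. Once that identification is made, the finite support of $\{\tau_j(K)\}$ and the Abel summation are routine, so I do not expect any deep obstacle.
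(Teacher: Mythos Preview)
Your Abel summation step contains a genuine error. You correctly arrive at
\[
\sum_{j=1}^{N} j\bigl(\tau_{j-1}(K)-\tau_j(K)\bigr) \;=\; \sum_{j=0}^{N-1}\tau_j(K) - N\,\tau_N(K),
\]
but your claim that ``the remaining $\tau_j$'s for $j\ge 1$ cancel against the (reindexed) telescoping contributions'' is simply false: after the Abel step there is nothing left for them to cancel against, and the limit is $\sum_{j\ge 0}\tau_j(K)$, not $\tau_0(K)$. You appear to be conflating the telescoping identity $\sum_j(\tau_{j-1}-\tau_j)=\tau_0$ with the weighted sum $\sum_j j(\tau_{j-1}-\tau_j)$. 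Moreover, with the paper's shift $\shift_m$ (which increases arrow lengths $\ge m$ by one and leaves the others alone), for $m$ larger than any parameter of the standard complex one has $\shift_m(C)=C$, so the corresponding ``shifted $\tau$'' stabilizes at $\tau(K)$ rather than tending to zero. Thus neither your finite-support hypothesis nor the final collapse to $\tau_0$ holds.

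The paper's proof takes a completely different and much shorter route. It passes to the standard complex representative $C=C(a_1,\dots,a_n)$ and uses the auxiliary homomorphism
\[
P(C) \;=\; -2\sum_{j>0} j\,\varphi_j(C) \;+\; \sum_{i=1}^n \sgn a_i,
\]
which is shown (Lemma~\ref{lem:Pgrading}) to equal the $U$-grading of the $U$-tower generator $x_n$; for a knot this grading is $-2\tau(K)$. Because the standard complex of a knot is \emph{symmetric} ($a_i=-a_{n+1-i}$), the sign sum vanishes, and one reads off $-2\sum_j j\,\varphi_j(K)=-2\tau(K)$. The symmetry of the knot's complex is the essential input you are missing; the identity is not a purely formal consequence of expressing $\varphi_j$ as first differences of shifted invariants.
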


Both $\Upsilon(t)$ and $\varphi_j$ factor through the local equivalence group of knot Floer complexes (\cite[Theorem 1.5]{Zemkeconnsuminv}, forgetting the involutive part; equivalently stable equivalence from \cite[Theorem 1]{Homsurvey}; equivalently $\nu^+$-equivalence of \cite{KimPark}). Following \cite[Section 3]{Zemkeconnsuminv}, the knot Floer complex can be viewed as a module over $\F[U, V]$; local equivalence is then an equivalence relation between certain such complexes. In our setting, the invariants $\varphi_j$ actually factor through the local equivalence group defined over the ring $\F[U, V]/(UV=0)$, which is the same as the group constructed using $\varep$-equivalence in \cite[Definition 1]{Homconcordance}. The advantage of quotienting by $UV=0$ is that the resulting local equivalence group is totally ordered; this total order is the same as the order induced by $\varep$ \cite{Hominfiniterank}. Using this order, we have the following characterization result.

\begin{theorem}\label{thm:localequivlex}
Every knot Floer complex coming from a knot in $S^3$ is locally equivalent mod $UV$ to a standard complex (defined in Section \ref{subsec:standards}) and can be completely described by a finite (symmetric) sequence of nonzero integers $(a_i)_{i=1}^{2n}$. Moreover, if we endow the integers with the following unusual order
\[ -1 \lebang -2 \lebang -3 \lebang \dots \lebang  0 \lebang \dots \lebang 3 \lebang 2 \lebang 1, \]
then local equivalence classes mod $UV$ are ordered lexicographically with respect to their standard representatives.
\end{theorem}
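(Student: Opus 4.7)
My plan is to prove the two claims of the theorem together. For existence of a standard representative, I would work with a reduced model of $\CFK_\cR(K)$ over $\F[U,V]/(UV=0)$, i.e.\ one with finitely many generators and all differentials strictly decreasing some filtration. The strategy is to extract a staircase recursively. Choose a cycle $x_0$ whose class generates the $\F[U]$-tower in $\HF$; since $UV = 0$, one of $Ux_0, Vx_0$ must eventually become a boundary, of the form $U^{|a_1|} y_1$ (or $V^{|a_1|} y_1$), defining a generator $y_1$ and an integer $a_1$ whose sign records which of $U, V$ was involved. Continuing, one finds $x_1$ with $\partial x_1$ hitting $y_1$ through a power of the opposite variable, determining $a_2$ of opposite sign-type; iterating produces generators $x_0, x_1, \ldots, x_{2n}$ forming the desired staircase. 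The required symmetry of the sequence follows from the $U \leftrightarrow V$ conjugation symmetry of $\CFK_\cR(K)$, either by running the recursion simultaneously from both ends or by applying the symmetry at the end. The residual generators pair off into acyclic summands via a final basis change, establishing local equivalence with the standard complex.

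For the lex-order characterization, I would induct on the length $2n$ of the sequence. Given two standard complexes with sequences $(a_i)$ and $(b_i)$, the goal is to show that a local map from the first to the second exists iff $(a_i) \leqbang (b_i)$ lexicographically. If $a_1 = b_1$, quotienting by the initial generators on each side reduces the problem to the tail sequences, where induction applies. If $a_1 \neq b_1$, the key point is that the sign of $a_1$ records whether the tower class is killed by a power of $U$ or $V$, and the absolute value records how quickly; the unusual order on integers is precisely calibrated so that $a_1 \lebang b_1$ is equivalent to the existence of a local map sending the tower generator of the first complex into the span of the tower generator of the second, while there is an obstruction in the reverse direction coming from the fact that the tower class of the second complex cannot survive into the first after the smaller power has acted. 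In both cases, local maps are constructed directly at the chain level, and obstructions are extracted from the leading-order behavior of $\tau$-type invariants, essentially captured by the $\varphi_j$.

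The main obstacle, I expect, is the inductive extraction of the canonical staircase form in the existence step. Intuitively the procedure is clear, but rigorously designing the recursion so that the symmetry is preserved and the residual part is absorbed cleanly requires careful bookkeeping of basis changes and filtrations; in particular, one must verify that after each round of the recursion the complement still admits a generator representing the tower class of the simplified complex. Once standard form is established, the lex-order statement is an essentially formal induction using explicit constructions and obstructions of local maps at each stage; uniqueness of the standard representative will then follow a posteriori from the fact that the lex order is a total order on sequences.
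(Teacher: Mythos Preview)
Your proposal has genuine gaps in both halves.

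For the existence step, your recursive extraction is not well-defined as stated. You say ``one of $Ux_0, Vx_0$ must eventually become a boundary,'' but this is not what happens: the dichotomy is whether $\d_U x_0 = 0$ (in which case $a_1 > 0$ and one must find $x_1$ with $\d_U x_1 = U^{a_1}x_0$) or $\d_U x_0 \neq 0$ (in which case $a_1 < 0$). The relation $UV=0$ plays no role here. More seriously, you give no argument for \emph{termination}: why does the recursion produce a finite staircase rather than going on forever? In the paper this is the crux (Proposition~\ref{prop:aizero}), and the proof is indirect: one shows that if all $a_i \neq 0$, then for large $n$ the images $g(x_m)$ and $g(x_n)$ of two distinct preferred generators must coincide, which via a merging lemma (Lemma~\ref{lem:blebangc}) contradicts maximality. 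Your claim that ``residual generators pair off into acyclic summands'' is also unsubstantiated; the paper obtains this (Corollary~\ref{cor:splitting}) only \emph{after} proving that any local self-map of a standard complex is an isomorphism (Lemma~\ref{lem:selflocalmapisomorphism}).

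For the lex-order step, your inductive ``quotient by the initial generators'' does not work: removing $x_0$ from a standard complex yields a semistandard complex, not a standard one, and the notion of local map changes. The paper instead proceeds by direct casework (Lemmas~\ref{lem:leqlex}, \ref{lem:localsupport}, \ref{lem:>lex}), constructing explicit local maps in one direction and propagating support conditions to obstruct them in the other. Your appeal to $\varphi_j$ for obstructions is circular: the $\varphi_j$ are \emph{defined} via the standard representative, so their invariance presupposes exactly the uniqueness you are trying to establish.

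The paper's overall architecture is also quite different from yours: rather than building the staircase from inside $C$, it defines $a_i(C)$ extrinsically as $\sup^!$ over the $i$th parameters of all standard complexes $\leq C$, proves the supremum is realized (Proposition~\ref{prop:sup}) and eventually zero, and then squeezes $C$ between the maximal standard complex below it and the minimal one above it to force local equivalence.
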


\subsection{Properties of $\varphi_j$}
The homomorphisms $\varphi_j$ have many properties in common with $\Upsilon$: both invariants take a particularly simple form on homologically thin knots and L-space knots. We use the convention that $K$ is an \emph{L-space knot} if $K$ admits a positive L-space surgery. 

\begin{restatable}{proposition}{propthin}\label{prop:thin}
If $K$ is homologically thin, then 
\[ \varphi_j(K) = \begin{cases}
	\tau(K) &\text{ if } j=1 \\
	0 &\text{ otherwise.}
\end{cases}
\]
\end{restatable}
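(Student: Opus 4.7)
The plan is to combine the classification in Theorem \ref{thm:localequivlex} with the well-known structure of knot Floer complexes of homologically thin knots. Since $\varphi_j$ factors through the local equivalence group modulo $UV$, it suffices to identify the standard representative of $K$ under this equivalence and compute $\varphi_j$ directly on it.

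First, I would recall that for any homologically thin knot $K$, the complex $\CFK(K)$ is determined up to filtered quasi-isomorphism by $\tau(K)$ (this follows from Petkova's classification of thin knot Floer complexes, or alternatively from the fact that the full bigraded $\CFK$ is pinned down by the Alexander polynomial together with the $\delta$-grading). Concretely, after collapsing to $\F[U,V]/(UV)$, one obtains a single staircase summand whose turns all have length one, together with possibly acyclic summands, which are killed by local equivalence. Thus the standard sequence $(a_i)_{i=1}^{2n}$ produced by Theorem \ref{thm:localequivlex} has $n=|\tau(K)|$ and entries $a_i \in \{+1,-1\}$, arranged symmetrically with the signs dictated by the sign of $\tau(K)$. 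If $\tau(K)=0$, the standard complex is trivial (empty sequence) and there is nothing more to prove.

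Next, I would apply the definition of $\varphi_j$ (given later in the paper) to this standard representative. From the relation $\tau(K) = \sum_j j\varphi_j(K)$ in Proposition \ref{prop:tau}, together with the fact that $\varphi_j$ is built to read off the multiplicity with which $\pm j$ appears among the $a_i$ (with signs governed by the lexicographic ordering of Theorem \ref{thm:localequivlex}), only $\varphi_1$ can be nonzero when every $|a_i|=1$. Matching signs then forces $\varphi_1(K) = \tau(K)$, and $\varphi_j(K) = 0$ for $j \geq 2$.

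The only real obstacle is the first step: verifying that the local equivalence class mod $UV$ of a homologically thin knot is represented by a standard complex with all $|a_i|=1$. Once this structural fact is in hand, the computation of $\varphi_j$ is immediate from its definition on standard complexes and the consistency check against $\tau$ from Proposition \ref{prop:tau}.
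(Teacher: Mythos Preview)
Your proposal is correct and follows essentially the same approach as the paper: invoke Petkova's classification to identify the standard complex representative of a thin knot as the length-$2|\tau(K)|$ staircase with all $|a_i|=1$, then read off $\varphi_j$ directly. The only minor difference is that the paper computes $\varphi_1 = \tau(K)$ straight from Definition~\ref{def:homs} (counting signed odd-index entries), whereas you route the sign determination through Proposition~\ref{prop:tau}; both work, though the direct count is cleaner and avoids any appearance of circularity.
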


\begin{restatable}{proposition}{propLspace}\label{prop:Lspace}
Let $K$ be an L-space knot with Alexander polynomial 
\[ \Delta_K(t) = \sum_{i=0}^n (-1)^i t^{b_i}\]
where $(b_i)_{i=0}^n$ is a decreasing sequence of integers and $n$ is even. Define
\[ c_i = b_{2i-2} - b_{2i-1}, \quad 1 \leq i \leq n/2. \]
Then
\[ \varphi_j(C) = \# \{ c_i \mid c_i =j \}. \]
\end{restatable}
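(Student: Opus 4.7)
The plan is to identify the standard representative of the knot Floer complex of an L-space knot and then read $\varphi_j$ directly off the sequence $(a_i)$ from Theorem \ref{thm:localequivlex}.

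First, I would invoke the classical description (due to Ozsv\'ath--Szab\'o) of L-space knot Floer complexes: for an L-space knot $K$ with symmetrized Alexander polynomial $\Delta_K(t)=\sum_{i=0}^{n}(-1)^{i}t^{b_i}$ (with $b_0>b_1>\cdots>b_n$ and $n$ even), the complex $\CFK(K)$ is a staircase whose consecutive ``steps'' have lengths equal to the successive differences $b_{i-1}-b_{i}$, alternating between horizontal ($U$-) arrows and vertical ($V$-) arrows. Such a staircase manifestly survives the quotient by $UV$, so it also represents the local equivalence class mod $UV$.

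Next, I would use the symmetry $b_i=-b_{n-i}$ of $\Delta_K$ to pair horizontal and vertical step lengths. A direct calculation gives
\[
b_{2i-1}-b_{2i} \;=\; -b_{n-2i+1}+b_{n-2i} \;=\; b_{n-2i}-b_{n-2i+1} \;=\; c_{n/2-i+1},
\]
so that each vertical step length also appears as a horizontal step length. With the standard convention that positive entries in $(a_i)$ encode horizontal arrows and negative entries encode vertical arrows, the staircase is described by the sequence
\[
(a_1,a_2,\dots,a_n) \;=\; \bigl(c_1,\,-c_{n/2},\,c_2,\,-c_{n/2-1},\,\ldots,\,c_{n/2},\,-c_1\bigr),
\]
which is symmetric under $a_{n+1-i}=-a_i$, as required of a standard complex.

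Finally, I would match the formula for $\varphi_j$ on standard complexes (defined in the body of the paper) with the sequence above. Since each $c_i$ is positive and appears exactly once as a positive entry of $(a_i)$, counting the positive occurrences of $j$ in $(a_i)$ gives precisely $\#\{i \mid c_i=j\}$, proving the proposition. The main obstacle is step two: confirming that the staircase as written is already in the precise form of a standard complex from Section \ref{subsec:standards} (rather than only locally equivalent to one). If the conventions force a reordering of the entries, one must verify that rearranging the staircase by a local equivalence preserves the multiset $\{|a_i|\}$ with the appropriate signs; but because an L-space staircase is irreducible (no shortcuts exist), no simplification occurs and the identification is essentially immediate.
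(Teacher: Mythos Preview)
Your approach is essentially identical to the paper's: cite the Ozsv\'ath--Szab\'o structure theorem to identify $\CFKUV(K)$ with the standard complex $C(c_1, -c_{n/2}, c_2, -c_{n/2-1}, \ldots, c_{n/2}, -c_1)$, then apply Definition~\ref{def:homs}. One small correction: in the paper's conventions (Definition~\ref{def:standard}) it is the \emph{parity of the index} $i$, not the sign of $a_i$, that distinguishes $U$-arrows from $V$-arrows, and $\varphi_j$ is a signed count over the \emph{odd-indexed} parameters rather than the positive ones; your argument still goes through because for the L-space staircase these happen to coincide.
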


\begin{example}
Consider the torus knot $T_{3,4}$. We have that $\Delta_{T_{3,4}}(t) = t^6-t^5+t^3-t+1$, and so by Proposition \ref{prop:Lspace}, we have 
\[ \varphi_j(T_{3,4}) = \begin{cases}
	1 &\text{ if } j=1,2 \\
	0 &\text{ otherwise.}
\end{cases}
\]
See Figure \ref{fig:T34} for a visual depiction of $\CFKi(T_{3,4})$. 
\end{example}

\begin{example} \label{ex:torusknots}
More generally, the torus knot $T_{n, n+1}$ has Alexander polynomial 
\[ \Delta_{T_{n, n+1}}(t) = \sum_{i=0}^{n-1}t^{ni}  - \sum_{i=0}^{n-2} t^{ni+i+1} \]
which yields $(c_i)_{i=1}^{n-1} = (1, 2, \dots, n-1)$. Thus
\[ \varphi_j(T_{n, n+1}) = \begin{cases}
	1 &\text{ if } j=1,2, \dots, n-1 \\
	0 &\text{ otherwise.}
\end{cases}
\]
\end{example}

\begin{remark}
Note that if $K$ is an L-space knot, then by Proposition \ref{prop:Lspace}, $\varphi_j(K) \geq 0$ for all $j$. This provides an easy (although fairly weak) method for showing that a linear combination of knots is not concordant to any L-space knot.
\end{remark}

\begin{remark}
In Propositions \ref{prop:thin} and \ref{prop:Lspace} (as well as in the above examples), note that $\varphi_j$ is the (signed) count of the number of horizontal arrows of length $j$. We will see in Definition \ref{def:homs} that $\varphi_j$ is equal to the signed count of horizontal arrows in the standard complex representative of $K$ (in the sense of Theorem~\ref{thm:localequivlex}).
\end{remark}

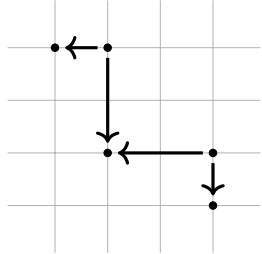
\begin{figure}[ht]
\centering
\begin{tikzpicture}[scale=0.7]
	\draw[step=1, black!30!white, very thin] (-0.9, -0.9) grid (3.9, 3.9);
	\filldraw (0, 3) circle (2pt) node[] (a){};
	\filldraw (1, 3) circle (2pt) node[] (b){};
	\filldraw (1, 1) circle (2pt) node[] (c){};
	\filldraw (3, 1) circle (2pt) node[] (d){};
	\filldraw (3, 0) circle (2pt) node[] (e){};
	\draw [very thick, <-] (a) -- (b);
	\draw [very thick, <-] (c) -- (b);
	\draw [very thick, <-] (c) -- (d);
	\draw [very thick, <-] (e) -- (d);
\end{tikzpicture}
\caption{The knot Floer complex of $T_{3,4}$.}
\label{fig:T34}
\end{figure}

While $\Upsilon(t)$ and $\varphi_j$ have many properties in common, there do exist knots $K$ for which $\Upsilon_K(t) \equiv 0$ while $\varphi_j(K)$ is nontrivial. Let $K_{p,q}$ denote the $(p,q)$-cable of $K$, where $p$ denotes the longitudinal winding.

\begin{proposition}
Let $K = T_{2,5} \# -T_{4,5} \# T_{2,3;2,5}$. Then $\Upsilon_K(t) \equiv 0$, while
\[ \varphi_j(K) = \begin{cases}
	2 &\text{ if } j=1 \\
	-1 &\text{ if } j=2 \\
	0 &\text{ otherwise.}
\end{cases}
\]
\end{proposition}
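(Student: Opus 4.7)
The plan is to exploit the additivity of both $\Upsilon$ and $\varphi_j$ under connected sum and handle each of the three summands separately. The knots $T_{2,5}$ and $T_{4,5}$ are L-space knots, and the cable $T_{2,3;2,5}$ is also an L-space knot by the Hedden--Hom cabling criterion, since $5/2 \ge 2g(T_{2,3}) - 1 = 1$. Consequently Proposition~\ref{prop:Lspace} and the staircase description of $\Upsilon$ apply to every summand.

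For the $\varphi_j$ computation I would read off the $c_i$ from the Alexander polynomials. We have $\Delta_{T_{2,5}}(t) = t^4 - t^3 + t^2 - t + 1$, giving $(c_1, c_2) = (1, 1)$ and $\varphi_1(T_{2,5}) = 2$; Example~\ref{ex:torusknots} gives $\varphi_j(T_{4,5}) = 1$ for $j \in \{1,2,3\}$. For the cable, multiplicativity of the Alexander polynomial under cabling yields
\[ \Delta_{T_{2,3;2,5}}(t) = \Delta_{T_{2,3}}(t^2)\, \Delta_{T_{2,5}}(t) = (t^4 - t^2 + 1)(t^4 - t^3 + t^2 - t + 1) = t^8 - t^7 + t^4 - t + 1, \]
so $(c_1, c_2) = (1, 3)$ and $\varphi_1(T_{2,3;2,5}) = \varphi_3(T_{2,3;2,5}) = 1$. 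Combining with the sign on $-T_{4,5}$ gives $\varphi_1(K) = 2 - 1 + 1 = 2$, $\varphi_2(K) = -1$, $\varphi_3(K) = -1 + 1 = 0$, and $\varphi_j(K) = 0$ for $j \ge 4$. As a sanity check, Proposition~\ref{prop:tau} gives $\sum_j j\,\varphi_j(K) = 2 - 2 = 0$, which matches $\tau(K) = \tau(T_{2,5}) - \tau(T_{4,5}) + \tau(T_{2,3;2,5}) = 2 - 6 + 4 = 0$.

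For the vanishing of $\Upsilon$, I would use the explicit formula of Ozsv\'ath--Stipsicz--Szab\'o expressing $\Upsilon_J(t)$ of an L-space knot $J$ as a concave piecewise-linear function on $[0,2]$ whose breakpoints and slopes are determined by the exponent sequence $(b_i)$. Having tabulated these breakpoints and slopes for each of the three summands, one verifies directly that
\[ \Upsilon_{T_{2,5}}(t) - \Upsilon_{T_{4,5}}(t) + \Upsilon_{T_{2,3;2,5}}(t) \equiv 0 \]
on $[0,2]$. The main obstacle is purely bookkeeping: the cancellation must hold on every subinterval between breakpoints, which requires listing all corner points arising from the three summands and checking term-by-term that the signed sum of slopes vanishes on each piece. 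The symmetry $\Upsilon(t) = \Upsilon(2-t)$ reduces the task to $[0,1]$, and the $\tau$-computation above already confirms that the slopes at $t = 0$ cancel, so the remaining verification is a small, finite calculation at the interior breakpoints of the three staircases.
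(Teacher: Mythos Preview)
Your proposal is correct and follows essentially the same approach as the paper: both use additivity of $\varphi_j$ together with Proposition~\ref{prop:Lspace} applied to the three L-space summands, and your explicit Alexander polynomial computations (in particular $\Delta_{T_{2,3;2,5}}(t) = t^8 - t^7 + t^4 - t + 1$) match what the paper cites from \cite{HomUpsilon}. For the vanishing of $\Upsilon$, the paper simply invokes \cite[Theorem 2]{HomUpsilon}, whereas you outline the underlying staircase computation directly; this is the same argument in substance, just with the bookkeeping spelled out rather than cited.
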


\begin{proof}
The fact that $\Upsilon_K(t) \equiv 0$ follows from the proof of \cite[Theorem 2]{HomUpsilon}. The computation of $\varphi_j(K)$ follows from Proposition \ref{prop:Lspace} and the fact that the $\varphi_j$ are homomorphisms. (Note that $T_{2,3;2,5}$ is an L-space knot; see the proof of \cite[Lemma 2.1]{HomUpsilon} for the relevant Alexander polynomial.)
\end{proof}

Conversely, while we do not have an explicit topological example, there is no algebraic obstruction to the existence of knots with $\varphi_j(K)$ trivial and $\Upsilon_K(t)$ nontrivial.

\begin{proposition}
Suppose there existed a knot $K$ whose knot Floer complex was given by Figure \ref{fig:OSScomplex}. Then $\Upsilon_K(t)$ is nontrivial, while $\varphi_j(K) = 0$ for all $j$.
\end{proposition}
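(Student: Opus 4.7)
The plan is to prove the two claims of the proposition separately, treating $\varphi_j(K)=0$ first and then the nontriviality of $\Upsilon_K(t)$.

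For $\varphi_j(K) = 0$ for all $j$, I would appeal to Theorem~\ref{thm:localequivlex} together with the remark just before the proposition: $\varphi_j(K)$ equals the signed count of horizontal arrows of length $j$ in the standard representative of the mod-$UV$ local equivalence class of $K$. It therefore suffices to show that the complex in Figure~\ref{fig:OSScomplex}, after quotienting by $UV = 0$, is locally equivalent to the trivial standard complex (the one indexed by the empty sequence $(a_i)$). I would first reduce the complex mod $UV$, cancelling any arrows that are purely horizontal composed with purely vertical, and locate a distinguished generator $\mathbf{x}_0$ in bigrading $(0,0)$ whose $U$- and $V$-localizations generate the two towers. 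I would then exhibit the two local equivalence maps: one from the trivial complex sending its generator to $\mathbf{x}_0$, and one from the reduced complex projecting onto $\mathbf{x}_0$ and sending every other generator to zero. The remaining work is to check that these maps are grading-preserving filtered chain maps, which should reduce to a finite check on the generators appearing in Figure~\ref{fig:OSScomplex}.

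For the nontriviality of $\Upsilon_K(t)$, I would compute $\Upsilon$ directly from the complex using its definition via the $t$-modified filtration on $\CFKi(K)$. Each generator in Figure~\ref{fig:OSScomplex} is assigned a $t$-weighted filtration value that depends linearly on its algebraic and Alexander gradings, and $\Upsilon_K(t)$ is then read off as (twice) the maximum grading of a cycle in the corresponding filtered subcomplex that survives to the infinity tower. Working through the arrows in the figure for a judiciously chosen $t_0 \in (0,2)$, one obtains an explicit nonzero value; a single such $t_0$ suffices to conclude $\Upsilon_K(t) \not\equiv 0$.

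The main obstacle is the mod-$UV$ vanishing direction. The delicate point is that the complex in Figure~\ref{fig:OSScomplex} is specifically designed so that certain generators which contribute to $\Upsilon$ (via mixed $U$-$V$ compositions in $\CFKi$) become null-homotopic, or part of acyclic direct summands, once $UV = 0$ is imposed. Writing down the null-homotopies on the nose — rather than just arguing abstractly that the reduced complex has trivial homology away from a single tower generator — is where the bookkeeping is likely to be intricate. Once that reduction is carried out and $\mathbf{x}_0$ is identified, both halves of the proposition fall out: the $\varphi_j$ vanish because the standard representative is trivial, and the $\Upsilon$ computation is unaffected since it takes place in the unquotiented complex.
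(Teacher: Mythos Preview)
Your approach is correct and matches the paper's, but you are substantially overcomplicating the $\varphi_j$ direction. The paper's proof of that half is one sentence: since the only diagonal arrow (from the generator $e$ at $(1,1)$ to $d$ at $(0,0)$) is decorated by $UV$, it simply vanishes modulo $UV$. This leaves $e$ as an isolated generator with $\partial e = 0$, so the complex splits over $\cR$ as $\langle e\rangle \oplus \langle a,b,c,d\rangle$. One checks immediately that $e$ generates both the $U$- and the $V$-tower, so the inclusion $\cR \to C$, $1\mapsto e$, and the projection $C \to \cR$ killing $a,b,c,d$ are the desired local maps. There is no cancellation of ``horizontal composed with vertical'' arrows, no null-homotopies, and no intricate bookkeeping; your phrasing here suggests a slightly confused picture of what the quotient $UV=0$ actually does to the complex. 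For the $\Upsilon$ direction, the paper simply cites \cite[Proposition 9.4]{OSS} rather than redoing the computation, but your proposed direct computation is exactly what that reference carries out.
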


\begin{figure}[ht]
\centering
\begin{tikzpicture}[scale=0.7]
	\draw[step=1, black!30!white, very thin] (-0.9, -0.9) grid (3.9, 3.9);
	\filldraw (0, 3) circle (2pt) node[] (a){};
	\filldraw (3, 3) circle (2pt) node[] (b){};
	\filldraw (3, 0) circle (2pt) node[] (c){};
	\filldraw (0, 0) circle (2pt) node[] (d){};
	\filldraw (1, 1) circle (2pt) node[] (e){};
	\draw [very thick, <-] (a) -- (b);
	\draw [very thick, <-] (c) -- (b);
	\draw [very thick, ->] (c) -- (d);
	\draw [very thick, ->] (a) -- (d);
	\draw [very thick, ->] (e) -- (d);
\end{tikzpicture}
\caption{The complex from \cite[Figure 6]{OSS}.}
\label{fig:OSScomplex}
\end{figure}
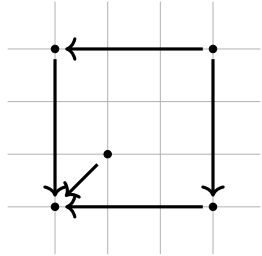

\begin{proof}
The computation of $\Upsilon_K(t)$ is given in \cite[Proposition 9.4]{OSS}. Since diagonal arrows vanish modulo $UV$, it is easily checked that the above complex is trivial in local equivalence (see Section~\ref{sec:knotlike}). This implies that $\varphi_j(K) = 0$ for all $j$.
\end{proof}

\subsection{Topological applications of $\varphi_j$}
The homomorphisms $\varphi_j$ have applications to $\CTS$, the subgroup of $\cC$ generated by topologically slice knots. (That is, $\CTS$ is generated by knots bounding locally flat disks in $B^4$.) Let $D$ denote the positively-clasped, untwisted Whitehead double of $T_{2,3}$, and let $K_n = D_{n, n+1} \# -T_{n, n+1}$.

\begin{theorem}\label{thm:TS}
Consider the topologically slice knots $K_n$ described above. For each index $n$, we have $\varphi_n(K_n) = 1$ and $\varphi_j(K_n) = 0$ for all $j > n$. In particular, the homomorphisms 
\[ \bigoplus_{j=1}^\infty  \varphi_j \co \CTS \to \bigoplus_{j=1}^\infty  \Z \]
map the span of the $K_n$ isomorphically onto $\bigoplus_{j=1}^\infty  \Z$.
\end{theorem}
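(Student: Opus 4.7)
Since each $\varphi_j$ is a group homomorphism and Example~\ref{ex:torusknots} computes $\varphi_j(T_{n,n+1})$ explicitly, one has
\[
\varphi_j(K_n) \;=\; \varphi_j(D_{n,n+1}) \,-\, \varphi_j(T_{n,n+1}),
\]
so the theorem reduces to two claims about the cable of the Whitehead double: $\varphi_n(D_{n,n+1}) = 1$, and $\varphi_j(D_{n,n+1}) = 0$ for every $j > n$. The values $\varphi_j(D_{n,n+1})$ for $j < n$ are harmless for the theorem statement but, as a sanity check, must combine with $\varphi_j(T_{n,n+1})$ so that $\sum_j j\varphi_j(K_n) = \tau(K_n)$ via Proposition~\ref{prop:tau}.

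The main step is to identify the standard complex of $D_{n,n+1}$ modulo $UV$. The inputs are that the positively clasped Whitehead double satisfies $\tau(D)=1$ and $\varep(D)=1$, so its own standard complex mod $UV$ is the length-one staircase, and that cabling formulas of Hedden, Van Cott, and Hom describe $\CFK(K_{p,q})$ when $\varep(K)=1$ in terms of a torus-knot-like backbone modeled on $\CFK(T_{p,q})$ together with finitely many extra contributions coming from $\CFK(K)$. I would apply such a cabling result to $D_{n,n+1}$, project to the quotient $UV=0$ (where many off-diagonal terms collapse), and read off the standard complex representative via Theorem~\ref{thm:localequivlex}. The expected output is that the standard complex of $D_{n,n+1}$ mod $UV$ agrees with that of $T_{n,n+1}$ except for the insertion of a single length-$n$ block in the parameter sequence (with the appropriate sign, contributing $+1$ to $\varphi_n$) and no parameters of absolute value greater than $n$.

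The main obstacle is this cabling computation at the level of standard complexes mod $UV$: cabling formulas in the literature are often only stated up to $\varep$-equivalence, and recovering every $\varphi_j$ requires the finer information carried by the standard representative. In practice I would first verify the base case $n=1$ by hand, where $D_{1,2}=D$ and the claim reduces to $\varphi_1(D)=\tau(D)=1$, and then proceed by induction on $n$, using the explicit standard complex of $T_{n,n+1}$ as the backbone at each step and tracking only the extra block introduced by cabling $D$ rather than the unknot. Once this cable computation is in hand, the final statement is an upper-triangular argument: the map $\bigoplus_{j\ge 1}\varphi_j$ sends $K_n$ to a sequence supported in indices $\le n$ whose $n$-th coordinate equals $1$, so any element of $\bigoplus_{j\ge 1}\Z$ with largest nonzero index $N$ can be written uniquely as an integer combination of the $K_n$ by successively cancelling the top-index coordinate, yielding the desired isomorphism onto $\bigoplus_{j\ge 1}\Z$.
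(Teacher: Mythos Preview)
Your reduction to computing $\varphi_j(D_{n,n+1})$ and the final upper-triangular argument are both correct and match the paper. The gap is in the middle step, where you propose to compute the standard complex of $D_{n,n+1}$ directly from cabling formulas. Two things go wrong.

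First, your structural expectation is incorrect. The standard complex of $D_{n,n+1}$ is \emph{not} that of $T_{n,n+1}$ with one extra length-$n$ block inserted. The paper's computation (Proposition~\ref{prop:cablesofD}) gives, for instance, $\varphi_1(D_{n,n+1})=n$ (not $1$) and $\varphi_{n-1}(D_{n,n+1})=0$ for $n>2$ (not $1$), so the discrepancy with $T_{n,n+1}$ is spread across many indices. This does not affect the specific values $\varphi_n$ and $\varphi_{j>n}$ needed for the theorem, but it shows that your picture of the complex is off, and an inductive argument built on that picture would not go through.

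Second, and more importantly, you treat the fact that cabling results are ``only stated up to $\varepsilon$-equivalence'' as an obstacle, when in fact it is the key tool. The paper's approach is: $\varepsilon$-equivalence coincides with local equivalence over $\cR$, so the $\varphi_j$ are $\varepsilon$-equivalence invariants. Since $D$ is $\varepsilon$-equivalent to $T_{2,3}$ and cabling respects $\varepsilon$-equivalence, one has $\varphi_j(D_{n,n+1}) = \varphi_j(T_{2,3;n,n+1})$. Now $T_{2,3;n,n+1}$ is an L-space knot (an iterated torus knot on a positive torus knot), so its standard complex is a staircase read off directly from its Alexander polynomial via Proposition~\ref{prop:Lspace}. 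The entire computation reduces to a closed-form Alexander polynomial calculation, with no need for bordered cabling formulas, induction, or tracking ``extra blocks.'' This is both cleaner and avoids the structural guess that led you astray.
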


\begin{remark}
The knots $K_n$ are the same knots considered in \cite{Hominfiniterank}. However, there is an error in the proof of the main result of \cite{Hominfiniterank}. Fortunately, the above theorem shows that the knots $K_n$ do in fact generate an infinite-rank summand of $\CTS$. Moreover, they show this in a way that preserves the spirit of \cite{Hominfiniterank}, namely by considering knot Floer complexes modulo $\varep$-equivalence and extracting numerical invariants based on the lengths of vertical and horizontal arrows.
\end{remark}

We also have applications of $\varphi_j$ to concordance genus and concordance unknotting number. Recall that the \emph{concordance genus} of $K$ is defined to be
\[ g_c(K) = \min \{ g(K') \mid K, K' \text{ smoothly concordant} \} \]
where $g(K')$ denotes the Seifert genus of $K'$. Note that 
\[ g_c(K) \geq g_4(K) \]
where $g_4(K)$ denotes the smooth four-ball genus of $K$. The \emph{concordance unknotting number} of $K$ is defined to be
\[ u_c(K) = \min \{ u(K') \mid K, K' \text{ smoothly concordant} \} \]
where $u(K')$ denotes the unknotting number of $K'$.  Note that again,
\[ u_c(K) \geq g_4(K). \]
Since $g_4(K) \geq |\tau(K)|$, the knot Floer homology of $K$ provides lower bounds on both $g_c(K)$ and $u_c(K)$. Here, we show that the invariants $\varphi_j$ bound concordance genus and concordance unknotting number as follows:

\begin{theorem}\label{thm:gcuc}
Let
\[N(K) = \begin{cases}
0 & \text{if } \varphi_j(K)=0 \text{ for all } j
\\
\max \{ j \mid \varphi_j(K) \neq 0\} & otherwise.
\end{cases}\]
Then 
\begin{enumerate}
	\item \label{it:gc} $g_c(K) \geq \frac{1}{2}N(K)$, and
	\item \label{it:uc} $u_c(K) \geq N(K)$.
\end{enumerate}
\end{theorem}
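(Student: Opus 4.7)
Since each $\varphi_j$ is a concordance homomorphism, $N(K)$ is itself a concordance invariant. Consequently, both (\ref{it:gc}) and (\ref{it:uc}) reduce to proving
\[
 g(K) \;\geq\; \tfrac{1}{2} N(K) \qquad \text{and} \qquad u(K) \;\geq\; N(K)
\]
for every knot $K$: minimizing over the concordance class of $K$ and using $N(K') = N(K)$ for any concordant $K'$ then yields the stated bounds on $g_c(K)$ and $u_c(K)$.

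For the genus bound, my plan is to exploit the characterization (promised in Definition~\ref{def:homs}) of $\varphi_j$ as a signed count of length-$j$ horizontal arrows in the standard complex representative $\mathcal{C}(a_1, \dots, a_{2n})$ furnished by Theorem~\ref{thm:localequivlex}. Since $\CFKUV(K)$ is supported in Alexander gradings of absolute value at most $g(K)$, any horizontal or vertical arrow in this complex has length at most $2g(K)$. The key step is to verify that the standard representative inherits this bound, i.e.\ that passing to the canonical representative of a local equivalence class mod $UV$ does not increase the maximum arrow length. Granting this, $N(K) \leq \max_i |a_i| \leq 2g(K)$, as required.

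For the unknotting bound, the plan is to trace the effect of a single crossing change on the standard complex. Using Zemke's cobordism maps for the genus-$1$ cobordism associated to a crossing change, I would show that the induced change on the local equivalence class mod $UV$ can be modeled by a move on the sequence $(a_i)$ that alters $\max_i |a_i|$ by at most $1$. Since the unknot has trivial standard complex, a length-$u(K)$ unknotting sequence then yields $\max_i |a_i| \leq u(K)$, and hence $N(K) \leq u(K)$. The principal obstacle is precisely this last step: the cobordism's genus alone yields only the weaker bound $N(K) \leq 2g_4(K) \leq 2u(K)$, so the sharp inequality $N(K) \leq u(K)$ requires a more refined analysis exploiting the specific structure of a crossing change beyond its topological genus, likely using that the crossing-change cobordism decomposes as two band moves through the Hopf band and tracking how each band move acts on the sequence $(a_i)$.
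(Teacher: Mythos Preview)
Your plan for part~(\ref{it:gc}) is essentially the paper's argument. The mechanism behind your ``key step'' is precisely Corollary~\ref{cor:splitting}: the standard complex $C(a_1,\dots,a_n)$ is a direct \emph{summand} of $\CFKUV(K)$ (up to homotopy), not merely locally equivalent to it. Hence each preferred generator $x_i$ represents a nonzero class in $H_*(\CFKUV(K)/(U,V))$, and the length-$|a_i|$ arrow between consecutive generators forces an Alexander grading jump of $|a_i|$ inside the interval $[-g(K),g(K)]$. This gives $|a_i|\le 2g(K)$ and so $N(K)\le 2g(K)$. You should invoke the splitting explicitly; without it, ``passing to the canonical representative does not increase maximum arrow length'' is an assertion, not an argument.

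For part~(\ref{it:uc}) there is a genuine gap, and your proposed route is harder than necessary. Tracking how a single crossing change alters the sequence $(a_i)$ is delicate---the local equivalence class can change in complicated ways, and there is no reason to expect a clean ``$\max_i|a_i|$ drops by at most one'' rule. The paper bypasses this entirely by invoking a result of Alishahi--Eftekhary: for any knot there exist $\cR$-chain maps
\[
f\colon \CFKUV(K)\to \cR \quad\text{and}\quad g\colon \cR\to \CFKUV(K)
\]
whose composites are homotopic to multiplication by $U^{u'(K)}$, where $u'(K)\le u(K)$. Now reduce mod $V$. Since $g\circ f$ factors through $H_*(\cR/V)\cong\F[U]$, which is torsion-free, the induced map $U^{u'(K)}$ must annihilate $\Tors_U H_*(\CFKUV(K)/V)$. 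But the standard summand contributes a $U$-torsion class of order exactly $N(K)$ (coming from any odd-index parameter $a_i$ with $|a_i|=N(K)$), so $u'(K)\ge N(K)$, hence $u(K)\ge N(K)$. Minimizing over the concordance class finishes. The point is that you do not need to understand how the $(a_i)$ evolve---only that multiplication by $U^{u(K)}$ factors through the unknot complex and therefore kills torsion.
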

\noindent
Let $\Tors_U M$ denotes the $U$-torsion submodule of an $\F[U]$-module $M$. The quantity $N(K)$ is bounded above by the maximal order of an element in $\Tors_U \HFK^-(K)$, as follows:

\begin{restatable}{proposition}{propHFKm}\label{prop:HFKm}
If $U^M \cdot \Tors_U \HFK^-(K) = 0$, then $\varphi_j(K) = 0$ for all $j > M$. In particular, $N(K) \leq M$.
\end{restatable}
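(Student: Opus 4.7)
The plan is to reduce to the standard complex representative and interpret horizontal arrow lengths in terms of $U$-torsion. By Theorem \ref{thm:localequivlex}, the knot Floer complex of $K$ is locally equivalent (mod $UV$) to a standard complex $\cC$ with sequence $(a_i)_{i=1}^{2n}$, and by Definition \ref{def:homs}, $\varphi_j(K)$ is a signed count of the entries $a_i$ corresponding to horizontal arrows of length $j$. Hence $N(K)$ equals the length of the longest horizontal arrow in $\cC$, and the proposition reduces to showing that no horizontal arrow in $\cC$ has length greater than $M$.

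The key step is to identify horizontal arrow lengths with $U$-torsion orders in $\HFK^-(\cC) := H_*(\cC/V)$. A horizontal arrow of length $j$ records a differential of the form $\partial x = U^j y$; upon setting $V=0$, the cycle $y$ represents a nonzero homology class satisfying $U^j[y] = 0$ but $U^{j-1}[y] \neq 0$, and the block structure dictated by the $(a_i)$ sequence forces the torsion classes produced by distinct arrows to be independent. Consequently, $\HFK^-(\cC)$ contains a $U$-torsion element of order exactly $N(K)$.

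It then remains to transfer this lower bound on $U$-torsion from $\cC$ to $\HFK^-(K)$. Since $\cC$ is the standard representative extracted from $\cCFK(K)$ via the simplification procedure of Section \ref{subsec:standards}, one should be able to verify that the $U$-torsion of $\HFK^-(\cC)$ appears as a summand of the $U$-torsion of $\HFK^-(K)$. The hypothesis $U^M \cdot \Tors_U \HFK^-(K) = 0$ then forces $N(K) \leq M$, giving $\varphi_j(K) = 0$ for all $j > M$.

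The main obstacle is making this torsion-transfer step precise. Local equivalence only preserves $U$-localized information a priori, so one has to exploit the minimality of the standard form --- specifically, that its construction proceeds by cancelling acyclic generator pairs which contribute no torsion --- to conclude that every torsion class in $\HFK^-(\cC)$ lifts to a class of at-least-as-large torsion order in $\HFK^-(K)$. I expect this to follow more or less immediately from the explicit algorithm producing $\cC$ in Section \ref{subsec:standards}.
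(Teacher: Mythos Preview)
Your overall strategy is correct and matches the paper's: pass to the standard representative $C(a_1,\dots,a_n)$, read off the $|a_{2i-1}|$ as $U$-torsion orders in $H_*(C/V)$, and bound these by the torsion order of $\HFK^-(K)$. The issue is precisely the step you flag as ``the main obstacle.'' Your proposed resolution is both vague and misdirected: you appeal to an ``explicit algorithm producing $\cC$ in Section~\ref{subsec:standards},'' but that section only \emph{defines} standard complexes; it contains no reduction procedure. The standard representative is produced via the $a_i$-invariants of Section~\ref{sec:ai} and the characterization in Section~\ref{sec:char}, and this is not done by ``cancelling acyclic generator pairs.''

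The tool you are missing is Corollary~\ref{cor:splitting}: $\CFKUV(K)$ is \emph{homotopy equivalent} (not merely locally equivalent) to $C(a_1,\dots,a_n)\oplus A$. Quotienting by $V$ and taking homology exhibits $H_*(C/V)$ as a direct summand of $H_*(\CFKUV(K)/V)\cong\HFK^-(K)$ as $\F[U]$-modules, so $\Tors_U H_*(C/V)$ embeds in $\Tors_U\HFK^-(K)$. The hypothesis $U^M\cdot\Tors_U\HFK^-(K)=0$ then forces $|a_{2i-1}|\le M$ for all $i$, and the proposition follows. Once you invoke this corollary the proof is one line; without it, your torsion-transfer step is genuinely unjustified, since local equivalence alone does not control torsion.
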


The bounds in Theorem \ref{thm:gcuc} \eqref{it:uc} are sharp (e.g., for the trefoil); it is unknown to the authors whether the bound in Theorem \ref{thm:gcuc} \eqref{it:gc} is sharp. Note that in many cases, the bounds are rather weak; for example, $N(T_{n, n+1}) = n-1$, while $g_4(T_{n, n+1}) = \tau(T_{n, n+1}) = n(n-1)/2$. The proof of the concordance genus bound in Theorem \ref{thm:gcuc} \eqref{it:gc} is similar to the proof of \cite[Theorem 2]{Homconcordancegenus}, and indeed is strong enough to recover \cite[Theorem 3]{Homconcordancegenus}. The proof of Theorem \ref{thm:gcuc} \eqref{it:uc} relies on unknotting number bounds from \cite{AlishahiEftekharyunknotting}.

We have the following application of Theorem \ref{thm:gcuc} \eqref{it:uc}.

\begin{theorem}\label{thm:topsliceuc}
There exist topologically slice knots $\{K_n\}_{n = 1}^{\infty}$ such that $g_4(K_n) = 1$ for all $n$, while $u_c(K_n) \geq n.$
\end{theorem}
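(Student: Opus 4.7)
The plan is to realize $K_n$ as positive untwisted Whitehead doubles $K_n = D_+(J_n)$ of carefully chosen companion knots $J_n$, for which the topological and smooth four-genus conditions are immediate, and then to exhibit a horizontal arrow of length at least $n$ in the standard complex representative of $K_n$ mod $UV$ so that Theorem \ref{thm:gcuc}\eqref{it:uc} applies directly.

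First I would dispatch the two easy conditions. Any untwisted Whitehead double has Alexander polynomial $1$, so by Freedman's theorem each $K_n = D_+(J_n)$ is topologically slice. Whitehead doubles have Seifert genus $1$, and by Hedden's formula, $\tau(D_+(J)) = 1$ whenever $\tau(J) > 0$; combined with $g_4 \geq |\tau|$, this gives $1 = \tau(K_n) \leq g_4(K_n) \leq g(K_n) = 1$, so $g_4(K_n) = 1$ for every $n$. A natural candidate for the companion is $J_n = T_{n,n+1}$, whose $\tau$ is strictly positive and whose local equivalence class mod $UV$ is explicitly described by Example \ref{ex:torusknots} in terms of horizontal arrows of lengths $1, 2, \ldots, n-1$.

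The substantive step is to show $N(K_n) \geq n$, after which Theorem \ref{thm:gcuc}\eqref{it:uc} yields $u_c(K_n) \geq n$. Using a satellite formula for the knot Floer complex of a Whitehead double (e.g., Hedden's computation, or the formulas extractable from Zemke's work on the knot Floer TQFT), one expresses $\CFK(K_n)$ in terms of $\CFK(J_n)$, then reduces mod $UV$ and reads off the standard complex representative via Theorem \ref{thm:localequivlex}. The horizontal arrows of maximal length in $J_n$ survive (possibly after rescaling by the clasp data) into horizontal arrows of the Whitehead double's standard complex, so that at least one $\varphi_j(K_n)$ with $j \geq n$ is nonzero. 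Appealing to Definition \ref{def:homs} then gives $\varphi_n(K_n) \neq 0$, hence $N(K_n) \geq n$.

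The main obstacle is the direct computation of the standard-complex form of $D_+(J_n)$ mod $UV$, since full knot Floer complexes of Whitehead doubles are notoriously delicate. The cleanest workaround, which I would attempt first, is an indirect argument using the homomorphism property of $\varphi_j$: combine $K_n$ with auxiliary L-space knots (whose $\varphi_j$ values are computable via Proposition \ref{prop:Lspace}) and use the resulting identities to pin down $\varphi_n(K_n)$ without building the full complex. Alternatively, one may bound $N(K_n)$ from below using the $U$-torsion order of $\HFK^-(K_n)$ in the reverse direction to Proposition \ref{prop:HFKm}, by locating an explicit $U^n$-torsion cycle coming from the companion's horizontal arrows. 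Either route reduces the entire theorem to an arrow-length bookkeeping problem, once the topological sliceness and $g_4 = 1$ conditions are handled by Freedman and Hedden as above.
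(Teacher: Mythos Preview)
Your proposal has a genuine and fatal gap: the very feature that makes $g_4(K_n) = 1$ easy also forces $N(K_n) \leq 2$, so the bound from Theorem~\ref{thm:gcuc}\eqref{it:uc} cannot grow.

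Concretely, an untwisted Whitehead double has Seifert genus $1$. By Corollary~\ref{cor:splitting}, the standard complex $C(a_1,\dots,a_m)$ is a direct summand of (the reduced) $\CFKUV(K_n)$, so each preferred generator $x_i$ represents a nonzero class in $\widehat{\HFK}(K_n)$. Genus detection then forces $|A(x_i)| \leq 1$ for all $i$, hence $|a_i| \leq 2$ for all $i$. Equivalently, this is exactly the content of Theorem~\ref{thm:gcuc}\eqref{it:gc}: $N(K_n) \leq 2g_c(K_n) \leq 2g(K_n) = 2$. Thus $\varphi_j(D_+(J_n)) = 0$ for every $j \geq 3$, regardless of the companion $J_n$, and your claim that ``horizontal arrows of maximal length in $J_n$ survive into the Whitehead double'' is false. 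Your fallback via Proposition~\ref{prop:HFKm} also cannot help: that proposition gives only an \emph{upper} bound $N(K) \leq M$ from the torsion order, never a lower bound.

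The paper's construction avoids this trap by cabling first and then subtracting. It takes $K_n = D_{n,1} \# -D_{n-1,1}$, where $D_{n,1}$ is the $(n,1)$-cable of a fixed Whitehead double $D$. Cabling raises the genus, so the standard complex of $D_{n,1}$ can (and does) contain an arrow of length $n$; one checks $\varphi_n(D_{n,1}) = 1$ and $\varphi_j(D_{n,1}) = 0$ for $j > n$ using prior computations of the $a_i(D_{n,1})$. The homomorphism property then gives $N(K_n) = n$. The price is that $g_4(K_n) = 1$ is no longer automatic; it is established separately (via a genus-one cobordism between $D_{n,1}$ and $D_{n-1,1}$) in the cited reference.
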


\noindent The knots used to prove Theorem \ref{thm:topsliceuc} are the same knots appearing in \cite[Theorem 3]{Homconcordancegenus}. In \cite{OwensStrle}, Owens-Strle give examples of knots for which $u_c(K) - g_4(K) = 1$. As far as the authors know, Theorem \ref{thm:topsliceuc} gives the first known examples of knots for which $u_c(K) - g_4(K)$ is arbitrarily large.

\subsection{Remarks}
We conclude with a few remarks relating the present work with other results. In \cite{OSborderedalgebras}, Ozsv\'ath-Szab\'o define a bordered-algebraic knot invariant which is isomorphic to the knot Floer complex over the ring $\F[U, V]/(UV=0)$. Their bordered-algebraic knot invariant is particularly amenable to computer computation. It should thus be possible to implement an effective computer program to calculate the homomorphisms $\varphi_j$.

Theorem \ref{thm:char} is closely related to horizontally and vertically simplified bases for the knot Floer complex, defined in \cite[Section 11.5]{LOT}. Indeed, Corollary \ref{cor:splitting} states every knot Floer complex over $\F[U, V]/(UV=0)$ contains a direct summand with a simultaneously vertically and horizontally simplified basis, and that this summand supports $\HF^\infty(S^3)$. This is closely related to the notion of loop-type modules, defined in \cite[Definition 3.1]{HanselmanWatsonCalculus}. (Note that over the ring $\F[U, V]$, not every complex admits a  simultaneously vertically and horizontally simplified basis; see \cite[Figure 3]{Hominfiniterank}.)

Lastly, we point out that the techniques in this paper are the knot Floer analogues of the techniques used in \cite{DHSThomcobord} to study the three-dimensional homology cobordism group.

\subsection*{Organization}
In Section \ref{background}, we briefly recall the definition of the knot Floer complex, working over the ring $\cR = \F[U, V]/(UV = 0)$. In Section \ref{sec:knotlike}, we introduce the notion of a knot-like complex, and define the local equivalence group $\KL$ of knot-like complexes. In Section \ref{sec:standards}, we define a particularly simple family of knot-like complexes, which we call standard complexes. We use these to construct a sequence of numerical invariants associated to any knot-like complex in Section \ref{sec:ai}. This is used in Section \ref{sec:char} to show that every knot-like complex is locally equivalent to a standard complex. In Section \ref{sec:homs}, we apply our characterization of knot-like complexes to define the homomorphisms $\varphi_j$. In Section \ref{sec:thinLspace}, we prove Propositions \ref{prop:thin} and \ref{prop:Lspace} (computing $\varphi_j$ for thin and L-space knots). In Section \ref{sec:topslice}, we prove Theorem \ref{thm:TS} (on an infinite-rank summand of $\CTS$), and in Section \ref{sec:gcuc}, we prove Theorems \ref{thm:gcuc} and \ref{thm:topsliceuc} (on applications of $\varphi_j$ to $g_c$ and $u_c$). Finally, we conclude with some further remarks and open questions in Section \ref{sec:furtherremarks}.

Throughout, we work over $\F = \Z/2\Z$. We use the convention that $\N = \Z_{>0}$.

\subsection*{Acknowledgements}
We would like to thank Akram Alishahi, Tye Lidman, Chuck Livingston, Brendan Owens, and Ian Zemke for helpful conversations.

\section{Background on knot Floer homology}\label{background}
In this section, we give a brief overview of knot Floer homology, primarily to establish notation. We assume that the reader is familiar with knot Floer homology as in \cite{OSknots} and \cite{RasmussenThesis}; see \cite{ManolescuKnotIntro} and \cite{Homsurvey} for survey articles on this subject. Our conventions mostly follow those in \cite{Zemkeabsgr}; see, in particular, Section 1.5 of \cite{Zemkeabsgr}.

\begin{definition}
Let $\cR = \F[U, V]/(UV=0)$, endowed with a relative bigrading $\gr = (\gr_U, \gr_V)$, where $\gr(U) = (-2, 0)$ and $\gr(V) = (0, -2)$. We call $\gr_U$ the \emph{$U$-grading} and $\gr_V$ the \emph{$V$-grading}.
\end{definition}

Let $\cH = (\Sigma, \bfalpha, \bfbeta, w, z)$ be a doubly-pointed Heegaard diagram compatible with $(S^3, K)$. Define $\CFKUV(\cH)$ to be the chain complex freely generated over $\cR$ by $\bfx \in \bbT_{\bfalpha} \cap \bbT_{\bfbeta}$ with differential
\[ \d \bfx = \sum_{\bfy \in \bbT_{\bfalpha} \cap \bbT_{\bfbeta}}  \sum_{\substack{\phi \in \pi_2(\bfx, \bfy) \\ \mu(\phi)=1}} U^{n_w(\phi)} V^{n_z(\phi)} \bfy, \]
where, as usual, $\pi_2(\bfx, \bfy)$ denotes homotopy classes of disks in $\Sym^g (\Sigma)$ connecting $\bfx$ to $\bfy$, and $\mu(\phi)$ denotes the Maslov index of $\phi$. The chain complex $\CFKUV(\cH)$ comes equipped with a relative bigrading $\gr = (\gr_U, \gr_V)$, defined as follows. Given $\bfx, \bfy \in \bbT_{\bfalpha} \cap \bbT_{\bfbeta}$ and $\phi \in \pi_2(\bfx, \bfy)$, let the relative grading shifts be given by
\begin{align*}
	\gr_U(\bfx, \bfy) &= \mu(\phi) -2n_w(\phi) \\
	\gr_V(\bfx, \bfy) &= \mu(\phi) -2n_z(\phi). 
\end{align*}
It follows that the differential has degree $(-1, -1)$. (In the literature, $\gr_U$ is usually referred to as \emph{Maslov grading}.) We define a relative \emph{Alexander grading} by 
\[ A(\bfx, \bfy) = \frac{1}{2} (\gr_U(\bfx, \bfy) - \gr_V(\bfx, \bfy)) = n_z(\phi) - n_w(\phi). \]
Note that the variable $U$ lowers $\gr_U$ by $2$, preserves $\gr_V$, and lowers $A$ by $1$. The variable $V$ preserves $\gr_U$, lowers $\gr_V$ by $2$, and increases $A$ by $1$. The differential preserves the Alexander grading.

Up to chain homotopy over $\cR$, the chain complex $\CFKUV(\cH)$ is an invariant of $K \subset S^3$, and so we will typically write $\CFKUV(K)$ rather than $\CFKUV(\cH)$. We now recall some facts from \cite{OSknots}. The complex $\CFKUV(K)$ has the following symmetry property. Let $\overline{\CFKUV}(K)$ denote the complex obtained by interchanging the roles of $U$ and $V$. (Note that we thus also interchange the values of $\gr_U$ and $\gr_V$.) Then
\[ \CFKUV(K) \simeq \overline{\CFKUV}(K). \]
The knot Floer complex behaves nicely with respect to connected sums. Indeed, we have that
\[ \CFKUV(K_1 \# K_2) \simeq \CFKUV(K_1) \otimes_\cR \CFKUV(K_2). \]
We also have that
\[ \CFKUV(-K) \simeq \CFKUV(K)^\vee, \]
where $\CFKUV(K)^\vee = \Hom_\cR(\CFKUV(K), \cR)$.

\begin{remark}
Since the differential preserves the Alexander grading, the complex $\CFKUV$ splits  -- as a chain complex over $\F$, but \emph{not} as an $\cR$-module -- as a direct sum over the Alexander grading:
\[ \CFKUV (K) = \bigoplus_{s \in \Z} \CFKUV (K, s), \]
where 
\begin{align*}
U &\co  \CFKUV (K, s) \to  \CFKUV (K, s-1) \\
V &\co  \CFKUV (K, s) \to  \CFKUV (K, s+1).
\end{align*}
The chain complex $\CFKUV (K, s)$ is isomorphic to the complex $\widehat{A}_s$ from \cite{OSinteger}; that is, $H_*(\CFKUV (K, s))$ is isomorphic (as a relatively graded vector space) to $\smash{\HFhat(S^3_N(K), \s_s)}$, the Heegaard Floer homology of large surgery on $K$ in the $\spinc$ structure corresponding to $s$.
\end{remark}

The version of knot Floer homology we have constructed here follows slightly different conventions than the usual definition in e.g.\ \cite{OSknots}. For the convenience of the reader, we recall some of the most salient features of the standard knot Floer homology package, and explicitly translate them into our setting. For further discussion, see Section 1.5 of \cite{Zemkeabsgr}. 

First, consider the $\F$-vector space $\smash{\widehat\HFK(K)}$, which is defined by not allowing holomorphic disks in the definition of $\d$ to cross either the $w$ or the $z$ basepoint. In our context, this is isomorphic to $H_*(\CFKUV(K)/(U, V))$, where $(U,V)$ denotes the ideal generated by $U$ and $V$. The Alexander grading is given by $\smash{A = \frac{1}{2} (\gr_U-\gr_V)}$ and the Maslov grading is given by $M = \gr_U$. 

Next, consider the $\F[U]$-module $\HFK^-(K)$, which is defined by taking the homology of the associated graded complex of $\CFK^-(K)$ with respect to the Alexander filtration. This is equivalent to allowing holomorphic disks to cross the $w$ but not the $z$ basepoint. In our context, this yields $H_*(\CFKUV(K)/V)$, where again the Alexander grading is given by $\smash{A = \frac{1}{2} (\gr_U-\gr_V)}$ and the Maslov grading is given by $M = \gr_U$. It is a standard fact that for knots in $S^3$, the $\F[U]$-module $\HFK^-(K) \cong H_*(\CFKUV(K)/V)$ has a single $U$-nontorsion tower.\footnote{By this, we mean that $H_*(\CFKUV(K)/V)/U\text{-torsion} \cong \F[U]$. Note, however, that this copy of $\F[U]$ is not required to be generated by an element with $\gr_U = 0$.} By symmetry, it follows that $H_*(\CFKUV(K)/U)$ has a single $V$-nontorsion tower. 

We now claim that these two nontorsion towers satisfy the following grading normalizations:
\begin{enumerate}
\item The $U$-gradings of all $V$-nontorsion classes in $H_*(\CFKUV(K)/U)$ are zero.\label{lab:cond1}
\item The $V$-gradings of all $U$-nontorsion classes in $H_*(\CFKUV(K)/V)$ are zero.\label{lab:cond2}
\end{enumerate}
Note that all $V$-nontorsion classes in $H_*(\CFKUV(K)/U)$ have the same $U$-grading, since multiplication by $V$ does not change $\gr_U$. Similarly, all $U$-nontorsion classes in $H_*(\CFKUV(K)/V)$ have the same $V$-grading. To see the claim, consider the complex $\CFKUV(K)$ and set $U = 0$ and $V = 1$. This means that we allow holomorphic disks to cross the $z$ but not the $w$ basepoint, and we disregard the Alexander filtration. This yields a complex whose homology computes $\smash{\widehat\HF(S^3)} \cong \F$, which is concentrated in Maslov grading zero. Using the fact that the Maslov grading is equal to $\gr_U$, some thought shows that the $V$-nontorsion tower of $H_*(\CFKUV(K)/U)$ is thus generated by an element with $\gr_U = 0$. By symmetry, we likewise have that any $U$-nontorsion element in $H_*(\CFKUV(K)/V)$ has $\gr_V = 0$.

Finally, recall that the concordance invariant $\tau(K)$ is defined to be the negative of the maximal Alexander grading of any $U$-nontorsion element in $\HFK^-(K) \cong H_*(\CFKUV(K)/V)$. By the previous two paragraphs, this means that
\[
\tau(K) = - \max\{\dfrac{1}{2}\gr_U(x) \mid x \in H_*(\CFKUV(K)/V) \text{ is not } U \text{-torsion} \}.
\]
By symmetry, we conclude that similarly
\[
\tau(K) = - \max\{ \dfrac{1}{2}\gr_V(x) \mid x \in H_*(\CFKUV(K)/U) \text{ is not } V \text{-torsion} \}.
\]
The reader should think of the complexes $\CFKUV(K)/U$ and $\CFKUV(K)/V$ as deleting horizontal and vertical arrows (respectively) in the pictorial representation of $\CFKUV$. It may be helpful to keep in mind Figure~\ref{fig:T34}. There, the $V$-nontorsion tower of $H_*(\CFKUV(K)/U)$ is generated by the top-left basis element, while the the $U$-nontorsion tower of $H_*(\CFKUV(K)/V)$ is generated by the bottom-right basis element. 

The following definition is particularly useful in applications of knot Floer homology to concordance:

\begin{definition}\label{def:loceq}
Let $K_1$ and $K_2$ be knots in $S^3$. We say that $\CFKUV(K_1)$ and $\CFKUV(K_2)$ are \emph{locally equivalent} if there exist absolutely $U$-graded, absolutely $V$-graded $\cR$-equivariant chain maps
\[ f \co \CFKUV(K_1) \to \CFKUV(K_2) \quad \text{ and }  \quad g \co \CFKUV(K_2) \to \CFKUV(K_1) \]
such that $f$ and $g$ induce isomorphisms on $H_*(\CFKUV(K_i)/U)/V\text{-torsion}$. Roughly speaking, this means that $f$ maps the top of the $V$-tower in $H_*(\CFKUV(K_1)/U)$ to the top of the $V$-tower in $H_*(\CFKUV(K_2)/U)$, and vice-versa for $g$.
\end{definition}

\noindent Local equivalence is considered in the involutive setting in \cite[Section 2.3]{Zemkeconnsuminv}.

\begin{remark}
Note that $\CFKUV(K)$ is locally equivalent to $\CFKUV(O)$, where $O$ denotes the unknot, if and only if  $\CFKUV(K) \simeq \CFKUV(O) \oplus A$, where $A$ is a chain complex over $\cR$ with $U^{-1}H_*(A) = V^{-1}H_*(A) = 0$. It is straightforward to verify that local equivalence over $\cR$ and $\varep$-equivalence (see \cite[Section 2]{Hominfiniterank}) are the same (after translating between $\cR$-modules and bifiltered chain complexes over $\F[U, U^{-1}]$).
\end{remark}

\begin{theorem}[{\cite[Theorem 1.5]{Zemkeconnsuminv}, cf.\ \cite[Theorem 2]{Homconcordance}}] \label{thm:concle}
If $K_1$ and $K_2$ are concordant, then $\CFKUV(K_1)$ and $\CFKUV(K_2)$ are locally equivalent.
\end{theorem}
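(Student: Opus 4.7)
The plan is to invoke Zemke's link cobordism framework \cite{Zemkeconnsuminv} to produce the required chain maps, and then to verify the local equivalence condition by analyzing the induced maps on the $V$-nontorsion tower of $H_*(\CFKUV(K)/U)$.

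First, given a concordance $C \subset S^3 \times [0,1]$ from $K_1$ to $K_2$, I would decorate $C$ by choosing two parallel properly embedded arcs running from $K_1$ to $K_2$, dividing $C$ into two rectangles: one containing the $w$-basepoint arc and the other containing the $z$-basepoint arc. Zemke's construction then produces an $\cR$-equivariant chain map $F_C \co \CFKUV(K_1) \to \CFKUV(K_2)$. By Zemke's grading formulas, the absolute $\gr_U$ and $\gr_V$ shifts of $F_C$ are linear combinations of the genus of $C$ and the Euler characteristics of the two decorated subsurfaces. Since $C$ is an annulus, $g(C) = 0$, and each decorated rectangle has Euler characteristic zero, so both grading shifts vanish and $F_C$ is absolutely bigraded. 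The reverse concordance $\overline{C}$, obtained by turning $S^3 \times [0,1]$ upside down, yields an analogous map $F_{\overline{C}} \co \CFKUV(K_2) \to \CFKUV(K_1)$.

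Next, I would show that $F_C$ and $F_{\overline{C}}$ induce isomorphisms on $H_*(\CFKUV(K_i)/U)/V\text{-torsion}$. The key point is that the algebraic reduction of setting $U=0$ and inverting $V$ identifies the resulting $\F$-module with $\HFhat(S^3) \cong \F$, generated in bigrading $(0,0)$ by the normalization discussion preceding the theorem statement. Under this identification, the map induced by the decorated concordance agrees with the ordinary $\HFhat$ cobordism map associated to the trivial product cobordism $S^3 \times [0,1]$, which is the identity. Hence $F_C$ restricts to an isomorphism on the $V$-nontorsion tower of $H_*(\CFKUV(K_1)/U)$, and symmetrically for $F_{\overline{C}}$.

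The main technical obstacle is the last step: rigorously identifying the link cobordism map associated to the decorated annulus with the identity on the $V$-nontorsion tower after the reduction. This amounts to comparing Zemke's link cobordism formalism to the standard $\HFhat$ cobordism maps via a forgetful compatibility that collapses the link data, and then invoking functoriality to recognize the product cobordism on $S^3$. Once this compatibility is established, both the absolute grading preservation and the nontriviality on the nontorsion tower are automatic, yielding the desired local equivalence.
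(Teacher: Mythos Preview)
Your proposal is correct and is essentially a sketch of the argument underlying the cited reference \cite[Theorem 1.5]{Zemkeconnsuminv}. The paper itself does not give an independent proof: it simply notes that the result follows from Zemke's theorem by forgetting the involutive component and quotienting by $UV$ (or equivalently from \cite[Theorem 2]{Homconcordance} via translation from $\varepsilon$-equivalence), which is exactly the framework you are invoking.
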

\noindent
Theorem~\ref{thm:concle} follows from \cite[Theorem 1.5]{Zemkeconnsuminv} by forgetting the involutive component and quotienting by $UV$, or from \cite[Theorem 2]{Homconcordance} by translating from $\varep$-equivalence and bifiltered chain complexes to local equivalence and $\cR$-modules.
\section{Knot-like complexes and their properties}\label{sec:knotlike}
In this section, we consider abstract $\cR$-complexes satisfying many of the same formal properties as $\CFKUV(K)$. We show that modulo local equivalence, the set of such complexes forms a group, with the operation induced by tensor product. Moreover, we show that this group is totally ordered.

\subsection{Knot-like complexes}
We begin by defining knot-like complexes, so named because they are $\cR$-complexes satisfying many of the properties of $\CFKUV$ from the previous section.

\begin{definition}\label{def:knotlike}
A \emph{knot-like complex} $C$ is a free, finitely generated, bigraded chain complex over $\cR$ such that
\begin{enumerate}
	\item $H_*(C/U)$ has a single $V$-nontorsion tower, lying in $\gr_U = 0$.
	\item $H_*(C/V)$ has a single $U$-nontorsion tower, lying in $\gr_V = 0$.
\end{enumerate}
Again, we mean by this that $H_*(C/U)/V\text{-torsion}$ is isomorphic to $\F[V]$, and that all of the $V$-nontorsion elements in $H_*(C/U)$ have $U$-grading zero. A similar statement holds for $H_*(C/V)$. The differential $\d$ is required to have degree $(-1, -1)$.
\end{definition}

\begin{remark}
Note that we do \textit{not} in general require any symmetry with respect to interchanging $U$ and $V$.
\end{remark}

\begin{definition}\label{def:le}
Let $C_1$ and $C_2$ be two knot-like complexes. We say that $C_1 \leq C_2$ if there exists an absolutely $U$-graded, relatively $V$-graded $\cR$-equivariant chain map
\[ f \co C_1 \rightarrow C_2 \]
such that $f$ induces an isomorphism on $H_*(C_i/U)/V\text{-torsion}$. We call $f$ a \emph{local map}. We say that two knot-like complexes $C_1$ and $C_2$ are \emph{locally equivalent}, denoted $C_1 \sim C_2$, if $C_1 \leq C_2$ and $C_2 \leq C_1$. 
\end{definition}
\noindent
We will also occasionally use the terminology:
\begin{definition}
Let $C$ be a knot-like complex and let $x \in C$. We say that $x$ is a \textit{$V$-tower class} if $[x]$ is a maximally $V$-graded, $V$-nontorsion cycle in $H_*(C/U)$. Similarly, we say that $x$ is a \textit{$U$-tower class} if $[x]$ is a maximally $U$-graded, $U$-nontorsion cycle in $H_*(C/V)$. Thus $f$ (as defined above) sends $V$-tower classes to $V$-tower classes.
\end{definition}

\begin{remark}\label{remark:absrel}
Note that $f$ in Definition~\ref{def:le} is \textit{not} required to be absolutely $V$-graded, but rather only relatively $V$-graded. Thus, \textit{a priori} the notion of local equivalence in Definition~\ref{def:le} is strictly weaker than the notion of local equivalence presented in Definition~\ref{def:loceq}; i.e., we might have two knot-like complexes $C_1$ and $C_2$ which are locally equivalent via maps $f$ and $g$ that introduce complementary $V$-grading shifts. However, we will show in Lemma~\ref{lem:loceqsymmetric} that if $C_1$ and $C_2$ are locally equivalent (in the sense of Definition~\ref{def:le}) via $f$ and $g$, then $f$ and $g$ induce isomorphisms on $H_*(C_i/V)/U\text{-torsion}$ (i.e., send $U$-tower classes to $U$-tower classes), even without any symmetry requirements on the $C_i$. Combined with the normalization conventions of Definition~\ref{def:knotlike}, this shows that $f$ and $g$ are absolutely $V$-graded.
\end{remark}

It is straightforward to verify that $\leq$ is a partial order on the set of local equivalence classes of knot-like complexes.

\begin{remark}
Our notion of local equivalence agrees with \cite[Definition 2.4]{Zemkeconnsuminv} after forgetting $\iota_K$ and modding out by the ideal generated by $UV$. This definition of local equivalence also agrees with the equivalence relation defined using $\varepsilon$ from \cite[Section 4.1]{Homconcordance}; for this, see Theorem~\ref{thm:char} and Corollary~\ref{cor:splitting}.
\end{remark}

Let $(U, V)$ denote the ideal generated by $U$ and $V$. If $C$ is a free, finitely generated chain complex over $\cR$, then every element $x$ in $(U, V)$ can be uniquely expressed as $x_U + x_V$, where $x_U \in \Im U$ and $x_V \in \Im V$. 

\begin{definition}
We say a chain complex over $\cR$ is \emph{reduced} if $\d \equiv 0 \mod (U, V)$. In a reduced complex, we can write $\d$ as the sum $\d = \d_U + \d_V$, where if $\d x = y$, then $\d_U x = y_U$ and $\d_V x = y_V$. Note that $\d_U^2 = \d_V^2 = 0$. We call $\d_U$ the \textit{$U$-differential} and refer to elements with $\d_U x = 0$ as \textit{$U$-cycles}; similarly, we call $\d_V$ the \textit{$V$-differential} and refer to elements with $\d_V x = 0$ as \textit{$V$-cycles}.
\end{definition}

\begin{lemma}\label{lem:reduced}
Every knot-like complex $C$ is locally equivalent to a reduced knot-like complex $C'$.
\end{lemma}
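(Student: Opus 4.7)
The plan is to apply a cancellation lemma iteratively to eliminate the ``non-$(U,V)$ part'' of $\partial$. Choose a free, homogeneous bigraded basis $\{e_i\}$ for $C$ over $\cR$, and consider the induced $\F$-linear map $\bar\partial$ on the vector space $C/(U,V)C$. By definition, $C$ is reduced if and only if $\bar\partial = 0$. If not, there must exist homogeneous basis elements $e_i, e_j$ such that the coefficient of $e_i$ in $\partial e_j$ is a nonzero element of $\F$, hence equal to $1$. Because $\partial$ has bigrading $(-1,-1)$ and $1$ has bigrading $(0,0)$, the bigradings of $e_i$ and $e_j$ differ by exactly $(1,1)$.

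One then invokes the standard cancellation lemma for chain complexes over local rings: pass to the chain-homotopy equivalent free complex $\tilde C$ whose basis is $\{e_k : k \neq i, j\}$, with differential
\[
\tilde\partial(x) = \pi\bigl(\partial(x) + \langle \partial(x), e_i \rangle \cdot \partial(e_j)\bigr),
\]
where $\pi$ projects onto the remaining basis elements. Homogeneity of the canceled pair guarantees that $\tilde C$ inherits a bigrading for which $\tilde\partial$ has degree $(-1,-1)$, and that the chain-homotopy equivalence $C \simeq \tilde C$ is absolutely bigraded.

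Since $C$ is finitely generated, iterating this procedure terminates at a reduced, free, bigraded complex $C'$ chain-homotopy equivalent to $C$. Chain-homotopy equivalence preserves the bigraded homotopy type of $C/U$ and $C/V$, so $C'$ still has a single $V$-nontorsion tower in $H_*(C'/U)$ lying in $\gr_U = 0$ and a single $U$-nontorsion tower in $H_*(C'/V)$ lying in $\gr_V = 0$; hence $C'$ is a knot-like complex. The induced chain-homotopy equivalences in both directions are absolutely bigraded, $\cR$-equivariant, and induce isomorphisms on all homology, so they are in particular local maps, and therefore $C \sim C'$.

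The main obstacle is purely bookkeeping: one must check that each cancellation respects the bigrading and produces an absolutely bigraded homotopy equivalence. This is automatic from homogeneity of the chosen basis, together with the fact that the only units of $\cR$ lie in bigrading $(0,0)$, so the canceled basis pairs are always homogeneous in the correct complementary degrees.
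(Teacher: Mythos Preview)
Your proposal is correct and takes essentially the same approach as the paper: both arguments iteratively cancel an acyclic two-generator piece to reduce the rank until $\bar\partial=0$. The paper carries this out by hand (completing $\{x,\partial x\}$ to a basis in which the complement is a subcomplex, then splitting off the acyclic piece via a split short exact sequence), while you invoke the standard Gaussian-elimination/cancellation lemma; the underlying mechanism is identical.
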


\begin{proof}
Suppose that $C$ is not reduced. Then there exists $x \in C$ such that $\d x$ is not in the ideal generated by $U$ and $V$. We claim that we may complete $\{ x, \d x\}$ to a basis $\{ x, \d x, y_1, \dots, y_n \}$ for $C$ such that the $y_i$ generate a subcomplex $C'$ of $C$. To see this, first complete $\{x, \d x\}$ to an $\cR$-basis $\{ x, \d x, y_1, \dots, y_n \}$ for $C$, where $\d$ does not necessarily preserve the span of the $y_i$. Here, we are using the fact that if $N$ is a (free) submodule of a free module $M$, then a basis for $N$ can be extended to a basis for $M$ if and only if $M/N$ is also free. To apply this in our case, note that $x$ and $\d x$ do not lie in the image of $(U, V)$. A grading argument then shows that no linear combination of $x$ and $\d x$ lies in the image of $(U, V)$.

For each $y_i$, we then write $\partial y_i$ as a linear combination of $x$, $\d x$, and the other basis elements $y_j$. By adding multiples of $x$ to $y_i$, we may assume that $\d x$ does not appear in any differential $\d y_i$. This also shows that $x$ does not appear in $\d y_i$, since then we would have 
\[
0 = \d^2 y_i = \d\left(P(U, V)x + \sum P_j(U, V) y_j \right)
\]
for some polynomials $P(U, V)$ and $P_j(U, V)$, which would imply that $\d x$ appears in some $\d y_j$.

It follows that 
\[ 0 \to \langle x, \d x \rangle \to C \xrightarrow{p} C' \to 0 \]
is a split short exact sequence of freely generated $\cR$-complexes. Since $\langle x, \d x \rangle$ is acyclic by construction, the projection $p \co C \to C'$ and section $s \co C' \to C$ both induce isomorphisms on homology. Hence $C$ and $C'$ are locally equivalent. Since $C$ is finitely generated, we may iterate this procedure to arrive at a reduced complex.
\end{proof}

From now on, we will assume that all of our knot-like complexes are reduced.

\subsection{The local equivalence group of knot-like complexes}
We now show that knot-like complexes modulo local equivalence form a group, with the operation induced by tensor product. Moreover, we will show that the partial order $\leq$ is in fact a total order. We begin with some routine formalism:

\begin{definition}
The \emph{product} of two knot-like complexes $C_1$ and $C_2$ is $C_1 \otimes_\cR C_2$.
\end{definition}

\begin{lemma}
The product of two knot-like complexes is a knot-like complex.
\end{lemma}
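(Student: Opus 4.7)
The plan is to verify the three defining properties of a knot-like complex for $C_1 \otimes_\cR C_2$: freeness and finite generation over $\cR$, the degree-$(-1,-1)$ condition on the differential, and the pair of homological conditions (1) and (2) in Definition~\ref{def:knotlike}. The first two are immediate: a tensor product of free, finitely generated $\cR$-modules is free and finitely generated, and the tensor-product differential $\d_1 \otimes \id + \id \otimes \d_2$ has bidegree $(-1,-1)$ because each factor does. Thus the bulk of the argument is devoted to (1) and (2), which are symmetric in $U$ and $V$, so it suffices to establish (1).

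First I would observe that reduction modulo $U$ commutes with tensor product in the sense that
\[
(C_1 \otimes_\cR C_2)/U \;\cong\; (C_1/U) \otimes_{\F[V]} (C_2/U),
\]
using that $\cR/U \cong \F[V]$. Since $\F[V]$ is a PID and each $C_i/U$ is a free finitely generated chain complex over $\F[V]$, the Künneth theorem yields a short exact sequence
\[
0 \to H_*(C_1/U) \otimes_{\F[V]} H_*(C_2/U) \to H_*\bigl((C_1 \otimes_\cR C_2)/U\bigr) \to \Tor_{\F[V]}\bigl(H_*(C_1/U), H_*(C_2/U)\bigr)[-1] \to 0
\]
respecting the bigrading.

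Next I would use the hypothesis that each $H_*(C_i/U)$ decomposes (non-canonically) as $\F[V] \oplus T_i$, where the free summand is concentrated in $\gr_U = 0$ and $T_i$ is a finitely generated $V$-torsion $\F[V]$-module. Distributing the tensor product, the left-hand term of the Künneth sequence has exactly one free $\F[V]$ summand, generated by $x_1 \otimes x_2$ where each $x_i$ is the chosen $V$-tower class in $\gr_U = 0$; all other summands, as well as the Tor term, are $V$-torsion. A short diagram chase then shows that any $V$-nontorsion class $y$ in $H_*((C_1 \otimes_\cR C_2)/U)$ must, after multiplication by some $V^k$, lie in the image of $H_*(C_1/U) \otimes H_*(C_2/U)$ as a $V$-nontorsion class, hence have $\gr_U(V^k y) = 0$. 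Since multiplication by $V$ preserves $\gr_U$, this forces $\gr_U(y) = 0$, and also shows that the nontorsion tower has rank one.

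The symmetric argument applied after reducing modulo $V$ establishes condition (2). The only subtlety, and the step I would write out with most care, is the grading-tracking in the Künneth sequence — in particular verifying that \emph{every} $V$-nontorsion class (not merely a generator of the tower) sits in $\gr_U = 0$, which is where the normalization conventions built into Definition~\ref{def:knotlike} come into play.
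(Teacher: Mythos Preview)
Your argument is correct and supplies exactly the details the paper omits: the paper's own proof consists of the single word ``Straightforward.'' Your route via the K\"unneth theorem over the PID $\F[V]$ is the natural one, and the care you take in tracking $\gr_U$ through the short exact sequence---in particular, arguing that any $V$-nontorsion class becomes, after multiplying by a power of $V$, a nontorsion class in the image of the tensor product and hence lies in $\gr_U = 0$---is precisely what is needed to verify the normalization condition in Definition~\ref{def:knotlike}. One small point worth making explicit in the write-up is that the splitting $H_*(C_i/U) \cong \F[V] \oplus T_i$ can be chosen to respect the bigrading (by picking a homogeneous lift of the tower generator), so that the statement ``the free summand is concentrated in $\gr_U = 0$'' is justified.
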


\begin{proof}
Straightforward.
\end{proof}

\begin{definition}
Let $\KL$ denote the set of local equivalence classes of knot-like complexes, with the operation induced by $\otimes$.
\end{definition}

\begin{proposition}
The pair $(\KL, \otimes)$ forms an abelian group.
\end{proposition}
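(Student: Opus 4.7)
The plan is to verify the group axioms in the order: well-definedness of $\otimes$ on $\KL$, existence of a two-sided identity, and existence of inverses. Associativity and commutativity are immediate from the corresponding properties of $\otimes_\cR$ over the commutative ring $\cR$.

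For well-definedness, I would first show that the tensor product of two knot-like complexes is again knot-like. Quotienting by $U$ gives a tensor product of free chain complexes over $\F[V]$, so the Künneth formula applies. Writing $H_*(C_i/U) \cong \F[V] \oplus T_i$, where the $\F[V]$-summand is concentrated in $\gr_U = 0$ and $T_i$ is $V$-torsion, one sees that $H_*((C_1 \otimes_\cR C_2)/U)$ again has the form $\F[V] \oplus T$ with a single tower in $\gr_U = 0$ generated by the tensor product of the two tower generators; the analogous statement for $V$ follows by the symmetric argument. Under this identification, the tensor product of two local maps carries tower generators to tower generators and is therefore again local. The identity element is $[\cR]$, with the canonical isomorphism $C \otimes_\cR \cR \cong C$ verifying the identity axiom.

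The main obstacle is the existence of inverses. I would take the inverse of $[C]$ to be $[C^\vee]$, where $C^\vee = \Hom_\cR(C, \cR)$; dualizing a basis of $C$ yields a free $\cR$-basis of $C^\vee$ with both $\gr_U$ and $\gr_V$ negated. That $C^\vee$ is itself knot-like follows from the universal coefficient theorem for chain complexes over the PID $\F[V]$: since $H_*(C/U)$ is a direct sum of $\F[V]$ (in $\gr_U = 0$) and a torsion part, its $\F[V]$-dual is again $\F[V]$ (with the same grading) plus an $\mathrm{Ext}$-contribution that is $V$-torsion. The analogous argument works for $C^\vee/V$.

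To exhibit a local equivalence $C \otimes_\cR C^\vee \sim \cR$, I would use the evaluation map $\varepsilon \co C \otimes C^\vee \to \cR$ defined by $\varepsilon(x \otimes \varphi) = \varphi(x)$, and the coevaluation map $\eta \co \cR \to C \otimes C^\vee$ sending $1 \mapsto \sum_i e_i \otimes e_i^\vee$ for any free basis $\{e_i\}$ of $C$ with dual basis $\{e_i^\vee\}$. Both are $\cR$-equivariant chain maps; in characteristic two, the verification that $\eta(1)$ is a cycle reduces to a symmetric matrix identity that already holds at the chain level. The hard part is checking that $\varepsilon$ and $\eta$ induce isomorphisms on $H_*(\cdot/U)$ modulo $V$-torsion. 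For $\varepsilon$, one identifies the $V$-tower generator of $H_*((C \otimes C^\vee)/U)$ via Künneth as $[x_0 \otimes x_0^\vee]$, where $x_0$ represents the $V$-tower generator of $H_*(C/U)$ and $x_0^\vee$ the $V$-tower generator of $H_*(C^\vee/U)$, and observes that one can arrange $\varepsilon(x_0 \otimes x_0^\vee) = 1$. For $\eta$, one must show that $\sum_i e_i \otimes e_i^\vee$ is cohomologous to $x_0 \otimes x_0^\vee$ modulo $V$-torsion; the expected argument is to split $C/U$ as the $V$-tower part plus a complement consisting of $V$-torsion classes, and to verify that the cross terms $e_i \otimes e_i^\vee$ coming from the complement contribute $V$-torsion classes in $H_*((C \otimes C^\vee)/U)$ by exhibiting explicit bounding chains analogous to the trefoil computation. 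This last step is the main obstacle and is where the bulk of the work will lie.
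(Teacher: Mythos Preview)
Your approach is correct and matches the paper's: the identity is $[\cR]$ and the inverse of $[C]$ is $[C^\vee]$ with $C^\vee = \Hom_\cR(C,\cR)$. The paper simply asserts this is ``straightforward to verify'' and gives no further detail, so you are providing a fuller account of the same argument rather than a different one.

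One remark on what you flag as the main obstacle. The verification that $\eta(1) = \sum_i e_i \otimes e_i^\vee$ represents the $V$-tower class modulo $V$-torsion is in fact a one-line computation once you choose a vertically simplified basis $\{x, y_i, z_i\}$ for $C$ with $\d_V y_i = V^{n_i} z_i$ (as in the proof of Lemma~\ref{lem:totalorder}). In the dual basis one has $\d_V^\vee z_i^\vee = V^{n_i} y_i^\vee$, and then
\[
\d_V(y_i \otimes z_i^\vee) = V^{n_i}\bigl(z_i \otimes z_i^\vee + y_i \otimes y_i^\vee\bigr),
\]
so each summand $y_i \otimes y_i^\vee + z_i \otimes z_i^\vee$ in $\eta(1)$ is $V$-torsion in $H_*\bigl((C\otimes C^\vee)/U\bigr)$, leaving $[\eta(1)] = [x \otimes x^\vee]$ modulo $V$-torsion. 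Note that the terms $y_i \otimes y_i^\vee$ and $z_i \otimes z_i^\vee$ are not individually cycles, so your phrasing ``the cross terms \dots contribute $V$-torsion classes'' should be read as applying to these pairs rather than to each basis element separately. With this adjustment, your sketch goes through without difficulty.
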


\begin{proof}
This is straightforward to verify.  The identity is given by $\cR$ with trivial differential, and the inverse of $[C]$ is $[C^\vee]$, where $C^\vee = \Hom_\cR(C, \cR)$. 
\end{proof}

\begin{remark}
See \cite[Proposition 2.6]{Zemkeconnsuminv} for the analogous result in the involutive setting over the ring $\F[U, V]$.
\end{remark}

We now come to the significantly more interesting proposition. 
\begin{proposition}\label{prop:totalorder}
The relation $\leq$ defines a total order on $\KL$.
\end{proposition}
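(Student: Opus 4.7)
The plan is to reduce the problem to the comparison of standard complexes, which will be defined in Section~\ref{sec:standards}. Since the excerpt already establishes that $\leq$ is a partial order on $\KL$, only totality remains to be shown. The strategy has two main stages.

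The first stage is to invoke Theorem~\ref{thm:char} (proved later in Section~\ref{sec:char}), which asserts that every knot-like complex $C$ is locally equivalent to a standard complex $C(a_1, \ldots, a_{2n})$ parametrized by a finite sequence of nonzero integers. This reduces total comparability on $\KL$ to the problem of totally ordering standard complexes directly.

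The second stage is to totally order standard complexes by explicit construction. Given sequences $\mathbf{a}$ and $\mathbf{b}$, I would compare them lexicographically using the unusual order
\[
-1 \lebang -2 \lebang -3 \lebang \cdots \lebang 0 \lebang \cdots \lebang 3 \lebang 2 \lebang 1
\]
on $\Z$. If $\mathbf{a} \leqbang \mathbf{b}$, I would construct an explicit $\cR$-equivariant chain map $C(\mathbf{a}) \to C(\mathbf{b})$ that sends a $V$-tower class to a $V$-tower class; if $\mathbf{b} \leqbang \mathbf{a}$, I would dually construct such a map in the other direction. The map is built by induction on the first index $i$ at which $a_i \neq b_i$: on the first $i-1$ generator pairs of the zig-zag, the map is modeled on the identity, while at position $i$ it is defined in terms of the arrow lengths $|a_i|, |b_i|$ and signs $\sgn(a_i), \sgn(b_i)$, using the $\cR$-structure to push the image into the correct part of $C(\mathbf{b})$.

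The main obstacle is the explicit case analysis at the first position of disagreement. One must distinguish the four sign combinations for $(a_i, b_i)$, verify the chain-map condition, check that a $V$-tower class is preserved, and confirm compatibility with the grading conventions of Definition~\ref{def:knotlike}. It is precisely the delicate sign-and-magnitude bookkeeping at this step that forces the unusual order on $\Z$ appearing in Theorem~\ref{thm:localequivlex}; conversely, once that order is adopted, the construction proceeds uniformly and yields totality for free.
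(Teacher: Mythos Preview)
Your approach is circular. You propose to invoke Theorem~\ref{thm:char} (that every knot-like complex is locally equivalent to a standard complex) to reduce totality to the lexicographic ordering of standard complexes. However, the proof of Theorem~\ref{thm:char} given in Section~\ref{sec:char} already uses Proposition~\ref{prop:totalorder}. That proof produces a maximal standard complex $C_1 \leq C$ and a minimal standard complex $C_2 \geq C$, then argues that if $C_1 \neq C_2$ there is a standard complex $C_3$ strictly between them which ``contradicts either the maximality of $C_1$ or the minimality of $C_2$.'' This step requires knowing that $C_3$ is comparable to $C$ --- i.e., it uses totality.

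The paper avoids this circle by proving totality \emph{first}, with no reference to standard complexes at all. The key ingredient is Lemma~\ref{lem:totalorder}: for any knot-like complex $C$, if there is no local map $\cR \to C$ then there is a local map $C \to \cR$. This is established by a direct basis construction in $C$ (building a suitable simplified basis and quotienting). Totality for arbitrary $C_1, C_2$ then follows by applying the lemma to $C_1 \otimes C_2^\vee$ and tensoring back with $C_2$. Only after totality is in hand does the paper develop standard complexes and prove the characterization theorem.

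Your second stage --- the explicit lexicographic comparison of standard complexes --- is essentially the content of Lemma~\ref{lem:leqlex}, and that part is fine and does not depend on totality. But by itself it only shows that any two \emph{standard} complexes are comparable, not that any two knot-like complexes are; closing that gap is exactly where the circular dependence enters.
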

\noindent
Proposition \ref{prop:totalorder} is a consequence of the following lemma.

\begin{lemma}\label{lem:totalorder}
Let $C$ be a knot-like complex. If there does not exist a local map $f \co \cR \to C$, then there exists a local map $g \co C \to \cR$.
\end{lemma}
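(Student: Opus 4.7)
My plan is to construct $g\co C \to \cR$ directly, under the assumption that no local map $f\co \cR \to C$ exists. By Lemma \ref{lem:reduced}, I may assume $C$ is reduced. Let $\alpha$ denote the generator of the $V$-nontorsion tower in $H_*(C/U)$, which by the knot-like hypothesis sits in bigrading $(0, g_0)$ for some $g_0 \in \Z$. A local map $f$ exists if and only if some cycle of $C$ of bigrading $(0, g_0)$ represents $\alpha$ modulo $U$, so our hypothesis says that every such lift $y$ satisfies $\d y = U w_y$ with $w_y \neq 0$.

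The strategy is to pick a homogeneous $\cR$-basis $\{e_1, \ldots, e_n\}$ of $C$ containing some lift $y$ of $\alpha$, and to set $g(y) = 1$ and $g(e_i) = 0$ for all other basis elements, extended $\cR$-linearly. Such a $g$ is absolutely $U$-graded of degree zero and relatively $V$-graded with shift $-g_0$, and it sends $[y] = \alpha$ to $1 \in \F[V] = H_*(\cR/U)$; so $g$ will be a local map as soon as we verify that it is a chain map. Since the differential on $\cR$ is trivial, the chain map condition reduces to requiring that no $\d e_i$ involves a term $U^p y$ or $V^q y$, i.e., that the basis contains no arrows entering $y$.

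The bulk of the proof therefore consists of showing that such an arrow-free basis can always be chosen. The main leverage is the failure of $f$: any persistent incoming arrow into $y$ should allow one to cancel it against a piece of $\d y = U w_y$ and thereby produce an honest cycle of bigrading $(0, g_0)$ representing $\alpha$, contradicting the hypothesis. Concretely, an arrow $e_i \to V^q y$ exhibits $V^q y$ as (part of) a boundary in $C/U$, and combined with the fact that $H_*(C/U)/V\text{-torsion}$ is a single $V$-tower generated by $\alpha$, a suitable change of basis should yield a cycle lift of $\alpha$ in the required grading. A dual argument using the single $U$-tower in $H_*(C/V)/U\text{-torsion}$ handles arrows of the form $e_i \to U^p y$, exploiting the relation $UV = 0$. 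The main obstacle I expect is making this simultaneous Gaussian-elimination procedure rigorous over the non-PID ring $\cR = \F[U, V]/(UV = 0)$: the interaction $UV = 0$ means that cancelling $V$-arrows can produce new $U$-arrows (and vice versa), so one must argue that the process terminates without re-introducing incoming arrows into $y$.
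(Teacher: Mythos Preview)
Your overall strategy matches the paper's: find a homogeneous $\cR$-basis containing a lift $y$ of the $V$-tower class such that the span of the remaining basis elements is a subcomplex, and then define $g$ as the quotient by that subcomplex. The paper does exactly this. The issue is that what you have written is a plan, not a proof; the step you flag as ``the bulk of the proof'' is never carried out, and your heuristic for it contains a real gap.

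For incoming $V$-arrows your sketch is essentially right and can be made rigorous immediately: since $\F[V]$ is a PID, $C/U$ admits a $V$-simplified basis $\{x, y_i, z_i\}$ with $\d_V x = 0$, $\d_V y_i = V^{\eta_i} z_i$, and $x$ representing $\alpha$; any lift of this basis to $C$ has no $V$-arrows entering $x$. The paper begins with exactly this move.

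For incoming $U$-arrows, however, your proposed ``dual argument using the single $U$-tower in $H_*(C/V)/U\text{-torsion}$'' does not work as stated. The $U$-tower generator is in general a \emph{different} basis element from $y$, so knowing that nothing hits the $U$-tower generator tells you nothing about $U$-arrows into $y$. The correct leverage is not duality but the hypothesis that no local map $\cR \to C$ exists: if, after the $V$-simplification, some $U$-power of $x$ lay in $\im \d_U$ (modulo the $z_i$), one could modify $x$ by elements of $\im \d_V$ and $\im U$ to obtain a genuine cycle still representing $\alpha$, yielding the forbidden local map. This is exactly where the hypothesis enters, and it is what your phrase ``cancel it against a piece of $\d y = U w_y$'' is gesturing at, but you have not supplied the argument.

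The paper also sidesteps your worry about termination of an iterative cancellation: rather than Gaussian elimination, it defines in one stroke the $\F[U]$-submodule $P = \{w \in C/V : U^n w \in \im \d_U \text{ for some } n\}$, shows (using the hypothesis) that $x$ is independent of $\bar{P} + \Span_\F\{z_i\}$ modulo $(U,V)$, and then extends $\{x\} \cup \{p_i\} \cup \{z_{i_j}\}$ to an $\cR$-basis of $C$. The span of everything except $x$ is then a subcomplex for structural reasons ($\im \d_U \subset P$ and $\im \d_V \subset \Span\{z_i\}$), with no induction needed.
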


\begin{proof}
The idea of the proof is we build a basis $\{x, t_i\}$ for $C$ such that quotienting by the span of $\{t_i\}$ gives the desired local map. Roughly, we first find a basis for the subcomplex $A$ generated by elements $w$ such that some $U$-power of $w$ is in the image of $\d_U$ or some $V$-power of $w$ is in the image of $\d_V$. We then extend this basis by an element $x$ representing a $V$-nontorsion class in $H_*(C/U)$. We use the absence of a local map from $\cR$ to $C$ in order to guarantee that $x$ is not in $A$. Finally, we complete this to a basis for all of $C$. We describe this argument more precisely below.

We begin by finding a ``vertically simplifed" basis for $C$ which is especially nice with respect to $\d_V$. Since $\F[V] \cong \cR/U$ is a PID, the complex $C/U$ admits a basis $\cB = \{x, y_i, z_i\}$ over $\F[V]$ such that
\begin{align*}
&\d_V x = 0, \\
&\d_V y_i = V^{\eta_i} z_i, \text{ and} \\
&\d_V z_i = 0
\end{align*}
for some set of positive integers $\eta_i$. Since $C$ is a free $\cR$-module, it is easily checked that choosing any lift of $\cB$ from $C/U$ to $C$ yields an $\cR$-basis for $C$, which (by abuse of notation) we also denote by $\cB = \{x, y_i, z_i\}$. Moreover, since $UV = 0$, these elements also satisfy the equalities $\d_V x = 0$, $\d_V y_i = V^{\eta_i} z_i$, and $\d_V z_i = 0$. We will henceforth think of $C$ as a free module over this basis, so that
\[
C = \Span_{\F}\{x, y_i, z_i\} \otimes_{\F} \cR.
\]
Note that $\im \d_V$ is contained in $\Span_{\F[V]}\{z_i\}$. We will also have cause to consider the $\F[U]$-module $C/V$, which we identify with 
\[
C/V = \Span_{\F}\{x, y_i, z_i\} \otimes_{\F} \F[U],
\]
as well as the $\F$-vector space $C/(U,V)$, which we identify with
\[
C/(U,V) = \Span_{\F}\{x, y_i, z_i\}.
\]
Note that these identifications allow us to view elements of $C/(U, V)$ as elements of $C/V$ (and elements of $C/V$ as elements of $C$) in the obvious way -- an $\F$-linear combination of basis elements in $C/(U, V)$ may be viewed as the same linear combination in $C/V$, and so on. That is, they specify lifts from $C/(U,V)$ to $C/V$ and from $C/V$ to $C$.

Now let $P$ be the submodule of $C/V$ consisting of elements $w$ such that some $U$-power of $w$ lies in the image of $\d_U$:
\[
P = \{w \in C/V \co U^nw \in \im \d_U \text{ for some } n \geq 0\}.
\]
Note that $P$ has the property that if $Uw \in P$, then $w \in P$. Moreover, by the fact that $\d_U^2 = 0$, we have that every element $w \in P$ is a $\d_U$-cycle, that is, $\d_U w =0$. Choose an $\F[U]$-basis $p_1, \ldots, p_r$ for $P$. Let $\bar{p}_i$ denote the reduction of $p_i$ modulo $U$ in $C/(U, V)$. Explicitly, if $p_i$ is a linear combination (over $\F[U]$) of the basis elements $\{x, y_i, z_i\}$, then $\bar{p}_i$ consists of those terms which are not decorated by any powers of $U$. Note that $p_i$ differs from the canonical lift of $\bar{p}_i$ by an element in $\im U$.

We claim that the $\bar{p}_i$ are linearly independent as elements of $C/(U, V)$. Suppose not. Then we have some linear combination 
\[
\bar{p}_{i_1} + \cdots + \bar{p}_{i_k} = 0.
\]
Lifting this to $C/V$, this implies that $p_{i_1} + \cdots + p_{i_k} = Uw$ for some $w$. However, this means that $w \in P$. Writing $w$ as a linear combination of the $p_i$ gives a contradiction.

Consider the subspaces of $C/(U, V)$ given by $\bar{P} = \Span_{\F}\{\bar{p}_1, \ldots, \bar{p}_r\}$ and $\bar{Z} = \Span_{\F}\{z_i\}$. Extend the linearly independent set $\{\bar{p}_1, \ldots, \bar{p}_r\}$ to a basis
\[
\{\bar{p}_1, \ldots, \bar{p}_r,  z_{i_1}, \ldots, z_{i_s}\}
\]
for $\bar{P} + \bar{Z}$ in $C/(U, V)$. We claim that $x$ (viewed as an element of $C/(U, V)$) does not lie in $\bar{P} + \bar{Z}$. Indeed, if it did, we would have $x = \bar{p} + \sum z_{i_j}$ for some $\bar{p} \in \bar{P}$ and sum of the $z_{i_j}$. Lifting this to $C/V$ shows that
\[
x + \sum z_{i_j} + Uw \in P
\]
for some $w \in C/V$. By construction of $P$, we have that the above expression is a $\d_U$-cycle. Viewing it as an element of $C$, we also see that it is a $\d_V$-cycle, since $\d_V x = \d_V z_i = 0$ and $\d_V(Uw) = U \d_V w = 0$. This means that we can specify a local map from $\cR$ to $C$ by sending the generator of $\cR$ to $x + \sum z_{i_j} + Uw$, which generates the $V$-tower in $C/U$ (by definition of $x$ and the $z_i$). This would contradict the hypothesis of the lemma. Thus, $x \notin \bar{P} + \bar{Z}$.

Now consider the set of generators $S = \{x, p_1, \dots, p_r, z_{i_1}, \dots, z_{i_s}\}$ in $C/V$. It is straightforward to check that this is linearly independent by reducing any putative linear relation modulo $U$. We also claim that if  $Uw \in \Span_{\F[U]} S$, then $w \in \Span_{\F[U]} S$. Indeed, suppose not. Then we have
\[
U w = U^*x + \sum U^* p_i + \sum U^* z_{i_j}
\]
where at least one term on the right-hand side appears with a $U$-exponent of zero. Reducing both sides modulo $U$, we obtain a nontrivial linear relation among the generators $\{x, \bar{p}_1, \ldots, \bar{p}_r,  z_{i_1}, \ldots, z_{i_s}\}$, a contradiction. It follows that we may extend $S$ to an $\F[U]$-basis
\[
\{x, p_1, \dots, p_r, z_{i_1}, \dots, z_{i_s}, w_1, \dots, w_t\}
\]
for all of $C/V$.\footnote{As in the proof of Lemma~~\ref{lem:reduced}, we are using the fact that if $N$ is a (free) submodule of a free module $M$, then a basis for $N$ can be extended to a basis for $M$ if and only if $M/N$ is also free.} This then gives an $\cR$-basis for all of $C$.

By construction, 
\[ D = \Span_\cR \{ p_1, \dots, p_r, z_{i_1}, \dots, z_{i_s}, w_1, \dots, w_t\}\]
is a subcomplex of $C$. Indeed, the image of $\d_U$ is contained in the span of the $p_i$. Similarly, the image of $\d_V$ is contained in the span of the $p_i$ and $z_{i_j}$. To see this, note that any $z_k$ is an $\F$-linear combination of the $\bar{p}_i$ and the $z_{i_j}$. Hence (viewing these as elements of $C$), we have
\[
z_k = \sum p_i + \sum z_{i_j} + Uw
\]
for some element $w$, since $\bar{p}_i = p_i \mod U$. Thus for any $y_k$, we have 
\[
\d_V y_k = V^{\eta_k} z_k = V^{\eta_k}\left( \sum p_i + \sum z_{i_j} + Uw \right) = V^{\eta_k} \left( \sum p_i + \sum z_{i_j} \right).
\]
Hence $\d D \subset D$. Then the quotient map
\[ C \to C / D \cong \cR \]
is a local map from $C$ to $\cR$.
\end{proof}

\begin{proof}[Proof of Proposition \ref{prop:totalorder}]
We need to show totality of $\leq$. Let $C_1$ and $C_2$ be two knot-like complexes. Consider $C_1 \otimes C_2^\vee$. By Lemma \ref{lem:totalorder}, we have that either $C_1 \otimes C_2^\vee \geq \cR$ or  $C_1 \otimes C_2^\vee \leq \cR$. By tensoring with $C_2$, we have that either $C_1 \geq C_2$ or $C_1 \leq C_2$, as desired.
\end{proof}

\begin{remark}
The group $\KL$ should be compared to to the group $\cCFK$ defined in \cite{Hominfiniterank} using $\varep$-equivalence. Indeed, $\cCFK$ is isomorphic (as an ordered group) to the subgroup of $\KL$ generated by $\{ \CFKUV(K) \mid K \text{ a knot in } S^3\}$. In particular, the order $\leq$ defined in Definition \ref{def:le} agrees with the order given by $\varep$.
\end{remark}

\section{Standard complexes and their properties}\label{sec:standards}
In this section, we define a convenient family of knot-like complexes called standard complexes.

\begin{remark}
The reader should compare with \cite[Section 4]{DHSThomcobord}, which carries out the analogous construction in the setting of almost $\iota$-complexes. Indeed, an almost $\iota$-complex may be viewed as a complex over the ring $\F[U, Q]/(Q^2=QU=0)$. In our case, this corresponds (roughly) to passing to the ring $\F[U, V]/(UV = V^2 = 0)$.
\end{remark}

\subsection{Standard complexes}\label{subsec:standards}
Let $C$ be a knot-like complex generated by $x_0, \dots, x_n$. We say there is a \emph{$U^m$-arrow} between $x_i$ and $x_j$ for  $m \in \N$ if one of the following occurs:
\begin{enumerate}
	\item \label{it:itoj} $\d_U x_i = U^m x_j$, or
	\item \label{it:jtoi} $\d_U x_j = U^m x_i$.	
\end{enumerate}
The arrow goes from $x_i$ to $x_j$ in \eqref{it:itoj} and from $x_j$ to $x_i$ in \eqref{it:jtoi}. We define \emph{$V^m$-arrows} analogously by replacing $U$ with $V$.

\begin{remark}
In the traditional depiction of $\CFKi$ as a bifiltered complex in the $(i, j)$-plane, each generator (over $\mathbb{F}$) is placed in its appropriate bigrading and is decorated with a power of $U$. An arrow between two generators indicates that one (with its $U$-power decoration) appears in the differential of the other. This is not quite the same as the pictoral description we use here. Instead, we suppress writing the decorations of our generators and use their spatial placement in the plane to determine the appropriate $U$ or $V$ powers appearing in the differential. That is, a horizontal arrow of length $m$ from $x_k$ to $x_\ell$ indicates that $x_\ell$ appears in the differential of $x_k$ with a coefficient of $U^m$, and similarly for vertical arrows and powers of $V$. It can be shown, however, that (modulo an infinite number of translations) this produces the same shape as in the previous picture.
\end{remark}

\begin{definition}\label{def:standard}
Let $n \in 2\N$, and let $(b_1, \dots, b_n)$ be a sequence of nonzero integers. A \emph{standard complex of type $(b_1, \dots, b_n)$}, denoted by $C(b_1, \dots, b_n)$, is the knot-like complex freely generated over $\cR$ by
\[ \{x_0, x_1, \dots, x_n\}. \]
Each pair of generators $x_i$ and $x_{i-1}$ for $i$ odd are connected by $U^{|b_{i}|}$-arrows, and each pair of generators $x_i$ and $x_{i-1}$ for $i$ even are connected by $V^{|b_{i}|}$-arrows. The direction is determined by the sign of $b_{i}$, as follows. If $b_{i}$ is positive, then the arrow goes from $x_{i}$ to $x_{i-1}$, and if $b_{i}$ is negative, then the arrow goes from $x_{i-1}$ to $x_{i}$. We call $n$ the \emph{length} of the standard complex and $\{x_i\}_{i = 1}^n$ the \emph{preferred basis}. 
Explicitly, the differential on $C(b_1, \dots, b_n)$ is as follows. For $i$ odd,
\begin{align*}
&\d_U x_{i-1} = U^{|b_i|} x_{i}  \quad \text{ if } b_i < 0 \\
&\d_U x_i = U^{b_i} x_{i-1} \ \ \quad \text{ if } b_i > 0 
\end{align*}
while for $i$ even,
\begin{align*}
&\d_V x_{i-1} = V^{|b_i|} x_{i} \quad \text{ if } b_i < 0 \\
&\d_V x_{i} = V^{b_i} x_{i-1} \ \ \quad \text{ if } b_i > 0. 
\end{align*}
All other differentials are zero. 

Note that $x_0$ generates $H_*(C(b_1, \dots, b_n)/U)/V\text{-torsion}$. Similarly, $x_n$ generates $H_*(C(b_1, \dots, b_n)/V)/U\text{-torsion}$. There is thus a unique grading on $C(b_1, \dots, b_n)$ which makes it into a knot-like complex: namely, $\gr_U(x_0) = 0$ and $\gr_V(x_n)=0$. The fact that the differential has degree $(-1, -1)$ then determines the rest of the gradings. Note that $\gr_U(x_i) \equiv \gr_V(x_i) \equiv i \mod 2$; we refer to this as the \textit{parity} of (the grading of) a generator of $C(b_1, \dots, b_n)$. 
\end{definition}

\begin{definition}
We say a standard complex $C(b_1, \dots, b_n)$ is \emph{symmetric} if $b_i = -b_{n+1-i}$.
\end{definition}

\begin{example}
We define the trivial standard complex $C(0) \cong \cR$ to be the complex generated over $\cR$ by a single element with $U$- and $V$-grading zero.
\end{example}

\begin{example}
The standard complex $C(1, -2, 2, -1)$ is generated over $\cR$ by 
\[ x_0, x_1, x_2, x_3, x_4 \]
with 
\[ \d x_0 = \d x_2 = \d x_4 =0, \qquad \d x_1 = Ux_0 + V^2 x_2, \qquad \d x_3 = U^2 x_2 + V x_4. \] 
The gradings of the generators are
\begin{align*}
	\gr(x_0) &= (0, -6) \\
	\gr(x_1) &= (-1, -5) \\
	 \gr(x_2) &= (-2, -2) \\
	 \gr(x_3) &= (-5, -1) \\
	 \gr(x_4) &= (-6, 0).
 \end{align*}
See Figure \ref{fig:1221} for a visual depiction of $C(1, -2, 2, -1)$, where a horizontal (resp. vertical) arrow of length $m$ from $x_i$ to $x_j$ represents a $U^m$-arrow (resp. $V^m$-arrow). Note that to read off the standard complex from the figure, we start at $x_0$ and follow the unique path to $x_4$, recording the direction and length of each arrow that we traverse. Namely, traversing an arrow of length $m$ against the direction of the arrow yields a $+m$, while traversing an arrow of length $m$ in the direction of the arrow yields a $-m$.
\end{example}

\begin{figure}[ht]
\centering
\begin{tikzpicture}[scale=0.7]
	\draw[step=1, black!30!white, very thin] (-0.9, -0.9) grid (3.9, 3.9);
	\filldraw (0, 3) circle (2pt) node[] (a){};
	\filldraw (1, 3) circle (2pt) node[] (b){};
	\filldraw (1, 1) circle (2pt) node[] (c){};
	\filldraw (3, 1) circle (2pt) node[] (d){};
	\filldraw (3, 0) circle (2pt) node[] (e){};
	\draw [very thick, <-] (a) -- (b);
	\draw [very thick, <-] (c) -- (b);
	\draw [very thick, <-] (c) -- (d);
	\draw [very thick, <-] (e) -- (d);
	\node[left] at (a){$x_0$};
	\node[right] at (b){$x_1$};
	\node[left] at (c){$x_2$};
	\node[right] at (d){$x_3$};
	\node[right] at (e){$x_4$};
\end{tikzpicture}
\caption{The standard complex $C(1, -2, 2, -1)$. A horizontal (respectively, vertical) arrow of length $m$ from $x_i$ to $x_j$ means that $\d_U x_i = U^m x_j$ (respectively, $\d_V x_i = V^m x_j$).}
\label{fig:1221}
\end{figure}

\begin{example}
The standard complex $C(-1, 1)$ is generated over $\cR$ by 
\[ x_0, x_1, x_2 \]
with 
\[ \d x_0 = Ux_1, \qquad \d x_1 = 0, \qquad  \d x_2 = V x_1 \] 
and gradings
\begin{align*}
	\gr(x_0) &= (0, 2) \\
	\gr(x_1) &= (1, 1) \\
	 \gr(x_2) &= (2, 0).
\end{align*}
See Figure \ref{fig:11} for a visual depiction.
\end{example}

\begin{figure}[ht]
\centering
\begin{tikzpicture}[scale=0.7]
	\draw[step=1, black!30!white, very thin] (-0.9, -0.9) grid (1.9, 1.9);
	\filldraw (1, 0) circle (2pt) node[] (a){};
	\filldraw (0, 0) circle (2pt) node[] (b){};
	\filldraw (0, 1) circle (2pt) node[] (c){};
	\draw [very thick, ->] (a) -- (b);
	\draw [very thick, ->] (c) -- (b);
	\node[right] at (a){$x_0$};
	\node[left] at (b){$x_1$};
	\node[left] at (c){$x_2$};
\end{tikzpicture}
\caption{The standard complex $C(-1, 1)$.}
\label{fig:11}
\end{figure}

\begin{example}\label{ex:121121}
The standard complex $C(1, -2, -1, 1, 2, -1)$ is generated over $\cR$ by 
\[ x_0, x_1, x_2, x_3, x_4, x_5, x_6 \]
with nonzero differentials
\[ \d x_1 = Ux_0 + V^2 x_2, \quad \d x_2 = Ux_3, \quad \d x_4 = V x_3, \quad \d x_5 = U^2 x_4 + V x_6 \]
with gradings
\begin{align*}
	\gr(x_0) &= (0, -4) \\
	\gr(x_1) &= (-1, -3) \\
	 \gr(x_2) &= (-4, -2) \\
	 \gr(x_3) &= (-3, -3) \\
	 \gr(x_4) &= (-2, -4) \\
	 \gr(x_5) &= (-3, -1) \\
	 \gr(x_6) &= (-4, 0).
 \end{align*}
See Figure \ref{fig:121121} for a visual depiction.
\end{example}

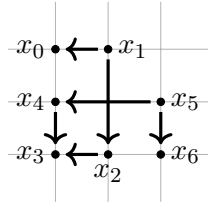
\begin{figure}[ht]
\centering
\begin{tikzpicture}[scale=0.7]
	\draw[step=1, black!30!white, very thin] (-0.9, -0.9) grid (2.9, 2.9);
	\filldraw (0, 2) circle (2pt) node[] (a){};
	\filldraw (1, 2) circle (2pt) node[] (b){};
	\filldraw (1, 0) circle (2pt) node[] (c){};
	\filldraw (0, 0) circle (2pt) node[] (d){};
	\filldraw (0, 1) circle (2pt) node[] (e){};
	\filldraw (2, 1) circle (2pt) node[] (f){};
	\filldraw (2, 0) circle (2pt) node[] (g){};
	\draw [very thick, <-] (a) -- (b);
	\draw [very thick, <-] (c) -- (b);
	\draw [very thick, ->] (c) -- (d);
	\draw [very thick, ->] (e) -- (d);
	\draw [very thick, <-] (e) -- (f);
	\draw [very thick, ->] (f) -- (g);
	\node[left] at (a){$x_0$};
	\node[right] at (b){$x_1$};
	\node[below] at (c){$x_2$};
	\node[left] at (d){$x_3$};
	\node[left] at (e){$x_4$};
	\node[right] at (f){$x_5$};
	\node[right] at (g){$x_6$};
\end{tikzpicture}
\caption{The standard complex $C(1, -2, -1, 1, 2, -1)$. }
\label{fig:121121}
\end{figure}

\begin{lemma}\label{lem:dual}
The dual of $C(b_1, \dots, b_n)$ is $C(-b_1, \dots, -b_n)$.
\end{lemma}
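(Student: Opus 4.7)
The plan is to exhibit an explicit isomorphism
\[
\Phi \co C(-b_1, \dots, -b_n) \xrightarrow{\sim} C(b_1, \dots, b_n)^\vee
\]
sending the preferred basis element $y_i$ on the left to the dual basis element $x_i^*$ on the right, and then check that this is an isomorphism of bigraded $\cR$-complexes. The key observation is that dualization reverses arrows: from the definition $(\d^* \phi)(x) = \phi(\d x)$, if $C$ has a $U^m$-arrow from $x_i$ to $x_j$ (i.e., $\d x_i = U^m x_j$), then $(\d^* x_j^*)(x_i) = x_j^*(\d x_i) = U^m$, so $C^\vee$ has a $U^m$-arrow from $x_j^*$ to $x_i^*$. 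The analogous statement holds for $V^m$-arrows.

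First I would write out $\d^*$ explicitly on each $x_i^*$ using the differential from Definition~\ref{def:standard}: each arrow in $C(b_1, \dots, b_n)$ contributes an arrow in the dual of the same length, in the same variable ($U$ or $V$), but with reversed direction. Then I would compare with $C(-b_1, \dots, -b_n)$. Since negating $b_i$ preserves $|b_i|$ and the parity of $i$ (so the $U$-versus-$V$ type of the $i$-th arrow, which depends only on $i \bmod 2$, is unchanged), but flips its sign and hence, by Definition~\ref{def:standard}, its direction, the identification $y_i \leftrightarrow x_i^*$ intertwines the differentials arrow by arrow.

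Next I would verify the bigrading. The dual basis element $x_i^*$ carries bigrading $-\gr(x_i)$. Since $x_0$ generates the $V$-nontorsion tower in $H_*(C(b_1, \dots, b_n)/U)$ and $x_n$ generates the $U$-nontorsion tower in $H_*(C(b_1, \dots, b_n)/V)$, the knot-like normalization forces $\gr_U(x_0) = 0$ and $\gr_V(x_n) = 0$; hence $\gr_U(x_0^*) = 0$ and $\gr_V(x_n^*) = 0$. I would then observe that in $C^\vee$ no outgoing $U$-arrow leaves $x_0^*$ (since no $U$-arrow entered $x_0$ in $C$) and no outgoing $V$-arrow leaves $x_n^*$, while all other generators come in torsion pairs, so $x_0^*$ and $x_n^*$ are respectively the $V$-nontorsion and $U$-nontorsion tower generators of $C^\vee$. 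This shows $C(b_1, \dots, b_n)^\vee$ is a knot-like complex with the correct normalization. Since the bigrading on a standard complex is uniquely determined by the two tower normalizations together with the requirement that $\d$ has bidegree $(-1, -1)$, and $\Phi$ intertwines differentials, the remaining bigradings match automatically.

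The main (and only) step is the combinatorial verification that reversing arrow directions corresponds to negating each $b_i$; no serious obstacle is expected, as this is essentially bookkeeping from the definition of the dual complex.
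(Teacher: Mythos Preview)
Your proposal is correct and is exactly the explicit unpacking of what the paper's one-line proof (``This is a straightforward consequence of the definitions'') leaves implicit: dualization reverses arrow directions while preserving their $U$/$V$ type and length, which is precisely what negating each $b_i$ does in Definition~\ref{def:standard}, and the bigrading normalization then follows automatically. There is nothing to add.
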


\begin{proof}
This is a straightforward consequence of the definitions.
\end{proof}

\subsection{An unusual order on the integers}
Let $\Zbang = (\Z, \leqbang)$ denote the integers with the following unusual order:
\[ -1 \lebang -2 \lebang -3 \lebang \dots \lebang  0 \lebang \dots \lebang 3 \lebang 2 \lebang 1. \]
We will see shortly the utility of this strange order. Note that for $a, b \neq 0$, we have $a \lebang b$ if and only if $\smash{\frac{1}{a} < \frac{1}{b}}$, where $<$ denotes the usual order on $\Q$. Since $a \gebang 0$ if and only if $a > 0$, the sign of $a \in \Zbang$ coincides with the usual definition (that is, $a$ is positive if $a>0$ and negative if $a<0$).

\subsection{Ordering standard complexes}

We consider $\Zbang$-valued sequences, with the lexicographic order induced by $\leqbang$. We take the convention that in order to compare two sequences of different lengths, we append sufficiently many trailing zeros to the shorter sequence so that the sequences have the same length.

\begin{proposition}\label{prop:lex}
Standard complexes are ordered lexicographically as $\Zbang$-valued sequences with respect to the total order on $\KL$.
\end{proposition}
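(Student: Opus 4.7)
The plan is to prove the proposition by constructing, for every pair of standard complexes $C = C(b_1,\ldots,b_n)$ and $C' = C(c_1,\ldots,c_m)$ with $(b_1,\ldots,b_n) \leqbang (c_1,\ldots,c_m)$ lexicographically, an explicit local map $f \co C \to C'$. By the totality of $\leq$ on $\KL$ (Proposition~\ref{prop:totalorder}) and of $<^!$ on sequences, this forward implication, combined with a separate verification that distinct standard sequences yield non-locally-equivalent complexes, delivers the full statement.

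If $(b) = (c)$ the identity map suffices, so assume the sequences differ and let $K$ be the least index (after padding the shorter with trailing zeros) at which $b_K \ne c_K$, so that $b_K <^! c_K$. Unpacking the definition of $<^!$, the possibilities are: (i) $b_K < 0 \leq c_K$; (ii) $b_K = 0 < c_K$ (so $(b)$ has length $K-1$); (iii) $b_K, c_K < 0$ with $|b_K| < |c_K|$; or (iv) $b_K, c_K > 0$ with $b_K > c_K$. Let $W \in \{U,V\}$ denote the variable associated to the $K$-th arrow in a standard complex, namely $W = U$ when $K$ is odd and $W = V$ when $K$ is even. I would define $f$ on the preferred bases by $f(x_i) = x_i$ for $i < K$, $f(x_i) = 0$ for $i > K$, and
\[ f(x_K) = \begin{cases} 0 & \text{in case (i),} \\ W^{|c_K|-|b_K|} x_K & \text{in case (iii),} \\ W^{b_K - c_K} x_K & \text{in case (iv),} \end{cases} \]
with case (ii) requiring no definition at $x_K$ since $x_K$ is absent from the domain.

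The verification that $f$ is a local map splits into three parts. First, $f(x_0) = x_0$ and this element generates the $V$-nontorsion part of $H_*(-/U)$ in both complexes, so the local equivalence condition of Definition~\ref{def:le} is satisfied. Second, $f$ is $\gr_U$-absolutely graded (and $\gr_V$-relatively graded) because the gradings of $x_0, \ldots, x_{K-1}$ agree in $C$ and $C'$ (since $b_i = c_i$ for $i < K$), and the $W$-grading of $x_K$ differs between the two complexes by exactly the shift compensated for by the prescribed power of $W$ in our definition of $f(x_K)$ (or else $f(x_K) = 0$ is trivially graded). Third, $f$ intertwines the differentials: the check is trivial away from the indices $\{K-1, K, K+1\}$, and at those indices the would-be discrepancies take the form $W^a \cdot W'^b$, where $W'$ is the variable perpendicular to $W$ arising from the arrow at position $K+1$ (or $K-1$); all such cross-terms vanish by $UV = 0$. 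I expect the main obstacle to be the case-by-case bookkeeping, especially the sign-change case (i) where the arrow between $x_{K-1}$ and $x_K$ has opposite orientations in $C$ and $C'$; nonetheless, the strictly alternating $U$/$V$ zigzag structure of standard complexes, together with $UV = 0$, disposes of all such obstructions uniformly.
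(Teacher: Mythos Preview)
Your construction of the local map $f\co C\to C'$ in the forward direction is correct and matches the paper's Lemma~\ref{lem:leqlex} essentially verbatim (modulo notation). One minor quibble: not all of the chain-map checks at indices $K-1,K,K+1$ reduce to $UV=0$. In cases (iii) and (iv) the key verification $\partial_W f(x_{K-1}) = f(\partial_W x_{K-1})$ (resp.\ $\partial_W f(x_K) = f(\partial_W x_K)$) is a genuine equality of $W$-powers coming from your choice of exponent, not a vanishing cross-term. But the construction is right.

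The real gap is the sentence ``combined with a separate verification that distinct standard sequences yield non-locally-equivalent complexes.'' You have not provided this verification, and it is not a formality: it is roughly half the content of the proposition. Totality of $\leq$ on $\KL$ only buys you that if $(b)>^!(c)$ then $C(b)\leq C(c)$ implies $C(b)\sim C(c)$; you still need to rule out this local equivalence for $(b)\neq(c)$. The paper handles the converse direction via Lemma~\ref{lem:>lex}, which shows directly that if $(a)>^!(b)$ then there is \emph{no} local map $C(a)\to C(b)$. This in turn rests on Lemma~\ref{lem:localsupport}: any local map between standard complexes agreeing on the first $k$ parameters must send $x_i$ to something supported by $y_i$ for all $i\leq k$, proved by induction using the uniqueness of preimages/images under $\partial_U$ and $\partial_V$ in a standard complex. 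With that support statement in hand, one derives a contradiction at index $k$ by casework on the signs of $a_k$ and $b_k$. None of this is difficult, but it is the substantive half of the argument and cannot be deferred.
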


The proof of Proposition~\ref{prop:lex} consists of a number of straightforward but technical verifications regarding local maps between standard complexes. We have included the details so that the reader will become accustomed to routine manipulations involving these definitions.

\begin{lemma}\label{lem:leqlex}
Let $(a_1, \dots, a_m) \leqbang (b_1, \dots, b_n)$ in the lexicographic order on $\Zbang$-valued sequences. Then $C(a_1, \dots, a_m) \leq C(b_1, \dots, b_n)$ in $\KL$.
\end{lemma}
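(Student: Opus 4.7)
The plan is to construct an explicit local map $f \co C(a_1, \dots, a_m) \to C(b_1, \dots, b_n)$ on preferred bases. Since equal sequences yield isomorphic complexes, I assume strict inequality and let $k$ be the smallest index at which the two sequences (padded by zeros) first differ. Since the two sequences agree through position $k - 1$, the initial segments of the preferred bases are compatible, and I set $f(y_i) = x_i$ for $0 \leq i \leq k - 1$.

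The definition of $f(y_k)$ depends on the signs of $a_k$ and $b_k$. Assume $k$ is odd, so arrows at position $k$ are $U$-arrows (the $k$ even case is symmetric with $V$ in place of $U$). Given $a_k \lebang b_k$, the possibilities are: (i) $a_k > b_k > 0$; (ii) $a_k, b_k < 0$ with $|a_k| < |b_k|$; (iii) $a_k < 0$ and $b_k \geq 0$; (iv) $a_k = 0$ and $b_k > 0$. In (i) and (ii), I set $f(y_k) = U^{a_k - b_k} x_k$ and $f(y_k) = U^{|b_k| - |a_k|} x_k$ respectively, chosen precisely so that the position-$k$ arrow constraints on the chain map are satisfied. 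In (iii), I set $f(y_k) = 0$ (when $k \leq m$); case (iv) means $y_k$ does not exist. For all $i > k$, I set $f(y_i) = 0$.

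Next I verify the chain map condition $\partial f = f \partial$ on each preferred basis element. For $i < k - 1$ this is automatic; at $y_{k - 1}$ the explicit formula for $f(y_k)$ is designed so that the position-$k$ contributions on both sides match, and cases (iii) and (iv) are handled by the absence of a position-$k$ contribution in $\partial x_{k-1}$. For $i \geq k$ the condition reduces to $f(\partial y_i) = 0$, which holds because the relation $UV = 0$ in $\cR$ kills any $V$-arrow term multiplied by the $U$-power factor appearing in $f(y_k)$, while all remaining contributions land in generators $y_j$ with $j > k$ that $f$ sends to zero. A short grading computation confirms that the formulas for $f(y_k)$ have the correct $U$-grading, and the $V$-gradings are relatively preserved by the same arrow analysis. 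Finally, $f(y_0) = x_0$ is the $V$-tower generator by definition, so $f$ is a local map.

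The main obstacle is the bookkeeping involved in verifying the chain map condition at and beyond position $k$, across all combinations of signs for the subsequent entries $a_{k+1}, b_{k+1}, \dots$. The saving grace is the identity $UV = 0$ in $\cR$, which annihilates most potential cross terms; this is precisely why the standard-complex framework over $\cR$ (rather than $\F[U,V]$) behaves so cleanly.
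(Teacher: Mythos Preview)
Your construction of $f$ is exactly the paper's, modulo swapping the names $x_i$ and $y_i$ for the two bases. The casework on the signs of $a_k,b_k$ and the definition of $f$ on the $k$th generator coincide with the paper's three subcases, and the handling of the boundary cases $a_k=0$ or $b_k=0$ matches the paper's inclusion/projection maps. One small imprecision: your sentence ``for $i \geq k$ the condition reduces to $f(\partial y_i) = 0$'' is not quite right at $i=k$ in cases (i) and (ii), since there $\partial f(y_k) = U^{a_k} x_{k-1} \neq 0$; what actually happens is that the $U$-part of $\partial f(y_k)$ matches $f(\partial_U y_k) = U^{a_k} x_{k-1}$, while the $V$-part of $\partial f(y_k)$ vanishes via $UV=0$ as you say. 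With that correction, the verification goes through and matches the paper's.
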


\begin{proof}
If $(a_1, \dots, a_m) = (b_1, \dots, b_n)$, then it is clear that the complexes in question are locally equivalent by taking the obvious identity map. Thus, assume that $(a_1, \dots, a_m) < (b_1, \dots, b_n)$. Suppose that the two sequences agree up to index $k$, so that $a_i = b_i$ for $1 \leq i < k$ and $a_k \lebang b_k$. 

Let $\{x_i\}$ and $\{y_i\}$ be the preferred bases for $C(a_1, \dots, a_m)$ and $C(b_1, \dots, b_n)$, respectively. Define 
\[ f \co C(a_1, \dots, a_m) \to C(b_1, \dots, b_n) \]
by
\begin{align*}
	f(x_i) = \begin{cases}
		y_i &\text{ if } 0 \leq i < k \\
		0 &\text{ if } i > k.
	\end{cases}
\end{align*}
In order to define $f(x_k)$, we proceed with some elementary casework based on the value of $k$. First, suppose that $k \leq \min \{ m, n\}$, and consider the parity of $k$:
\begin{enumerate}
	\item If $k$ is odd:
	\begin{enumerate}
		\item If $a_k \lebang b_k < 0$, then let $f(x_k) = U^{a_k - b_k} y_k$. It is straightforward to verify that $f$ is a chain map; the only nontrivial checks are that $\d_U f(x_{k-1}) = f \d_U (x_{k-1})$ and $\d f(x_{k}) = f \d (x_{k})$. To verify the former, we see that
		\begin{align*}
			\d_U f(x_{k-1}) = \d_U y_{k-1} = U^{-b_k} y_k,
		\end{align*}
		while 
		\begin{align*}
			f \d_U (x_{k-1}) = f ( U^{-a_k} x_k) = U^{-a_k} U^{a_k - b_k} y_k.
		\end{align*}
To verify the latter, we see that
		\begin{align*}
			\d f(x_{k}) = \d U^{a_k - b_k} y_k = U^{a_k - b_k} \d_V y_k.
		\end{align*}
		This is zero, since either $\d_V y_k = 0$ or $\d_V y_k = V^{-b_{k+1}}y_{k+1}$ and $UV =0$. Meanwhile, $f \d (x_{k}) = 0$ since $\d x_k$ is either equal to zero or $V^{-a_{k+1}}x_{k+1}$.
		\item If $a_k < 0 < b_k$, then let $f(x_k) = 0$. It is straightforward to verify $f$ is a chain map; the only nontrivial check is that $\d_U f(x_{k-1}) = f \d_U (x_{k-1})$. This follows from the fact that $b_k > 0$ (i.e., $\d_U y_{k-1} = 0$).
		\item If $0 < a_k \lebang b_k$, then let $f(x_k) = U^{a_k - b_k} y_k$. It is straightforward to verify that $f$ is a chain map; the only nontrivial check is that $\d f(x_{k}) = f \d (x_{k})$. This follows from the fact that 
		\begin{align*}
			\d f(x_{k}) &= \d U^{a_k - b_k} y_k \\
					&= U^{a_k - b_k} (\d_U y_k + \d_V y_k) \\
					&= U^{a_k - b_k} U^{b_k} y_{k-1},
		\end{align*}
		while
		\begin{align*}
			f \d (x_{k}) = f U^{a_k} x_{k-1} = U^{a_k} y_{k-1}.
		\end{align*}
	\end{enumerate}
	\item The case when $k$ even is similar, but with $V$ playing the role of $U$.
\end{enumerate}
\noindent
Now assume that $k > \min \{ m ,n \}$. We consider the following two cases:
\begin{enumerate}
	\item Suppose that $n > m$. Then $k = m+1$, and 
	\[
	(b_i)_{i=1}^n = (a_1, \dots, a_m, b_{m+1}, \dots, b_n)
	\]
	with $b_{m+1} > 0$. Let $f$ be the obvious inclusion map. As above, it is easily checked that $f$ commutes with $\d$.
	\item Suppose that $m > n$. Then $k = n+1$, and 
	\[
	(a_i)_{i=1}^m = (b_1, \dots, b_n, a_{n+1}, \dots, a_m)
	\]
	with $a_{n+1} < 0$. Let $f$ be the obvious projection map. As above, it is easily checked that $f$ commutes with $\d$.
\end{enumerate}
It is clear that $f$ is local, since $f(x_0) = y_0$. This completes the proof.
\end{proof}

\begin{lemma}\label{lem:localsupport}
Let $C_1= C(a_1, \dots, a_m)$ and $C_2 = C(b_1, \dots, b_n)$ be standard complexes with preferred bases $\{x_i\}$ and $\{ y_i\}$, respectively. Suppose that $a_i = b_i$ for all $1 \leq i \leq k$ and that $f \co C_1 \to C_2$ is a local map. Then $f(x_i)$ is supported by $y_i$ for all $0 \leq i \leq k$.
\end{lemma}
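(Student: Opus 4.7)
The plan is to induct on $i$, showing that when we write $f(x_i) = \sum_j P_j y_j$ with $P_j \in \cR$, the coefficient $P_i$ is nonzero (in fact $P_i = 1$). A preliminary grading observation: since $a_j = b_j$ for $1 \le j \le k$, traversing the arrows of $C_1$ from $x_0$ to $x_i$ and of $C_2$ from $y_0$ to $y_i$ accumulates identical relative bigrading shifts, so $\gr_U(x_i) = \gr_U(y_i)$ and $\gr_V(x_i) - \gr_V(y_i)$ is a constant $\delta$ independent of $i$. Combined with the fact that $f$ is absolutely $U$-graded and relatively $V$-graded, this forces $P_i$ to have a fixed bigrading of the form $(0, \delta + c)$ for some shift $c$ depending only on $f$; the base case will pin this down to $(0,0)$, so that $P_i \in \F = \{0,1\}$ for all $0 \le i \le k$.

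Base case $i=0$: the local map condition says that $f$ induces an $\F[V]$-linear isomorphism on $H_*(\cdot/U)/V\text{-torsion} \cong \F[V]$. Hence $f_*[x_0]$ is a unit multiple of $[y_0]$, and since the only unit in $\F[V]$ is $1$, we get $f_*[x_0] = [y_0]$. A direct inspection of standard complexes shows that each $y_j$ with $j \ge 1$ has torsion class in $H_*(C_2/U)$ modulo $V$-torsion; so the image of $f(x_0)$ in the torsion-free quotient reduces to $(P_0 \bmod U) \cdot [y_0]$, yielding $P_0 = 1$ (and also pinning down $\delta + c = 0$).

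Inductive step. Assume $P_{i-1} = 1$ in $f(x_{i-1})$. Without loss of generality, $a_i > 0$ (the case $a_i < 0$ is symmetric), so $\d x_i = W^{a_i} x_{i-1}$ with $W = U$ or $V$ according as $i$ is odd or even. Applying $f$ gives $\d f(x_i) = W^{a_i} f(x_{i-1})$; compare the coefficient of $y_{i-1}$ on both sides. The right side contributes $W^{a_i} \cdot P_{i-1} = W^{a_i}$ by the inductive hypothesis. On the left, expanding $\d f(x_i) = \sum_j P_j \d y_j$, the only $j$ for which $\d y_j$ can involve $y_{i-1}$ are $j = i$ and (possibly) $j = i-2$: the former contributes $P_i \cdot W^{a_i}$, while the latter, if present, contributes $P_{i-2} \cdot W'^{|b_{i-1}|}$ where $W'$ is the \emph{opposite} variable to $W$, since adjacent arrows in a standard complex always alternate between $U$- and $V$-type. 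Because $UV = 0$, the term $P_{i-2} \cdot W'^{|b_{i-1}|}$ lies entirely in the $W'$-only summand of $\cR$ and cannot contribute any $W$-monomial. Equating the $W$-parts of the two sides forces $P_i \cdot W^{a_i} = W^{a_i}$, whence $P_i \neq 0$, and so $P_i = 1$ by the grading argument. The case $a_i < 0$ is analogous, using $\d x_{i-1} = W^{|a_i|} x_i$ and extracting the coefficient of $y_i$; the cross-term now comes from $y_{i+1}$ and is again killed by $UV = 0$.

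The main obstacle is the casework (parity of $i$, signs of $a_i$ and $b_{i \pm 1}$, and the boundary cases $i \in \{0,1\}$ and $i \in \{k-1,k\}$), though each case is routine. The conceptual core of the argument is the alternation of arrow types along a standard complex, combined with the relation $UV = 0$: any cross-contribution from a non-adjacent basis element always lies in the ``wrong'' variable and is therefore isolated in $\cR$, unable to interfere with the main term carrying $P_i$.
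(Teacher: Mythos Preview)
Your proof is essentially correct and follows the same inductive strategy as the paper. The explicit grading argument pinning down $P_i \in \F$ is a useful addition that the paper leaves implicit.

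One minor slip: when you write $\partial x_i = W^{a_i} x_{i-1}$, you are omitting the possible $W'$-component of $\partial x_i$ coming from the arrow indexed by $a_{i+1}$ (if $a_{i+1} < 0$ then $\partial_{W'} x_i = W'^{|a_{i+1}|} x_{i+1}$). Thus the right-hand side of your equation $\partial f(x_i) = f(\partial x_i)$ picks up an extra term $W'^{|a_{i+1}|} f(x_{i+1})$. This omission does not harm the conclusion, since that extra term lies in $\im W'$ and contributes nothing to the $W$-part of the $y_{i-1}$-coefficient --- but it is the mirror image of the cross-term from $j=i-2$ that you carefully handle on the left-hand side, and should be treated symmetrically.

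The paper sidesteps this bookkeeping by working with $\partial_U$ (or $\partial_V$) alone rather than the full differential. Since $f$ is $\cR$-equivariant and $\im U \cap \im V = 0$ in a reduced complex, the chain-map condition splits as $f\partial_U = \partial_U f$ and $f\partial_V = \partial_V f$ separately. This eliminates all cross-terms at once, and the induction reduces to the single observation that $y_i$ is the \emph{unique} basis element whose $\partial_U$ (or $\partial_V$) is supported by a power of the relevant neighbor.
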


\begin{proof}
We proceed by induction on $i$. The base case $i = 0$ follows from the fact that $f$ is local. Thus, let $i < k$, and assume that $f(x_i)$ is supported by $y_i$. We show that $f(x_{i+1})$ is supported by $y_{i+1}$. Suppose that $i$ is even. We consider the following two cases:
\begin{enumerate}
	\item Suppose that $a_{i+1} = b_{i+1} < 0$. Then $\d_U f(x_i) = f \d_U (x_i) = U^{|a_{i+1}|} f ( x_{i+1})$. By the induction hypothesis, $f(x_i)$ is supported by $y_i$. We have that $\d_U y_i = U^{|b_{i+1}|} y_{i+1}$ and that $y_i$ is the unique element in $C_2$ such that $\d_U$ of it is supported by a $U$-power of $y_{i+1}$. It follows that $f(x_{i+1})$ must be supported by $y_{i+1}$.
	\item Suppose that $a_{i+1} = b_{i+1} > 0$. Then $\d_U f(x_{i+1}) = f \d_U (x_{i+1}) = U^{|a_{i+1}|} f ( x_{i})$. By the induction hypothesis, $f(x_i)$ is supported by $y_i$. We have that $\d_U y_{i+1} = U^{|b_{i+1}|} y_i$ and that $y_{i+1}$ is the unique basis element in $C_2$ such that $\d_U$ of it is supported by a $U$-power of $y_i$. It follows that $f(x_{i+1})$ must be supported by $y_{i+1}$.
\end{enumerate}
The case $i$ odd is similar, but with $V$ playing the role of $U$.
\end{proof}

\begin{lemma}\label{lem:>lex}
Let $(a_1, \dots, a_m) \gebang (b_1, \dots, b_n)$ in the lexicographic order on $\Zbang$-valued sequences. Then there is no local map from $C_1 = C(a_1, \dots, a_m)$ to $C_2 = C(b_1, \dots, b_n)$.
\end{lemma}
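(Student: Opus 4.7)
The plan is to argue by contradiction. Suppose that a local map $f \co C_1 \to C_2$ exists, and let $k$ be the smallest index at which the two sequences disagree, with the convention that if $k$ exceeds the length of one of the sequences, the missing entry is treated as $0$. Since $a_i = b_i$ for all $1 \leq i \leq k-1$, Lemma~\ref{lem:localsupport} yields $f(x_i) = y_i + (\text{terms not involving } y_i)$ for $0 \leq i \leq k-1$, where the coefficient of $y_i$ is exactly $1$. By interchanging the roles of $U$ and $V$, we may assume that $k$ is odd, so that $a_k$ and $b_k$ govern $U$-arrows between indices $k-1$ and $k$.

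The key structural observation is the following: in $C_2$, the only basis element whose differential contains a nonzero $U$-power multiple of $y_{k-1}$ is $y_k$ itself (arising as $\partial y_k = U^{b_k} y_{k-1}$ when $b_k > 0$), and the only basis element whose differential contains a nonzero $U$-power multiple of $y_k$ is $y_{k-1}$ itself (arising as $\partial y_{k-1} = U^{|b_k|} y_k$ when $b_k < 0$). The adjacent basis elements $y_{k-2}$ and $y_{k+1}$ interact with $y_{k-1}$ and $y_k$ exclusively via $V$-arrows (because $k$ is odd), so by $UV = 0$ they cannot contribute any $U$-power multiple of $y_{k-1}$ or $y_k$ to the differential of $f(x_k)$ or $f(x_{k-1})$.

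We then split into sub-cases according to the sign configurations compatible with $a_k \gebang b_k$. If $a_k, b_k > 0$ with $a_k < b_k$, then $\partial f(x_k) = f(\partial x_k)$ forces $U^{a_k} y_{k-1}$ to appear in $\partial f(x_k)$; combined with the structural observation, this imposes $c\, U^{b_k} = U^{a_k}$ for some $c \in \cR$, which is impossible since $a_k < b_k$. If $a_k > 0$ and $b_k \leq 0$ (where $b_k = 0$ corresponds to $k > n$), the same equation demands that $U^{a_k} y_{k-1}$ appear in $\partial f(x_k)$, but by the observation no element of $C_2$ has a $U$-multiple of $y_{k-1}$ in its differential. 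If $a_k, b_k < 0$ with $|a_k| > |b_k|$, we instead examine $\partial f(x_{k-1}) = f(\partial x_{k-1})$: the left-hand side contains $U^{|b_k|} y_k$ (from the $y_{k-1}$-summand of $f(x_{k-1})$, with no further $U$-power contributions to $y_k$), while the right-hand side equals $U^{|a_k|} f(x_k)$ plus pure $V$-terms, forcing $U^{|a_k|} c = U^{|b_k|}$ for some $c \in \cR$, again impossible. Finally, if $a_k = 0$ (the case $k > m$) and $b_k < 0$, then $\partial x_{k-1}$ in $C_1$ has no $U$-component, so $f(\partial x_{k-1})$ has no $U$-multiple of $y_k$; but $\partial f(x_{k-1})$ contains $U^{|b_k|} y_k$, a contradiction.

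The main obstacle is carefully tracking potential compensating contributions from the ``other terms'' in $f(x_{k-1})$ or $f(x_k)$. This is precisely where the structural observation is indispensable: any such contribution would require passing a $U$-power through a $V$-arrow, which vanishes by $UV = 0$. Once these are ruled out, every sub-case reduces to an impossible equation between pure powers of $U$, giving the desired contradiction.
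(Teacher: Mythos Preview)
Your proof is correct and follows essentially the same approach as the paper's: argue by contradiction, use Lemma~\ref{lem:localsupport} to control $f(x_{k-1})$, and then do casework on the signs of $a_k$ and $b_k$ (with the boundary cases $k>\min(m,n)$ absorbed into the sign cases via $a_k=0$ or $b_k=0$). One small point worth tightening: Lemma~\ref{lem:localsupport} literally only says that $f(x_i)$ is \emph{supported} by $y_i$, i.e.\ the coefficient is nonzero, whereas you use that the coefficient is exactly $1$; this follows because $f$ is absolutely $U$-graded and relatively $V$-graded and $a_j=b_j$ for $j<k$ forces $\gr(x_i)=\gr(y_i)$ up to the global $V$-shift, so the coefficient has bigrading $(0,0)$ and must equal $1$ (the base case $c_0=1$ coming from locality). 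The paper's proof leaves this equally implicit, so this is not a defect unique to your argument, but it would strengthen the write-up to say so explicitly.
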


\begin{proof}
Suppose that $a_i = b_i$ for $i < k$ and that $a_k \gebang b_k$. We proceed by contradiction. Assume there is a local map $f \co C_1 \to C_2$. We begin by considering the case when $k \leq \min \{ m, n\}$:
\begin{enumerate}
	\item Suppose that $k$ is odd. We have three further subcases:
	\begin{enumerate}
		\item Suppose that $b_k \lebang a_k < 0$. Then $\d_U x_{k-1} = U^{|a_k|} x_k$ and $\d_U y_{k-1} = U^{|b_k|} y_k$. Furthermore, $y_{k-1}$ is the unique basis element of $C_2$ such that $\d_U$ of it is supported by a $U$-power of $y_k$. By Lemma \ref{lem:localsupport}, $f(x_{k-1})$ is supported by $y_{k-1}$. It follows that $f \d_U (x_{k-1}) = \d_U f(x_{k-1})$ is supported by $U^{|b_k|} y_k$. Hence $f(U^{|a_k|} x_k)$ must be supported by $U^{|b_k|} y_k$, which is a contradiction, since $b_k \lebang a_k < 0$, i.e., $|b_k| < |a_k|$ where $<$ denotes the usual ordering on $\Z$.
		\item Suppose that $b_k < 0 < a_k$. Then $\d_U y_{k-1} = U^{|b_k|} y_k$ and $\d_U x_{k-1} = 0$. Furthermore, $y_{k-1}$ is the unique basis element in $C_2$ such that $\d_U$ of it is supported by a $U$-power of $y_k$. By Lemma \ref{lem:localsupport}, $f(x_{k-1})$ is supported by $y_{k-1}$. But $0 = f \d_U(x_{k-1}) = \d_U f(x_{k-1})$, a contradiction, since the right-hand side is supported by $U^{|b_k|} y_k$.
		\item Suppose that $0 < b_k \lebang a_k$. Then $\d_U x_k = U^{a_k} x_{k-1}$ and $\d_U y_k = U^{b_k} y_{k-1}$. Furthermore, $y_k$ is the unique basis element in $C_2$ such that $\d_U$ of it is supported by a $U$-power of $y_{k-1}$. By Lemma \ref{lem:localsupport}, $f(x_{k-1})$ is supported by $y_{k-1}$. Then  $\d_U f(x_k) = f \d_U (x_k) = f(U^{a_k} x_{k-1})$, where the right-hand side is supported by $U^{a_k} y_{k-1}$. Hence $f(x_k)$ must be supported by $U^{a_k-b_k}y_k$, a contradiction since $0 < b_k \lebang a_k $, i.e., $b_k > a_k$ where $<$ denotes the usual ordering on $\Z$.
	\end{enumerate}
	\item The case when $k$ is even is similar, but with $V$ playing the role of $U$.
\end{enumerate}
Now assume that $k > \min \{ m, n\}$. We consider the following two cases:
\begin{enumerate}
	\item Suppose $n > m$. Then $k = m+1$ and $(b_i)_{i=1}^n = (a_1, \dots, a_m, b_{m+1}, \dots, b_n)$. Then $b_{m+1} < 0$, that is, $\d_U y_m = U^{|b_{m+1}|} y_{m+1}$ and $y_m$ is the unique element in $C_2$ such that $\d_U$ of it is supported by a $U$-power of $y_{m+1}$. By Lemma \ref{lem:localsupport}, $f(x_m)$ is supported by $y_m$. But $0 = f \d_U (x_m) = \d_U f(x_m) \neq 0$ since $\d_U f(x_m)$ is supported by $U^{|b_{m+1}|} y_{m+1}$.
	\item Suppose $m > n$. Then $k = n+1$ and $(a_i)_{i=1}^m = (b_1, \dots, b_n, a_{n+1}, \dots, a_m)$. Then $a_{n+1} > 0$, that is, $\d_U x_{n+1} = U^{|a_{n+1}|} x_{n}$. Furthermore, no $U$-power of $y_n$ appears as $\d_U$ of any element in $C_2$. By Lemma \ref{lem:localsupport}, $f(x_n)$ is supported by $y_n$. But $\d_U f (x_{n+1}) = f \d_U ( x_{n+1}) = f ( U^{|a_{n+1}|} x_{n} )$ is supported by $U^{|a_{n+1}|} y_{n}$, a contradiction.
\end{enumerate}
This completes the proof.
\end{proof}

\begin{proof}[Proof of Proposition \ref{prop:lex}]
The proposition follows immediately from Lemmas \ref{lem:leqlex} and \ref{lem:>lex}.
\end{proof}

\subsection{Semistandard complexes}
In future sections, we will also find it useful to have the following generalization of standard complexes.

\begin{definition}
Let $n \in 2\N - 1$, and let $(b_1, \dots, b_n)$ be a sequence of nonzero integers. The \emph{semistandard complex} $C'(b_1, \dots, b_n)$ is the subcomplex of the standard complex $C(b_1, \dots, b_n, 1)$ generated by $x_0, x_1, \dots, x_n$. We call these the \emph{preferred generators} of $C'(b_1, \dots, b_n)$. (The choice $b_{n+1}=1$ here is unimportant; any $b_{n+1} > 0$ is allowed.) 

We stress that a semistandard complex is \emph{not} a knot-like complex; indeed, for $C'$ a semistandard complex, $H_*(C'/U)/V\text{-torsion}$ has two $V$-towers, which are generated by $x_0$ and $x_n$. Note that since $n$ is odd, the gradings of $x_0$ and $x_n$ have opposite parities.
\end{definition}

We use the symbol $'$ to distinguish semistandard complexes from standard complexes; that is, $C'(b_1, \dots, b_n)$ denotes a semistandard complex (where $n$ is odd) while $C(b_1, \dots, b_n)$ denotes a standard complex (where $n$ is even).

\begin{definition}
A grading-preserving $\cR$-equivariant chain map
\[ f \co C'(b_1, \dots, b_n) \to C \]
from a semistandard complex to a knot-like complex $C$ is said to be \emph{local} if the class of $f(x_0)$ generates $H_*(C/U)/V\text{-torsion}$.
\end{definition}

\begin{example}
The semistandard complex $C'(1, -2, -1, 1, 2)$ is generated over $\cR$ by 
\[ x_0, x_1, x_2, x_3, x_4, x_5 \]
with nonzero differentials
\[ \d x_1 = Ux_0 + V^2 x_2, \quad \d x_2 = Ux_3, \quad \d x_4 = V x_3, \quad \d x_5 = U^2 x_4. \]
See Figure \ref{fig:12112} for a visual depiction. 
\end{example}

\begin{figure}[ht]
\centering
\begin{tikzpicture}[scale=0.7]
	\draw[step=1, black!30!white, very thin] (-0.9, -0.9) grid (2.9, 2.9);
	\filldraw (0, 2) circle (2pt) node[] (a){};
	\filldraw (1, 2) circle (2pt) node[] (b){};
	\filldraw (1, 0) circle (2pt) node[] (c){};
	\filldraw (0, 0) circle (2pt) node[] (d){};
	\filldraw (0, 1) circle (2pt) node[] (e){};
	\filldraw (2, 1) circle (2pt) node[] (f){};
	\draw [very thick, <-] (a) -- (b);
	\draw [very thick, <-] (c) -- (b);
	\draw [very thick, ->] (c) -- (d);
	\draw [very thick, ->] (e) -- (d);
	\draw [very thick, <-] (e) -- (f);
	\node[left] at (a){$x_0$};
	\node[right] at (b){$x_1$};
	\node[below] at (c){$x_2$};
	\node[left] at (d){$x_3$};
	\node[left] at (e){$x_4$};
	\node[right] at (f){$x_5$};
\end{tikzpicture}
\caption{The semistandard complex $C'(1, -2, -1, 1, 2)$. }
\label{fig:12112}
\end{figure}
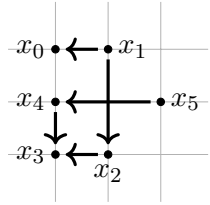

\subsection{Short maps}
It will often be useful for us to consider module maps from a standard complex $C(b_1, \dots, b_n)$ to a knot-like complex $C$ that are chain maps except possibly at $x_n$. We make this notion precise with the following definition.

\begin{definition}
Let $C_1= C(b_1, \dots, b_n)$ be a standard complex and $C_2$ a knot-like complex. An absolutely $U$-graded, relatively $V$-graded module map $f \co C_1 \to C_2$ is called a \emph{short map}, denoted
\[ f \co C(b_1, \dots, b_n) \leadsto C_2, \]
if $f \d (x_i) + \d f (x_i) = 0$ for $1 \leq i \leq n-1$ and $f \d_V(x_n) + \d_V f(x_n) = 0$. If $f$ induces an isomorphism on $H_*( C_i  /U)/V \text{-torsion}$, then we call $f$ a \emph{short local map}.
\end{definition}
\noindent
We similarly define short maps for semistandard complexes:

\begin{definition}
Let $C_1 = C'(b_1, \dots, b_n)$ be a semistandard complex and $C_2$ a knot-like complex. An absolutely $U$-graded, relatively $V$-graded module map $f \co C_1 \to C_2$ is called a \emph{short map}, denoted
\[ f \co C'(b_1, \dots, b_n) \leadsto C, \]
if $f \d (x_i) + \d f (x_i) = 0$ for $1 \leq i \leq n-1$ and $f \d_U(x_n) + \d_U f(x_n) = 0$. If the class of $f(x_0)$ generates $H_*(C_2/U)/V\text{-torsion}$, then we call $f$ a \emph{short local map}.
\end{definition}

The following lemma states that given a short map, we can extend it to an actual chain map (from a different domain).

\begin{lemma}[Extension Lemma]\label{lem:shortextension}
Let
\[ f \co C(b_1, \dots, b_n) \leadsto C \]
be a short map from a standard complex to $C$. Then there exists an $\cR$-equivariant chain map
\[ g \co C(b_1, \dots, b_n, b_{n+1}, \dots, b_m) \to C \]
for some $b_i$, $n+1 \leq i \leq m$ such that $f$ and $g$ agree on the generators of $C(b_1, \dots, b_n)$ (viewed as generators of $C(b_1, \dots, b_n, b_{n+1}, \dots, b_m)$ in the obvious way). Moreover, if $f$ is local, then $g$ is local.
\end{lemma}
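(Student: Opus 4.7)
The plan is to construct $g$ iteratively, adding one generator to the standard complex at a time and canceling the residual obstruction to $f$ being a chain map. Setting $y_0 = \d_U f(x_n) \in C$, I first observe that $y_0$ is the only obstruction to $f$ being a chain map: the full chain condition holds at $x_0, \ldots, x_{n-1}$, while at $x_n$ the short map condition gives $\d_V f(x_n) = f\d_V(x_n)$, so the failure of $f$ to commute with $\d$ at $x_n$ is precisely $\d_U f(x_n) = y_0$ (since $\d_U x_n = 0$ in $C(b_1, \ldots, b_n)$). Using the chain map property at $x_{n-1}$, reducedness of $C$, and the relation $UV = 0$, I verify that $y_0$ is both a $\d_U$-cycle and a $\d_V$-cycle and that $y_0$ lies in $U \cdot C$.

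I then extend inductively. At each stage I have a short map $g$ on either a standard or semistandard complex of length $n+k$, together with a residual obstruction $y_k \in C$ equal to either $\d_U g(x_{n+k})$ or $\d_V g(x_{n+k})$, depending on the parity of $n+k$. If $y_k = 0$, I stop, appending one trivial generator (with target $0$) if necessary to reach a standard complex of even length. Otherwise, since $y_k$ lies in $U \cdot C$ (respectively $V \cdot C$) by reducedness, I choose $\lambda = |b_{n+k+1}|$ maximally so that $y_k = U^\lambda z_{k+1}$ (respectively $V^\lambda z_{k+1}$), with $z_{k+1}$ not divisible by $U$ (respectively $V$). Setting $b_{n+k+1} = -\lambda$ and $g(x_{n+k+1}) = z_{k+1}$ introduces a new arrow from $x_{n+k}$ to $x_{n+k+1}$ that precisely cancels $y_k$. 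A routine check, again using reducedness and $UV = 0$, shows that $z_{k+1}$ is automatically a cycle of the previous type (for instance, $\d_U z_{k+1} = 0$ in the case where $y_k$ was a $\d_U$-image), so $g$ extends as a short map on the enlarged complex, with new obstruction $y_{k+1}$ of the opposite type.

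The main obstacle I anticipate is showing that this process terminates after finitely many steps. Each iterate $z_j$ has a bigrading determined by $f(x_n)$ and the previously chosen $\lambda_i$; in the ``length-one forever'' regime where all $\lambda_i = 1$, the iterates $z_j$ all lie in the fixed bigrading of $f(x_n)$, and hence in a finite-dimensional $\F$-subspace of $C$. The non-divisibility conditions on the $z_j$, combined with the finite generation of $C$ over $\cR$ and the knot-like structure of $C$, should preclude an infinite nontrivial iteration. I expect this termination argument to closely follow the corresponding step in \cite[Section 4]{DHSThomcobord}, adapted from almost $\iota$-complexes to our ring $\cR = \F[U,V]/(UV)$. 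Locality of $g$ is then immediate when $f$ is local, since $g(x_0) = f(x_0)$ still represents a generator of $H_*(C/U)/V\text{-torsion}$.
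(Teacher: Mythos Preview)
Your inductive construction parallels the paper's, but with a different convention at each step: you set the new arrow length $|b_{n+k+1}|=\lambda$ to the \emph{maximal} $U$- (or $V$-) order of the obstruction and put the bare quotient $z_{k+1}$ as the new image, whereas the paper always appends a length-one arrow and sets the image to $U^{c-1}z$ (respectively $V^{d-1}w$). The paper's convention buys a shortcut you lose: whenever $c>1$, the image $U^{c-1}z$ lies in $\im U$, so $\d_V(U^{c-1}z)=0$ automatically (since $UV=0$), and one further step with $g(x_{n+2})=0$ already produces a genuine chain map. Under your convention the process instead continues, now with strictly higher bigrading; this also feeds into termination, but less directly than the paper's trick.

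The part of your plan that remains a genuine gap is termination in the residual regime where every $\lambda_i=1$. You are right that the $z_j$ then sit in two fixed bigradings and hence in a finite-dimensional $\F$-subspace of $C$, but finite-dimensionality together with the non-divisibility conditions does not by itself rule out an eventually periodic nonzero sequence; some further argument is needed to exclude a loop. The appeal to \cite{DHSThomcobord} does not transfer: there the second variable satisfies $Q^2=0$, so the analogue of a $V$-chain has length at most one and the grading really is monotone, whereas over $\cR=\F[U,V]/(UV)$ the bigradings of successive $z_j$ in the all-length-one case oscillate rather than increase. You will need to supply something extra here (and, to be fair, the paper's own one-line grading claim is no more detailed on exactly this point).
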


\begin{proof}
Consider $f(x_n)$. If $\d_U f(x_n) = 0$, then $f$ is already a chain map and we are done. Thus, suppose that $\d_U f(x_n) = U^c z$ for some $z \in C$, $c \geq 1$. Define a short map $f' \co C'(b_1, \dots, b_n, -1) \leadsto C$ by setting $f'(x_i) = f(x_i)$ for $0 \leq i \leq n$ and $f'(x_{n+1}) = U^{c-1} z$. We now consider several cases:

\begin{enumerate}
\item If $c> 1$, then extend the domain of $f'$ to $C(b_1, \dots, b_n, -1, -1)$  by setting $f'(x_{n+2})=0$. It is easily checked that $f'$ then provides the desired $\cR$-equivariant chain map.
\item If $c=1$ and $\d_V z = 0$, then we may again extend the domain of $f'$ to $C'(b_1, \dots, b_n, -1, -1)$ by setting $f'(x_{n+2})=0$. It is easily checked that $f'$ then provides the desired $\cR$-equivariant chain map.
\item If $c=1$ and $\d_V z= V^d w$ for some $w \in C$, $d \geq 1$, then we proceed as in the beginning of the proof, except replacing the role of $U$ with $V$. That is, extend the short map
\[ f' \co C'(b_1, \dots, b_n, -1) \leadsto C \]
to a short map
\[ f'' \co C(b_1, \dots, b_n, -1, -1) \leadsto C. \]
Iterate this procedure. Note that both the $U$- and $V$-gradings of the final preferred generator of $C(b_1, \dots, b_n, -1, -1, \dots, -1, -1)$ increase as the length of standard complex increases. Since $C$ is finitely generated, the gradings of its generators are bounded above. Hence it is easily checked that at some point this process must terminate, yielding the desired extension.
\end{enumerate}

Since $g(x_0) = f(x_0)$, it is clear that $g$ is local if $f$ is local.
\end{proof}
\noindent
The analogous result holds for semistandard complexes:

\begin{lemma}\label{lem:shortextensionsemi}
Let
\[ f \co C'(b_1, \dots, b_n) \leadsto C \]
be a short map from a semistandard complex to $C$. Then there exists a $\cR$-equivariant chain map
\[ g \co C(b_1, \dots, b_n, b_{n+1}, \dots, b_m) \to C \]
for some $b_i$, $n+1 \leq i \leq m$  such that  $f$ and $g$ agree on the generators of $C'(b_1, \dots, b_n)$. Moreover, if $f$ is local, then $g$ is local.
\end{lemma}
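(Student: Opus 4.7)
The plan is to mimic the proof of Lemma~\ref{lem:shortextension} with the roles of $U$ and $V$ interchanged, and then directly invoke that lemma to finish. The key observation is that since $n$ is odd, the top generator $x_n$ of $C'(b_1,\dots,b_n)$ satisfies $\d_V x_n = 0$ by construction; thus the short-map hypothesis pins down $f\d_U(x_n) + \d_U f(x_n) = 0$ and leaves $\d_V f(x_n)$ as the sole potential obstruction to $f$ being a chain map at $x_n$.

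First I would extend the domain by one generator to form the standard complex $C(b_1,\dots,b_n,-1)$ of even length $n+1$, in which $\d_V x_n = V x_{n+1}$ and $\d x_{n+1} = 0$. Set $f'(x_i) = f(x_i)$ for $0 \leq i \leq n$, and take $f'(x_{n+1}) = 0$ if $\d_V f(x_n) = 0$, or $f'(x_{n+1}) = V^{c-1}z$ if $\d_V f(x_n) = V^c z$ for some $z \in C$ and $c \geq 1$. A direct calculation should show that $f' \co C(b_1,\dots,b_n,-1) \leadsto C$ is a short (standard) map: the chain-map relation at $x_n$ in the enlarged complex collapses to the original short-map hypothesis via $V \cdot f'(x_{n+1}) = \d_V f(x_n)$, and the short-map condition $\d_V f'(x_{n+1}) = 0$ at the new top generator requires only that $\d_V z = 0$.

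Having produced a short standard map, I would apply Lemma~\ref{lem:shortextension} to obtain an $\cR$-equivariant chain map $g \co C(b_1,\dots,b_n,-1,b_{n+2},\dots,b_m) \to C$ whose restriction to $x_0,\dots,x_n$ coincides with $f$; locality transfers immediately because $g(x_0) = f(x_0)$. The main technical point -- and really the only nontrivial verification -- is the equality $\d_V z = 0$. This follows from $V^c \d_V z = \d_V^2 f(x_n) = 0$ together with the fact that, in the free $\cR$-module $C$, the submodule $V \cdot C$ in which $\d_V z$ lives meets the $V^c$-annihilator $U\F[U] \cdot C$ trivially, since $U\F[U] \cap V\F[V] = 0$ inside $\cR = \F[U,V]/(UV)$. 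This is the mirror image of the analogous cancellation step used in the proof of Lemma~\ref{lem:shortextension}.
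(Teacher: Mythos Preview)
Your proof is correct and matches the paper's approach, which simply says ``Analogous to the proof of Lemma~\ref{lem:shortextension}.'' Your version is in fact slightly more efficient: rather than redoing the full iteration with $U$ and $V$ swapped, you perform one $V$-step to reach a standard short map and then invoke Lemma~\ref{lem:shortextension} directly, which cleanly avoids repeating the termination argument.
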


\begin{proof}
Analogous to the proof of Lemma \ref{lem:shortextension}.
\end{proof}

\section{Numerical invariants $a_i$}\label{sec:ai}
In this section, we define a sequence of numerical invariants $(a_i)$ for any knot-like complex $C$, analogous to those constructed in \cite[Section 6]{DHSThomcobord}. Up to sign, these are the same as the invariants defined in \cite[Section 3]{Hominfiniterank}, which are also denoted by $(a_i)$. In the next section, we will see that the $a_i$ compute successive parameters in the standard complex representative of $C$.

Let $C$ be a knot-like complex. Define
\[ a_1(C) = \sup\bang \{ b_1 \in \Zbang \mid C(b_1, \dots, b_n) \leq C \}. \]
Here, $\sup\bang$ denotes the supremum  taken with respect to the (unusual!) order on $\Zbang$. We define $a_k(C)$ for $k \geq 2$ inductively, as follows. Suppose that we have already defined $a_i = a_i(C)$ for $1 \leq i \leq k$. If $a_k =0$, define $a_{k+1}(C) = 0$. Otherwise, define
\[ a_{k+1}(C) = \sup\bang \{ b_{k+1} \in \Zbang \mid C(a_1, \dots, a_k, b_{k+1}, \dots b_n) \leq C \}. \]
\noindent
That is, we consider the set of standard complexes $\leq C$ whose first $k$ symbols agree with the previously defined $a_i$. We then take the supremum over the family of $(k+1)$st symbols appearing in this set.\footnote{It will be implicit in the proof of Proposition~\ref{prop:sup} that this set of standard complexes is nonempty. More precisely, if $a_1, \ldots, a_k$ are all defined and nonzero, then there exists a standard complex of the form $C(a_1, \dots, a_k, b_{k+1}, \dots b_n)$ which is $\leq C$.} 

It will be convenient for us to have the following terminology.

\begin{definition}
Let $C$ be a knot-like complex, and let $n$ be a positive integer. Let $(a_1, \dots, a_n)$ be the sequence given by the first $n$ invariants $a_i = a_i(C)$, $1 \leq i \leq n$. We say that $(a_1, \dots, a_n)$ -- and, similarly, the standard complex $C(a_1, \dots, a_n)$ -- is \textit{$n$-maximal with respect to $C$}. Here, we identify $C(a_1, \dots, a_n, 0, \dots, 0) = C(a_1, \dots, a_n)$.
\end{definition}

The following proposition (combined with the extension lemma) shows that the supremum in the definition of $a_i$ is always realized.

\begin{proposition}\label{prop:sup}
Let $a_i = a_i(C)$. For each $n \in \N$, there is a short local map 
\[ f \co C(a_1, \dots, a_n) \leadsto C. \]
Here, we identify $C(a_1, \dots, a_n, 0, \dots, 0) = C(a_1, \dots, a_n)$.
\end{proposition}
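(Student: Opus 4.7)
The plan is to proceed by induction on $n$, with the trivial ``base case'' $n = 0$ handled by the local chain map $\cR \to C$ that sends the generator of $\cR = C(0)$ to any representative of the top class of the $V$-tower in $H_*(C/U)$; this provides a short local map $C() := \cR \leadsto C$.

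For the inductive step, suppose we are given a short local map $f_n \co C(a_1, \dots, a_n) \leadsto C$ (a standard complex for $n$ even and a semistandard complex for $n$ odd). If $a_n = 0$, then $a_{n+1} = 0$ by the inductive definition and $f_n$ itself serves as $f_{n+1}$ under the identification $C(a_1, \dots, a_n, 0) = C(a_1, \dots, a_n)$. Otherwise, apply Lemma~\ref{lem:shortextension} (if $n$ is even) or Lemma~\ref{lem:shortextensionsemi} (if $n$ is odd) to extend $f_n$ to an honest $\cR$-equivariant local chain map $g \co C(a_1, \dots, a_n, \tilde b_{n+1}, \dots, \tilde b_m) \to C$. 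This shows that the set $S_{n+1}$ over which the supremum defining $a_{n+1}$ is taken is nonempty.

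The main obstacle is verifying that this supremum is actually attained by some chain map. The key observation is that finiteness of $C$ forces $S_{n+1}$ to be finite. Indeed, for any local chain map $g \co C(a_1, \dots, a_n, b_{n+1}, \dots) \to C$, the image $g(x_0)$ must represent the top class of the $V$-tower of $H_*(C/U)$ and therefore lies in a specific bigrading determined by $C$ alone. Since $g$ is absolutely $U$-graded and relatively $V$-graded, this fixes the bigrading of every $g(x_i)$; as $C$ is a free, finitely generated $\cR$-module (with $UV=0$) and hence finite-dimensional in each bigrading, there are only finitely many possible values for $g(x_n)$. For each such value, the chain map relation at $x_n$ forces $U^{|b_{n+1}|} g(x_{n+1})$ (or $V^{|b_{n+1}|} g(x_{n+1})$, according to parity and sign of $b_{n+1}$) to equal a fixed element of $C$ depending on $g(x_{n-1})$ and $g(x_n)$; the maximal $U$- or $V$-divisibility of that fixed element then bounds $|b_{n+1}|$. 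Taking the maximum over the finitely many options for $g(x_n)$ gives a uniform bound, whence $S_{n+1}$ is finite. As a finite subset of $\Z^!$, it has a $\leq^!$-maximum $a_{n+1}$, realized by some chain map $g' \co C(a_1, \dots, a_n, a_{n+1}, c_{n+2}, \dots, c_{m'}) \to C$.

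Finally, define $f_{n+1}$ to be the restriction of $g'$ to the submodule spanned by $\{x_0, \dots, x_{n+1}\}$. For $0 \leq i \leq n$, the relation $f_{n+1}\partial(x_i) + \partial f_{n+1}(x_i) = 0$ is inherited directly from $g'$, since $\partial x_i$ lies entirely within this submodule. At $i = n+1$, the only outgoing arrow of $\partial x_{n+1}$ that leaves the submodule goes to $x_{n+2}$, and this arrow is of opposite type ($U$ versus $V$) to the arrow between $x_n$ and $x_{n+1}$. Hence the partial differential pertinent to the short map axiom ($\partial_V$ if $n+1$ is even, $\partial_U$ if $n+1$ is odd) is unaffected by the truncation, so the short map equation holds at $x_{n+1}$. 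Locality is inherited from $g'$, completing the induction.
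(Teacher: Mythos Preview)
Your overall inductive scheme is sound, but the crucial finiteness claim for $S_{n+1}$ is false, and the divisibility argument you give for it has a real hole. Concretely, take $C = C(a_1, \dots, a_n)$ itself (with all $a_i \neq 0$): the identity is a local chain map $C(a_1, \dots, a_n) \to C$, and setting $g(x_{n+1}) = g(x_{n+2}) = 0$ extends it to a local chain map $C(a_1, \dots, a_n, -t, -1) \to C$ for \emph{every} $t \geq 1$. So $S_{n+1}$ contains all negative integers and is infinite. The point where your argument breaks is the phrase ``the maximal $U$- or $V$-divisibility of that fixed element then bounds $|b_{n+1}|$'': when $b_{n+1} < 0$ and the fixed element $\d_U g(x_n)$ (or $\d_V g(x_n)$) happens to be zero, divisibility gives no bound whatsoever, and this is exactly what occurs in the example above.

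The paper's proof sidesteps this by observing that the only way $\sup\bang S_{n+1}$ can fail to be achieved is if $S_{n+1}$ is unbounded below in the usual sense, and then exploiting finiteness differently: as $b_{n+1} \to -\infty$, the $U$-grading (for $n$ even) or $V$-grading (for $n$ odd) of $x_{n+1}$ tends to $+\infty$, so finite generation of $C$ forces $g(x_{n+1}) = 0$ eventually. This is the content of Lemmas~\ref{lem:shortseqstandard} and~\ref{shortseqsemistandard}. When $n$ is even, the resulting restriction to $x_0, \dots, x_n$ is already a genuine local chain map and realizes $a_{n+1} = 0$ as a trailing zero. When $n$ is odd, one must go one step further (Lemma~\ref{lem:semitostandard}): the odd-parity cycle $g(x_n)$ is necessarily $V$-torsion in $H_*(C/U)$, so some $V$-power of it lies in $\im \d_V$, producing a \emph{positive} $b_{n+1}$ and contradicting the assumption that $\sup\bang S_{n+1} = 0$. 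Your argument is missing precisely this dichotomy.
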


\noindent
This is a consequence of the following lemmas.

\begin{lemma}\label{lem:semitostandard}
Let
\[ f \co C'(b_1, \dots, b_n) \to C \]
be a local map from a semistandard complex to knot-like complex $C$. Then there is some $b_{n+1} > 0$ such that we have a short local map from the standard complex $C(b_1, \dots, b_n, b_{n+1})$ to $C$:
\[ g \co C(b_1, \dots, b_n, b_{n+1}) \leadsto C. \]
\end{lemma}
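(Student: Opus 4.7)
My plan is to extend the given local map $f \co C'(b_1, \dots, b_n) \to C$ to an honest $\cR$-equivariant chain map from a slightly longer standard complex, by suitably defining the image of a new generator $x_{n+1}$. The first observation is that $\partial_V x_n = 0$ in the semistandard complex: since $n$ is odd, the only arrow incident to $x_n$ in $C'(b_1, \dots, b_n)$ is a $U$-arrow between $x_{n-1}$ and $x_n$. Therefore $f(x_n)$ is a $\partial_V$-cycle in $C$ and descends to a class $[f(x_n)] \in H_*(C/U)$.

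Next, I would use a parity argument to conclude that $[f(x_n)]$ must be $V$-torsion in $H_*(C/U)$. Since $f$ is absolutely $U$-graded, $\gr_U(f(x_n)) = \gr_U(x_n)$, and in a (semi)standard complex $\gr_U(x_i) \equiv i \pmod{2}$; hence $\gr_U(f(x_n))$ is odd. By the defining normalization of a knot-like complex, every $V$-nontorsion element of $H_*(C/U)$ has $U$-grading zero, so $[f(x_n)]$ cannot lie in the $V$-tower and is therefore $V$-torsion. Consequently, there exists $k \geq 0$ with $V^k [f(x_n)] = 0$ in $H_*(C/U)$. Lifting this to $C$ produces an equation $V^k f(x_n) = \partial_V w + Us$ for some $w, s \in C$; multiplying by $V$ and using $UV = 0$ yields the clean identity
\[ V^{k+1} f(x_n) = \partial_V(Vw) \quad \text{in } C. \]

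I would then set $b_{n+1} = k+1 > 0$ and define $g \co C(b_1, \dots, b_n, b_{n+1}) \to C$ by $g(x_i) = f(x_i)$ for $0 \leq i \leq n$ and $g(x_{n+1}) = Vw$. The only nontrivial chain-map check is at $x_{n+1}$: we have $\partial(Vw) = V \partial w = V \partial_V w = V^{b_{n+1}} f(x_n) = g(\partial x_{n+1})$, where the second equality uses $V \partial_U w = 0$ (because $\partial_U w \in \im U$ and $UV = 0$ in a reduced complex). Hence $g$ is in fact an honest $\cR$-equivariant chain map, which is certainly a short map, and it is local since $g(x_0) = f(x_0)$ generates $H_*(C/U)/V\text{-torsion}$.

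The main bookkeeping obstacle is confirming that $g$ is absolutely $U$-graded and relatively $V$-graded, so that it is genuinely a map of bigraded complexes. This reduces to checking that $Vw$ has the correct bigrading to serve as $g(x_{n+1})$, which follows from reading off the gradings in the equation $V^k f(x_n) = \partial_V w + Us$ and comparing with the bigradings of $x_n$ and $x_{n+1}$ prescribed in Definition \ref{def:standard}. The key conceptual step is really the parity observation that forces $V$-torsion; the rest of the construction is routine.
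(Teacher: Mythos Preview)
Your proof is correct and follows essentially the same route as the paper's: the key step in both is the parity observation that $\gr_U(f(x_n))$ is odd, forcing $[f(x_n)]$ to be $V$-torsion in $H_*(C/U)$, after which one chooses a preimage under $\partial_V$ for a suitable $V$-power of $f(x_n)$ to serve as $g(x_{n+1})$. Your treatment is in fact slightly more careful than the paper's at one point: the paper asserts directly that there exist $y\in C$ and $m>0$ with $\partial_V y = V^m f(x_n)$ in $C$, whereas a priori $V$-torsion in $H_*(C/U)$ only gives such an equation modulo $U$; your trick of multiplying by $V$ to kill the $Us$ term makes this passage explicit, and as a bonus yields an honest chain map rather than merely the short map the lemma requires.
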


\begin{proof}
Let $C' =  C'(b_1, \dots, b_n)$. Since $x_n$ is a cycle in $C'/U$, we have that $f(x_n)$ is a cycle in $C/U$. Moreover, the class of $f(x_n)$ must be $V$-torsion in $C/U$, since $x_n$ has odd grading and $H_*(C/U)/V\text{-torsion}$ is supported in $U$-grading zero. It follows that there exists some $y \in C$ and $m > 0$ for which $\d_V y = V^m f(x_n)$. Now define
\[
g(x_i) = 
\begin{cases}
	f(x_i) & \text {if } i=1, \dots, n \\
	y & \text {if } i=n+1.
\end{cases}
\]
Note that $b_{n+1} = m$. By construction, $g$ is a short local map.
\end{proof}

\begin{lemma}\label{lem:shortseqstandard}
Let $\{t_i\}_{i \in \N}$ be a sequence of integers with $t_i \rightarrow \infty$, and let
\[ f_i \co C(b_1, \dots, b_{n-1}, -t_i) \leadsto C \]
be a sequence of short local maps from standard complexes to a knot-like complex $C$. Then there exists a short local map
\[ f \co C(b_1, \dots, b_{n-1}, b_n) \leadsto C \]
for some $b_n > 0$.
\end{lemma}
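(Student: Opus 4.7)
The plan is to extract a limiting map by a grading-and-pigeonhole argument that stabilizes the $f_i$ on $x_0, \dots, x_{n-1}$, and then invoke Lemma~\ref{lem:semitostandard} to produce the positive final entry.

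First, I would show that for each $0 \leq j \leq n-1$, the image $f_i(x_j)$ lies in a bigrading piece of $C$ that is independent of $i$. Indeed, the absolute $U$-grading of $x_j$ in $C(b_1, \dots, b_{n-1}, -t_i)$ and the relative difference $\gr_V(x_j) - \gr_V(x_0)$ both depend only on $b_1, \dots, b_{n-1}$, not on $t_i$. Since each $f_i$ is local, $f_i(x_0)$ represents the top class of the $V$-tower in $H_*(C/U)/V\text{-torsion}$, so $\gr_V(f_i(x_0))$ equals the fixed $V$-grading of that top class in $C$. Combined with $f_i$ being absolutely $U$-graded and relatively $V$-graded, this pins down an $i$-independent bigrading piece of $C$ containing each $f_i(x_j)$ for $j \leq n-1$. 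Because $C$ is free and finitely generated over $\cR$, each such piece is a finite-dimensional $\F$-vector space, so applying pigeonhole successively over $j = 0, 1, \dots, n-1$ I would pass to a subsequence on which $f_i(x_j) = y_j$ is constant in $i$.

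Second, I would analyze $f_i(x_n)$. Since $n$ is even, the arrow between $x_{n-1}$ and $x_n$ is a $V^{t_i}$-arrow contributing the summand $V^{t_i} x_n$ to $\d x_{n-1}$. A direct grading computation gives $\gr_V(x_n) = \gr_V(x_{n-1}) - 1 + 2t_i$, so $\gr_V(f_i(x_n)) \to \infty$ as $i \to \infty$. But every element of $C$ has $\gr_V$ bounded above by the maximum $\gr_V$ of a generator, so $f_i(x_n) = 0$ for all sufficiently large $i$. For such $i$, I would then verify that the restriction of $f_i$ to $\{x_0, \dots, x_{n-1}\}$ defines a local chain map
\[ f \co C'(b_1, \dots, b_{n-1}) \to C. \]
The only nonobvious chain map equation is at $x_{n-1}$: in the original source, $\d x_{n-1}$ contains the summand $V^{t_i} x_n$, but this is annihilated once $f_i(x_n) = 0$, so the equation $\d y_{n-1} = f_i(\d x_{n-1})$ reduces exactly to the chain map equation in $C'(b_1, \dots, b_{n-1})$, where $\d x_{n-1}$ has no $x_n$ term by construction. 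Locality holds because $f(x_0) = y_0$ remains a $V$-tower generator. Applying Lemma~\ref{lem:semitostandard} to $f$ then produces the desired short local map
\[ g \co C(b_1, \dots, b_{n-1}, b_n) \leadsto C \]
with $b_n > 0$.

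The main obstacle I anticipate is the grading bookkeeping in the first step. The source normalization $\gr_V(x_n) = 0$ makes $\gr_V(x_0)$ depend on $t_i$, and it takes some care to see that locality nevertheless pins down $\gr_V(f_i(x_0))$ as a fixed constant in the target, thereby stabilizing the bigrading of $f_i(x_j)$ for all $j \leq n-1$ and allowing the pigeonhole to proceed.
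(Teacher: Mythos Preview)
Your proof is correct and follows the same approach as the paper: use a grading argument to force $f_i(x_n)=0$ for large $i$, restrict to obtain a local map from the semistandard complex $C'(b_1,\dots,b_{n-1})$, then apply Lemma~\ref{lem:semitostandard}. The paper's proof is just this, stated in three sentences.

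Your pigeonhole step stabilizing $f_i(x_j)$ for $j\leq n-1$ is unnecessary. Once you know $f_i(x_n)=0$ for a single sufficiently large $i$, the restriction of that one $f_i$ already gives the required local map out of $C'(b_1,\dots,b_{n-1})$; no subsequence or constancy is needed. What \emph{is} needed from your first step is only the grading observation that locality pins down $\gr_V(f_i(x_0))$ in $C$ (hence the $V$-grading shift of $f_i$), so that $\gr_V(f_i(x_n))\to\infty$ follows. The paper suppresses this point, so your explicit justification there is a genuine clarification; the pigeonhole on top of it buys nothing.
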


\begin{proof}
As $i$ increases, the $V$-grading of the final generator $x_n$ of $C(b_1, \dots, b_{n-1}, -t_i)$ also increases. Since $C$ is finitely generated, it follows that for sufficiently large $i$, we have $f_i(x_n) = 0$. Restriction to the first $n-1$ generators thus yields a local map from the semistandard complex $C'(b_1, \dots, b_{n-1})$ to $C$. Now apply Lemma \ref{lem:semitostandard} to obtain the desired result.
\end{proof}

\begin{lemma}\label{shortseqsemistandard}
Let $\{t_i\}_{i \in \N}$ be a sequence of integers with $t_i \rightarrow \infty$, and let
\[ f_i \co C'(b_1, \dots, b_{n-1}, -t_i) \leadsto C \]
be a sequence of short local maps from semistandard complexes to a knot-like complex $C$. Then there exists a short local map
\[ f \co C(b_1, \dots, b_{n-1}) \leadsto C \]
from the standard complex $C(b_1, \dots, b_{n-1})$ to $C$.
\end{lemma}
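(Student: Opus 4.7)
The plan is to mimic the proof of Lemma~\ref{lem:shortseqstandard}. Since $n$ is odd (as the length of a semistandard complex), arrow $n$ in $C'(b_1, \dots, b_{n-1}, -t_i)$ is a $U$-arrow with $\d_U x_{n-1} = U^{t_i} x_n$. Consequently $\gr_U(x_n) = \gr_U(x_{n-1}) + 2t_i - 1$, where $\gr_U(x_{n-1})$ depends only on $b_1, \dots, b_{n-1}$ (together with the normalization $\gr_U(x_0) = 0$) and is therefore independent of $i$. Thus $\gr_U(x_n) \to +\infty$ as $t_i \to \infty$. Since $C$ is finitely generated over $\cR$ and $U$-multiplication only decreases $\gr_U$, the $U$-gradings of nonzero homogeneous elements of $C$ are bounded above, so $f_i(x_n) = 0$ for all sufficiently large $i$.

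I would then fix such an $i$ and define $f \co C(b_1, \dots, b_{n-1}) \to C$ by restricting $f_i$ to $\{x_0, \dots, x_{n-1}\}$. This restriction is manifestly absolutely $U$-graded, relatively $V$-graded, and $\cR$-equivariant. For $1 \leq j \leq n-2$, the differential of $x_j$ in the standard complex $C(b_1, \dots, b_{n-1})$ coincides with its differential in the semistandard complex (both involve only $x_{j-1}$ and $x_{j+1}$), so the relation $f \d(x_j) + \d f(x_j) = 0$ is inherited from $f_i$.

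The main step is to verify the short map condition at $x_{n-1}$, namely $f \d_V(x_{n-1}) + \d_V f(x_{n-1}) = 0$. Expanding the semistandard short-map relation at $x_{n-1}$ and using $f_i(x_n) = 0$, we obtain
\[
f_i \d_V(x_{n-1}) + \d_U f_i(x_{n-1}) + \d_V f_i(x_{n-1}) = 0.
\]
Since $C$ is reduced (which we may assume by Lemma~\ref{lem:reduced}) and $UV = 0$, we have $\im U \cap \im V = 0$ inside $C$. The first and third terms of the displayed sum lie in $\im V$, whereas the middle term lies in $\im U$; hence each part must vanish separately. In particular, $f \d_V(x_{n-1}) + \d_V f(x_{n-1}) = 0$, as required. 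Locality then follows because $f(x_0) = f_i(x_0)$ generates $H_*(C/U)/V$-torsion.

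The only subtle point is the $\im U / \im V$ decomposition at $x_{n-1}$; once this is observed, the full semistandard short-map condition at the final generator automatically implies the weaker $\d_V$-only condition required in the standard case, and everything else is an immediate inheritance from $f_i$.
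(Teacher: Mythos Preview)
Your proof is correct and follows essentially the same approach as the paper: use the growing $U$-grading of $x_n$ to force $f_i(x_n)=0$ for large $i$, then restrict to $x_0,\dots,x_{n-1}$. The paper is terser and simply asserts that this restriction is a (local) map without verifying the condition at $x_{n-1}$; your explicit $\im U / \im V$ separation argument is exactly the verification that makes this assertion honest, and in fact shows slightly more than needed (it gives $\d_U f(x_{n-1})=0$ as well, so the restriction is a genuine chain map, not just a short one).
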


\begin{proof}
As $i$ increases, the $U$-grading of the final generator $x_n$ of $C'(b_1, \dots, b_{n-1}, -t_i)$ also increases. Since $C$ is finitely generated, it follows that for sufficiently large $i$, we have $f_i(x_n) = 0$. Restriction to the first $n-1$ generators then yields a local map from the standard complex $C(b_1, \dots, b_{n-1})$ to $C$.
\end{proof}
\noindent
We are now ready to prove Proposition \ref{prop:sup}:

\begin{proof}[Proof of Proposition \ref{prop:sup}]\
We prove that the supremum in the definition of $a_i$ is always realized (modulo trailing zeros). We proceed by induction. Suppose that $a_1, \ldots, a_k$ are defined and nonzero. Let $\mathcal{F}$ be the family of standard complexes appearing in the definition of $a_{k+1}$. By examining the order on $\Zbang$, we see that the only subsets of $\Zbang$ which fail to attain their supremum are those which are unbounded below (in the usual sense). Hence the only case we have to worry about is when the family of $(k+1)$st symbols appearing in $\mathcal{F}$ has $\sup\bang$ equal to zero. 

If $k$ is odd, then truncating each element of $\mathcal{F}$ to its first $k + 1$ generators provides a family of standard complexes and local maps as in the statement of Lemma~\ref{lem:shortseqstandard}. This is a contradiction, since Lemma~\ref{lem:shortseqstandard} (combined with the extension lemma) then implies that the relevant $\sup\bang$ is strictly greater than zero. Thus, we may assume that $k$ is even. Then truncating each element of $\mathcal{F}$ to its first $k + 1$ generators yields a family of semistandard complexes to which we may apply Lemma~\ref{shortseqsemistandard}. In this situation, we see that $a_{k+1}$ is realized as a trailing zero, completing the proof.
\end{proof}
\section{Characterization of knot-like complexes up to local equivalence}\label{sec:char}

We now prove that every knot-like complex is locally equivalent to a standard complex. In fact, we prove a slightly stronger statement in Corollary~\ref{cor:splitting} below:

\begin{theorem}\label{thm:char}
Every knot-like complex is locally equivalent to a standard complex.
\end{theorem}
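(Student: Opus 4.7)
The plan is to show that every knot-like complex $C$ is locally equivalent to the standard complex $C(a_1, a_2, \dots, a_N)$ assembled from the invariants of Section~\ref{sec:ai}, where $N$ is chosen (necessarily even) so that $a_n(C) = 0$ for all $n > N$. The argument naturally splits into three parts: termination of the sequence $(a_i)$, the inequality $C(a_1, \dots, a_N) \leq C$ in $\KL$, and the reverse inequality.

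For the termination step, I would exploit the finite generation of $C$. After replacing $C$ by a reduced representative (Lemma~\ref{lem:reduced}), only finitely many bigradings $(\gr_U, \gr_V)$ appear among its generators. The preferred generators $x_0, \dots, x_n$ of $C(a_1, \dots, a_n)$ have distinct bigradings when all $a_i$ are nonzero, and $\gr_U(x_n) + \gr_V(x_n)$ strictly decreases as $n$ grows (each arrow of length $|a_i| \geq 1$ lowers one of $\gr_U$ or $\gr_V$ by $2|a_i|$). The short local maps $C(a_1, \dots, a_n) \leadsto C$ supplied by Proposition~\ref{prop:sup} are absolutely $U$-graded and, modulo a uniform $V$-shift, preserve bigradings, so the image of each $x_i$ must either lie at a bigrading that occurs in $C$ or be zero. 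Iterating the constructions behind Lemmas~\ref{lem:shortseqstandard} and~\ref{shortseqsemistandard}, one sees that the only way this can continue indefinitely is for some $a_n$ to already be zero.

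For the forward inequality $C(a_1, \dots, a_N) \leq C$, I would appeal directly to Proposition~\ref{prop:sup}, which produces a short local map $f \co C(a_1, \dots, a_N) \leadsto C$. In the ``trailing zero'' case (which applies at $n = N$ since $a_{N+1} = 0$), the proof of Proposition~\ref{prop:sup} actually produces this map as the restriction of short semistandard maps via Lemma~\ref{shortseqsemistandard}; a direct inspection of this restriction shows that the failed differential term $\d_U f(x_N)$ of any would-be extension is forced to vanish, so $f$ is already an honest chain map and $C(a_1, \dots, a_N) \leq C$.

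The heart of the proof is the reverse inequality $C \leq C(a_1, \dots, a_N)$, which is where the main obstacle lies. I would argue by contradiction using the total order on $\KL$ (Proposition~\ref{prop:totalorder}). If $C \not\leq C(a_1, \dots, a_N)$, then $C > C(a_1, \dots, a_N)$ strictly in $\KL$, so the knot-like complex $D = C \otimes C(a_1, \dots, a_N)^\vee$ is strictly greater than $\cR$. By definition, there is then a local map $\cR \to D$ but none in the reverse direction. Tensoring this local map with $C(a_1, \dots, a_N)$ and composing with the canonical evaluation pairing $C(a_1, \dots, a_N) \otimes C(a_1, \dots, a_N)^\vee \to \cR$, one obtains a new local map $C(a_1, \dots, a_N) \to C$ that carries additional structural data beyond the previously constructed map. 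Unpacking this at the basis level, one extracts a short local map from a standard complex $S'$ whose sequence is strictly greater than $(a_1, \dots, a_N)$ in the lexicographic order on $\Zbang$-sequences; by Proposition~\ref{prop:lex}, this yields $S' > C(a_1, \dots, a_N)$ in $\KL$, contradicting the maximality built into the construction of the $a_i$. The main technical subtlety is in translating this abstract tensorial argument into an explicit standard-complex extension, a basis-level analysis in the spirit of Section~\ref{sec:standards}.
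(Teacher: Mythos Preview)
Your proposal has two genuine gaps, one in the termination step and one in the reverse inequality.

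\textbf{Termination.} The claim that $\gr_U(x_n) + \gr_V(x_n)$ strictly decreases as $n$ grows is false. For instance, in $C(1,-1,1,-1,\dots)$ (or $C(-1,1,-1,1,\dots)$) one computes that $\gr_U(x_i) + \gr_V(x_i)$ is constant along the entire sequence of generators; an arrow of length $1$ changes $(\gr_U,\gr_V)$ by $(\pm 1,\mp 1)$, not by lowering one coordinate. So a pure grading-drift argument cannot force termination. What is actually needed (and what the paper supplies as Lemma~\ref{lem:ImUVnotmax}) is the statement that under a maximal short local map no $f(x_i)$ can lie in $\Im(U,V)$; this pins the $f(x_i)$ to finitely many bigradings and forces a coincidence $f(x_m)=f(x_n)$ for some $m<n$. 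Deriving a contradiction from such a coincidence is itself nontrivial and is the content of Lemma~\ref{lem:blebangc}; your appeal to ``iterating the constructions behind Lemmas~\ref{lem:shortseqstandard} and~\ref{shortseqsemistandard}'' does not address this.

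\textbf{Reverse inequality.} The tensorial construction you describe does not produce what you claim. Starting from a local map $\cR \to C \otimes C(a_1,\dots,a_N)^\vee$, tensoring with $C(a_1,\dots,a_N)$ and composing with the evaluation pairing simply yields another local map $C(a_1,\dots,a_N) \to C$---which we already had. There is no mechanism here for extracting a standard complex $S'$ with $S' >^! C(a_1,\dots,a_N)$ and $S' \leq C$; you have essentially reconstructed the forward inequality. The paper handles this step quite differently: it dualizes, applying the same $a_i$-construction to $C^\vee$ to obtain the \emph{minimal} standard complex $C_2$ with $C \leq C_2$. One then argues that if $C_1 \neq C_2$ there is a standard complex strictly between them (the lexicographic order on standard complexes is dense: $C(a_1,\dots,a_N,N',-N')$ for large $N'$ gives elements arbitrarily close to $C_1$ from above), and by the total order on $\KL$ any such complex contradicts either the maximality of $C_1$ or the minimality of $C_2$.
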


\begin{corollary}\label{cor:splitting}
Let $C$ be a knot-like complex, and assume $C$ is locally equivalent to $C(a_1, \dots, a_n)$. Then $C$ is homotopy equivalent to $C(a_1, \dots, a_n) \oplus A$, for some $\cR$-complex $A$.
\end{corollary}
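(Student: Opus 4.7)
By Theorem~\ref{thm:char}, since $C$ is locally equivalent to $C(a_1, \dots, a_n)$, there exist local chain maps $f \co C(a_1, \dots, a_n) \to C$ and $g \co C \to C(a_1, \dots, a_n)$. My plan is to show that the composition $\phi := g \circ f \co C(a_1, \dots, a_n) \to C(a_1, \dots, a_n)$ is an $\cR$-chain isomorphism. Given this, the map $r := \phi^{-1} \circ g \co C \to C(a_1, \dots, a_n)$ will satisfy $r \circ f = \id$, exhibiting $f$ as a split chain monomorphism; setting $A := \ker r$ then yields a direct-sum decomposition $C = f(C(a_1, \dots, a_n)) \oplus A \cong C(a_1, \dots, a_n) \oplus A$ of $\cR$-chain complexes, since $f$ is injective (being split) and thus restricts to a chain isomorphism onto its image. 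A chain isomorphism is in particular a homotopy equivalence, so this gives the conclusion.

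To show that $\phi$ is an $\cR$-module isomorphism, I would apply Nakayama's lemma: since $\cR$ is local with maximal ideal $(U,V)$ and residue field $\F$, and $C(a_1, \dots, a_n)$ is finitely generated and free, $\phi$ is invertible if and only if the reduction $\bar\phi := \phi \otimes_\cR \F$ is an $\F$-linear isomorphism of $C(a_1, \dots, a_n)/(U,V)$. By Lemma~\ref{lem:localsupport}, applied with both parameter sequences fully coinciding, $\phi(x_i)$ is supported by $x_i$ for every $i$; a bigrading argument forces the coefficient of $x_i$ in $\phi(x_i)$ to lie in the bigrading-$(0,0)$ piece of $\cR$, which is just $\F$, and hence to equal $1 \in \F$. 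So the matrix of $\phi$ has $1$'s along the diagonal. Any off-diagonal entry that survives reduction modulo $(U,V)$ must likewise live in bigrading $(0,0)$, and therefore can only occur between pairs of basis elements $x_i$, $x_j$ sharing a common bigrading. Using the chain-map relation $\partial\phi = \phi\partial$ together with the zigzag structure of $C(a_1, \dots, a_n)$---in which each $x_i$ has at most one $U$-arrow and one $V$-arrow attached at each end---one would verify that the off-diagonal constant entries cannot form a nontrivial directed cycle among basis elements of equal bigrading, so that $\bar\phi$ is triangular (with respect to an appropriate ordering) on each bigrading subspace. Hence $\det \bar\phi = 1$, proving that $\phi$ is an isomorphism; since $\phi$ is a chain map, it is then a chain isomorphism.

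The hard part will be the combinatorial verification in the previous paragraph: that the chain-map condition forbids off-diagonal constant ``cycles'' in $\bar\phi$ within any fixed bigrading. This is where the specific zigzag structure of standard complexes is essential, rather than merely the abstract formal properties of knot-like complexes; the argument will proceed by enumerating, for each pair $x_i$, $x_j$ of equal bigrading, the possible shapes of $\partial x_i$ and $\partial x_j$ (which neighbors contribute, of type $U$ or $V$, and in which direction), and showing that ``upstream'' basis elements in the natural partial order cannot appear with constant coefficient in the $\phi$-image of ``downstream'' ones. Once this claim is in hand, the rest of the proof is standard homological algebra, as outlined in the first paragraph.
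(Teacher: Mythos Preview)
Your approach is essentially the same as the paper's. The paper proves that any local self-map of a standard complex is an isomorphism (Lemma~\ref{lem:selflocalmapisomorphism}) and then splits the short exact sequence $0 \to C(a_1,\dots,a_n) \xrightarrow{f} C \to C/\Im f \to 0$, exactly as you outline. The ``hard part'' you identify---ruling out constant-coefficient cycles among equal-bigrading generators---is precisely the content of Lemma~\ref{lem:cutsequence}, which shows that if $f(x_i)$ is supported by $x_j$ with $i\neq j$, then $(b_{i+1},\dots,b_n) \lebang (b_{j+1},\dots,b_n)$; since distinct indices give tail sequences of different lengths, this is a strict total order, and your matrix $\bar\phi$ is triangular with $1$'s on the diagonal.

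One small correction: $\cR = \F[U,V]/(UV)$ is not a local ring in the usual sense (for instance, $(U, V+1)$ is a maximal ideal distinct from $(U,V)$). What you want is the \emph{graded} Nakayama lemma, which applies because $(U,V)$ is the unique maximal homogeneous ideal and all your maps are bigraded. Alternatively, you can bypass Nakayama entirely as the paper does: once you know $\phi$ is injective (which follows from the triangularity of $\bar\phi$ together with a maximal-monomial argument as in Lemma~\ref{lem:selflocalmapinjective}), restrict to each bigrading, where $C(a_1,\dots,a_n)$ is a finite-dimensional $\F$-vector space, and conclude bijectivity there.
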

\noindent
Theorem \ref{thm:char} immediately implies Theorem \ref{thm:localequivlex}:

\begin{proof}[Proof of Theorem \ref{thm:localequivlex}]
Following Section \ref{background}, to every knot in $S^3$, we can associate a knot-like complex. By Theorem \ref{thm:char}, every knot-like complex is locally equivalent to a standard complex, and by Proposition \ref{prop:lex}, standard complexes are ordered lexicographically. This proves Theorem~\ref{thm:localequivlex} modulo the claim that the standard complex associated to any knot is symmetric. We delay this until the end of the section; see Lemma~\ref{lem:symmetric}.
\end{proof}

Roughly speaking, we will show that if $C$ is a knot-like complex, then the numerical invariants $a_i(C)$ defined in the previous section compute successive parameters in the desired standard complex representative of $C$. Our main technical result will be to show that the $a_i$ (as defined previously) eventually become equal to zero:

\begin{proposition}\label{prop:aizero}
Let $C$ be a knot-like complex. Then $a_i(C) = 0$ for all $i$ sufficiently large.
\end{proposition}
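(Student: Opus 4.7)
The plan is to argue by contradiction, supposing that $a_i := a_i(C) \neq 0$ for all $i \in \N$. By Proposition~\ref{prop:sup} together with Lemma~\ref{lem:shortextension}, for each $n$ we obtain a genuine $\cR$-equivariant local map from a standard complex $C(a_1, \dots, a_n, \tilde b_{n+1}, \dots, \tilde b_{m_n}) \to C$ whose first $n$ symbols are exactly the $a_i$'s. Restricting, we get short local maps $f_n \colon C(a_1, \dots, a_n) \leadsto C$ whose preferred-generator images $f_n(x_0), \dots, f_n(x_n) \in C$ satisfy the prescribed differential relations and are pinned in $U$-grading by $\gr_U(f_n(x_i)) = \gr_U(x_i)$.

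The first step is to replace $C$ by a reduced representative using Lemma~\ref{lem:reduced} and to fix a bihomogeneous $\cR$-basis $\{e_1, \dots, e_r\}$ for $C$, with $r$ finite. In a reduced complex, the $U$- and $V$-differentials of each basis element can be expanded as $\cR$-linear combinations of other basis elements decorated by positive powers of $U$ or $V$. This encodes the reduced differential as a finite directed graph on the vertex set $\{e_1, \dots, e_r\}$ whose edges (labelled by a power of $U$ or $V$) record the nonzero contributions to $\d_U$ and $\d_V$.

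The key step is to extract from each short local map $f_n$ a ``path'' of length $n$ in this graph. Concretely, I would argue that, after a suitable normalization of the $f_n(x_i)$ (absorbing lower-order terms using grading constraints and the maximality of the $a_i$), the image of each preferred generator $x_i$ has a well-defined leading-term support, and the transitions dictated by the $U^{|a_{i+1}|}$- and $V^{|a_{i+1}|}$-arrows in $C(a_1, \dots, a_n)$ correspond precisely to edges in the graph. Since the graph has only $r$ vertices, for $n > r$ any such path must revisit a vertex, producing a loop. Excising the loop yields a short local map from a standard complex whose parameter sequence lexicographically exceeds $(a_1, \dots, a_k)$ under $\leqbang$ for some $k \leq n$; combined with the extension lemma, this contradicts the maximality of $a_{k+1}$ as the $\sup\bang$ in its defining set.

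The principal obstacle is twofold. First, the images $f_n(x_i)$ are in general nontrivial $\cR$-linear combinations of basis elements rather than single basis vectors, so one must identify a canonical ``leading'' summand whose transitions match edges of the graph, and verify that this choice is stable under the $\d_U$- and $\d_V$-relations imposed by the standard complex. Second, the splicing must be set up so that the resulting parameter sequence is genuinely $\gebang$-larger than $(a_1, \dots, a_k)$; because the unusual order $\leqbang$ ranks positive integers above negative ones and ranks smaller magnitudes above larger ones within each sign, careful sign casework on the $a_i$ at the splice point is needed to ensure the modification produces an actual improvement in the defining supremum.
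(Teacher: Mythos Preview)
Your outline has the right architecture (contradiction, pigeonhole, splice), but both of the obstacles you flag are real and neither is resolved. The paper avoids your first obstacle entirely: rather than tracking a ``leading term'' of $f_n(x_i)$ through a graph on the basis, it proves a separate lemma (Lemma~\ref{lem:ImUVnotmax}) showing that for the $n$-maximal short local map $g\colon C(a_1,\dots,a_n)\leadsto C$, no image $g(x_i)$ can lie in $\Im(U,V)$. This forces the bigradings of all $g(x_i)$ into a bounded region, so for large $n$ two \emph{full} images coincide, $g(x_m)=g(x_n)$ for some $m<n$. No leading-term extraction is needed.

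Your second obstacle is also where your proposal diverges from what actually works. The paper does not ``excise the loop'' to produce a lex-larger standard complex; instead it proves a technical splicing result (Lemma~\ref{lem:blebangc}): given two short local maps $f\colon C(b_1,\dots,b_m)\leadsto C$ and $g\colon C(c_1,\dots,c_n)\leadsto C$ with $f(y_m)=g(x_n)$ and $(b_m,\dots,b_1)\lebang(c_n,\dots,c_1)$, one builds a new short local map $h$ on $C(c_1,\dots,c_n)$ with $h(x_n)=g(x_n)+f(y_m)=0$, contradicting Lemma~\ref{lem:ImUVnotmax}. Applying this with $f=g|_{C(a_1,\dots,a_m)}$ and comparing the reversed sequences $(a_m,\dots,a_1)$ and $(a_n,\dots,a_1)$ (which differ since $m\neq n$) finishes the argument. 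The ``improvement in the supremum'' you are looking for is thus indirect: the contradiction comes from $h(x_n)=0$, not from exhibiting a larger parameter sequence directly.
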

\noindent
The proof of Proposition \ref{prop:aizero} will be given at the end of the section. First, we show how this implies Theorem \ref{thm:char}:

\begin{proof}[Proof of Theorem \ref{thm:char}]
Let $C$ be a knot-like complex with numerical invariants $a_i$. By Propositions \ref{prop:sup} and \ref{prop:aizero}, there exists some standard complex $C_1 \leq C$ which realizes the $a_i$. It is easily checked from the fact that standard complexes are lexicographically ordered that $C_1$ must be the maximal standard complex $\leq C$. Dualizing, let $C_2$ be the minimal standard complex with $C \leq C_2$. If $C_1 \neq C_2$, then (using the fact that standard complexes are lexicographically ordered) there exists a standard complex $C_3$ lying strictly between them. This complex contradicts either the maximality of $C_1$ or the minimality of $C_2$. Thus we must have the local equivalence $C_1 = C = C_2$.
\end{proof}

To prove the more refined Corollary \ref{cor:splitting}, we use the following series of lemmas concerning self-maps of standard complexes.

\begin{lemma}\label{lem:cutsequence}
Let 
\[ f \co C(b_1, \dots, b_n) \to C(b_1, \dots, b_n) \]
be a local map such that $f(x_i)$ is supported by $x_j$ for some $i \neq j$. Then
\[ (b_{i+1}, \dots, b_n) \lebang (b_{j+1}, \dots, b_n). \]
Here, we mean that $(b_{k+1}, \dots, b_n) = (0)$ if $k = n$.
\end{lemma}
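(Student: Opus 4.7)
The plan is to derive the lex comparison by using $f$ to produce a local chain map between a ``spliced'' standard complex and $C$, then invoking Lemma~\ref{lem:>lex}. By the $U \leftrightarrow V$ symmetry of the setup, I may assume throughout that $f(x_i)$ is supported by $U^c x_j$ for some $c \geq 0$; the case of support by $V^c x_j$ is entirely analogous.

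First I would verify a parity constraint. Since $f$ is bigrading preserving, the support of $f(x_i)$ by $U^c x_j$ forces $\gr_V(x_i) = \gr_V(x_j)$. Using the explicit formula for the $V$-grading increments along the preferred basis (contribution $\mathrm{sgn}(b_k)$ at odd positions and $\mathrm{sgn}(b_k) - 2 b_k$ at even positions), the equation $\sum_{k = i+1}^{j} \Delta \gr_V(k) = 0$ together with a parity count forces $j - i$ to be even. Hence $i$ and $j$ have the same parity, so arrow types (horizontal versus vertical) at corresponding positions will match in the splicing construction.

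Next I would form the grafted standard complex
\[
C^{*} = C(b_1, \ldots, b_j, b_{i+1}, b_{i+2}, \ldots, b_n)
\]
with preferred basis $\{y_0, \ldots, y_{j + n - i}\}$. A direct positionwise comparison of defining sequences shows that $C^{*} \leqbang C$ in the lex order on $\KL$ is equivalent to $(b_{i+1}, \ldots, b_n) \leqbang (b_{j+1}, \ldots, b_n)$; since $i \neq j$ forces $C^{*} \neq C$ (the sequences differ in length and all $b_k$ are nonzero), the inequality is automatically strict once non-strict holds. So by Lemma~\ref{lem:>lex} it suffices to produce a local chain map $g \co C^{*} \to C$.

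I would build $g$ by setting $g(y_k) = x_k$ for $0 \leq k \leq j$ (identity on the head) and defining $g(y_{j+\ell})$ on the tail inductively from $f(x_{i+\ell})$, using the relation
\[
P r_{i+1} + Q r_{i-1} = U^{c} \d x_j + \d R
\]
obtained by expanding $f \circ \d = \d \circ f$ at $x_i$; here $R = f(x_i) - x_i - U^{c} x_j$, the $r_{i \pm 1}$ are the Lemma~\ref{lem:localsupport} remainders of $f(x_{i \pm 1})$, and $P, Q$ are the monomial coefficients appearing in $\d x_i$. This identity precisely encodes how the extra $U^{c} x_j$ support in $f(x_i)$ must propagate, through the chain map condition, into the tails $f(x_{i \pm 1})$ and $\d x_j$, and thereby supplies the algebraic data needed to match differentials at the splice between $y_j$ and $y_{j+1}$. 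Any residual discrepancy can be absorbed by iteratively enlarging the domain of $g$ as in Lemma~\ref{lem:shortextension}, which only strengthens the eventual lex inequality. Locality is immediate since $g(y_0) = x_0$. The main obstacle is precisely the splice verification: the naive assignment $g(y_{j+\ell}) = f(x_{i+\ell})$ has the wrong bigrading when $c > 0$ and already fails the chain map condition when $c = 0$ because of the remainders $R$ and $r_{i \pm 1}$, so the technical heart of the argument is systematically exploiting the compatibility relation above together with the extension procedure to produce a bona fide $g$.
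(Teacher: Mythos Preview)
Your approach has a real gap at the splice step, and the appeal to the extension lemma does not close it.

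Reformulating the lemma as the existence of a local map $g\colon C^{*} = C(b_1,\ldots,b_j,b_{i+1},\ldots,b_n) \to C$ is legitimate: by Lemmas~\ref{lem:leqlex} and~\ref{lem:>lex}, such a $g$ exists if and only if the desired lex inequality holds. The parity observation is also essentially correct (though it follows more directly from $\gr_U$, which $f$ preserves absolutely, than from $\gr_V$). But you do not actually construct $g$. You correctly note that the naive choice $g(y_{j+\ell}) = f(x_{i+\ell})$ fails, and you record the relation coming from $f\partial = \partial f$ at $x_i$; what is missing is any argument that this relation lets you define $g(y_{j+1})$ so that the chain map condition holds against $g(y_j)=x_j$. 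The obstruction is precisely that the arrow between $y_j$ and $y_{j+1}$ in $C^{*}$ carries the parameter $b_{i+1}$, while the arrow leaving $x_j$ in $C$ carries $b_{j+1}$; comparing these two \emph{is} the content of the lemma, and your relation does not obviously effect that comparison (the term $\partial R$ can absorb too much).

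Your use of Lemma~\ref{lem:shortextension} is also misplaced. That lemma repairs a short map by appending parameters at the \emph{end} of the domain, whereas your failure occurs at the internal position $j+1$. Even if it applied, the extended domain would be some complex $\leqbang C^{*}$ in the lex order, and a local map from something below $C^{*}$ into $C$ tells you nothing about whether $C^{*}\leq C$; so it weakens rather than strengthens the conclusion.

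The paper's proof avoids all of this by a direct induction: from the chain map condition at $x_i$ or $x_{i+1}$, one reads off $b_{i+1} \leqbang b_{j+1}$ via a short case analysis on the sign of $b_{i+1}$; if equality holds, the same analysis shows $f(x_{i+1})$ is supported by $x_{j+1}$, and one repeats. Since $i\neq j$ the two tails have different lengths, so equality cannot persist and strict inequality is eventually forced. This argument is both shorter and complete.
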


\begin{proof}
First assume that $i$ is even. By grading considerations, this implies that $j$ is also even. We have the following casework:

\begin{enumerate}
\item
Suppose that $b_{i+1} < 0$. Then $\d_U x_i = U^{|b_{i+1}|} x_{i+1}$. Hence $\d_U f (x_i) = f \d_U x_i \in \im U^{|b_{i+1}|}$. Since $f(x_i)$ is supported by $x_j$, it follows that $\d_U x_j \in \im U^{|b_{i+1}|}$. This implies that $b_{j+1} \geqbang b_{i+1}$. (Here, we use the fact that no $U$-power of $x_{j+1}$ appears in $\d_U$ of any standard basis element other than $x_j$.) 
\item
Suppose that $b_{i+1} > 0$. Then $\d_U x_{i+1} = U^{b_{i+1}} x_i$. Hence $\d_U f (x_{i+1}) = U^{b_{i+1}} f(x_i)$ is supported by $U^{b_{i+1}} x_j$. In particular, $U^{b_{i+1}} x_j$ is in the image of $\d_U$, which implies that $b_{j+1} \geqbang b_{i+1}$. (Here, we use the fact that $x_{j+1}$ is the unique basis element whose image under $\d_U$ can be supported by a $U$-power of $x_j$.) 
\item
Suppose that $i = n$, so that $b_{i+1} = 0$. Then $\d_U x_i = 0$. Hence $\d_U f(x_i) = 0$. Since $f(x_i)$ is supported by $x_j$, it follows that $\d_U x_j = 0$. (Here, we  use the fact that no $U$-power of $x_{j+1}$ can appear in $\d_U$ of any standard basis element other than $x_j$.) This implies that $b_{j+1} > 0$.
\end{enumerate}
If strict inequality holds in any of the above cases, then we are done. On the other hand, if $b_{i+1} = b_{j+1}$, then it is easily seen that $f(x_{i+1})$ is supported by $x_{j+1}$, and we proceed inductively. By the hypothesis that $i \neq j$, the sequences $(b_{i+1}, \dots, b_n)$ and $(b_{j+1}, \dots, b_n)$ are of different lengths, and hence cannot be equal. The case $i$ odd is similar, with the role of $U$ played by $V$.
\end{proof}

\begin{lemma}\label{lem:selflocalmapinjective}
Any local map 
\[ f \co C(b_1, \dots, b_n) \to C(b_1, \dots, b_n) \]
must be injective.
\end{lemma}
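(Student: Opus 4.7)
The plan is to upgrade the conclusion of the lemma to invertibility of $f$, which immediately implies injectivity. The idea is to exhibit the matrix of $f$ with respect to the preferred basis of $C(b_1,\dots,b_n)$ as lower triangular with $1$'s on the diagonal, after a suitable reordering of the basis elements.

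First, I would argue that $f$ preserves the $V$-grading absolutely (not merely relatively). Indeed, $f$ induces an $\F[V]$-module automorphism $\bar{f}$ of $H_*(C/U)/V\text{-torsion}\cong\F[V]$, and since the only unit in $\F[V]$ over $\F=\F_2$ is $1$, we must have $\bar{f}=\mathrm{id}$. Matching the absolute $V$-gradings on $[x_0]$ on both sides forces the relative $V$-grading shift of $f$ to be zero. Applying Lemma~\ref{lem:localsupport} with $C_1=C_2=C(b_1,\dots,b_n)$ and $k=n$ then shows that each $f(x_i)$ is supported by $x_i$, and the coefficient of $x_i$ in $f(x_i)$ must lie in bigrading $(0,0)$ of $\cR$, which forces it to equal $1\in\F\subset\cR$.

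Second, I would control the off-diagonal entries via Lemma~\ref{lem:cutsequence}: if the coefficient of $x_j$ in $f(x_i)$ is nonzero for some $i\neq j$, then $(b_{i+1},\dots,b_n)\lebang(b_{j+1},\dots,b_n)$. A short check shows that, after zero-padding, the tails $(b_{i+1},\dots,b_n)$ for $i=0,1,\dots,n$ are pairwise lex-distinct (since the nonzero $b_k$'s prevent a shorter padded tail from agreeing with a longer one at the positions past its nonzero entries). Hence $\lebang$ defines a strict total order $\prec$ on $\{0,1,\dots,n\}$.

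Finally, I would reorder the preferred basis compatibly with $\prec$, so that the matrix of $f$ becomes lower triangular with $1$'s on the diagonal. Writing $f=I+N$ with $N$ strictly lower triangular in this ordering, $N$ is nilpotent (any nonzero $(N^k)_{j,i}$ requires a chain $i\prec i_1\prec\cdots\prec i_{k-1}\prec j$ of length $k$ in a finite strict order, so $N^{n+1}=0$). Over $\F_2$, the formula $(I+N)(I+N+N^2+\cdots+N^n)=I+N^{n+1}=I$ exhibits the inverse, and in particular $f$ is injective. The main subtlety to verify carefully is the initial grading argument that pins down the $V$-shift to be zero; the rest is a routine matrix manipulation using the two cited lemmas.
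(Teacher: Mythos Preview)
Your argument is correct and in fact proves the stronger statement that $f$ is an isomorphism (the paper establishes this separately as Lemma~\ref{lem:selflocalmapisomorphism}). Both approaches rest on the same two ingredients, Lemma~\ref{lem:localsupport} and Lemma~\ref{lem:cutsequence}, but the packaging differs. The paper argues injectivity directly: given a putative nonzero kernel element $\sum_i r_i x_i$, it picks a maximal monomial coefficient $r_{i_0}$ (under the partial order $1>U>U^2>\cdots$ and $1>V>V^2>\cdots$), and among the indices with that coefficient it selects the one whose tail $(b_{j+1},\dots,b_n)$ is $\lebang$-minimal; the two lemmas then show that the corresponding term $r_{j_1}x_{j_1}$ survives in $f(\sum r_i x_i)$, a contradiction. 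Your route instead organizes the same information into a unipotent lower-triangular matrix and inverts via the finite geometric series $I+N+\cdots+N^n$. Your approach buys you Lemma~\ref{lem:selflocalmapisomorphism} for free, at the cost of the extra preliminary step pinning the $V$-shift to zero (needed so that the diagonal entries are honestly $1$ rather than a $V$-power); the paper's maximal-coefficient argument does not require knowing this in advance.
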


\begin{proof}
Suppose not. Then there exists some linear combination $\sum_i r_ix_i$ with $r_i \in \cR$ such that $f(\sum_i r_ix_i)=0$. Since $f$ is graded, we may assume that $\sum_i r_ix_i$ is grading-homogenous, so that each $r_i$ is a monomial (that is, $r_i \in \{0, 1, U, U^2, \dots, V, V^2, \dots\}$). 

We impose a partial order on the set of monomials in $\cR$ by defining $1 > U > U^2 > \dots > 0$ and  $1 > V > V^2 > \dots > 0$. Among the nonzero coefficients $r_i$, choose a maximal element $r_{i_0}$ with respect to this partial order. Let $I = \{ j \mid r_j = r_{i_0} \}$. For each $j \in I$, consider $(b_{j+1}, \dots, b_n)$. Label the elements of $I = \{ j_1, \dots, j_m \}$ such that
\[ (b_{j_1+1}, \dots, b_n) \lebang (b_{j_2+1}, \dots, b_n)  \lebang \cdots \lebang (b_{j_m+1}, \dots, b_n) .\]
\noindent
Consider $f(x_{j_1})$. By Lemma \ref{lem:localsupport}, $f(x_{j_1})$ is supported by $x_{j_1}$. By Lemma \ref{lem:cutsequence}, $f(x_{j_i})$ for $i = 2, \dots, m$ cannot be supported by $x_{j_1}$. By the $\cR$-equivariance of $f$ and maximality of $r_{i_0}$, there is no other term in $f(\sum_{i \neq j_1} r_i x_i)$ that can cancel $r_{j_1} x_{j_1}$, contradicting the fact that $f(\sum_i r_ix_i)=0$. Hence $f$ must be injective.
\end{proof}

\noindent
We thus have:
\begin{lemma}\label{lem:selflocalmapisomorphism}
Any local self-map of a standard complex to itself is an isomorphism.
\end{lemma}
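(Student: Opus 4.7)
The plan is to combine injectivity (which we already have from Lemma~\ref{lem:selflocalmapinjective}) with a finite-dimensional linear algebra argument on bigraded pieces. Specifically, the strategy is to show that $f$ restricts to an injective endomorphism of each bigraded component of $C(b_1,\dots,b_n)$, each of which is a finite-dimensional $\F$-vector space, whence $f$ is surjective on each piece and hence surjective overall.

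The one preliminary step that requires some care---and which I expect to be the main (minor) obstacle---is upgrading $f$ from an absolutely $U$-graded, relatively $V$-graded map to a fully bigraded map. A priori $f$ shifts $V$-gradings by some constant $c\in 2\Z$. However, since $f$ is local, it induces an $\F[V]$-module isomorphism on $H_*(C(b_1,\dots,b_n)/U)/V\text{-torsion}$, which is isomorphic to $\F[V]$ with top generator represented by $x_0$. Any $\F[V]$-module self-map of $\F[V]$ is multiplication by some polynomial $P(V)$; invertibility forces $P$ to be a nonzero constant. In particular, $[f(x_0)] = [x_0]$ in this quotient, so $\gr_V(f(x_0)) = \gr_V(x_0)$, forcing $c = 0$. (Alternatively, one could invoke Lemma~\ref{lem:localsupport} applied to the self-map $f$ with $a_i = b_i$ to deduce that the coefficient of $x_0$ in $f(x_0)$ is a nonzero element of bigrading $(0,c)$, and then observe that the only bigrading-homogeneous elements of $\cR$ compatible with $f$ being local have $c = 0$.)

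Once $f$ is known to be bigraded, the argument concludes quickly. The complex $C(b_1,\dots,b_n)$ splits as $\bigoplus_{i=0}^{n}\cR\cdot x_i$, and since $UV = 0$, each summand $\cR\cdot x_i$ has $\F$-basis $\{x_i,Ux_i,U^2x_i,\dots\}\cup\{Vx_i,V^2x_i,\dots\}$, all of whose elements lie in pairwise distinct bigradings. Therefore each bigraded component of $C(b_1,\dots,b_n)$ is a finite-dimensional $\F$-vector space, containing at most one basis element from each generator. Because $f$ is injective and bigrading-preserving, its restriction to each such component is an injective $\F$-linear endomorphism of a finite-dimensional vector space, hence automatically surjective. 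Assembling these surjectivities across all bigradings gives that $f$ is surjective, and thus an isomorphism.
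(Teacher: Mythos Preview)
Your proof is correct and follows essentially the same approach as the paper's: show $f$ is absolutely $V$-graded (hence bigraded), then apply injectivity from Lemma~\ref{lem:selflocalmapinjective} on each finite-dimensional bigraded piece to conclude surjectivity. The paper asserts the absolute $V$-grading as ``clear'' without elaboration, whereas you supply the actual argument via the induced $\F[V]$-module isomorphism on the $V$-tower; this is a welcome expansion rather than a different method.
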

\begin{proof}
Let $f$ be a local self-map of a standard complex $C$. It is clear that $f$ must be absolutely $V$-graded. Hence $f$ restricted to each bigrading is a linear map from a finite-dimensional $\F$-vector space to itself, which is injective by Lemma~\ref{lem:selflocalmapinjective}. (Note that $C$ is finitely generated.) It follows that $f$ is surjective.
\end{proof}

\noindent
Using Lemma~\ref{lem:selflocalmapisomorphism}, we now prove Corollary \ref{cor:splitting}:

\begin{proof}[Proof of Corollary \ref{cor:splitting}]
By Theorem \ref{thm:char}, for a knot-like complex $C$, we have local maps
\[ f \co C(a_1, \dots, a_n) \to C \quad  \text{ and } \quad g \co C \to C(a_1, \dots, a_n). \]
Then $g \circ f$ is a local map from $C(a_1, \dots, a_n)$ to itself, which is an isomorphism by Lemma \ref{lem:selflocalmapisomorphism}. It follows that the short exact sequence
\[ 0 \to C(a_1, \dots, a_n) \xrightarrow{f} C \to C / \Im f \to 0 \]
splits.
\end{proof}

We now turn to the proof of Proposition \ref{prop:aizero}. We begin with the following lemma.

\begin{lemma}\label{lem:ImUVnotmax}
Let $C$ be a knot-like complex and let $a_i = a_i(C)$. Suppose we have a short local map
\[ f \co C(a_1, \dots, a_n) \leadsto C. \]
Then $f(x_i)$ is not in $\Im (U, V)$ for any $0 \leq i \leq n$.\footnote{Note that $0$ is considered to be in $\Im (U, V)$.} In particular, $f(x_i) \neq 0$ for $0 \leq i \leq n$. 
\end{lemma}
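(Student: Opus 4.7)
I would argue by contradiction on the smallest index $i$ with $f(x_i) \in \Im(U, V)$, splitting into a base case $i = 0$ and an inductive case $i \ge 1$.

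For the base case $i = 0$, the contradiction is with locality rather than maximality. Writing $f(x_0) = U\alpha + V\beta$ and noting that $\partial x_0 \in \Im U$ in any standard complex (since the level-$1$ arrow is a $U$-arrow), the short-map relation gives $\partial f(x_0) \in \Im U$. Since $\cR = \F[U,V]/(UV)$ and $UC \cap VC = 0$ in a free $\cR$-module, expanding $\partial(V\beta) = V\partial_V\beta$ and comparing with the $U$-part forces $\partial_V\beta = 0$, so $\beta$ reduces to a cycle in $C/U$ with $V[\beta] = [f(x_0)]$. If $[\beta] \neq 0$ in $H_*(C/U)$, then since $V[\beta]$ is $V$-nontorsion so is $[\beta]$, placing it above the top of the $V$-tower and violating the $\F[V]$-generator normalization; if $[\beta] = 0$, then $[f(x_0)] = [V\beta] = 0$, contradicting the locality of $f$.

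For the inductive case $i \ge 1$, the minimality of $i$ gives $f(x_{i-1}) \notin \Im(U,V)$. I would write $f(x_i) = U\alpha + V\beta$ and apply the short-map relation $\partial f(x_{i-1}) = f\partial x_{i-1}$. The right-hand side contains a factor of $U^{|a_i|}$ or $V^{|a_i|}$ times $f(x_i)$ (determined by the parity of $i$ and the sign of $a_i$), and the relation $UV = 0$ annihilates one of $U\alpha$ or $V\beta$, leaving an identity of the form $\partial_U f(x_{i-1}) = U^{|a_i|+1}\alpha$ (or its $V$-analogue). Reading this backward, I would define a modified map $g$ on $\{x_0,\dots,x_i\}$ agreeing with $f$ on $x_0,\dots,x_{i-1}$ but replacing $f(x_i)$ by an appropriate element (such as $\alpha$, $\beta$, or $U\alpha$ depending on subcase), realizing a short map from a standard (or semistandard) complex with $i$-th entry $a_i' >^! a_i$ in the unusual order (typically $a_i - 1$ when $a_i < 0$ or $a_i > 1$, with sign adjustments at the boundary). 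Applying Lemma~\ref{lem:shortextension} (or Lemma~\ref{lem:shortextensionsemi} if $g$ factors through a semistandard complex) extends $g$ to a short local map from a full standard complex $C(a_1,\dots,a_{i-1},a_i',b'_{i+1},\dots,b'_m) \leadsto C$. Combined with Proposition~\ref{prop:sup}, this contradicts $a_i = \sup^!$.

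The main obstacle will be the case analysis for $i \ge 1$, which branches over the parity of $i$ (whether level $i$ is a $U$-arrow or $V$-arrow), the sign of $a_i$ (the direction of the arrow), and boundary subcases $a_i = \pm 1$ where the naive substitution $a_i - 1$ either crosses sign-regimes or hits $0$. These boundary subcases require careful passage through semistandard complexes to exploit the jump between negatives and positives in $<^!$. Throughout, one must verify that the modified $g$ satisfies the short-map relations at every intermediate index and still sends $x_0$ to a $V$-tower class so that the extension via Lemma~\ref{lem:shortextension} or Lemma~\ref{lem:shortextensionsemi} remains local.
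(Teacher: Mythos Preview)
Your proposal is correct and follows essentially the same strategy as the paper: take the minimal offending index, then either derive a contradiction with minimality (pushing the defect back to $x_{i-1}$) or construct a short local map from a complex with $i$th parameter $a_i' >^! a_i$ and invoke the extension lemma to contradict maximality of $a_i$.

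The one organizational difference worth noting: the paper does \emph{not} take the minimal $i$ with $f(x_i)\in\Im(U,V)$, but instead runs the argument twice, first for the minimal $j$ with $f(x_j)\in\Im U$, then (by symmetry) for the minimal $j$ with $f(x_j)\in\Im V$. This separation cleans up the case analysis considerably. For instance, when $j$ is even with $a_j<0$ and $f(x_j)=U\eta_j$, the paper can simply observe that $f(x_j)\equiv 0\bmod U$, so the restriction of $f$ to $x_0,\dots,x_{j-1}$ is already a genuine local map from the semistandard complex $C'(a_1,\dots,a_{j-1})$, and then Lemma~\ref{lem:semitostandard} finishes. In your combined setup with $f(x_i)=U\alpha+V\beta$ and $\beta\neq 0$, this shortcut fails (since $\d_V f(x_{i-1})=V^{|a_i|+1}\beta\neq 0$), and you are forced instead to set $g(x_i)=\beta$ and work with $C(a_1,\dots,a_i-1)$. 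That still works, but the paper's separation avoids having to track both $\alpha$ and $\beta$ simultaneously. A small imprecision in your writeup: depending on the sign of $a_i$, the relevant short-map relation is sometimes at $x_i$ rather than $x_{i-1}$ (e.g.\ for $a_i>0$ one uses $\d_U f(x_i)=U^{a_i}f(x_{i-1})$), but you clearly anticipate this in your discussion of the case breakdown.
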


\begin{proof}
We first show that $f(x_i) \notin \Im U$. We proceed by contradiction. Let $j = \min \{ i \mid f(x_i) \in \Im U \}$ be the minimal index for which $f(x_j) \in \Im U$, and let $f(x_j) = U \eta_j$. (Note that $\eta_j$ is allowed to be zero.) Since $f$ is local, we have that $f(x_0) \neq 0 \in H_*(C/U)$, so $j \neq 0$.

Suppose that $j$ is odd. If $a_j \neq 1$, define a local map 
\[ g \co C'(a_1, \dots, a_j -1) \leadsto C \]
by setting $g(x_i) = f(x_i)$ for $1\leq i < j$ and $g(x_j) = \eta_j$. By the extension lemma, $g$ extends to a local map. This contradicts the maximality of $a_j$, since $a_j-1 \gebang a_j$. If $a_j = 1$, we have that $\d_U x_j = U x_{j-1}$. Since $\d_U f(x_j) =f \d_U (x_j) = U f(x_{j-1})$, we have $\d_U \eta_j = f(x_{j-1})$. Since $C$ is reduced, it follows that $f(x_{j-1}) \in \Im U$, contradicting the minimality of $j$.

Now suppose that $j$ is even. Assume $a_j < 0$. Since $f(x_j) \equiv 0 \mod U$, it is easily checked that the restriction of $f$ gives a local map
\[ g \co C'(a_1, \dots, a_{j-1}) \to C. \]
Applying Lemma~\ref{lem:semitostandard} and then the extension lemma shows that this contradicts the maximality of $a_j$. Thus, we may assume $a_j > 0$. Then 
\[ V^{a_j} f(x_{j-1}) = \d_V f(x_j) = \d_V U \eta_j = 0. \]
This implies that $f(x_{j-1}) \in \Im U$, contradicting the minimality of $j$. 

The case $f(x_i) \notin \Im V$ is similar. Indeed, let $j = \min \{ i \mid f(x_i) \in \Im V \}$, and let $f(x_j) = V \eta_j$. (Note that $\eta_j$ is allowed to be zero.) Since $H_*(C/U)$ does not have any $V$-nontorsion classes of positive grading, it follows that $j \neq 0$. The remainder of the proof follows by interchanging the roles of $U$ and $V$ in the argument above.
\end{proof}

\noindent
Before proceeding, we will need the following technical result which will allow us to rule out when certain complexes are $n$-maximal. The reader may wish to postpone reading the proof of Lemma~\ref{lem:blebangc} until after seeing its utilization in the proof of Proposition~\ref{prop:aizero}.

\begin{lemma}\label{lem:blebangc}
Let
\[ f \co C(b_1, \dots, b_m) \leadsto C \quad \text{ and } \quad g \co C(c_1, \dots, c_n) \leadsto C \]
be short local maps from standard complexes to a knot-like complex $C$. Let $\{y_i\}_{i=1}^m$ and $\{x_i\}_{i=1}^n$ denote the standard bases for $C(b_1, \dots, b_m)$ and $C(c_1, \dots, c_n)$, respectively. Suppose that $f(y_m) = g(x_n)$, and we have the inequality of reversed sequences
\[
(b_m, \dots, b_1) \lebang (c_n, \dots, c_1)
\]
with respect to the lexicographic order on $\Zbang$-valued sequences. Then $C(c_1, \dots, c_n)$ is not $n$-maximal (with respect to $C$).
\end{lemma}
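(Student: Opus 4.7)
The goal is to produce from the data of $f$ and $g$ a short local map from a standard complex whose parameter sequence is strictly greater than $(c_1,\dots,c_n)$ in the lexicographic $\leqbang$-order at some position $\leq n$; by definition of the invariants $a_i(C)$, this forces $c_j \neq a_j(C)$ for some $j$, contradicting $n$-maximality. Let $k$ be the least index at which $b_{m-k+1}$ and $c_{n-k+1}$ differ (with the trailing-zero convention), so by assumption $b_{m-k+1} \lebang c_{n-k+1}$ and $b_{m-i+1} = c_{n-i+1}$ for $1 \leq i \leq k-1$. I plan to proceed by induction on $k$.

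The base case $k=1$ is the heart of the argument. Here the hypothesis $f(y_m) = g(x_n)$, combined with the short-map identities $\partial_V f(y_m) = f(\partial_V y_m)$ and $\partial_V g(x_n) = g(\partial_V x_n)$, together with $b_m \lebang c_n$, produces a relation of the form $V^a\bigl(g(x_{n-1}) - V^t f(y_{m-1})\bigr) = 0$ in $C$ (or an analogous $U$-statement, depending on the signs of $b_m$ and $c_n$), where the exponent $t$ is governed by the difference of the two last coordinates; the ring relation $UV=0$ kills the cross terms that would otherwise obstruct this. I then define a candidate short map $\tilde g \co C(c_1,\dots,c_{n-1},d) \leadsto C$ for a $\lebang$-strictly larger value $d$ by setting $\tilde g(x_i) = g(x_i)$ for $i < n$ and taking $\tilde g(x_n)$ to be $g(x_n)$ corrected by an appropriate combination of images of $f$, so that $\partial_V \tilde g(x_n)$ matches the prescribed relation in $C(c_1,\dots,c_{n-1},d)$. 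The bigrading matches because $f(y_m) = g(x_n)$ forces the cumulative $\gr_U$- and $\gr_V$-shifts along the two standard complexes to coincide at the common endpoint.

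For the inductive step $k > 1$, I use the matching equality $b_m = c_n$ to peel off the final generators of both standard complexes, restricting $f$ and $g$ to short local maps out of semistandard complexes of length $m-1$ and $n-1$ whose data satisfies the hypothesis for $k-1$. Applying the inductive hypothesis and then the extension lemmas (Lemmas \ref{lem:shortextension} and \ref{lem:shortextensionsemi}) repackages the output as a short local map from a standard complex whose sequence agrees with $(c_1,\dots,c_{n-k})$ on its prefix and is $\lebang$-strictly greater at position $n-k+1$. The main obstacle throughout is verifying the short-map condition on the candidate map $\tilde g$: one must check that the chosen corrected element $\tilde g(x_n)$ (or its analogue higher up the induction) has the right bigrading and the prescribed $\partial_V$, which reduces to a case analysis on signs of the relevant $b_i$ and $c_i$ and uses $UV=0$ essentially to cancel the cross contributions arising from $\partial_U f(y_{m-k+1})$ and $\partial_V g(x_{n-k+1})$.
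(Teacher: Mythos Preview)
Your inductive scheme has a genuine gap at the ``peel off'' step. When $b_m = c_n$ you want to restrict $f$ and $g$ to the first $m-1$ and $n-1$ generators and apply the inductive hypothesis. But the hypothesis of the lemma requires equality of the images of the \emph{final} generators, and $f(y_{m-1}) = g(x_{n-1})$ does not follow from $f(y_m) = g(x_n)$. For instance, if $b_m = c_n > 0$ you only obtain $V^{c_n}\bigl(f(y_{m-1}) + g(x_{n-1})\bigr) = 0$, which in a free $\cR$-module merely says $f(y_{m-1}) + g(x_{n-1}) \in \Im U$; if $b_m = c_n < 0$ you only obtain $\d_V\bigl(f(y_{m-1}) + g(x_{n-1})\bigr) = 0$. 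Neither gives the equality you need, so the induction does not close. Your base case is also underspecified: you assert the existence of a corrected $\tilde g(x_n)$ producing a strictly $\lebang$-larger parameter $d$, but the relation you derive does not furnish such an element (for example, when $0 < c_n < b_m$ there is no evident way to realize $V^d g(x_{n-1})$ as a $\d_V$-boundary for any $d < c_n$).

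The paper takes a quite different route. Rather than trying to improve the parameter $c_n$ directly, it constructs in one stroke a genuine local map $h \co C(c_1,\dots,c_n) \to C$ by setting $h(x_{n-i}) = g(x_{n-i}) + f(y_{m-i})$ along the matching tail $0 \leq i \leq \ell$ and $h(x_i) = g(x_i)$ otherwise, with a carefully chosen value at the transition index $n-\ell-1$. The point is that $h(x_n) = g(x_n) + f(y_m) = 0$, and then Lemma~\ref{lem:ImUVnotmax} (which you never invoke) immediately gives the contradiction: a short local map from an $n$-maximal complex cannot send any preferred generator into $\Im(U,V)$. The ``improvement'' arguments you are attempting are already packaged inside the proof of Lemma~\ref{lem:ImUVnotmax}; the role of the present lemma is only to manufacture a local map with a vanishing generator image, and the addition $g + f$ on the overlap does this cleanly without any induction.
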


\begin{proof}
Assume that the sequences $(b_m, \dots, b_1)$ and $(c_n, \dots, c_1)$ first differ in their $(\ell + 1)^\text{st}$ terms, so that $b_{m-i} = c_{n-i}$ for $0 \leq i < \ell$ and $b_{m-\ell} \lebang c_{n-\ell}$.\footnote{Here, $\ell \leq \min(m, n)$. Note that we allow $l = \min(m, n)$, with the convention that $b_0 = c_0 = 0$.} This means that the final $\ell + 1$ generators of $C(b_1, \dots, b_m)$ (and the arrows going between them) are isomorphic to the final $\ell + 1$ generators of $C(c_1, \dots, c_n)$. Our goal will be to define a new local map 
\[ h \co C(c_1, \dots, c_n) \to C \]
which has the property that $h(x_{n-i}) = g(x_{n-i}) + f(y_{m-i})$ for all $0 \leq i \leq \ell$. Since $f$ and $g$ are chain maps, it is evident that $h$ is a chain map, at least when restricted to the generators $x_{n-i}$ for $0 \leq i < \ell$. Below, we give the full verification and construction of $h$. In order to conclude the proof, we then note that $h(x_n) = g(x_n) + f(y_m) = 0$, and apply Lemma~\ref{lem:ImUVnotmax}.

We define $h$ on all generators except $x_{n - \ell - 1}$ as follows. Let
\begin{align}
\label{h1}&h(x_i) = g(x_i) \hspace{0.05cm} \ \ \qquad \qquad \qquad \qquad \text{for } 0 \leq i \leq n-\ell -2 \\
\label{h2}&h(x_{n-i}) = g(x_{n-i}) + f(y_{m-i}) \qquad \text{for }0 \leq i \leq \ell.
\end{align}
It is clear that the chain map condition $\d h = h \d$ holds for all generators $x_i$ with $i < n - \ell - 2$, as well as all generators with $i > n - \ell$. The main subtlety will thus be to define $h(x_{n - \ell - 1})$. We have the following casework:
\begin{equation}\label{h3}
h(x_{n-\ell-1}) = \begin{cases}
	g(x_{n-\ell-1}) + U^{b_{m-\ell}-c_{n-\ell}}f(y_{m-\ell-1}) &\text{if } c_{n-\ell}, b_{m-\ell} \text{ same sign} \\
	&\text{and }n-\ell \text{ odd}, \\
	g(x_{n-\ell-1}) + V^{b_{m-\ell}-c_{n-\ell}}f(y_{m-\ell-1}) &\text{if } c_{n-\ell}, b_{m-\ell} \text{ same sign} \\
	&\text{and }n-\ell \text{ even}, \\
	g(x_{n-\ell-1})  &\text{if } c_{n-\ell}, b_{m-\ell} \text{ different} \\
	&\text{signs}.
\end{cases}
\end{equation}
Here, we consider $b_0 = c_0 = 0$ to be of a different sign than either positive or negative. For the sake of concreteness, we explicitly describe $h$ in the two cases when $m < n$ and $n < m$. If $m < n$, then all three of (\ref{h1}), (\ref{h2}), and (\ref{h3}) are utilized when defining $h$. In particular, since $m$ and $n$ are both even and $\ell \leq \min(m, n)$, we have $n - \ell - 2 \geq 0$, and thus $h(x_0) = g(x_0)$. However, if $n < m$, then the form of $h$ may change slightly depending on the value of $\ell$. More precisely, if we are in the boundary case when $\ell = n$, then $h$ is defined on all generators by (\ref{h2}):
\begin{align*}
h(x_{n-i}) = g(x_{n-i}) + f(y_{m-i}) \qquad \qquad \text{for } 0 \leq i \leq n.
\end{align*}
Similarly, if $\ell = n - 1$, then only (\ref{h2}) and (\ref{h3}) are used:
\begin{align*}
&h(x_{n-i}) = g(x_{n-i}) + f(y_{m-i}) \qquad \qquad \text{for } 0 \leq i < n \\
&h(x_0) = g(x_0) + U^{b_{m-n+1}-c_{1}}f(y_{m-n}).
\end{align*}
Note that in all other cases, we again have $h(x_0) = g(x_0)$.

We now check that $h$ is a chain map. As in Section~\ref{sec:standards}, this consists of a number of technical but straightforward verifications. For simplicity, assume for the moment that $\ell < n - 1$. First consider the case when $n-\ell$ is odd. Note that this also implies $m - \ell$ is odd, so $m - \ell > 0$. It is clear that $\d_U h (x_{n-\ell-2}) = h \d_U (x_{n-\ell-2})$ and $\d_V h (x_{n-\ell}) = h \d_V (x_{n-\ell})$. For the remaining chain map conditions, we proceed with casework based on the signs of $c_{n-\ell-1}$ and $c_{n-\ell}$. First, we consider the possible signs of $c_{n-\ell-1}$ to verify that $h \d_V (x_{n-\ell-2}) = \d_V h (x_{n-\ell-2})$ and $h \d_V (x_{n-\ell-1}) = \d_V h (x_{n-\ell-1})$. We then consider the possible signs of $c_{n-\ell}$ to verify that $h \d_U (x_{n-\ell-1}) = \d_U h (x_{n-\ell-1})$ and $h \d_U (x_{n-\ell}) = \d_U h (x_{n-\ell})$.
\begin{enumerate}

\item Suppose $c_{n-\ell-1} < 0$. Then $\d_V x_{n-\ell-2} = V^{|c_{n-\ell-1}|} x_{n-\ell-1}$ and $\d_V x_{n-\ell-1} = 0$. Assume that $c_{n-\ell}$ and $b_{n-\ell}$ have the same sign. We compute
\begin{align*}
	h \d_V (x_{n-\ell-2}) &= V^{|c_{n-\ell-1}|} h(x_{n-\ell-1}) \phantom{\Big)} \\
		&= V^{|c_{n-\ell-1}|} \Big(g(x_{n-\ell-1}) + U^{b_{m-\ell}-c_{n-\ell}}f(y_{m-\ell-1})\Big) \\
		&= V^{|c_{n-\ell-1}|} g(x_{n-\ell-1}) \phantom{\Big)}\\
	\d_V h (x_{n-\ell-2}) &= \d_V g(x_{n-\ell-2}) \phantom{\Big)}\\
		&= g \d_V(x_{n-\ell-2}) \phantom{\Big)}\\
		&= V^{|c_{n-\ell-1}|} g( x_{n-\ell-1}). \phantom{\Big)}
\end{align*}
Similarly,
\begin{align*}
	h \d_V (x_{n-\ell-1}) &= 0 \phantom{\Big)} \\
	\d_V h (x_{n-\ell-1}) &= \d_V \Big(g(x_{n-\ell-1}) + U^{b_{m-\ell}-c_{n-\ell}}f(y_{m-\ell-1}) \Big) \\
		&= g\d_V(x_{n-\ell-1})  \phantom{\Big)} \\
		&= 0, \phantom{\Big)}
\end{align*}
as desired. If $c_{n-\ell}$ and $b_{m-\ell}$ have different signs, then the same computation holds, except that the $U^{b_{m-\ell}-c_{n-\ell}}f(y_{m-\ell-1})$ terms vanish.

\item Suppose $c_{n-\ell-1} > 0$. Then $\d_V x_{n-\ell-2} =0$ and $\d_V x_{n-\ell-1} = V^{c_{n-\ell-1}} x_{n-\ell-2}$. Assume that $c_{n-\ell}$ and $b_{n-\ell}$ have the same sign. We compute
\begin{align*}
	h \d_V (x_{n-\ell-2}) &= 0 \\
	\d_V h (x_{n-\ell-2}) &= \d_V g (x_{n-\ell-2}) = g \d_V (x_{n-\ell-2}) = 0.
\end{align*}
Similarly,
\begin{align*}
	h \d_V (x_{n-\ell-1}) &=  h(V^{c_{n-\ell-1}} x_{n-\ell-2}) = V^{c_{n-\ell-1}} g(x_{n-\ell-2}) \phantom{\Big)} \\
	\d_V h (x_{n-\ell-1}) &= \d_V \Big( g(x_{n-\ell-1}) + U^{b_{m-\ell}-c_{n-\ell}}f(y_{m-\ell-1}) \Big) \\
		&= g\d_V (x_{n-\ell-1}) \phantom{\Big)} \\
		&= V^{c_{n-\ell-1}} g(x_{n-\ell-2}), \phantom{\Big)}
\end{align*}
as desired. If $c_{n-\ell}$ and $b_{m-\ell}$ have different signs, then the same computation holds, except that the $U^{b_{m-\ell}-c_{n-\ell}}f(y_{m-\ell-1})$ terms vanish.

\item Suppose $c_{n-\ell} < 0$. Then $\d_U x_{n-\ell-1} = U^{|c_{n-\ell}|} x_{n-\ell}$ and $\d_U x_{n-\ell} = 0$. We compute
\begin{align*}
	h \d_U (x_{n-\ell-1}) &= h(U^{|c_{n-\ell}|} x_{n-\ell}) \\
		&= U^{|c_{n-\ell}|} \Big( g(x_{n-\ell}) + f(y_{m-\ell}) \Big) \\
	\d_U h (x_{n-\ell-1}) &= \d_U \Big( g(x_{n-\ell-1}) + U^{b_{m-\ell}-c_{n-\ell}}f(y_{m-\ell-1}) \Big) \\
		&= g\d_U(x_{n-\ell-1}) + U^{b_{m-\ell}-c_{n-\ell}}f \d_U (y_{m-\ell-1}) \phantom{\Big)} \\
		&=  U^{|c_{n-\ell}|}  g(x_{n-\ell}) + U^{b_{m-\ell}-c_{n-\ell}} f(U^{|b_{m-\ell}|} y_{m-\ell}) \phantom{\Big)} \\
		&= U^{|c_{n-\ell}|} \Big( g(x_{n-\ell}) + f(y_{m-\ell}) \Big).
\end{align*}
In the penultimate equality above, we are using the fact that $b_{m-\ell} \lebang c_{n-\ell} < 0$ to conclude that $\d_U (y_{m-\ell-1})=U^{|b_{m-\ell}|} y_{m-\ell}$; we use this again in the final equality to write $|b_{m-\ell}| = -b_{m-\ell}$ and $|c_{n-\ell}| = -c_{n-\ell}$. Similarly,
\begin{align*}
	h \d_U (x_{n-\ell}) &= 0 \phantom{\Big)} \\
	\d_U h (x_{n-\ell}) &= \d_U \Big( g(x_{n-\ell}) + f(y_{m-\ell}) \Big) = 0,
\end{align*}
where in the second equality above, we again use $b_{m-\ell} \lebang c_{n-\ell} < 0$.

\item Suppose $c_{n-\ell} > 0$. Then $\d_U x_{n-\ell-1} = 0$ and $\d_U x_{n-\ell} = U^{c_{n-\ell}} x_{n-\ell-1}$. We consider two further subcases, based on whether $b_{m-\ell} < 0$ or $b_{m-\ell} > 0$.
	\begin{enumerate} 
	
\item Suppose $b_{m-\ell} < 0$, so that $\d_U y_{m-\ell-1} = U^{|b_{m-\ell}|} y_{m-\ell}$ and $\d_U y_{m-\ell} = 0$. Then 
\begin{align*}
	h \d_U (x_{n-\ell-1}) &= 0 \\
	\d_U h (x_{n-\ell-1}) &= \d_U  g(x_{n-\ell-1}) = 0.
\end{align*}
Similarly,
\begin{align*}
	\qquad \qquad h \d_U (x_{n-\ell}) &=  h (U^{c_{n-\ell}} x_{n-\ell-1} ) = U^{c_{n-\ell}}  g(x_{n-\ell-1}) \\
	\d_U h (x_{n-\ell}) &= \d_U \Big(  g(x_{n-\ell}) + f(y_{m-\ell}) \Big) = U^{c_{n-\ell}} g(x_{n-\ell-1}),
\end{align*}
as desired.	

\item Suppose $b_{m-\ell} > 0$, so that $\d_U y_{m-\ell-1} = 0$ and $\d_U y_{m-\ell} = U^{b_{m-\ell}} y_{m-\ell-1}$. Then 
\begin{align*}
	\qquad \qquad h \d_U (x_{n-\ell-1}) &= 0 \\
	\qquad \qquad \d_U h (x_{n-\ell-1}) &= \d_U \Big( g(x_{n-\ell-1}) + U^{b_{m-\ell}-c_{n-\ell}}f(y_{m-\ell-1})  \Big) = 0.
\end{align*}
Similarly,
\begin{align*}
	\qquad h \d_U (x_{n-\ell}) &=  h (U^{c_{n-\ell}} x_{n-\ell-1} ) \\
		&= U^{c_{n-\ell}}  \Big( g(x_{n-\ell-1}) + U^{b_{m-\ell}-c_{n-\ell}}f(y_{m-\ell-1}) \Big) \\
		&= U^{c_{n-\ell}}  g(x_{n-\ell-1}) + U^{b_{m-\ell}} f(y_{m-\ell-1}) \\
	\qquad \d_U h (x_{n-\ell}) &= \d_U \Big(  g(x_{n-\ell}) + f(y_{m-\ell}) \Big) \\
		&= U^{c_{n-\ell}} g(x_{n-\ell-1}) + U^{b_{m-\ell}} f  (y_{m-\ell-1}),
\end{align*}
as desired.
\end{enumerate}

\end{enumerate}
This shows that $h$ is a chain map, at least when $\ell < n - 1$ and $n-\ell$ is odd. The proof when $n-\ell$ is even follows by interchanging the roles of $U$ and $V$. (There is a slight re-interpretation of Case (4) when $\ell = m$, which we leave to the reader.)

Finally, we consider the remaining cases when $\ell = n$ or $\ell = n - 1$. If $\ell = n$, then the only nontrivial check is to show that $\d h (x_0) = h \d (x_0)$. In this situation, we have $c_{m-n} \lebang b_0 = 0$. First, suppose that $c_{n - m + 1} = b_1 < 0$. Then
\begin{align*}
	\d h(x_0) & = \d f(x_0) + \d g(y_{n-m}) = U^{|b_1|} f(x_1) + U^{|c_{n-m+1}|} g(y_{n-m+1}) \\
	h \d (x_0) &= h(U^{|b_1|} x_1) = U^{|b_1|} h(x_1) =  U^{|b_1|} \Big( f(x_1) + g(y_{n-m+1}) \Big).
\end{align*}
The case $b_1 > 0$ is analogous. The situation when $\ell = n -1$ is similar in flavor, and we leave it to the reader.

We now claim that $h$ is a local map. If $\ell < n - 1$, then $h(x_0) = g(x_0)$, and so clearly $h$ is local. If $\ell = n - 1$, then $h(x_0) = g(x_0) + U^{b_{m-n+1} - c_1}f(y_{m-n})$. Hence $h(x_0)$ and $g(x_0)$ are equal in $C/U$, and $h$ is again local. Finally, if $\ell = n$, then $h(x_0) = g(x_0) + f(y_{m - n})$. Since $b_{m - n} \lebang c_0 = 0$, we have that $\d_V y_{m - n} = 0$ and $\d_V y_{m - n - 1} = V^{|b_{m-n}|} y_{m-n}$. Hence $f(y_{m-n})$ is a $V$-torsion cycle in $H_*(C/U)$. Since $g(x_0)$ generates $H_*(C/U)/V\text{-torsion}$, this shows that $h$ is local, as desired. 

By construction, $h(x_n) = f(y_m) + g(x_n) = 0$. Applying Lemma \ref{lem:ImUVnotmax}, we conclude that $C(c_1, \dots, c_n)$ is not $n$-maximal with respect to $C$. 
\end{proof}
\noindent
We are now ready to prove Proposition \ref{prop:aizero}:

\begin{proof}[Proof of Proposition \ref{prop:aizero}]
We proceed by contradiction. Suppose that $a_i \neq 0$ for all indices $i$. Let $n$ be very large. By Proposition \ref{prop:sup}, we have a short local map
\[ g \co C(a_1, \dots, a_n) \leadsto C. \]
Since $C$ is finitely generated, it follows from Lemma~\ref{lem:ImUVnotmax} that for $n$ sufficiently large, we must have $g(x_m) = g(x_n)$ for some $m < n$. Indeed, Lemma \ref{lem:ImUVnotmax} implies that the gradings of the $g(x_i)$ must lie in a bounded interval, since otherwise some $g(x_i)$ would be in $\Im U$ or $\Im V$. Hence $g(x_m) = g(x_n)$ for some $m < n$.


Consider the short local map
\[ f \co C(a_1, \dots, a_m) \leadsto C \]
obtained by restricting $g$. On one hand, $C(a_1, \dots, a_m)$ and $C(a_1, \dots, a_n)$ are evidently $m$- and $n$-maximal with respect to $C$. However, since $m \neq n$, we have that either $(a_m, \dots, a_1) \lebang (a_n, \dots, a_1)$ or  $(a_n, \dots, a_1) \lebang (a_m, \dots, a_1)$. Hence we may apply Lemma~\ref{lem:blebangc}, either with the maps $f$ and $g$, or vice-versa. This gives a contradiction.
\end{proof}

We now justify Remark~\ref{remark:absrel} and show that if $C_1$ and $C_2$ are locally equivalent via maps $f$ and $g$, then $f$ and $g$ take $U$-tower classes to $U$-tower classes:

\begin{lemma}\label{lem:loceqsymmetric}
Let $C_1$ and $C_2$ be knot-like complexes. Suppose $C_1$ and $C_2$ are locally equivalent via $f$ and $g$. Then $f$ and $g$ induce isomorphisms on $H_*(C_i/V)/U\text{-torsion}$.
\end{lemma}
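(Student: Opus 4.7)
The plan is to reduce the question to Lemma~\ref{lem:selflocalmapisomorphism} by splitting off a common standard complex summand from both $C_1$ and $C_2$. By Theorem~\ref{thm:char} and Corollary~\ref{cor:splitting}, each $C_i$ is homotopy equivalent to $S\oplus A_i$, where $S$ is the common standard representative: commonality is forced by Proposition~\ref{prop:lex} together with $C_1\sim C_2$, which demands the two standard representatives to agree in the lexicographic total order. I will first need to check that the summands $A_i$ contribute nothing to the nontorsion part of either $H_*(C_i/U)$ or $H_*(C_i/V)$. The first statement is automatic: $H_*(C_i/U)/V\text{-torsion}\cong\F[V]$ is already accounted for inside the $S$-summand by the generator $x_0$. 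For the second, the same counting argument, using the grading normalization of a standard complex (with $x_n$ representing the $U$-nontorsion class in $\gr_V=0$), shows that $H_*(A_i/V)$ is entirely $U$-torsion.

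With that in hand, the inclusions $\iota_i\colon S\hookrightarrow C_i$ and projections $\pi_i\colon C_i\twoheadrightarrow S$ coming from the splitting are absolutely bigraded, $\cR$-equivariant chain maps that send $V$-tower classes in $H_*(\cdot/U)$ to $V$-tower classes. In particular they are local in the sense of Definition~\ref{def:le}. Consequently the composition $h := \pi_2\circ f\circ\iota_1\colon S\to S$ is a local self-map of the standard complex $S$, and Lemma~\ref{lem:selflocalmapisomorphism} promotes it to a genuine isomorphism. The key point is that an isomorphism of $S$ automatically preserves $U$-tower information, even though the locality hypothesis only refers to $V$-towers.

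To finish, take a $U$-tower generator $\xi_S\in H_*(S/V)$. Decomposing $f(\iota_1\xi_S)\in H_*(C_2/V)$ along the splitting $H_*(C_2/V)\cong H_*(S/V)\oplus H_*(A_2/V)$, the $S$-component is precisely $h(\xi_S)$, which is a $U$-tower generator since $h$ is an isomorphism, while the $A_2$-component is automatically $U$-torsion by the first paragraph. Hence $f(\iota_1\xi_S)$ is a $U$-tower generator of $H_*(C_2/V)$, and since $f$ is $\cR$-equivariant and absolutely $U$-graded (and $H_*(C_i/V)/U\text{-torsion}\cong\F[U]$ is generated in $\gr_U = 0$), the induced map on $H_*(\cdot/V)/U\text{-torsion}$ is nonzero, hence an isomorphism. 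The symmetric argument applied to $g$ completes the proof. The main obstacle I anticipate is carefully justifying that the inclusion and projection maps from the Corollary~\ref{cor:splitting} splitting are themselves bigraded local maps with the required properties; once that bookkeeping is in place, the rigidity provided by Lemma~\ref{lem:selflocalmapisomorphism} does all the real work.
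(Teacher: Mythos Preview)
Your proof is correct and uses the same core idea as the paper: both arguments reduce to Lemma~\ref{lem:selflocalmapisomorphism} by manufacturing a local self-map of the standard representative $S$. The paper's execution is a bit more economical: rather than invoking the splitting of Corollary~\ref{cor:splitting} and chasing an explicit $U$-tower generator, it simply takes $C_1$ to be standard, notes that $g\circ f$ is then an isomorphism of $S$, and factors the induced map on $H_*(\,\cdot\,/V)/U\text{-torsion}$ as
\[
\F[U]\xrightarrow{\ f\ }\F[U]\xrightarrow{\ g\ }\F[U];
\]
since the composite is an isomorphism of $\F[U]$-modules, each factor must be as well. Your route via $\pi_2\circ f\circ\iota_1$ accomplishes the same thing with a little more bookkeeping. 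One small correction: your parenthetical that $H_*(C_i/V)/U\text{-torsion}$ is generated in $\gr_U=0$ is wrong---Definition~\ref{def:knotlike} places the $U$-tower in $\gr_V=0$, and indeed $\gr_U(x_n)=P(C)$ need not vanish---but this is harmless, since you have already shown that $f$ sends a module generator to a module generator, which is what forces the isomorphism.
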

\begin{proof}
By passing to the same local representative, we may assume that $C_1$ is a standard complex. Then $g \circ f$ is a local map from a standard complex to itself, which is an isomorphism by Lemma~\ref{lem:selflocalmapisomorphism}. In particular, $g \circ f$ induces an isomorphism from $H_*(C_1/V)/U\text{-torsion}$ to itself, factoring through the composition
\[
H_*(C_1/V)/U\text{-torsion} \xrightarrow{f} H_*(C_2/V)/U\text{-torsion} \xrightarrow{g} H_*(C_1/V)/U\text{-torsion}.
\]
Since each of the above terms consists of a single $U$-tower, it is clear that the induced maps must individually be isomorphisms.
\end{proof}

\noindent
Finally, we show that the standard complex associated to any knot is symmetric:

\begin{lemma}\label{lem:symmetric}
Let $K$ be a knot in $S^3$, and let $C = C(a_1, \ldots, a_n)$ be the standard complex representative of $\CFKUV(K)$. Then $C$ is symmetric.
\end{lemma}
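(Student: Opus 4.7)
The plan is to exploit the $UV$-symmetry of the knot Floer complex recorded in Section~\ref{background}, namely $\CFKUV(K) \simeq \overline{\CFKUV}(K)$, where $\overline{\CFKUV}(K)$ is obtained by interchanging the roles of $U$ and $V$ (and correspondingly $\gr_U$ and $\gr_V$). Since homotopy equivalent $\cR$-complexes are locally equivalent, $\overline{\CFKUV}(K)$ is locally equivalent to $\overline{C(a_1, \ldots, a_n)}$. So it suffices to identify $\overline{C(a_1, \ldots, a_n)}$ as a standard complex and then appeal to uniqueness of standard representatives.

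First I would compute the standard form of $\overline{C(a_1, \ldots, a_n)}$ explicitly. I reindex the preferred basis by $x'_i = x_{n-i}$. Under this reindexing, the arrow between $x'_{i-1}$ and $x'_i$ is (geometrically) the arrow that originally connected $x_{n-i+1}$ and $x_{n-i}$. Swapping $U$ and $V$ converts a $U$-arrow into a $V$-arrow and vice versa, while the reindexing flips the parity of the position: these two effects cancel, so the arrow at position $i$ in the $x'$-basis is of the correct type ($U$-arrow for $i$ odd, $V$-arrow for $i$ even). The magnitude of the exponent is unchanged, so $|b_i| = |a_{n-i+1}|$. For the sign, note that the geometric direction of the arrow is unaffected by swapping $U$ and $V$, but the sign convention of Definition~\ref{def:standard} is stated in terms of the indices of the source and target, which get reversed under $x'_i = x_{n-i}$. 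A short case analysis (say, on whether $a_{n-i+1}$ is positive or negative) then gives $b_i = -a_{n-i+1}$ in both cases. Hence
\[
\overline{C(a_1, \ldots, a_n)} \;\cong\; C(-a_n,\, -a_{n-1},\, \ldots,\, -a_1).
\]

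The final step is to invoke uniqueness. Combining the local equivalences $\CFKUV(K) \sim C(a_1,\ldots,a_n)$ and $\overline{\CFKUV}(K) \sim C(-a_n,\ldots,-a_1)$ with $\CFKUV(K) \simeq \overline{\CFKUV}(K)$ yields
\[
C(a_1, \ldots, a_n) \;\sim\; C(-a_n, \ldots, -a_1).
\]
By Proposition~\ref{prop:lex}, the total order $\leq$ on $\KL$ restricts to the (strict) lexicographic order on standard complexes, so two locally equivalent standard complexes must have identical defining sequences. Therefore $(a_1, \ldots, a_n) = (-a_n, \ldots, -a_1)$, which is exactly the symmetry $a_i = -a_{n+1-i}$.

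The only real obstacle is the bookkeeping in the second paragraph: getting the sign convention right in the identification $\overline{C(a_1, \ldots, a_n)} \cong C(-a_n, \ldots, -a_1)$. This is routine casework from Definition~\ref{def:standard} rather than a genuine difficulty; everything else is a direct appeal to results already proved in Sections~\ref{sec:knotlike}--\ref{sec:char}.
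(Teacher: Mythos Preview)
Your overall strategy matches the paper's, and your computation that $\overline{C(a_1,\ldots,a_n)} \cong C(-a_n,\ldots,-a_1)$ is correct. But there is a genuine gap in the first paragraph: you assert that $\overline{\CFKUV}(K)$ is locally equivalent to $\overline{C(a_1,\ldots,a_n)}$, and the justification you give (``homotopy equivalent $\cR$-complexes are locally equivalent'') does not establish this. What you actually need is that applying the bar operation to the local equivalence $\CFKUV(K)\sim C$ yields a local equivalence $\overline{\CFKUV}(K)\sim\overline{C}$. This is not automatic from Definition~\ref{def:le}, because that definition is \emph{asymmetric} in $U$ and $V$: a local map is only required to induce an isomorphism on $H_*(-/U)/V\text{-torsion}$. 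After swapping $U$ and $V$, the barred maps induce isomorphisms on $H_*(-/V)/U\text{-torsion}$, which is a priori a different condition.

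The paper handles exactly this point via Lemma~\ref{lem:loceqsymmetric}, which shows that local maps automatically induce isomorphisms on $H_*(-/V)/U\text{-torsion}$ as well; once that is in hand, local equivalence is genuinely $U\leftrightarrow V$ symmetric and your argument goes through. An alternative fix you could use is Corollary~\ref{cor:splitting}: since $\CFKUV(K)\simeq C(a_1,\ldots,a_n)\oplus A$ as $\cR$-complexes, applying bar gives an honest homotopy equivalence $\overline{\CFKUV}(K)\simeq \overline{C}\oplus\overline{A}$, and the summand $\overline{A}$ still has trivial localized homology, so $\overline{\CFKUV}(K)\sim\overline{C}$ follows directly. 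Either way, you should cite one of these results rather than the statement about homotopy equivalence.
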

\begin{proof}
Given Lemma~\ref{lem:loceqsymmetric}, it is clear that the definition of local equivalence is in fact completely symmetric with respect to interchanging the roles of $U$ and $V$. That is, we may require the maps $f$ and $g$ in Definition~\ref{def:le} to be absolutely $U$- and $V$-graded, and induce isomorphisms on both $H_*(C_i/U)/V\text{-torsion}$ and $H_*(C_i/V)/U\text{-torsion}$. Now suppose that $f$ and $g$ are such local equivalences between $C$ and $\CFKUV(K)$. Then it is not hard to see that we have local equivalences between these two complexes with the roles of $U$ and $V$ reversed; i.e., 
\[
\overline{C} \sim \overline{\CFKUV}(K).
\]
However, we already know that $\CFKUV(K)$ is homotopy equivalent to $\overline{\CFKUV}(K)$, so $C \sim \overline{C}$. It is easily checked that passing from $C$ to $\overline{C}$ reverses the order of the standard complex parameters, showing that $C$ is symmetric, as desired.
\end{proof}
\section{Homomorphisms}\label{sec:homs}
In this section, we construct an infinite family of linearly independent homomorphisms from $\KL$ to $\Z$.

\subsection{Some $\Z$-valued homomorphisms}

We begin with the following definition.

\begin{definition}\label{def:homs}
Let $C = C(a_1, \dots, a_n)$ be a standard complex. Define
\[ \varphi_j(C) = \# \{ a_i \mid a_i = j, i \text{ odd} \} -  \# \{ a_i \mid a_i = - j, i \text{ odd} \}. \]
That is, $\varphi_j(C)$ is the signed count of the number of times that $j$ appears as an odd parameter $a_{2k+1}$. Equivalently, $\varphi_j(C)$ is the signed count of horizontal arrows of length $j$. If $C$ is any knot-like complex, then we define $\varphi_j(C)$ by passing to the standard complex representative of $C$ afforded by Theorem~\ref{thm:char}.
\end{definition}
\noindent
The goal of this section is to prove the following theorem.

\begin{theorem}\label{thm:homs}
For each $j \in \N$, the function
\[ \varphi_j \co \KL \to \Z \]
is a homomorphism.
\end{theorem}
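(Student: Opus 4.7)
The plan is to establish additivity: $\varphi_j(C_1 \otimes C_2) = \varphi_j(C_1) + \varphi_j(C_2)$ for all knot-like complexes $C_1, C_2$ and all $j \in \N$. Since $\varphi_j$ is defined via the standard complex representative, which is unique by Theorem~\ref{thm:localequivlex}, $\varphi_j$ is well-defined as a function on $\KL$. Moreover, tensor product descends to an operation on $\KL$, so I may first replace $C_1$ and $C_2$ by their standard representatives $C(a_1, \ldots, a_m)$ and $C(b_1, \ldots, b_n)$ given by Theorem~\ref{thm:char}. It therefore suffices to compute $\varphi_j$ of the tensor product of two standard complexes.

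The heart of the argument is to analyze the standard complex representative $C(c_1, \ldots, c_r)$ of $C(a_1, \ldots, a_m) \otimes C(b_1, \ldots, b_n)$, and show that for every $j$,
\[
\#\{i \text{ odd} : c_i = j\} - \#\{i \text{ odd} : c_i = -j\} = \varphi_j(C_1) + \varphi_j(C_2).
\]
To do this, I would construct an explicit short local map from a candidate standard complex $C(c_1, \ldots, c_r)$ \emph{into} $C(a_1, \ldots, a_m) \otimes C(b_1, \ldots, b_n)$, with the candidate's odd-indexed parameters built by combining the odd-indexed parameters of the two factors (with their signs). Using the extension lemma (Lemma~\ref{lem:shortextension}) together with Proposition~\ref{prop:sup}, this gives a lower bound (in the $\bang$-order) on the true standard representative. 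Dualizing the construction via Lemma~\ref{lem:dual} (which flips the signs of all parameters) and using the same procedure on $C_1^\vee \otimes C_2^\vee$ gives a matching upper bound. The two bounds together pin down the signed count of each odd-indexed value $\pm j$ in the true standard representative of $C_1 \otimes C_2$.

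The main obstacle is the combinatorial construction of the short local map realizing the expected sequence of odd parameters in the tensor product. I expect to proceed by induction on $m + n$, with the base case handled trivially (one factor being $\cR$). For the inductive step, one partitions into cases based on the signs of $a_1$ and $b_1$: in each case, the first generator of the tensor product's standard representative is $x_0 \otimes y_0$, and the arrow structure emanating from it is inherited from whichever of $C(a_1, \ldots, a_m)$, $C(b_1, \ldots, b_n)$ is currently ``larger'' in the $\bang$-order. Peeling off the leading generators reduces to a tensor product of shorter standard complexes. Throughout, one uses the fact that $UV = 0$ ensures that no spurious horizontal arrows are created by interaction between the two factors — every horizontal arrow in $C(a_1, \ldots, a_m) \otimes C(b_1, \ldots, b_n)$ comes from a horizontal arrow in one factor tensored with a $U$-tower generator of the other — so the signed count of odd-indexed $\pm j$'s adds exactly.
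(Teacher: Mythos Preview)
Your proposal has a genuine gap. The core of your plan is to directly analyze the standard representative of $C(a_1,\dots,a_m)\otimes C(b_1,\dots,b_n)$ well enough to read off the signed count of each $\pm j$ among its odd parameters. The paper explicitly avoids this route, noting (just before the proof of Theorem~\ref{thm:homs}) that ``we do not currently have an explicit description of the group law on $\KL$ in terms of the standard complex parameters''; Example~\ref{ex:C22C11} shows $C(2,-2)\otimes C(1,-1)\sim C(1,-1,2,1,-1,-2,1,-1)$, which already indicates that a simple ``concatenate/merge'' picture fails.

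More concretely, two of your key assertions are incorrect. First, the claim that ``every horizontal arrow in $C(a_1,\dots,a_m)\otimes C(b_1,\dots,b_n)$ comes from a horizontal arrow in one factor tensored with a $U$-tower generator of the other'' is false: by the Leibniz rule $\d_U(x\otimes y)=\d_U x\otimes y + x\otimes \d_U y$, so horizontal arrows in the tensor product involve \emph{all} generators of the other factor, not just the $U$-tower generator. Second, your inductive ``peeling off'' scheme is not well-posed: removing leading generators from a standard complex produces a semistandard complex, and its tensor with the other factor is no longer a knot-like complex, so the invariants $a_i$ and the notion of local map are not available. Finally, even granting upper and lower bounds on the standard representative in the $\lebang$-lexicographic order, such bounds do not determine the $\varphi_j$: two distinct sequences lying between the same bounds can have different signed counts of odd parameters equal to $\pm j$. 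You would need to pin down the representative exactly, which is precisely the unknown group law.

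The paper's proof sidesteps all of this by introducing two auxiliary homomorphisms whose additivity is easier to establish: the grading invariant $P$ (Definition~\ref{def:P}, Lemma~\ref{lem:Pgrading}) and the shift endomorphisms $\shift_m$ (Definition~\ref{def:shift}, Theorem~\ref{thm:shift}). The identity $P(\shift_m(C))-P(C)=-2\sum_{j\geq m}\varphi_j(C)$ then yields each $\varphi_j$ by a downward induction. The hard work is Theorem~\ref{thm:shift} (that $\shift_m$ is a homomorphism), proved by constructing an \emph{almost chain map} between shifted complexes and then correcting it to a genuine local map via Lemma~\ref{lem:almosttolocal}.
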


\noindent
Note that the product of two standard complexes is not a standard complex. Thus, to compute $\varphi_j(C_1 \otimes C_2)$ directly, we would first have to determine the standard complex representative of $C_1 \otimes C_2$. However, it turns out that we do not currently have an explicit description of the group law on $\KL$ in terms of the standard complex parameters (see Section~\ref{sec:furtherremarks}). Instead, we prove Theorem~\ref{thm:homs} by expressing each $\varphi_j$ as a linear combination of other auxiliary homomorphisms. The construction of these (and the proof that they are additive) will occupy our attention for the next two subsections.

Before proceeding, we show that Theorem~\ref{thm:main} follows readily from Theorem~\ref{thm:homs}:

\begin{proof}[Proof of Theorem \ref{thm:main}]
By Theorem \ref{thm:concle} and the behavior of $\CFKUV(K)$ under connected sum, we have a homomorphism
\[ \cC \to \KL \]
sending $[K]$ to $[\CFKUV(K)]$. Now compose with $\varphi_j$. (We henceforth abuse notation slightly and also refer to the composition $\cC \to \KL \to \Z$ as $\varphi_j$.) Surjectivity of
\[ \bigoplus_{j=1}^\infty  \varphi_j \co \cC \to \bigoplus_{j=1}^\infty  \Z \]
follows from the observation that $\varphi_j(T_{i+1, i+2}\#-T_{i, i+1}) =\delta_{ij}$ (see Example \ref{ex:torusknots}), or alternatively by considering the knots in Proposition \ref{prop:cablesofD}.
\end{proof}

We now introduce the first of our auxiliary homomorphisms:

\begin{definition}\label{def:P}
Let $C$ be a knot-like complex and let $C(a_1, \dots, a_n)$ be the standard complex representative of $C$ given by Theorem \ref{thm:char}. Define
\[ P(C) = -2\sum_{j > 0} j \varphi_j(C) + \sum_{i=1}^n \sgn a_{i}. \]
\end{definition}

\noindent
It is clear that $\varphi_j$ is an invariant of the local equivalence class of $C$. To see that $P$ is a homomorphism, we use the following alternative definition.

\begin{lemma}\label{lem:Pgrading}
The integer $P(C)$ is equal to the $U$-grading of a $U$-tower generator. 
\end{lemma}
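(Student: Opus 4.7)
The plan is to reduce to the case where $C$ is already a standard complex $C(a_1, \dots, a_n)$ and then directly compute $\gr_U(x_n)$ by walking from $x_0$ to $x_n$. For the reduction: since $P(C)$ is defined in terms of the standard complex representative, and since all $U$-tower generators of a given knot-like complex lie in the same $U$-grading ($V$ preserves $\gr_U$, and the $U$-tower in $H_*(C/V)/U$-torsion is a single copy of $\F[U]$), it suffices to show that $P(C(a_1,\dots,a_n))$ equals $\gr_U$ of a $U$-tower generator inside the standard complex itself. By Lemma~\ref{lem:loceqsymmetric}, local equivalences are absolutely graded with respect to both $U$ and $V$ and preserve both the $U$- and $V$-tower generators, so this is consistent with the value one would get working in any locally equivalent representative.

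Next I would identify $x_n$ as a $U$-tower generator of $C(a_1,\dots,a_n)$. In the quotient $C(a_1,\dots,a_n)/V$, every $V$-arrow vanishes, so the only remaining differentials are the $U$-arrows that join pairs $(x_{i-1},x_i)$ with $i$ odd. Because $n$ is even, $x_n$ is not the endpoint of any such arrow, so $[x_n]$ is a $U$-nontorsion cycle in $H_*(C/V)$. By Definition~\ref{def:standard} we have $\gr_V(x_n) = 0$, which matches the normalization of Definition~\ref{def:knotlike}, so $x_n$ is a legitimate $U$-tower generator with the correct absolute grading.

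Finally, I would compute $\gr_U(x_n)$ by telescoping. Starting from $\gr_U(x_0)=0$ and using that $\d$ has bidegree $(-1,-1)$, $U$ has $\gr_U = -2$, and $V$ has $\gr_U = 0$, a one-line case analysis on each arrow (split according to whether the arrow points from $x_i$ to $x_{i-1}$ or vice-versa) gives
\[
\gr_U(x_i) - \gr_U(x_{i-1}) = \begin{cases} -2a_i + \sgn(a_i) & \text{if } i \text{ odd,} \\ \sgn(a_i) & \text{if } i \text{ even.} \end{cases}
\]
Summing telescopically yields
\[
\gr_U(x_n) = -2\!\!\sum_{i \text{ odd}} a_i + \sum_{i=1}^n \sgn(a_i).
\]
The lemma then follows from the identity $\sum_{i\text{ odd}} a_i = \sum_{j>0} j\,\varphi_j(C)$, which is immediate from Definition~\ref{def:homs} after regrouping the odd-indexed $a_i$'s by value. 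The main obstacle is purely bookkeeping: getting the signs right in the case split for the odd $i$ step, where the direction of the $U^{|a_i|}$-arrow flips depending on the sign of $a_i$ and this flip is precisely what produces the $\sgn(a_i)$ correction term.
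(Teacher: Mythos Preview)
Your proof is correct and follows essentially the same approach as the paper: reduce to the standard complex, identify $x_n$ as a $U$-tower generator, and compute $\gr_U(x_n)$. The only minor difference is that the paper invokes Corollary~\ref{cor:splitting} (so that $x_n$ literally sits inside $C$ as a summand) for the reduction, whereas you use Lemma~\ref{lem:loceqsymmetric} to argue that the $U$-grading of a $U$-tower generator is a local equivalence invariant; both are valid, and your explicit telescoping computation is exactly the ``straightforward computation'' the paper leaves to the reader.
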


\begin{proof}
By Corollary \ref{cor:splitting}, $C$ is homotopy equivalent to $C(a_1, \dots, a_n) \oplus A$, where $a_i = a_i(C)$ and $A$ is some $\cR$-complex. Since $C$ is a knot-like complex, $U^{-1}H_*(C) \cong \F[U, U^{-1}]$, and so the $U$-nontorsion classes in $C$ are supported by the standard summand $C(a_1, \dots, a_n)$. It is then clear that $x_n$ is a $U$-tower generator in $C$. A straightforward computation shows that $\gr_U(x_n)$ is given by the expression in Definition~\ref{def:P}.
\end{proof}

\noindent
Given this, we immediately have:

\begin{proposition}
The function $P \co \KL \to 2\Z$ is a surjective homomorphism.
\end{proposition}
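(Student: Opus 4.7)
The plan is to leverage Lemma~\ref{lem:Pgrading} to reduce the problem to the (much easier) statement that the $U$-grading of a $U$-tower generator is additive under tensor product, and then exhibit one explicit standard complex to establish surjectivity.

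First, I will show that $P$ is a well-defined homomorphism. Well-definedness (invariance under local equivalence) is immediate from Lemma~\ref{lem:Pgrading}, since the $U$-grading of a $U$-tower generator is preserved by local equivalence: if $f\co C_1 \to C_2$ is an absolutely $U$-graded local map, then by Lemma~\ref{lem:loceqsymmetric} it sends $U$-tower classes to $U$-tower classes, so the $U$-gradings of the $U$-tower generators agree. For additivity, let $C_1$ and $C_2$ be knot-like complexes and let $x_1 \in C_1$, $x_2 \in C_2$ be $U$-tower generators. Then $x_1 \otimes x_2 \in C_1 \otimes_\cR C_2$ is a $U$-tower generator: after quotienting by $V$, we have $H_*((C_1 \otimes C_2)/V) \cong H_*(C_1/V) \otimes_{\F[U]} H_*(C_2/V)$, and modding out by $U$-torsion sends this to $\F[U]$, with generator $[x_1] \otimes [x_2]$. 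Since $\gr_U$ is additive under tensor product (the $U$-action has degree $(-2,0)$ in both factors), we conclude
\[
P(C_1 \otimes C_2) = \gr_U(x_1 \otimes x_2) = \gr_U(x_1) + \gr_U(x_2) = P(C_1) + P(C_2).
\]

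Next, I will check that $P$ takes values in $2\Z$. From the formula in Definition~\ref{def:P}, the first term $-2\sum_j j\varphi_j(C)$ is manifestly even. The second term $\sum_{i=1}^n \sgn a_i$ is a sum of $n$ terms each equal to $\pm 1$, with $n$ even, and hence is also even. (Alternatively, this follows from Lemma~\ref{lem:Pgrading} and the fact that the generators $x_i$ of a standard complex satisfy $\gr_U(x_i) \equiv i \pmod 2$, so $\gr_U(x_n)$ is even.)

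Finally, surjectivity of $P \co \KL \to 2\Z$ follows by exhibiting one element of $\KL$ with $P = 2$. Consider $C(-1,1)$: here $\varphi_1(C(-1,1)) = -1$ (the odd-indexed parameter $a_1 = -1$) and $\sum_i \sgn a_i = -1 + 1 = 0$, so
\[
P(C(-1,1)) = -2 \cdot 1 \cdot (-1) + 0 = 2.
\]
Since $\Im P$ is a subgroup of $2\Z$ containing $2$, it equals $2\Z$. The main obstacle here was really just organizing Lemma~\ref{lem:Pgrading} correctly; once one recognizes that $P$ is intrinsically a grading, additivity is formal and surjectivity is a one-line computation.
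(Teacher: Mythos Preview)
Your proof is correct and follows essentially the same approach as the paper: both use Lemma~\ref{lem:Pgrading} to identify $P$ with the $U$-grading of a $U$-tower generator, deduce additivity from the K\"unneth formula, and exhibit a single standard complex for surjectivity (you use $C(-1,1)$ with $P=2$, the paper uses $C(1,-1)$ with $P=-2$). You include more detail than the paper's two-line proof---the explicit check that $P$ lands in $2\Z$ and the well-definedness argument via Lemma~\ref{lem:loceqsymmetric}---but these are reasonable elaborations of points the paper leaves implicit.
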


\begin{proof}
The fact that $P$ is a homomorphism follows from the K\"unneth formula. To see that $P$ is surjective, we observe that $P(C(1, -1)) = -2$.
\end{proof}

\noindent
Before proceeding, we show  Proposition \ref{prop:tau} from the introduction:
\proptau*

\begin{proof}[Proof of Proposition \ref{prop:tau}]
It is sufficient to consider the local equivalence class of $\CFKUV(K)$. Let $C=C(a_1, \dots, a_n)$ be the local equivalence class of $\CFKUV(K)$. Then $C$ is symmetric, so $\sum_{i=1}^n \sgn a_i =0$ and $P(C) = \gr_U(x_n) = -2\tau(K)$.
\end{proof}

\subsection{Shift homomorphisms} We now introduce an auxiliary family of endomorphisms $\shift_m \co \KL \rightarrow \KL$ for $m \in \N$. Composing these with $P$, we obtain an infinite sequence of homomorphisms $P \circ \shift_m: \KL \rightarrow 2\Z$. In the next subsection, we show that the $\varphi_j$ are certain linear combinations of the $P \circ \shift_m$ (divided by two). Our present goal will be to define the $\shift_m$ and show that they are additive. This will be the most technical part of the argument, and will require the introduction of several auxiliary definitions.
\begin{definition}\label{def:shift}
Let $C=C(a_1, \dots, a_n)$ be a standard complex. Let $\shift_m(C)$ be the standard complex given by
\[ \shift_m (C) =  C(a_1', \dots, a_n'), \]
where
\[ a_i' = 
\begin{cases}
	a_i +1 &\text{ if } a_i \geq m \\
	a_i -1 &\text{ if } a_i \leq -m \\
	a_i  &\text{ if } |a_i| < m \\  
\end{cases}
\]
\noindent
That is, $\shift_m$ fixes $U^n$- and $V^n$-arrows for $n < m$ and takes $U^n$- and $V^n$-arrows to $U^{n+1}$- and $V^{n+1}$-arrows respectively for $n \geq m$.
\end{definition}
\noindent
The majority of this subsection will be devoted to proving the following theorem.

\begin{theorem}\label{thm:shift}
For all $m \geq 1$, the function $\shift_m \co \KL \to \KL$ is a homomorphism, that is, for knot-like complexes $C_1$ and $C_2$, we have the local equivalence
\[ \shift_m(C_1 \otimes C_2) \sim \shift_m(C_1) \otimes \shift_m(C_2). \]
\end{theorem}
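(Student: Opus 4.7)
The plan is to lift $\shift_m$ from an operation defined only on standard complexes to a chain-level operation on arbitrary reduced knot-like complexes, show that this lift descends to a well-defined endomorphism of $\KL$ agreeing with Definition~\ref{def:shift}, and then verify the additivity identity at the chain level, exploiting the relation $UV=0$ to decouple $U$- and $V$-arrows.

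First I would define a chain-level lift $\widetilde{\shift_m}$ on any reduced knot-like complex $C$. Writing $\d = \d_U + \d_V$ in matrix form with respect to a chosen $\cR$-basis, every nonzero entry is a monomial $U^k$ or $V^k$. Replace each such entry with $U^{k+1}$ (resp.\ $V^{k+1}$) whenever $k \geq m$, and adjust the bigradings of the affected generators so that $\d$ retains degree $(-1,-1)$ and the normalization of Definition~\ref{def:knotlike} is preserved. The identity $\d^2 = 0$ is maintained because $UV=0$ prevents any $U$-arrow from composing with a $V$-arrow, so the shift need only be checked separately on the chain structures carried by $\d_U$ and $\d_V$, where it manifestly commutes with the cancellation structure. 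Where the naive substitution leaves residual $\d^2$ terms (e.g.\ from basis choices that mix the two types of arrows), one may need to perform a basis change via Lemma~\ref{lem:reduced} first, or introduce auxiliary acyclic summands; alternatively, one can ``pad'' each long arrow by inserting an auxiliary generator $z$ so that an arrow $\d x = U^k y$ with $k \geq m$ is replaced by $\d x = U^{k-m+1} z$ and $\d z = U^m y$, which mechanically realizes the shift.

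Second, I would check agreement with Definition~\ref{def:shift} on standard complexes (essentially tautological, since the matrix entries for standard complexes are already of the form $U^{|b_i|}$ or $V^{|b_i|}$ in the preferred basis) and that $\widetilde{\shift_m}$ respects local equivalence. For the latter, observe that an $\cR$-equivariant local map $f\co C_1 \to C_2$, written in matrix form with respect to bases of $C_1$ and $C_2$, has monomial entries; applying the same substitution to these entries yields $\widetilde{\shift_m}(f)$, which one verifies is again a chain map and remains local because it fixes the top of the $V$-tower (of $U$-grading zero, which is not shifted). Together these establish that $\shift_m$ descends to a well-defined map on $\KL$ that coincides with Definition~\ref{def:shift} on standard representatives.

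The main obstacle, as expected, is the tensor product identity. The key observation is that the tensor product differential $\d(x \otimes y) = \d x \otimes y + x \otimes \d y$ produces $U$- and $V$-arrows each of which is inherited singly from one of the two factors; there is no ``compound'' arrow whose exponent combines contributions from both $C_1$ and $C_2$. Consequently, applying $\widetilde{\shift_m}$ entry by entry to $C_1 \otimes C_2$ coincides, as a bigraded $\cR$-complex, with $\widetilde{\shift_m}(C_1) \otimes \widetilde{\shift_m}(C_2)$, provided one checks that the grading shifts assigned to the two factors add correctly in the tensor product (this reduces to additivity of the normalizations). Combining with Theorem~\ref{thm:char} to pass back to standard representatives then yields the desired local equivalence $\shift_m(C_1 \otimes C_2) \sim \shift_m(C_1) \otimes \shift_m(C_2)$. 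The subtlest point in the whole argument is ensuring that the chain-level lift and its grading adjustments are independent of basis choices up to local equivalence; this is handled by showing that two bases related by an invertible $\cR$-linear change yield shifted complexes connected by an induced invertible map, which can be verified by an entry-by-entry comparison of matrix substitutions.
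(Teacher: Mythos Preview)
Your approach has a genuine gap: the chain-level lift $\widetilde{\shift_m}$ you propose cannot be defined on general reduced knot-like complexes, and in particular not on tensor products of standard complexes. The problem is the grading adjustment. Suppose a generator $x$ has $\d_U x = U^a y + U^b z$ with $a < m \leq b$. After your substitution this becomes $U^a y + U^{b+1} z$, which forces $\gr_U(z)$ to shift by $2$ relative to $\gr_U(y)$; but $y$ and $z$ may be linked elsewhere in the complex (for instance by $V$-arrows, or by other $U$-arrows to a common target) in a way that pins down their relative grading. This exact configuration occurs in $C_1\otimes C_2$: for $U$-paired generators $y_i,z_i$ in $C_1$ and $y_j,z_j$ in $C_2$ one has $\d_U(y_i\otimes y_j)=U^{|a_{2i-1}|}(z_i\otimes y_j)+U^{|b_{2j-1}|}(y_i\otimes z_j)$, and when $|a_{2i-1}|<m\le |b_{2j-1}|$ your entry-wise substitution produces an ungraded object, not a bigraded chain complex. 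So the central claim that $\widetilde{\shift_m}(C_1\otimes C_2)$ coincides with $\widetilde{\shift_m}(C_1)\otimes\widetilde{\shift_m}(C_2)$ as bigraded $\cR$-complexes fails before one can even compare the two sides.

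A second, related issue is your functoriality step. Applying the substitution $U^k\mapsto U^{k+1}$ for $k\ge m$ to the matrix entries of a local map $f$ does not produce a chain map, because this substitution is not a ring homomorphism (for instance $U^m\cdot U^m=U^{2m}$ would go to $U^{2m+1}$, whereas substituting in each factor gives $U^{2m+2}$), so the equation $f\d=\d f$ is not preserved under entry-wise substitution. The paper avoids both problems by never attempting to define $\shift_m$ on non-standard complexes. Instead it decomposes $\shift_m=\shift_{V,m}\circ\shift_{U,m}$, builds explicit $U$-simplified bases for $C_1\otimes C_2$ and for $\shift_{U,m}(C_1)\otimes\shift_{U,m}(C_2)$ (where the troublesome mixed terms are absorbed into new basis elements $\zeta_{i,j}$), and from a given local map $f\colon C_3\to C_1\otimes C_2$ constructs by hand an \emph{almost chain map} $f_{U,m}\colon\shift_{U,m}(C_3)\to\shift_{U,m}(C_1)\otimes\shift_{U,m}(C_2)$ with carefully inserted extra $U$-powers. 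A separate lemma (Lemma~\ref{lem:almosttolocal}) then upgrades any almost chain map of this type to a genuine local map. Dualizing gives the reverse inequality, and the $V$-shift is handled symmetrically. The machinery of almost chain maps is there precisely to manage the grading incompatibilities that break your direct approach.
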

\noindent
It will also be helpful to decompose $\shift_m$ as a composition of a shift in $U$ and a shift in $V$ (denoted $\shift_{U,m}$ and $\shift_{V,m}$, respectively):

\begin{definition}\label{def:UVshift}
Given a standard complex $C=C(a_1, \dots, a_n)$, let
\[ \shift_{U,m}(C) = C(a_1', \dots, a_n') \]
where for $i$ odd,
\[ a_i' = 
\begin{cases}
	a_i +1 &\text{ if } a_i \geq m \\
	a_i -1 &\text{ if } a_i \leq -m \\
	a_i  &\text{ if } |a_i| < m \\  
\end{cases}
\]
and for $i$ even,
\[ a_i' = a_i. \]
Similarly,  let
\[ \shift_{V,m}(C) = C(a_1', \dots, a_n') \]
where for $i$ even,
\[ a_i' = 
\begin{cases}
	a_i +1 &\text{ if } a_i \geq m \\
	a_i -1 &\text{ if } a_i \leq -m \\
	a_i  &\text{ if } |a_i| < m \\  
\end{cases}
\]
and for $i$ odd,
\[ a_i' = a_i. \]
\end{definition}
\noindent
It follows from the definitions that $\shift_m = \shift_{V,m} \circ \shift_{U,m}$.

\begin{lemma}\label{lem:shiftdual}
Let $C = C(a_1, \dots, a_n)$ be a standard complex. Then 
\[ \shift_{U,m} (C)^\vee = \shift_{U,m} (C^\vee) \quad \text{ and } \quad \shift_{V,m} (C)^\vee = \shift_{V,m} (C^\vee).\]
\end{lemma}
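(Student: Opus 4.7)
The plan is to reduce both claimed equalities to direct computations at the level of the parameter sequences defining the standard complexes, using Lemma~\ref{lem:dual} to identify the dual of a standard complex with the sign-negation of its defining sequence.

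Concretely, write $C = C(a_1, \dots, a_n)$, so that by Lemma~\ref{lem:dual} we have $C^\vee = C(-a_1, \dots, -a_n)$. For the first identity, I would compute $\shift_{U,m}(C)^\vee$ by first applying $\shift_{U,m}$ (which modifies only the odd-indexed $a_i$ according to Definition~\ref{def:UVshift}), and then applying Lemma~\ref{lem:dual} to negate every entry of the resulting sequence. Separately, I would compute $\shift_{U,m}(C^\vee)$ by first negating each $a_i$ and then applying $\shift_{U,m}$ to the negated sequence. Both resulting sequences agree at even indices (where $\shift_{U,m}$ acts as the identity), so it suffices to check agreement at each odd index $i$ by casework on the sign and magnitude of $a_i$: if $a_i \geq m$ then one side gives $-(a_i+1)$ while the other gives $(-a_i)-1$ since $-a_i \leq -m$; if $a_i \leq -m$ then one side gives $-(a_i-1)$ while the other gives $(-a_i)+1$ since $-a_i \geq m$; and if $|a_i| < m$ both sides give $-a_i$. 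In each case the two sides match.

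The identity for $\shift_{V,m}$ is proved by the identical argument with the parity of the index interchanged (i.e., $\shift_{V,m}$ acts nontrivially only on even-indexed parameters, and is the identity on odd-indexed ones).

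This argument is essentially bookkeeping, so there is no serious obstacle; the only thing to be careful about is making sure the casework on the shift rule is applied correctly both before and after negation, which is precisely where the symmetry of Definition~\ref{def:UVshift} under $a_i \mapsto -a_i$ (sending ``$a_i \geq m$'' to ``$-a_i \leq -m$'' and vice versa) makes everything go through.
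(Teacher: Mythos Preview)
Your proposal is correct and follows exactly the paper's approach: the paper simply states that the result follows from the definition of $\shift_{U,m}$ and $\shift_{V,m}$ combined with Lemma~\ref{lem:dual}, and your casework is precisely the unpacking of that sentence.
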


\begin{proof}
The result follows from the definition of $\shift_{U,m}$ and $\shift_{V,m}$ combined with Lemma \ref{lem:dual}.
\end{proof}

We now introduce some convenient terminology:

\begin{definition}
Let $C$ be a knot-like complex (not necessarily a standard complex) with an $\cR$-basis $\{ x_i \}$. We say that $\{x_i\}$ is \emph{$U$-simplified} if for each $x_i$, exactly one of the following holds:
\begin{enumerate}
	\item \label{it:notker} $\d_U x_i = U^k x_j$ for some $j$ and $k$,
	\item \label{it:image} $\d_U x_j = U^k x_i$ for some $j$ and $k$, or
	\item \label{it:gen} $\d_U x_i = 0$ and $x_i \notin \Im \d_U$. 
\end{enumerate}
If $\d x_i = U^k x_j$ (or vice-versa), we say that $x_i$ and $x_j$ are \emph{$U$-paired}. Since $H_*(C/V)$ has a single $U$-tower, it follows that at most one of the $x_i$ satisfies \eqref{it:gen}. We define a \emph{$V$-simplified basis} and \emph{$V$-paired} basis elements analogously. (See for example the proof of Lemma~\ref{lem:totalorder}.)
\end{definition}

\begin{example}
Let $C=C(a_1, \dots, a_n)$ be a standard complex with preferred basis $\{x_i\}_{i=0}^n$; this basis is clearly both $U$- and $V$-simplified.  We will find it convenient to re-label our basis elements slightly. We denote the \emph{$U$-simplified basis} \smash{$\{ w, y_i, z_i \}_{i=1}^{n/2}$} for $C$ by
\[ w = x_n \] 
and for each $1 \leq i \leq n/2$, 
\begin{align*}
	y_i &= \begin{cases}
			x_{2i-1} \quad \text{ if } a_{2i-1} > 0 \\
			x_{2i-2}\quad \text{ if } a_{2i-1} < 0 
		\end{cases}\\
	z_i &= \begin{cases}
			x_{2i-1} \quad \text{ if } a_{2i-1} < 0 \\
			x_{2i-2} \quad \text{ if } a_{2i-1} > 0. 	
		\end{cases}
\end{align*}
Set-wise, the $U$-simplified basis is of course identical to the standard preferred basis, but we fix notation so that $\d_U y_i = U^{|a_{2i-1}|} z_i$. (That is, $y_i$ and $z_i$ are $U$-paired.) We can likewise define the \emph{$V$-simplified basis} in the obvious way.

\end{example}

\begin{definition}\label{def:sum}
For $C = C(a_1, \dots, a_n)$, let $\{ w, y_i, z_i \}$ and $\{ w', y'_i, z'_i \}$ be the $U$-simplified bases for $C$ and $\shift_{U,m}(C)$ respectively. Define an $\cR$-module map
\[ s_{U,m} \co C \rightarrow \shift_{U,m}(C) \]
by sending
\[ s_{U,m}(r) = r' \]
for each $r \in \{ w, y_i, z_i \}$, and extending $\cR$-linearly. That is, $s_{U, m}$ simply effects the correspondence between the unprimed generators of $C$ and the primed generators of $\shift_{U, m}(C)$. Note that $s_{U,m}$ induces an isomorphism of ungraded $\cR$-modules, although we stress that $s_{U,m}$ is \textit{not} graded (even relatively). Furthermore, it is easily checked that $s_{U,m} \d_V = \d_V s_{U,m}$. On the other hand, $s_{U,m}$ does not commute with $\d_U$. Explicitly, we have
\begin{align*}
&s_{U,m} (\d_U y_i) = s_{U, m} (U^{|a_{2i-1}|} z_i) = U^{|a_{2i-1}|} z_i' \\
&\d_U (s_{U,m} y_i )= \d_U (y_i') = U^{|a_{2i-1}'|} z_i'.
\end{align*}
Note that the above expressions may differ by a power of $U$, depending on the value of $|a_{2i-1}|$.
\end{definition}

\begin{example}\label{examp:Usimplifiedtensor}
Let $C_1 = C(a_1, \dots, a_{n_1})$ and $C_2 = C(b_1, \dots, b_{n_2})$ be standard complexes. Abusing notation slightly, let $w, y_i,$ and  $z_i$ denote the $U$-simplified bases for both $C_1$ and $C_2$; it will be clear from context which generators lie in $C_1$ and $C_2$. Then the obvious tensor product basis for $C_1 \otimes C_2$ is not $U$-simplified. Instead, we define a $U$-simplified basis for $C_1 \otimes C_2$ as follows. For $1 \leq i \leq n_2/2$, let
\begin{align*}
	\alpha_i &= w \otimes y_i\\
	\beta_i &= w \otimes z_i,
\end{align*}
and for $1 \leq i \leq n_1/2$, let
\begin{align*}
	\gamma_i &= y_i \otimes w \\
	\delta_i &= z_i \otimes w.
\end{align*}
For $1 \leq i \leq n_1/2$ and $1 \leq j \leq n_2/2$, define
\begin{align*}
	\epsilon_{i,j} &=y_i \otimes y_j \\
	\zeta_{i,j} &= \begin{cases}
				U^{|b_{2j-1}| - |a_{2i-1}|} y_i \otimes  z_j +  z_i \otimes y_j   \quad \text{ if } |a_{2i-1}| \leq |b_{2j-1}| \\
				y_i \otimes z_j + U^{ |a_{2i-1}| - |b_{2j-1}|} z_i \otimes y_j  \quad \text{ if } |a_{2i-1}| > |b_{2j-1}| 
			\end{cases}
\end{align*}
and
\begin{align*}
	\eta_{i,j} &= \begin{cases}
				y_i \otimes z_j \quad \text{ if } |a_{2i-1}| \leq |b_{2j-1}| \\
				z_i \otimes y_j \quad \text{ if } |a_{2i-1}| > |b_{2j-1}| 
			\end{cases}\\
	\theta_{i,j} &= z_i \otimes z_j.
\end{align*}
Finally, let
\[
	\omega = w \otimes w. 
\]
Note that the following basis elements are $U$-paired:
\[ \{ \alpha_i, \beta_i \},  \quad \{ \gamma_i, \delta_i \}, \quad \{ \epsilon_{i,j}, \zeta_{i,j} \}, \quad \{ \eta_{i,j}, \theta_{i,j} \}. \]
For notational convenience, we relabel the basis elements
\begin{align*}
	\{ \kappa_\ell \} &= \{ \alpha_i \} \cup \{ \gamma_i \} \cup \{ \epsilon_{i,j} \} \cup \{ \eta_{i,j} \} \\
	\{ \lambda_\ell \} &= \{ \beta_i \} \cup \{ \delta_i \} \cup \{ \zeta_{i,j} \} \cup \{ \theta_{i,j} \}
\end{align*}
so that $\{ \omega, \kappa_\ell, \lambda_\ell \}$ is a $U$-simplified basis and $\d_U \kappa_\ell = U^{e_\ell} \lambda_{\ell}$ for some $e_{\ell}$. The reader should check that if $\kappa_{\ell}$ is one of $\epsilon_{i,j}$ or $\eta_{i, j}$, then
\[
e_{\ell} = \min(|a_{2i-1}|, |b_{2j-1}|).
\]
If $\kappa_{\ell}$ is an $\alpha_i$, then $e_{\ell} = |b_{2i-1}|$, while if $\kappa_\ell$ is a $\gamma_i$, then $e_{\ell} = |a_{2i-1}|$.

We analogously define a $U$-simplified basis $\{ \alpha'_i, \beta'_i, \gamma'_i, \delta'_i, \epsilon'_{i,j}, \zeta'_{i,j}, \eta'_{i,j}, \theta'_{i,j}, \omega' \}$ for $\shift_{U,m}(C_1) \otimes \shift_{U,m}(C_2)$ by considering both factors as standard complexes in their own right. (That is, $\alpha'_i = w' \otimes y'_i$, and so on.) We re-label this basis $\{ \omega', \kappa'_{\ell}, \lambda'_\ell \}$ as before, so that $\kappa_\ell'$ and $\lambda_\ell'$ are $U$-paired. As above, we have $\d_U \kappa_\ell' = U^{e_\ell'} \lambda_{\ell}'$, where 
\[
e_\ell' = \min(|a_{2i-1}'|, |b_{2j-1}'|),
\]
whenever $\kappa_{\ell}'$ is one of $\epsilon_{i,j}'$ or $\eta_{i, j}'$ (similarly for the other cases). An examination of Definition~\ref{def:UVshift} then shows that we may write
\[
e_\ell' = e_\ell+\tau(\ell), 
\]
where
\[ \tau(\ell) = \begin{cases}
			0 &\text{ if } e_\ell < m \\
			1 &\text{ if } e_\ell \geq m.			
	\end{cases}
\]	
\end{example}

\begin{definition}
Let $C_1$ and $C_2$ be standard complexes. Define an $\cR$-module map
\[ \sigma_{U,m} \co C_1 \otimes C_2 \to \shift_{U,m}(C_1) \otimes \shift_{U,m}(C_2) \]
by sending
\[ \sigma_{U,m}( \xi ) = \xi ' \]
for $\xi \in \{ \omega, \kappa_\ell, \lambda_\ell \}$, and extending $\cR$-linearly. As in Definition~\ref{def:sum}, $\sigma_{U,m}$ induces an isomorphism of ungraded $\cR$-modules. Furthermore, we claim that $\sigma_{U,m} \d_V = \d_V \sigma_{U,m}$. To see this, observe that
\[
\sigma_{U, m} \equiv s_{U, m} \otimes s_{U, m} \bmod U.
\]
Indeed, this congruence is obviously an equality for all basis elements not of the form $\gamma_{i,j}$ or $\eta_{i, j}$. For $\eta_{i, j}$, we again have equality using the fact that $|a_{2i-1}| \leq |b_{2j-1}|$ if and only if $|a_{2i-1}'| \leq |b_{2j-1}'|$. For basis elements of the form $\gamma_{i,j}$, a straightforward casework check establishes the congruence. The fact that $s_{U,m}$ commutes with $\d_V$ then shows that $\sigma_{U,m} \d_V = \d_V \sigma_{U,m}$. Again, however, note that $\sigma_{U,m}$ does not commute with $\d_U$.
\end{definition}

We now introduce an auxiliary technical definition which we will need to prove Theorem~\ref{thm:shift}:

\begin{definition}\label{def:almostchainmap}
An \emph{almost chain map} $f \co C(a_1, \dots, a_n) \to C$ from a standard complex with preferred basis $\{x_i\}_{i=0}^n$ to a knot-like complex is an ungraded $\cR$-module map such that for $1 \leq i \leq n$:
\begin{enumerate}
	\item for $i$ odd,
		\begin{enumerate}
			\item if $a_i < 0$, that is, $\d_U x_{i-1} = U^{|a_i|}x_i$, we have 
		\[ \d_U f (x_{i-1}) \equiv U^{|a_i|} f(x_{i}) \mod U^{|a_i|+1}, \]
			\item if $a_i > 0$, that is, $\d_U x_{i} = U^{|a_i|}x_{i-1}$, we have 
			\[ \d_U f (x_{i}) \equiv U^{|a_i|} f(x_{i-1}) \mod U^{|a_i|+1}, \]
		\end{enumerate}
	\item for $i$ even, 
		\begin{enumerate}
			\item if $a_i < 0$, that is, $\d_V x_{i-1} = V^{|a_i|}x_i$, we have 
		\[ \d_V f (x_{i-1}) \equiv V^{|a_i|} f(x_{i}) \mod V^{|a_i|+1}, \]
			\item if $a_i > 0$, that is, $\d_V x_{i} = V^{|a_i|}x_{i-1}$, we have 
			\[ \d_V f (x_{i}) \equiv V^{|a_i|} f(x_{i-1}) \mod V^{|a_i|+1}. \]
		\end{enumerate}
\end{enumerate} 
We stress that an almost chain map is \textit{not} in general a chain map, and may not even be grading-homogeneous.
\end{definition}

The main import of the (admittedly unmotivated) notion of an almost chain map will be the following lemma, which explains how to extract a genuine chain map from a given almost chain map. In our context, it will be easier to construct almost chain maps, which is why we have introduced Definiton~\ref{def:almostchainmap}. In what follows, let $[x]_{p, q}$ denote the homogeneous part of $x$ in bigrading $(u, v)$.

\begin{lemma}\label{lem:almosttolocal}
Let $f \co C(a_1, \dots, a_n) \to C$ be an almost chain map. Let $(u_i, v_i)$ be the bigrading of the generator $x_i$ in $C(a_1, \dots, a_n)$. Suppose that $[f(x_0)]_{u_0, v_0}$ represents a $V$-tower class in $C$ and $\d_U[f(x_n)]_{u_n, v_n} = 0$. Then there exists a genuine local map 
\[ g \co C(a_1, \dots, a_n) \to C \]
such that $g(x_i) \equiv [f(x_i)]_{u_i, v_i} \bmod (U,V)$ for all $0 \leq i \leq n$.
\end{lemma}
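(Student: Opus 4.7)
The plan is to construct $g$ by starting with the bigrading-homogeneous part $g_0(x_i) := [f(x_i)]_{u_i,v_i}$ (extended $\cR$-linearly) and modifying it by elements of $(U,V)$. The map $g_0$ is a grading-preserving $\cR$-module map of bidegree $(0,0)$, but may fail to be a chain map. The almost chain map condition controls this failure: writing the equation at index $i$ in the unified form $\d_\star f(x_s) = r^{|a_i|} f(x_t) + r^{|a_i|+1} \eta_i$, where $x_s$ and $x_t$ are the source and target of the arrow labeled $a_i$, $r \in \{U,V\}$, and $\d_\star$ is the corresponding differential, and extracting the homogeneous component at bigrading $(u_s-1, v_s-1)$ yields
\[
\d_\star g_0(x_s) = r^{|a_i|} g_0(x_t) + r^{|a_i|+1} c_i,
\]
where $c_i$ is the appropriate bigrading-homogeneous component of $\eta_i$.

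For each pair $\{x_{i-1},x_i\}$, I will modify $g_0$ by adding $r c_i$ to the target generator. For instance, when $i$ is odd with $a_i < 0$ (so $\d_U x_{i-1} = U^{|a_i|} x_i$ and the target is $x_i$), I set $g(x_i) = g_0(x_i) + U c_i$. This single modification accomplishes two things simultaneously: it makes the chain map condition $\d_U g(x_{i-1}) = U^{|a_i|} g(x_i)$ hold, and it also forces $\d_U g(x_i) = 0$ (the correct value of $\d_U x_i$ in the standard complex). The second property is not immediate from the almost chain map condition; to establish it, I apply $\d_U$ to the almost chain map equation at index $i$ and invoke the fact that $C$ is a free $\cR$-module with $\im U \cap \im V = 0$, which refines the equation to $\d_U f(x_i) = U \d_U \eta_i$. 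Extracting the appropriate homogeneous component gives $\d_U g_0(x_i) = U \d_U c_i$, which is exactly cancelled by $\d_U(U c_i)$ in characteristic $2$.

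The key structural observation that makes these modifications compatible is $UV = 0$: modifying $g(x_j)$ by a $U$-multiple does not affect $\d_V g(x_j)$, and vice versa. Since each generator lies in at most one $\d_U$ pair and one $\d_V$ pair, I can handle all $\d_U$ pairs and all $\d_V$ pairs in two independent passes. The two boundary conditions $\d_V g(x_0) = 0$ and $\d_U g(x_n) = 0$ are not covered by any pair: the former holds because $g_0(x_0)$ represents a class in $H_*(C/U)$, forcing $\d_V g_0(x_0) \in \im U \cap \im V = 0$; the latter is the explicit hypothesis. Locality is preserved automatically, since any modification to $g(x_0)$ is a $U$-multiple (as $x_0$ lies in no $\d_V$ pair), whence $[g(x_0)]$ agrees with $[g_0(x_0)]$ in $H_*(C/U)$ and thus represents a $V$-tower class.

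The main obstacle is verifying that a single $U$- or $V$-correction at the target of each arrow simultaneously fixes both the chain map condition for that pair and the vanishing of the relevant differential at the target. This hinges on the refined identity $\d_U f(x_i) = U \d_U \eta_i$ (and its $V$ analogue), extracted from the almost chain map by applying $\d_U$ and invoking $\im U \cap \im V = 0$. Once this compatibility is in hand, the rest of the proof is casework over the signs of the $a_i$ and the parities of the indices, and the final congruence $g(x_i) \equiv [f(x_i)]_{u_i,v_i} \bmod (U,V)$ follows because all modifications lie in $(U,V)$.
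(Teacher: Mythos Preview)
Your proposal is correct and follows essentially the same approach as the paper. The paper sets up the ansatz $g(x_i) = [f(x_i)]_{u_i,v_i} + Up_i + Vq_i$ and determines $p_i$, $q_i$ by casework, setting the correction at the source of each arrow to zero and the correction at the target to the homogeneous part of $\eta_i$; this is exactly your scheme of modifying only the target by $rc_i$. Your writeup is in fact slightly more careful than the paper's in one respect: where the paper simply asserts ``since $\d_U^2 = 0$, we have $\d_U f(x_i) + U\d_U\eta_i = 0$,'' you correctly note that $\d_U^2 = 0$ only gives $U^{|a_i|}(\d_U f(x_i) + U\d_U\eta_i) = 0$, and that one needs $\im U \cap \im V = 0$ in the free $\cR$-module $C$ to conclude the parenthesized expression vanishes.
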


\begin{proof}
For each $0 \leq i \leq n$, consider the ansatz:
\[
g(x_i) = [f(x_i)]_{u_i, v_i} + Up_i + Vq_i
\]
where $p_i$ and $q_i$ are undetermined elements of $C(a_1, \dots, a_n)$ with bigrading $(u_i, v_i)$. In order to determine $p_i$ and $q_i$, we substitute our ansatz into the chain map condition for $g$. We begin by using the condition $\d_U g = g\d_U$ to help determine the $p_i$:
\begin{enumerate}
\item Let $i$ be odd, and suppose $a_i < 0$. Then $\d_U x_{i-1} = U^{|a_i|} x_i$ and $\d_U x_i = 0$. Using Definition~\ref{def:almostchainmap}, write
\[
\d_U f (x_{i-1}) = U^{|a_i|} f(x_{i}) + U^{|a_i| + 1}\eta_i
\]
for some (possibly non-homogeneous) element $\eta_i \in C(a_1, \dots, a_n)$. Note that since $\d_U^2 = 0$, we have $\d_U f(x_i) + U\d_U \eta_i = 0$. We now compute:
\begin{align*}
g(\d_Ux_{i-1}) &= U^{|a_i|} g(x_i) \\
&= U^{|a_i|} ([f(x_i)]_{u_i, v_i}  + Up_i + Vq_i) \\
&= U^{|a_i|} [f(x_i)]_{u_i, v_i} + U^{|a_i| + 1}p_i \\
\d_U g(x_{i-1}) &= \d_U ([f(x_{i-1})]_{u_{i-1}, v_{i-1}} + Up_{i-1} + Vq_{i-1})\\
&= U^{|a_i|}[f(x_i)]_{u_i, v_i} + U^{|a_i| + 1} [\eta_i]_{u_i + 2, v_i + 2} + U\d_Up_{i-1},
\end{align*}
where in the last line, we have used the fact that $\d_U f (x_{i-1}) = U^{|a_i|} f(x_{i}) + U^{|a_i| + 1}\eta_i$. We likewise compute
\begin{align*}
g(\d_Ux_i) &= g(0) = 0 \\
\d_U g(x_i) &= \d_U([f(x_i)]_{u_i, v_i} + Up_i + Vq_i) \\
&= \d_U [f(x_i)]_{u_i, v_i} + U\d_U p_i.
\end{align*}
Examining the first pair of equalities above, we see that it suffices to set $p_{i-1} = 0$ and $p_i = [\eta_i]_{u_i + 2, v_i + 2}$. The second pair of equalities then follows from the fact that $\d_U f(x_i) + U\d_U \eta_i = 0$.
\item Let $i$ be odd, and suppose $a_i > 0$. Then $\d x_{i} = U^{a_i} x_{i-1}$ and $\d_U x_{i-1} = 0$. A similar analysis as above (interchanging the roles of $i$ and $i-1$ and replacing $|a_i|$ with $a_i$) shows that if we set $p_{i-1} = [\eta_{i-1}]_{u_{i-1}+2, v_{i-1}+2}$ and $p_i = 0$, then we have $(g\d_U + \d_Ug)(x_{i-1}) = (g \d_U + \d_U g)(x_i) = 0$.
\end{enumerate}
In this manner, by considering all odd indices $1 \leq i \leq n$, we see that we can choose the $p_i$ for $0 \leq i < n$ so that $(g \d_U + \d_Ug)(x_i) = 0$ for all $0 \leq i < n$. Define $p_n = 0$. Then $\d_U g(x_n) = \d_U [f(x_n)]_{u_i, v_i} = 0$ by hypothesis, while $g\d_U(x_n) = 0$. This establishes the $\d_U$-condition for all generators $x_i$.

Interchanging the roles of $U$ and $V$, an analogous argument (where we consider the case when $i$ is even) allows us to choose the $q_i$ such that $(g\d_V + \d_Vg)(x_i) = 0$ for all $0 \leq i \leq n$. (To establish the $\d_V$-condition for $x_0$, we use the fact that $\d_V [f(x_0)]_{u_0, v_0} = 0$, since $[f(x_0)]_{u_0, v_0}$ represents a $V$-tower class in $C$ by hypothesis.) By construction, $g$ is a graded, $\cR$-equivariant chain map which is clearly local. This completes the proof.
\end{proof}

Now let $C_3 = C(c_1, \dots, c_{n_3})$ be a standard complex, and let $f \co C_3 \to C_1 \otimes C_2$ be a local map. Our goal will be to construct a shifted map $f_{U, m}$ from $\shift_m C_3$ to $\shift_m C_1 \otimes \shift_m C_2$. We do this by first constructing an almost chain map between the desired complexes, and then applying Lemma~\ref{lem:almosttolocal}. The construction of $f_{U, m}$ (and the verification that it is an almost chain map) will be the most technical part of the argument and will occupy our attention for the next few pages.

\begin{definition}
Let $\{ w, y_i, z_i \}$ and $\{ w', y'_i, z'_i \}$ be the $U$-simplified bases for $C_3$ and $\shift_{U,m}(C_3)$, respectively. Define
\[ f_{U,m} \co \shift_{U,m}(C_3) \to \shift_{U,m}(C_1) \otimes \shift_{U,m}(C_2) \]
by first setting
\[ f_{U,m} (r') = \sigma_{U,m} f(r) \]
whenever $r' \in \{ w', z'_i\}$. To define $f_{U,m}(y'_i)$, we proceed with some casework. Write $f(y_i)$ in terms of the $U$-simplified basis for $C_1 \otimes C_2$, so that
\[ f(y_i) = \sum_{j \in J_1} \kappa_j + \sum_{j \in J_2} U^{p_j} \kappa_j + \sum_{j \in J_3} V^{q_j} \kappa_j + \sum_j P_j(U,V)\lambda_j + Q(U,V) \omega, \]
for some $p_j, q_j \in \N$, $P_j, Q \in \cR$, and disjoint index sets $J_1, J_2$, and $J_3$. We define $f_{U,m} (y'_i)$ based on the value of $|c_{2i-1}|$. If $|c_{2i-1}| < m$, let
\begin{align*} 
	f_{U,m} (y'_i) &= \sigma_{U,m} f(y_i) \\
				&=  \sigma_{U,m} \Big( \sum_{j \in J_1} \kappa_j + \sum_{j \in J_2} U^{p_j} \kappa_j + \sum_{j \in J_3} V^{q_j} \kappa_j +  \\
				&\qquad \qquad \sum_j P_j(U,V)\lambda_j + Q(U,V) \omega \Big),
\end{align*}
as before. If $|c_{2i-1}| \geq m$, let
\begin{align*} 
	f_{U,m} (y'_i) &=  \sigma_{U,m} \Big( \sum_{j \in J_1} U^{\overline{\tau}(j)} \kappa_j + \sum_{j \in J_2} U^{p_j+\overline{\tau}(j)} \kappa_j + \sum_{j \in J_3} V^{q_j} \kappa_j + \\
	&\qquad \qquad \sum_j P_j(U,V)\lambda_j + Q(U,V) \omega \Big),
\end{align*}
where 
\[ \overline{\tau}(j) = \begin{cases}
			1 &\text{ if } e_j < m \\
			0 &\text{ if } e_j \geq m.	
		\end{cases}
\]
Observe that $\tau(j) + \overline{\tau}(j) = 1$. In addition, note that if $f(y_i)$ is supported by $\kappa_j$, then $e_j \geq |c_{2i-1}|$. This follows from the fact that $\partial_U f(y_i) = f(\partial y_i)$ is in $\im U^{|c_{2i-1}|}$, while $\partial_U \kappa_j = U^{e_j} \lambda_j$. Hence in particular if $|c_{2i-1}| \geq m$, then for for any $j \in J_1$, we must have $\overline{\tau}(j) = 0$. (Thus we could have omitted the very first instance of $U^{\overline{\tau}(j)}$ in the above definition of $f_{U,m} (y'_i)$, but we have left it in for future notational convenience.)

We also note that 
\[ f(U^{|c_{2i-1}|} z_i) = f \d_U(y_i) = \d_U f(y_i) = \sum_{j \in J_1}U^{e_j} \lambda_j +  \sum_{j \in J_2} U^{p_j+e_j} \lambda_j, \]
hence 
\begin{equation}\label{eq:sumfz}
	U^{|c_{2i-1}|} \sigma_{U,m} f( z_i) = \sum_{j \in J_1}U^{e_j}  \sigma_{U,m} (\lambda_j) +  \sum_{j \in J_2} U^{p_j+e_j}  \sigma_{U,m} (\lambda_j).
\end{equation}
Finally, note that
\begin{equation}\label{eq:sigmaf}
	\sigma_{U,m} f(r) \equiv f_{U,m} s_{U,m}(r) \mod U
\end{equation}
for all $r \in \{ w, y_i, z_i \}$. Indeed, if $r = w$ or $z_i$, this congruence is an equality by definition; whereas if $r = y_i$, then the claim follows from the fact that (in the $|c_{2i-1}| \geq m$ case) $\overline{\tau}(j) = 0$ for all $j \in J_1$.
\end{definition}

\begin{lemma}\label{lem:fUalmost}
Let $f \co C_3 \to C_1 \otimes C_2$ be a local map. Then $f_{U,m}$ is an almost chain map.
\end{lemma}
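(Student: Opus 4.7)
The plan is to verify both conditions defining an almost chain map on the standard preferred basis $\{x'_i\}$ of $\shift_{U,m}(C_3)$: the $\d_U$-condition at each odd index $i$, and the $\d_V$-condition at each even index $i$.

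For the $\d_U$-conditions (odd $i$), the pair $(x'_{i-1}, x'_i)$ coincides with a $U$-pair $(y'_k, z'_k)$ in the $U$-simplified basis of $\shift_{U,m}(C_3)$, where $k = (i+1)/2$; regardless of the sign of $c'_{2k-1}$, the condition simplifies to
\[ \d_U f_{U,m}(y'_k) \equiv U^{|c'_{2k-1}|} f_{U,m}(z'_k) \pmod{U^{|c'_{2k-1}|+1}}. \]
I would expand $f(y_k)$ in the $U$-simplified basis $\{\omega,\kappa_j,\lambda_j\}$ of $C_1\otimes C_2$, apply $\d_U$ via $\d_U\kappa_j = U^{e_j}\lambda_j$ together with $e'_j = e_j + \tau(j)$, and combine the result with the chain map identity $U^{|c_{2k-1}|}f(z_k) = \sum_{j\in J_1}U^{e_j}\lambda_j + \sum_{j\in J_2}U^{p_j+e_j}\lambda_j$ from (\ref{eq:sumfz}). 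When $|c_{2k-1}|<m$, so $|c'_{2k-1}|=|c_{2k-1}|$, a discrepancy arises only from terms with $e_j\geq m$, which already lie in $\im U^m\subseteq \im U^{|c_{2k-1}|+1}$; when $|c_{2k-1}|\geq m$, so $|c'_{2k-1}|=|c_{2k-1}|+1$, the inserted $U^{\overline\tau(j)}$ factors combine with $\tau(j)$ via $\tau+\overline\tau=1$ to yield an exact equality.

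For the $\d_V$-conditions (even $i$, with $c'_i=c_i$), the key tools are that $\sigma_{U,m}$ is $\cR$-linear and commutes with $\d_V$ (noted in the main text after (\ref{eq:sigmaf})), and that $UV=0$ forces $\d_V(\im U)=0$ and $V\cdot\im U=0$. Writing $f_{U,m}(r') = \sigma_{U,m}\tilde f(r)$ for an intermediate map $\tilde f\co C_3\to C_1\otimes C_2$ that agrees with $f$ outside the modified $y_k$'s (with $|c_{2k-1}|\geq m$), the $V$-condition on $f_{U,m}$ becomes
\[ \d_V\tilde f(x_{i-1}) \equiv V^{|c_i|}\tilde f(x_i) \pmod{V^{|c_i|+1}}. \]
Subtracting the exact identity $\d_V f(x_{i-1}) = V^{|c_i|}f(x_i)$ reduces this to a residual condition on $\tilde f - f$, which is nontrivial only when an endpoint is a modified $y_k$. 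On such $y_k$, the difference equals $(U-1)\eta_k$ where $\eta_k = \sum_{j\in J_1,\,e_j<m}\kappa_j + \sum_{j\in J_2,\,e_j<m}U^{p_j}\kappa_j$. The crucial observation is that when $|c_{2k-1}|\geq m$, the chain map condition forces $e_j\geq|c_{2k-1}|\geq m$ for any $j\in J_1$ in the support of $f(y_k)$, so the $J_1$-contribution is empty and $\eta_k\in\im U$. Consequently $(U-1)\eta_k\in\im U$, and $UV=0$ gives $\d_V((U-1)\eta_k)=0$ and $V^{|c_i|}((U-1)\eta_k)=0$; the residual condition collapses to $0\equiv 0$.

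The main technical obstacle is the casework in the $\d_U$-verification, particularly bookkeeping with $\tau$ and $\overline\tau$; the conceptual crux for the $\d_V$-verification is the observation that the modification $\tilde f - f$ lies in $\im U$, which is annihilated by both $\d_V$ and by $V$-multiplication thanks to $UV=0$.
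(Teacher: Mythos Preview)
Your proposal is correct and follows essentially the same approach as the paper. The $\d_U$-verification is identical to the paper's casework. For the $\d_V$-verification, the paper packages the argument as a chain of congruences $\bmod\ U$ using the identity $\sigma_{U,m} f(r) \equiv f_{U,m} s_{U,m}(r) \bmod U$ together with the fact that $\d_V$ commutes with $\sigma_{U,m}$, $f$, and $s_{U,m}$; your introduction of the intermediate $\tilde f$ and the residual $\tilde f - f \in \im U$ is a more hands-on presentation of exactly this observation, and the two arguments are equivalent in content.
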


\begin{proof}
Let $\{ w, y_i, z_i \}$ be the $U$-simplified basis for $C_3 = C(c_1, \dots, c_{n_3})$.
It suffices to show
\begin{equation}\label{eq:Ualmostchain}
			\d_U f_{U,m} (y'_i) \equiv \begin{cases}
				U^{|c_{2i-1}|}  \sigma_{U,m} f(z_i) \mod U^{|c_{2i-1}|+1} &\text{ if } |c_{2i-1}| < m \\
				U^{|c_{2i-1}|+1}  \sigma_{U,m} f(z_i) \mod U^{|c_{2i-1}|+2} &\text{ if } |c_{2i-1}| \geq m,
			\end{cases}
\end{equation}
and that 
\begin{equation}\label{eq:Valmostchain}
	\d_V f_{U,m}(r') \equiv f_{U,m} \d_V (r')\mod U
\end{equation}
for all $r' \in \{ w', y'_i, z'_i \}$.
(The mod $U$ in the above equation is not necessary, since our complexes are reduced by assumption, but is included for emphasis.)

We first consider \eqref{eq:Ualmostchain}. Suppose $|c_{2i-1}| < m$. Then 
\begin{align*}
	\d_U f_{U,m} (y'_i) &=  \d_U \Big(   \sum_{j \in J_1} \sigma_{U,m}(\kappa_j) + \sum_{j \in J_2} U^{p_j}\sigma_{U,m}(\kappa_j) \Big) \\
			&=  \sum_{j \in J_1} U^{e_j+\tau(j)} \lambda'_j+  \sum_{j \in J_2}U^{p_j} U^{e_j+\tau(j)} \lambda'_j \\
			&=  \sum_{j \in J_1} U^{e_j+\tau(j)} \sigma_{U,m} (\lambda_j) + \sum_{j \in J_2}U^{p_j} U^{e_j+\tau(j)} \sigma_{U,m}(\lambda_j) \\ 
			&\equiv U^{|c_{2i-1}|}  \sigma_{U,m} f(z_i) \mod U^{|c_{2i-1}|+1}.
\end{align*}
Here, to obtain the last line, we compare the third line with (\ref{eq:sumfz}), and use the fact that if $\tau(j) = 1$, then $e_j \geq m > |c_{2i-1}|$.

Now suppose $|c_{2i-1}| \geq m$. We have
\begin{align*}
	\d_U f_{U,m} (y'_i) &=  \d_U  \Big(   \sum_{j \in J_1} U^{\overline{\tau}(j)} \sigma_{U,m}(\kappa_j) + \sum_{j \in J_2}U^{p_j+\overline{\tau}(j)}\sigma_{U,m}(\kappa_j) \Big) \\
			&= \sum_{j \in J_1}  U^{e_j+1} \lambda'_j+ \sum_{j \in J_2}U^{p_j+\overline{\tau}(j)} U^{e_j+\tau(j)} \lambda'_j \\
			&= \sum_{j \in J_1}  U^{e_j+1} \sigma_{U,m} (\lambda_j) + \sum_{j \in J_2}U^{p_j+e_j+1} \sigma_{U,m} (\lambda_j) \\
			&= U^{|c_{2i-1}|+1} \sigma_{U,m} f(z_i).			
\end{align*}
where in the last line we have used \eqref{eq:sumfz}.

We now consider \eqref{eq:Valmostchain}. We have
\begin{align*}
	\d_V f_{U,m}(r')  &\equiv \d_V \sigma_{U,m} f(r) \mod U \\
			&\equiv \sigma_{U,m} f \d_V (r) \mod U \\
			&\equiv  f_{U,m} s_{U,m} \d_V (r) \mod U \\
			&\equiv f_{U,m} \d_V (s_{U,m} (r)) \mod U \\
			&\equiv f_{U,m} \d_V (r') \mod U
\end{align*}
for any $r' \in \{ w', y'_i, z'_i \}$ (in fact, for any $r \in C_3$), where the first equivalence is by definition, the second since $\d_V$ commutes with $\sigma_{U,m}$ and $f$, the third by \eqref{eq:sigmaf}, and the fourth since $\d_V$ and $s_{U,m}$ commute.
\end{proof}

We now verify the remaining hypotheses of Lemma~\ref{lem:almosttolocal}. In the proofs of the following lemmas, we denote the standard preferred basis for $\shift_m(C_3)$ by $\{ x'_i \}$, and the $U$-simplified basis by $\{w', y'_j, z'_j\}$ as usual. 

\begin{lemma}\label{lem:fUtower}
With the notation as above, $[f_{U,m}(x'_0)]_{u'_0, v'_0}$ represents a $V$-tower class.
\end{lemma}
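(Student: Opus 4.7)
The plan is to work modulo $U$, where the almost chain map $f_{U,m}$ becomes tractable through the congruence $\sigma_{U,m} \equiv s_{U,m} \otimes s_{U,m} \bmod U$. Because $x'_0$ is the $V$-tower generator of $\shift_{U,m}(C_3)$, we have $u'_0 = 0$ by the grading normalization for knot-like complexes. The first step is to verify that $\shift_{U,m}$ preserves the $V$-grading of $x_0$, so that $v'_0 = v_0$, and likewise for the $V$-tower generators of $C_1$ and $C_2$. This follows from a telescoping computation of $\gr_V(x_0) - \gr_V(x_{n_3})$ along the chain $x_0 \to x_1 \to \dots \to x_{n_3}$: each even-index $V$-arrow contributes a shift depending on $a_i$, which $\shift_{U,m}$ leaves untouched, while each odd-index $U$-arrow contributes $\sgn(a_i)$, which $\shift_{U,m}$ likewise preserves. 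Combined with the normalization $\gr_V(x_{n_3}) = \gr_V(x'_{n_3}) = 0$, this forces $v'_0 = v_0$ (and analogously for the tensor factors).

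Next, using $x_0 \in \{y_1, z_1\}$ so that $s_{U,m}(x_0) = x'_0$, equation~(\ref{eq:sigmaf}) yields
\[ f_{U,m}(x'_0) \equiv \sigma_{U,m} f(x_0) \equiv (s_{U,m} \otimes s_{U,m})(f(x_0)) \bmod U. \]
Each $s_{U,m} \co C/U \to \shift_{U,m}(C)/U$ is an $\cR$-module isomorphism commuting with $\d_V$, hence an isomorphism of $\F[V]$-chain complexes that sends the class of $x_0$ to the class of $x'_0$. Therefore $s_{U,m} \otimes s_{U,m}$ is a quasi-isomorphism sending the $V$-tower generator $x_0^{(1)} \otimes x_0^{(2)}$ of $(C_1 \otimes C_2)/U$ to the $V$-tower generator $(x'_0)^{(1)} \otimes (x'_0)^{(2)}$ of $(\shift_{U,m}(C_1) \otimes \shift_{U,m}(C_2))/U$, where the superscripts denote the tensor factor. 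Since $f$ is local and graded, $[f(x_0)]$ agrees with the class of $x_0^{(1)} \otimes x_0^{(2)}$ modulo $V$-torsion; applying the quasi-isomorphism, $[f_{U,m}(x'_0)]$ agrees with $[(x'_0)^{(1)} \otimes (x'_0)^{(2)}]$ modulo $V$-torsion.

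Finally, I would extract the bigraded component. The differential $\d_V$ has bidegree $(-1, -1)$, so any $\d_V$-cycle splits by bigrading into individual $\d_V$-cycles. By~(\ref{eq:Valmostchain}) and $\d_V x'_0 = 0$, the chain $f_{U,m}(x'_0)$ is a $\d_V$-cycle modulo $U$. The target $V$-tower generator $(x'_0)^{(1)} \otimes (x'_0)^{(2)}$ lives in bigrading $(0, \gr_V((x'_0)^{(1)}) + \gr_V((x'_0)^{(2)}))$; by the first paragraph this equals $(0, v_0^{(1)} + v_0^{(2)})$, and locality of the graded map $f$ forces $v_0 = v_0^{(1)} + v_0^{(2)}$, so this bigrading is exactly $(0, v'_0)$. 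Since the projection to the $V$-tower modulo $V$-torsion is bigrading-preserving and a single element at bigrading $(0, v'_0)$ generates, the bigraded $(0, v'_0)$-piece $[f_{U,m}(x'_0)]_{u'_0, v'_0}$ must itself represent the $V$-tower generator, yielding the claim.

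The main technical obstacle will be bookkeeping the failure of $f_{U,m}$, $\sigma_{U,m}$, and $s_{U,m}$ to be graded $\cR$-module maps, which forces the reduction modulo $U$ before bigrading arguments can be made rigorously. Once this is done and the invariance of $\gr_V(x_0)$ under $\shift_{U,m}$ is established, the rest of the argument is essentially formal.
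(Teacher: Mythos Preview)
Your argument is correct and shares the same core idea as the paper's proof: reduce modulo $U$, where $f_{U,m}(x'_0) \equiv \sigma_{U,m} f(x_0)$, and exploit that this map is an $\F[V]$-chain isomorphism on the quotients. The paper's version is much terser (three sentences): it does a quick case split on whether $x'_0$ equals $w'$, $y'_j$, or $z'_j$, notes that in each case $f_{U,m}(x'_0) \equiv \sigma_{U,m} f(x_0) \bmod U$ (with equality in the $w'$ and $z'_j$ cases), and then simply invokes that $f$ is local and that $\sigma_{U,m}$ induces an ungraded isomorphism between $(C_1 \otimes C_2)/U$ and $(\shift_{U,m}(C_1) \otimes \shift_{U,m}(C_2))/U$.

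In particular, the paper does \emph{not} carry out your telescoping computation that $v'_0 = v_0$, nor does it spell out the bigraded-component extraction in your final paragraph. You are supplying details the paper leaves implicit: the paper's appeal to an ``ungraded isomorphism'' tacitly uses that $\sigma_{U,m}$ commutes with $\d_V$ (so $V$-nontorsion goes to $V$-nontorsion), and the passage from ``$f_{U,m}(x'_0)$ is a $V$-tower class mod $U$'' to the same statement for its $(u'_0, v'_0)$-component is taken for granted. Your route through $s_{U,m} \otimes s_{U,m}$ and the explicit grading bookkeeping makes this rigorous, at the cost of more machinery than the paper deems necessary. One small omission: your claim ``$x_0 \in \{y_1, z_1\}$'' tacitly assumes $n_3 > 0$; when $n_3 = 0$ one has $x_0 = w$ instead, but the conclusion $s_{U,m}(x_0) = x'_0$ still holds.
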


\begin{proof}
Note that $x'_0$ is one of $w'$, $y'_j$, or $z'_j$ for some $j$. If $x'_0 = w'$ or $z'_j$, then $f_{U,m}(x'_0) = \sigma_{U,m}f(x_0)$. The result now follows from the fact that $f$ is local and $\sigma_{U,m}$ induces an ungraded isomorphism between $(C_1\otimes C_2) / U$ and $(\shift_m(C_1) \otimes \shift_m(C_2))/U$. If $x'_0 = y'_j$, then $f_{U,m}(x'_0) \equiv \sigma_{U,m} f(x_0) \mod U$, and the result follows as before.
\end{proof}

\begin{lemma}\label{lem:fUdUxn}
With the notation as above, $\d_U[f_{U,m}(x'_n)]_{u'_n, v'_n} = 0.$ 
\end{lemma}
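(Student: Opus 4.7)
The plan will be to reduce the statement to the stronger vanishing $\d_U \sigma_{U,m} f(x_n) = 0$, from which the stated claim follows immediately because $\d_U$ is bigraded. First I would observe that, since the length $n_3$ of the standard complex $C_3 = C(c_1, \dots, c_{n_3})$ is even, the final arrow of $C_3$ is a $V$-arrow. Consequently $\d_U x_n = 0$ and no $U$-arrow has $x_n$ as source or target, so $x_n$ is precisely the distinguished ``free'' generator $w$ of the $U$-simplified basis of $C_3$; analogously $x'_n = w'$ in $\shift_{U,m}(C_3)$. By the defining formula of $f_{U,m}$ on elements $r' \in \{w', z'_i\}$, this yields $f_{U,m}(x'_n) = \sigma_{U,m} f(x_n)$.

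Next I would analyze the cycle condition for $f(x_n)$. Since $f$ is a chain map and $\d_U x_n = 0$, we have $\d_U f(x_n) = 0$. Expand $f(x_n)$ in the $U$-simplified basis $\{\omega, \kappa_j, \lambda_j\}$ of $C_1 \otimes C_2$,
\[
f(x_n) = Q(U,V)\omega + \sum_j P_j(U,V)\lambda_j + \sum_j R_j(U,V)\kappa_j,
\]
and use $\d_U \kappa_j = U^{e_j}\lambda_j$ together with $\d_U \lambda_j = \d_U \omega = 0$. The cycle condition becomes $R_j(U,V)\, U^{e_j} = 0$ in $\cR = \F[U,V]/(UV)$ for each $j$. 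Because $e_j \geq 1$, any constant or pure-$U$ term in $R_j$ would survive this multiplication; hence each $R_j$ must lie in the ideal $(V)$.

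Finally, applying the $\cR$-linear map $\sigma_{U,m}$ (which sends each unprimed generator of the $U$-simplified basis to its primed counterpart) gives
\[
\d_U \sigma_{U,m} f(x_n) = \sum_j R_j(U,V)\, U^{e'_j}\, \lambda'_j = 0,
\]
since $R_j \in (V)$ and $UV = 0$ in $\cR$. Taking the homogeneous component in bigrading $(u'_n, v'_n)$ and using that $\d_U$ has bidegree $(-2,0)$ then yields $\d_U [f_{U,m}(x'_n)]_{u'_n, v'_n} = 0$. The only real obstacle is the bookkeeping: identifying $x_n$ with $w$ in the $U$-simplified basis (which rests on the parity of $n_3$) and verifying that the relation $UV=0$ in $\cR$ forces the $\kappa_j$-coefficients of any $\d_U$-cycle to lie in $(V)$; both are formal once made explicit.
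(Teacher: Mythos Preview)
Your argument is correct and follows essentially the same route as the paper: identify $x'_n = w'$ so that $f_{U,m}(x'_n) = \sigma_{U,m} f(x_n)$, use that $f(x_n)$ is a $U$-cycle, and then check that $\sigma_{U,m}$ sends $U$-cycles to $U$-cycles. The paper simply asserts this last step (``an examination of the definition shows\ldots''), whereas you carry it out explicitly by expanding in the $U$-simplified basis and using $UV=0$; one minor slip is that $\d_U$ has bidegree $(-1,-1)$, not $(-2,0)$, but this is irrelevant since all you need is that $\d_U$ is grading-homogeneous.
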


\begin{proof}
Recall that $x'_n = w'$. Therefore, we have $f_{U,m}(x'_n) = \sigma_{U,m}f(x_n)$. Since $f$ is an $\cR$-equivariant chain map and $x_n$ is a $U$-cycle, it follows that $f(x_n)$ is also a $U$-cycle. An examination of the definition shows that $\sigma_{U,m}$ takes $U$-cyles to $U$-cycles, so $\d_U f_{U,m}(x'_n) = 0.$ 
\end{proof}

\noindent
Putting everything together, we have:

\begin{lemma}\label{lem:Ulocal}
Let $f \co C_3 \to C_1 \otimes C_2$ be a  local map. Then there exists a local map
\[ g \co \shift_{U,m} (C_3) \to \shift_{U,m} (C_1) \otimes \shift_{U,m} (C_2). \]
\end{lemma}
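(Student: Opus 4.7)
The plan is to assemble the lemma directly from the machinery already in place. Given the local map $f \co C_3 \to C_1 \otimes C_2$, we have constructed the $\cR$-module map
\[ f_{U,m} \co \shift_{U,m}(C_3) \to \shift_{U,m}(C_1) \otimes \shift_{U,m}(C_2) \]
from the preceding definitions. The source $\shift_{U,m}(C_3)$ is itself a standard complex, so it is meaningful to ask whether $f_{U,m}$ qualifies as an almost chain map in the sense of Definition \ref{def:almostchainmap}.

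First, Lemma \ref{lem:fUalmost} shows exactly that $f_{U,m}$ is an almost chain map: the $\d_U$-condition is the content of \eqref{eq:Ualmostchain} (noting that the definition of $\shift_{U,m}$ adds $1$ to the exponent $|c_{2i-1}|$ precisely when $|c_{2i-1}|\geq m$, so that the exponent appearing on the right-hand side in either case matches $|c'_{2i-1}|$), and the $\d_V$-condition is \eqref{eq:Valmostchain}. Next, Lemma \ref{lem:fUtower} verifies that $[f_{U,m}(x'_0)]_{u'_0,v'_0}$ represents a $V$-tower class in $\shift_{U,m}(C_1) \otimes \shift_{U,m}(C_2)$, and Lemma \ref{lem:fUdUxn} verifies the condition $\d_U[f_{U,m}(x'_n)]_{u'_n,v'_n}=0$. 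These are exactly the hypotheses of Lemma \ref{lem:almosttolocal}.

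Applying Lemma \ref{lem:almosttolocal} to $f_{U,m}$ therefore yields a genuine graded, $\cR$-equivariant chain map
\[ g \co \shift_{U,m}(C_3) \to \shift_{U,m}(C_1) \otimes \shift_{U,m}(C_2) \]
with $g(x'_i) \equiv [f_{U,m}(x'_i)]_{u'_i,v'_i} \bmod (U,V)$ for every $i$. In particular $g(x'_0)$ agrees with $[f_{U,m}(x'_0)]_{u'_0,v'_0}$ modulo $(U,V)$, and the latter represents a $V$-tower class by Lemma \ref{lem:fUtower}. Since any element congruent mod $(U,V)$ to a $V$-tower-class representative is itself a $V$-tower-class representative (adding a term in $\im U + \im V$ cannot change the homology class of a $V$-cycle in $C/U$ modulo $V$-torsion), we conclude that $g$ is local.

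There is no real obstacle at this stage — essentially the entire argument has been packaged into the preceding lemmas, and what remains is purely a bookkeeping assembly. The only subtlety worth flagging is the grading matching in the $\d_U$-condition of Definition \ref{def:almostchainmap} when comparing the two cases of \eqref{eq:Ualmostchain} to the standard-complex parameters $c'_{2i-1}$ of $\shift_{U,m}(C_3)$; this is a direct consequence of Definition \ref{def:UVshift}, which shifts $|c_{2i-1}|$ up by $1$ exactly when $|c_{2i-1}|\geq m$.
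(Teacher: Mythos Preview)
Your proof is correct and follows exactly the same approach as the paper: invoke Lemmas \ref{lem:fUalmost}, \ref{lem:fUtower}, and \ref{lem:fUdUxn} to verify the hypotheses of Lemma \ref{lem:almosttolocal}, then apply that lemma. One small note: your final paragraph re-deriving locality of $g$ from the congruence $g(x'_0) \equiv [f_{U,m}(x'_0)]_{u'_0,v'_0} \bmod (U,V)$ is unnecessary, since Lemma \ref{lem:almosttolocal} already asserts that the resulting map $g$ is local (and indeed your general claim that adding a term in $\im V$ preserves the $V$-tower property is not quite right without a grading argument; in the actual construction inside Lemma \ref{lem:almosttolocal} one has $q_0 = 0$).
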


\begin{proof}
By Lemma \ref{lem:fUalmost}, the map $f_{U,m}$ is an almost chain map; by Lemma \ref{lem:fUtower}, $[f_{U,m}(x'_0)]_{u'_0, v'_0}$ represents a $V$-tower class; and by Lemma \ref{lem:fUdUxn}, $\d_U[f_{U,m}(x'_n)]_{u'_n, v'_n} = 0$. Thus Lemma \ref{lem:almosttolocal} gives us the desired local map.
\end{proof}

By reversing the roles of $U$ and $V$, we may similarly define $f_{V, m}$. We record the analogous set of lemmas below:

\begin{lemma}\label{lem:fValmost}
Let $f \co C_3 \to C_1 \otimes C_2$ be a  local map. With the notation as above, $f_{V,m}$ is an almost chain map.
\end{lemma}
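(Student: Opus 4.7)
The plan is to prove Lemma~\ref{lem:fValmost} by directly mirroring the proof of Lemma~\ref{lem:fUalmost}, with the roles of $U$ and $V$ interchanged throughout. Concretely, the map $f_{V,m}$ is defined via a $V$-simplified basis $\{w, y_i, z_i\}$ for $C_3$ and a $V$-simplified basis for $C_1 \otimes C_2$, together with an analogous map $\sigma_{V,m}$, so all the structural features used in the $f_{U,m}$ proof transfer symmetrically. In particular, the analogues of equations~\eqref{eq:sumfz} and~\eqref{eq:sigmaf} hold after swapping $U \leftrightarrow V$ (and using the analogous $V$-paired basis elements in the tensor product), and one gets a congruence $\sigma_{V,m} f(r) \equiv f_{V,m} s_{V,m}(r) \bmod V$.

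First, I would verify the $V$-direction almost chain map condition by running the computation from Lemma~\ref{lem:fUalmost} word for word in the $V$-simplified setting. That is, for the generator $y'_i$ in $\shift_{V,m}(C_3)$, I would split into the cases $|c_{2i}| < m$ and $|c_{2i}| \geq m$ (where $c_{2i}$ is now the relevant even-index parameter, since $V$-arrows correspond to even indices). Using the analogue of \eqref{eq:sumfz}, I would then show
\[
\d_V f_{V,m}(y'_i) \equiv \begin{cases} V^{|c_{2i}|} \sigma_{V,m} f(z_i) \bmod V^{|c_{2i}|+1} &\text{if } |c_{2i}| < m \\ V^{|c_{2i}|+1} \sigma_{V,m} f(z_i) \bmod V^{|c_{2i}|+2} &\text{if } |c_{2i}| \geq m, \end{cases}
\]
which is the $V$-analogue of \eqref{eq:Ualmostchain}.

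Second, I would check the $U$-direction compatibility $\d_U f_{V,m}(r') \equiv f_{V,m} \d_U(r') \bmod V$ for every $r' \in \{w', y'_i, z'_i\}$. This uses exactly the symmetric chain of equivalences from the end of the proof of Lemma~\ref{lem:fUalmost}: by definition $f_{V,m}(r') = \sigma_{V,m} f(r) \bmod V$, then $\d_U$ commutes with $\sigma_{V,m}$ and $f$, then the congruence $\sigma_{V,m} f(r) \equiv f_{V,m} s_{V,m}(r) \bmod V$ is applied, and finally $\d_U$ commutes with $s_{V,m}$.

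There is essentially no obstacle here, since the argument is formally symmetric to that of Lemma~\ref{lem:fUalmost}; the only care required is bookkeeping the parity conventions (odd indices correspond to $U$-arrows, even indices to $V$-arrows) when transcribing the construction. Accordingly, the proof can reasonably be stated as a one-line appeal to symmetry, with the reader referred back to Lemma~\ref{lem:fUalmost}.
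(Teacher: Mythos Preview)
Your proposal is correct and matches the paper's approach exactly: the paper's proof is the one-line statement that the argument is identical to that of Lemma~\ref{lem:fUalmost} after reversing the roles of $U$ and $V$. Your elaboration of what this symmetry entails (the $V$-simplified bases, the even-index parameters $c_{2i}$, the analogues of \eqref{eq:sumfz} and \eqref{eq:sigmaf}) is accurate bookkeeping but more detail than the paper provides.
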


\begin{proof}
The proof is identical to the proof of Lemma \ref{lem:fUalmost} after reversing the roles of $U$ and $V$.
\end{proof}

\begin{lemma}\label{lem:fVtower}
With the notation as above, $[f_{V,m}(x'_0)]_{u'_0, v'_0}$ represents a $V$-tower class.
\end{lemma}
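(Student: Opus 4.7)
The plan is to closely mirror the proof of Lemma~\ref{lem:fUtower}, interchanging the roles of $U$ and $V$ throughout. First I would observe that in the $V$-simplified basis of $C_3$, the element $x_0$ is a $\d_V$-cycle not lying in $\im\d_V$ (its only incident arrows are $U$-arrows connecting to $x_1$), so it plays the role of the distinguished generator $w$. Consequently $x_0' = w'$ in the $V$-simplified basis of $\shift_{V,m}(C_3)$, and by the $V$-analog of the construction of $f_{V,m}$ we obtain the exact equality $f_{V,m}(x_0') = \sigma_{V,m} f(x_0)$.

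Next I would invoke the $V$-analog of the structural observations from the $U$-case: $\sigma_{V,m}$ is an ungraded $\cR$-module isomorphism from $C_1 \otimes C_2$ to $\shift_{V,m}(C_1) \otimes \shift_{V,m}(C_2)$ which sends the $V$-tower generator $w$ of the $V$-simplified basis of $C_1 \otimes C_2$ to the $V$-tower generator $w'$ of the target. Since $f$ is local, $f(x_0)$ contains $w$ as a summand with coefficient $1$, and hence $\sigma_{V,m} f(x_0)$ contains $w'$ as a summand with coefficient $1$. For the bigraded piece extraction, I would use that $\sigma_{V,m}$ preserves $\gr_U$: the shift $\shift_{V,m}$ alters only $V$-arrow lengths, while the $\gr_U$-jump across any arrow depends only on its sign, not its length. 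Thus every component of $\sigma_{V,m} f(x_0)$ is concentrated in $\gr_U = 0 = u_0'$, and we need only isolate a single $\gr_V$-slice.

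The main obstacle, in contrast to Lemma~\ref{lem:fUtower}, is that $\sigma_{V,m}$ does not commute with $\d_V$ (although it does commute with $\d_U$), so it fails to induce a chain map on $(\cdot)/U$. To handle this I would argue that the $w'$-summand of $\sigma_{V,m} f(x_0)$ is automatically a $\d_V$-cycle (since $w'$ itself is), and hence represents a $V$-tower class in $H_*((\shift_{V,m}(C_1) \otimes \shift_{V,m}(C_2))/U)/V\text{-torsion}$. The remaining contributions to the $(u_0', v_0')$-bigraded piece either vanish for bigrading reasons or land in the $V$-torsion subgroup of $H_*((\shift_{V,m}(C_1) \otimes \shift_{V,m}(C_2))/U)$, leaving the $V$-tower class representative unchanged. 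This verification, which requires a careful analysis of how $\sigma_{V,m}$ shifts $\gr_V$ of non-cycle contributions, is the most delicate step of the argument.
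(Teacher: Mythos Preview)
Your setup is correct: $x_0$ is the distinguished generator $w$ in the $V$-simplified basis, so $f_{V,m}(x_0') = \sigma_{V,m} f(x_0)$, and your observation that $\sigma_{V,m}$ preserves $\gr_U$ is both true and useful. However, you then manufacture a difficulty that is not really there. You are right that $\sigma_{V,m}$ does not commute with $\d_V$, but this does not prevent $\sigma_{V,m}$ from taking $V$-tower classes to $V$-tower classes, and that is all the paper claims.

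The point you are missing is that $\sigma_{V,m}$ nonetheless sends $\d_V$-cycles to $\d_V$-cycles. In the $V$-simplified basis $\{\omega, \kappa_\ell, \lambda_\ell\}$ of $C_1 \otimes C_2$, the $\d_V$-cycles modulo $U$ are exactly the $\F[V]$-span of $\omega$ and the $\lambda_\ell$; by definition $\sigma_{V,m}$ carries these to $\omega'$ and $\lambda_\ell'$, which are again $\d_V$-cycles. Since $\sigma_{V,m}(\omega) = \omega'$, it also carries $V$-nontorsion classes to $V$-nontorsion classes. That is the entire content of the paper's one-line assertion that ``$\sigma_{V,m}$ takes $V$-tower classes to $V$-tower classes,'' and it makes your ``most delicate step'' disappear: once $\sigma_{V,m} f(x_0)$ is known to be a $\d_V$-cycle, every homogeneous piece of it is a $\d_V$-cycle, and the piece containing $\omega'$ is automatically $V$-nontorsion. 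No separate analysis of how $\sigma_{V,m}$ shifts $\gr_V$ on ``non-cycle contributions'' is needed, because there are no non-cycle contributions.

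So your outline would work, but the case analysis you anticipate is avoidable; the paper's route is both shorter and conceptually cleaner.
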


\begin{proof}
By definition, $f_{V,m}(x'_0) = \sigma_{V,m}f(x_0)$. Since $f$ is local, $f(x_0)$ represents a $V$-tower class, and it is easy to check that $\sigma_{V,m}$ takes $V$-tower classes to $V$-tower classes.
\end{proof}

\begin{lemma}\label{lem:fVdVxn}
With the notation as above, $\d_U[f_{V,m}(x'_n)]_{u'_n, v'_n} = 0$.
\end{lemma}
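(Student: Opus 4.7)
The plan is to prove directly that $\d_U f_{V,m}(x'_n) = 0$ as an element of $\shift_{V,m}(C_1) \otimes \shift_{V,m}(C_2)$; since $\d_U$ has bidegree $(-2,0)$, the stated homogeneous component statement then follows by projecting to bigrading $(u'_n, v'_n)$. The argument broadly mirrors Lemma \ref{lem:fUdUxn}, but with a wrinkle: in the $V$-simplified basis of $C_3$, the distinguished generator $w$ is $x_0$, not $x_n$, so the identity $f_{V,m}(x'_n) = \sigma_{V,m} f(x_n)$ no longer holds on the nose. Instead, I will need to control the discrepancy modulo $V$ and then invoke $UV = 0$.

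My first step is to establish the symmetric analog of equation (\ref{eq:sigmaf}): for every $r$ in the $V$-simplified basis $\{w, y_i, z_i\}$ of $C_3$,
\[ \sigma_{V,m} f(r) \equiv f_{V,m} s_{V,m}(r) \pmod V. \]
This is obtained by inspecting the (symmetric) definition of $f_{V,m}$ and observing that the only discrepancies between $\sigma_{V,m} f$ and $f_{V,m} s_{V,m}$ arise from the inserted $V^{\overline{\tau}(j)}$ factors, which are $V$-multiples of basis elements. Specializing to $r = x_n$ and noting $s_{V,m}(x_n) = x'_n$, I can write $f_{V,m}(x'_n) = \sigma_{V,m} f(x_n) + V\eta$ for some $\eta \in \shift_{V,m}(C_1) \otimes \shift_{V,m}(C_2)$.

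Next I compute $\d_U$ of each summand. For the first, I invoke the symmetric version of the commutation $\sigma_{U,m} \d_V = \d_V \sigma_{U,m}$ -- namely $\sigma_{V,m} \d_U = \d_U \sigma_{V,m}$ -- together with the chain map property of $f$ and the identity $\d_U x_n = 0$ (since $x_n$ is the final generator of a standard complex), yielding $\d_U \sigma_{V,m} f(x_n) = \sigma_{V,m} f(\d_U x_n) = 0$. For the second summand, $\d_U(V\eta) = V\, \d_U \eta$, and because every term of $\d_U \eta$ carries a positive power of $U$, the relation $UV = 0$ in $\cR$ forces $V\,\d_U\eta$ to vanish. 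Summing, $\d_U f_{V,m}(x'_n) = 0$, and projecting to the $(u'_n, v'_n)$ component completes the proof.

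The only non-routine step I foresee is verifying the symmetric mod-$V$ congruence above, which requires a careful case check on whether $x_n$ corresponds to one of $w$, $y_j$, or $z_j$ in the $V$-simplified basis (determined by the sign of $a_n$). This is a direct re-run of the bookkeeping performed for $f_{U,m}$ with $U$ and $V$ interchanged, and I do not expect any genuine obstacle.
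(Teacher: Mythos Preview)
Your proof is correct and follows essentially the same idea as the paper. The paper's proof is terser: it simply invokes the $V$-analogue of equation~\eqref{eq:Valmostchain}, namely $\d_U f_{V,m}(r') = f_{V,m}\d_U(r')$ (with the parenthetical remark that the ``mod $V$'' is unnecessary since the complexes are reduced and $UV=0$), and then applies it to $r' = x'_n$ with $\d_U x'_n = 0$. Your argument unpacks the first two steps of that analogue equation explicitly---the mod-$V$ congruence $f_{V,m}(x'_n) \equiv \sigma_{V,m} f(x_n)$ and the commutation $\d_U \sigma_{V,m} = \sigma_{V,m} \d_U$---rather than citing the assembled identity, but the content is the same.
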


\begin{proof}
We have
\[\d_U f_{V,m}(x'_n)= f_{V,m}\d_U(x'_n) = 0\]
where the first equality follows by the analogue of equation~\eqref{eq:Valmostchain}.
\end{proof}

\begin{lemma}\label{lem:Vlocal}
Let $f \co C_3 \to C_1 \otimes C_2$ be a  local map. Then there exists a local map
\[ g \co \shift_{V,m} (C_3) \to \shift_{V,m} (C_1) \otimes \shift_{V,m} (C_2). \]
\end{lemma}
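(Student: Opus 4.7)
The plan is to observe that this lemma is the direct $V$-analogue of Lemma~\ref{lem:Ulocal}, and that all three of the auxiliary results required for its proof have already been established in the preceding lemmas. Specifically, the strategy is to invoke Lemma~\ref{lem:almosttolocal} applied to the almost chain map $f_{V,m}$, exactly as was done for $f_{U,m}$ in the proof of Lemma~\ref{lem:Ulocal}.

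First, by Lemma~\ref{lem:fValmost}, the map $f_{V,m} \co \shift_{V,m}(C_3) \to \shift_{V,m}(C_1) \otimes \shift_{V,m}(C_2)$ is an almost chain map. Next, Lemma~\ref{lem:fVtower} verifies that $[f_{V,m}(x'_0)]_{u'_0, v'_0}$ represents a $V$-tower class in the codomain, which is the first hypothesis needed in Lemma~\ref{lem:almosttolocal}. Finally, Lemma~\ref{lem:fVdVxn} verifies that $\d_U[f_{V,m}(x'_n)]_{u'_n, v'_n} = 0$, which is the second hypothesis.

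With all three inputs in hand, Lemma~\ref{lem:almosttolocal} immediately produces a genuine graded $\cR$-equivariant chain map
\[ g \co \shift_{V,m}(C_3) \to \shift_{V,m}(C_1) \otimes \shift_{V,m}(C_2) \]
such that $g(x'_i) \equiv [f_{V,m}(x'_i)]_{u'_i, v'_i} \bmod (U,V)$ for all $i$. In particular, the class of $g(x'_0)$ agrees with that of $[f_{V,m}(x'_0)]_{u'_0, v'_0}$ modulo $(U,V)$, so it continues to represent a $V$-tower generator of $H_*(\shift_{V,m}(C_1) \otimes \shift_{V,m}(C_2) / U) / V\text{-torsion}$. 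Hence $g$ is local, which is precisely the statement of the lemma. There is no real obstacle here: the technical heavy lifting has been carried out in the preceding three lemmas, and the present lemma is just the bookkeeping step that packages them via Lemma~\ref{lem:almosttolocal}.
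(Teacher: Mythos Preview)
Your proposal is correct and follows essentially the same approach as the paper: invoke Lemmas~\ref{lem:fValmost}, \ref{lem:fVtower}, and \ref{lem:fVdVxn} to verify the hypotheses of Lemma~\ref{lem:almosttolocal}, which then directly yields the desired local map. Your additional remark about why $g$ is local is already part of the conclusion of Lemma~\ref{lem:almosttolocal}, so it is not strictly needed, but it does no harm.
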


\begin{proof}
By Lemma \ref{lem:fValmost} and, the map $f_{V,m}$ is an almost chain map; by Lemma \ref{lem:fVtower}, $[f_{V,m}(x'_0)]_{u'_0, v'_0}$ represents a $V$-tower class; and by Lemma \ref{lem:fVdVxn},  $\d_U[f_{V,m}(x'_n)]_{u'_n, v'_n} = 0$. Thus Lemma \ref{lem:almosttolocal} gives the desired local map.
\end{proof}

We now finally turn to the proof of Theorem \ref{thm:shift}:

\begin{proof}[Proof of Theorem \ref{thm:shift}]
Suppose that $C_3 \sim C_1 \otimes C_2$. Let $f \co C_3 \to  C_1 \otimes C_2$ be a  local map. By Lemma \ref{lem:Ulocal}, we have a local map
\[ g \co \shift_{U,m} (C_3) \to \shift_{U,m} (C_1) \otimes \shift_{U,m} (C_2), \]
that is, 
\begin{equation}\label{eq:shiftineq}
	\shift_{U,m} (C_3) \leq \shift_{U,m} (C_1) \otimes \shift_{U,m} (C_2).
\end{equation}
Dually, we have $C^\vee_3 \sim C^\vee_1 \otimes C^\vee_2$, and by the same argument
\begin{equation}\label{eq:shiftineqdual}
	\shift_{U,m} (C^\vee_3) \leq \shift_{U,m} (C^\vee_1) \otimes \shift_{U,m} (C^\vee_2). 
\end{equation}
Dualizing \eqref{eq:shiftineqdual}, applying Lemma \ref{lem:shiftdual}, and combining with \eqref{eq:shiftineq}, we obtain
\[ \shift_{U,m} (C_1) \otimes \shift_{U,m} (C_2) \leq \shift_{U,m} (C_3) \leq \shift_{U,m} (C_1) \otimes \shift_{U,m} (C_2). \]
Thus we have
\[ \shift_{U,m} (C_3) \sim \shift_{U,m} (C_1) \otimes \shift_{U,m} (C_2). \]
The analogous argument replacing $U$ with $V$ (using Lemma \ref{lem:Vlocal} instead of Lemma \ref{lem:Ulocal}) shows that 
\[ \shift_{V,m} (\shift_{U,m} (C_3)) \sim \shift_{V,m} (\shift_{U,m} (C_1)) \otimes \shift_{V,m} ( \shift_{U,m} (C_2)). \]
Since $\shift_{V,m} \circ \shift_{U,m} = \shift_m$, it follows that
\[ \shift_m(C_1 \otimes C_2) \sim \shift_m(C_1) \otimes \shift_m(C_2), \]
as desired.
\end{proof}

\subsection{Proof of Theorem \ref{thm:homs}}
We now turn to the proof that the $\varphi_j$ are additive. Note that by considering the composition
\[ P \circ \shift_m \co \KL \to 2\Z, \quad m \in \N \]
we obtain infinitely many homomorphisms from $\KL$ to $2\Z$. The proof of Theorem \ref{thm:homs} relies on considering certain linear combinations of these homomorphisms.

\begin{proof}[Proof of Theorem \ref{thm:homs}]
Let $C \in \KL$. Since all of our maps are local equivalence invariants, we may assume that $C=C(a_1, \dots, a_n)$ is a standard complex. For any $m \in \N$, write
\[ P(\shift_m(C)) = -2 \sum_{1 \leq j < m} j \varphi_j(C) -2 \sum_{j \geq m} (j+1) \varphi_j(C) + \sum_{i=0}^n \sgn a_i. \]
Here, we have simply used the definition of $\shift_m$, together with the definition of $\varphi_j$ as a count of standard complex parameters. This implies that
\begin{equation}\label{eq:Pshift}
P(\shift_m(C)) - P(C) = -2 \sum_{j \geq m} \varphi_j(C).
\end{equation}

We now use (strong, downward) induction to show that $\varphi_j$ is a homomorphism for all $j \in \N$. Fix $C_1, C_2 \in \KL$, where $C_1 = C(a_1, \dots, a_{n_1})$ and $C_2 = C(b_1, \dots, b_{n_2})$. For
\[ N > \max \{ a_i, b_j \}, \]
we have $\varphi_N(C_1) = \varphi_N(C_2) = \varphi_N(C_1 \otimes C_2) = 0$. This establishes the base case. Thus, assume that $\varphi_j$ is a homomorphism for all $j \geq M+1$. We will show that $\varphi_{M}$ is also a homomorphism. Indeed, 
\begin{align*}
-2 \sum_{j \geq M} \Big( \varphi_j(C_1) + \varphi_j(C_2) \Big) &= P(\shift_{M}(C_1)) + P(\shift_{M}(C_2)) - P(C_1) -P(C_2)\\[-0.7em]
	&=P(\shift_{M}(C_1 \otimes C_2)) - P(C_1 \otimes C_2) \\
	&= -2 \sum_{j \geq M} \varphi_j(C_1 \otimes C_2),
\end{align*}
where the first and third equalities follow from \eqref{eq:Pshift}, and the second equality follows from the fact that $P$ and $\shift_M$ are homomorphisms. By the inductive hypothesis, we have that $\varphi_j$ is a homomorphism for all $j \geq M+1$. It follows that $\varphi_M$ is a homomorphism as well. This completes the proof. 
\end{proof}

\subsection{$\HFK^-$ and $\varphi_j$}
We are now ready to prove Proposition \ref{prop:HFKm}. Recall that \[N(K) = \begin{cases}
0 & \text{if } \varphi_j(K)=0 \text{ for all } j,
\\
\max \{ j \mid \varphi_j(K) \neq 0\} & otherwise.
\end{cases}\]

\propHFKm*
\begin{proof}
Let $C = C(a_1, \dots, a_n)$ be the standard complex representative of $\HFK^-(K)$ given by Theorem \ref{thm:char} and Corollary \ref{cor:splitting}. Recall that $H_*(\CFKUV(K)/V) \cong \HFK^-(K)$. Then $U^M \cdot \Tors_U  HFK^-(K) = 0$ implies $U^M \cdot \Tors_U  H_*(C/V)= 0$, which in turn implies that $a_i \leq M$ for $i$ odd. The result now follows from the definition of $\varphi_j$.
\end{proof}

\section{Thin knots and L-space knots}\label{sec:thinLspace}
In this section, we prove Propositions \ref{prop:thin} and \ref{prop:Lspace}.

\propthin*

\begin{proof}
By \cite[Theorem 4]{Petkovacables}, it follows that if $K$ is a thin knot, then $\CFKUV(K)$ is locally equivalent to the standard complex $C(a_1, \dots, a_n)$ where $n = 2|\tau(K)|$ and $a_i = \sgn \tau(K)$ for $i$ odd and $a_i = -\sgn \tau(K)$ for $i$ even. That is, the $a_i$ are an alternating sequence of $\pm 1$, starting with $+1$ if $\tau(K) > 0$ and $-1$ if $\tau(K)<0$. The result follows.
\end{proof}

\propLspace*

\begin{proof}
By \cite[Theorem 1.2]{OSlens} (cf. \cite[Theorem 2.10]{OSS}), we have that if $K$ is an L-space knot, then $\CFKUV(K)$ is the standard complex 
\[ C(c_1, -c_n, c_2, -c_{n-1}, c_3, -c_{n-2}, \dots , c_n, -c_1).\]
The result now follows from the definition of $\varphi_j$.
\end{proof}

\section{An infinite-rank summand of topologically slice knots}\label{sec:topslice}
The goal of this section is to prove Theorem \ref{thm:TS}. Let $D$ be the (untwisted, positively-clasped) Whitehead double of the right-handed trefoil. 
Consider $K_n = D_{n, n+1} \# - T_{n, n+1}$. The knots $K_n$ are topologically slice and will generate a $\mathbb{Z}^\infty$-summand of $\mathcal{C}_{TS}$. Indeed, the knot $D$ has Alexander polynomial one, and hence is topologically slice. Thus, the cable $D_{n,n+1}$ is topologically concordant to the underlying pattern torus knot $T_{n,n+1}$, and so $D_{n,n+1}\# -T_{n,n+1}$ is topologically slice.

\begin{proposition}\label{prop:cablesofD}
Let $D_{n, n+1}$ denote the $(n, n+1)$ cable of the (untwisted, positively-clasped)  Whitehead double of the right-handed trefoil. Then
 \begin{eqnarray*}
\varphi_j(D_{n, n+1}) = \begin{cases}
n, & j = 1;
\\
1, & 1<  j < (n-1) \text{ or } j = n;
\\
0, & j = n-1 > 1 \text{ or }  j > n.
\end{cases}
\end{eqnarray*}
\end{proposition}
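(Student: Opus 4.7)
The plan is to identify $D_{n,n+1}$ with the iterated torus knot $(T_{2,3})_{n,n+1}$ in the local equivalence group $\mathfrak{K}$, and then apply Proposition~\ref{prop:Lspace} together with a direct Alexander polynomial computation to read off $\varphi_j$.

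First I would recall that the Whitehead double $D$ has the same local equivalence class as $T_{2,3}$, namely the standard complex $C(1,-1)$. By Hedden's computation of $\CFK^\infty$ for the positively-clasped, untwisted Whitehead double of an L-space knot, $\CFK^\infty(D)$ splits (up to chain homotopy) as a direct sum of a staircase $C(1,-1)$ together with an acyclic ``box'' summand whose generators and differentials are divisible by $UV$. After reducing modulo $UV$, the box summand becomes trivial in local equivalence, leaving $D \sim C(1,-1) \sim T_{2,3}$ in $\mathfrak{K}$.

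Next I would upgrade this to a local equivalence of the cables using the cabling formula for the knot Floer complex of a $(p,q)$-cable when $\varepsilon(K) = 1$ (Hom's cabling results, or equivalently the Hanselman--Rasmussen--Watson immersed-curve framework). The input we need is: for knots $K$ with $\tau(K) = \varepsilon(K) = 1$ and cabling slope $(n+1)/n$ exceeding $2g(K) - 1$, the local equivalence class of $K_{n,n+1}$ depends only on the local equivalence class of $K$. Since $2g(T_{2,3}) - 1 = 1 < (n+1)/n$, applying this to $D \sim T_{2,3}$ yields $D_{n,n+1} \sim (T_{2,3})_{n,n+1}$. The latter is an L-space knot by the classification of L-space cables, so Proposition~\ref{prop:Lspace} applies. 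The hard part of the argument sits here: rigorously justifying that local equivalence of knots with $\varepsilon = 1$ is preserved under $(n,n+1)$-cabling in this slope regime.

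Finally I would carry out the Alexander polynomial computation. Using
\[
\Delta_{(T_{2,3})_{n,n+1}}(t) = (1 - t^n + t^{2n}) \cdot \Delta_{T_{n,n+1}}(t)
\]
together with the formula for $\Delta_{T_{n,n+1}}$ from Example~\ref{ex:torusknots}, I would expand the product, arrange the resulting exponents $b_i$ in decreasing order, and read off the gap sequence $c_i = b_{2i-2} - b_{2i-1}$. Careful but routine bookkeeping then shows that for $n \geq 3$ the multiset $\{c_i\}$ consists of $n$ copies of $1$, one copy of each intermediate value $2, 3, \dots, n-2$, no copies of $n-1$, and one copy of $n$; Proposition~\ref{prop:Lspace} converts this count into the claimed values of $\varphi_j(D_{n,n+1})$.
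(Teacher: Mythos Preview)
Your approach is essentially the same as the paper's: reduce to the iterated torus knot $T_{2,3;n,n+1}$ via local equivalence, observe that this is an L-space knot, and then compute its Alexander polynomial to apply Proposition~\ref{prop:Lspace}. The paper justifies the local-equivalence step by citing Lemma~6.12 and Proposition~4 of \cite{Homconcordance}: the first says $D$ is $\varepsilon$-equivalent to $T_{2,3}$, and the second says $\varepsilon$-equivalence is preserved under cabling (with no slope hypothesis). One small point: the slope condition $(n+1)/n > 2g(K)-1$ you invoke is the criterion for the cable of an L-space knot to again be an L-space knot (Hedden's result), not a hypothesis needed for local equivalence to pass to cables; you have conflated these two ingredients, though your final argument still goes through.
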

\begin{proof}
By Lemma 6.12 of \cite{Homconcordance}, the knot $D$ is $\varepsilon$-equivalent to $T_{2,3}$. Thus, by Proposition 4 of \cite{Homconcordance}, we may consider $\CFKUV(T_{2,3;n,n+1})$, where $T_{2,3;n,n+1}$ denotes
the $(n, n + 1)$-cable of $T_{2,3}$, instead of the locally equivalent $\CFKUV(D_{n,n+1})$. The advantage of this approach is that $T_{2,3;n,n+1}$ is an L-space knot
\cite[Theorem 1.10]{Hedden2} (cf. \cite{HomcablesLspace}), and so $\CFKUV(T_{2,3;n,n+1})$ is a standard complex and completely determined by its Alexander polynomial \cite[Theorem 1.2]{OSlens}.

It follows from \cite[Lemma 6.7]{Homconcordance} (also see the proof of \cite[Proposition 6.1]{heddenlivingstonruberman}) that
\begin{eqnarray*}
\Delta_{T_{n, n+1}}(t) = \sum_{i=0}^{n-1} t^{in} - t \sum_{i=0}^{n-2} t^{i(n+1)}.
\end{eqnarray*}
Recall that the Alexander polynomial of a cable knot is determined by
\begin{eqnarray*}
\Delta_{K_{p, q}}(t) = \Delta_K(t^p) \cdot \Delta_{T_{p, q}}(t).
\end{eqnarray*}
This gives
\begin{eqnarray*}
\Delta_{T_{2,3;n,n+1}}(t) &=& \Delta_{T_{2,3}}(t^n) \cdot \Delta_{T_{n, n+1}}(t)
\\
&=&( t^{2n} - t^n +1)\cdot \left( \sum_{i=0}^{n-1} t^{in} - t \sum_{i=0}^{n-2} t^{i(n+1)}\right).
\end{eqnarray*}
For small values of $n$, we have:
\begin{eqnarray*}
\Delta_{T_{2,3;2,3}}(t) &=&  t^6-t^5+t^3-t+1
\\
\Delta_{T_{2,3;3,4}}(t) &=&  t^{12} -t^{11}+t^8-t^7+t^6-t^5+t^4-t+1.
\end{eqnarray*}
For $n \geq 4$, we rearrange and simplify as follows. We first observe the following telescoping sum
\begin{align*} 
	(- t^n +1)\cdot \left( \sum_{i=0}^{n-1} t^{in}\right) &= 1-t^{n^2}.
\end{align*}
We also have
\[ (- t^n +1)\cdot  \left(-t \sum_{i=0}^{n-2} t^{i(n+1)}\right) =  - t + \sum_{j = 1}^{n-2}\Big( t^{j(n+1)} - t^{j(n+1)+1}\Big) + t^{(n-1)(n+1)} \]
and
\[( t^{2n})\cdot \left( \sum_{i=0}^{n-1} t^{in} - t \sum_{i=0}^{n-2} t^{i(n+1)}\right) = \sum_{i=2}^{n+1} t^{in} -  \sum_{j=2}^{n} t^{j(n+1)-1}.\]
Putting the two simplifications together, we get:
\begin{eqnarray*}
\Delta_{T_{2,3;n,n+1}}(t) &=& 1 - t + t^{n+1} - t^{n+2} + \sum_{j = 2}^{n-2}\Big( -t^{j(n+1)-1} + t^{j(n+1)} - t^{j(n+1)+1}\Big) 
\\
& &   +  \sum_{i=2}^{n+1} t^{in} - t^{n^2} -  t^{(n-1)(n+1)-1} -  t^{n(n+1)-1}+ t^{(n-1)(n+1)} 
\\
&=& 1 - t + t^{n+1} - t^{n+2} + \sum_{j = 2}^{n-2}\Big( t^{jn} -t^{jn+j-1} + t^{jn+j} - t^{jn+j+1}\Big) 
\\
& &   + t^{n^2-n} -  t^{n^2-2}+ t^{n^2-1}  -  t^{n^2+n-1}+ t^{n^2+n}.
\end{eqnarray*}
In particular, the number of terms in the Alexander polynomial is $4\cdot(n-1)+1$.

Thus, we have
\[\Delta_{T_{2,3;n,n+1}}(t) = \sum_{i=0}^{4(n-1)} (-1)^{i}t^{b_i}\]
where $(b_i)_{i=0}^{4(n-1)}$ is the decreasing sequence of integers found above. Defining 
\[c_i = b_{2i-2}-b_{2i-1}, \quad 1 \leq i \leq 2(n-1),\]
one readily checks that for $1 \leq i \leq 2(n-1)$, 
\begin{eqnarray*}
c_i (T_{2,3;n,n+1})= \begin{cases}
(i-1)/2, & i \text{ is odd}, i > 1;
\\
n, & i = 2(n-1);
\\
1, & \text{otherwise.}
\end{cases}
\end{eqnarray*}
Since $T_{2,3; n, n+1}$ is an L-space knot, by Proposition \ref{prop:Lspace} we have $\varphi_j(T_{2,3;n,n+1}) = \# \{ c_i \ | \ c_i = j\} $, and the calculation of $\varphi_j(D_{n,n+1})$ (which  equals $\varphi_j(T_{2,3;n,n+1})$) follows immediately.  \end{proof}

We now prove Theorem~\ref{thm:TS} to produce an infinite rank summand of $\CTS$. 
\begin{proof}[Proof of Theorem~\ref{thm:TS}]
Recall Example~\ref{ex:torusknots}, which states that the torus knot $T_{n, n+1}$ has
\[ \varphi_j(T_{n, n+1}) = \begin{cases}
	1, &\text{ if } j=1,2, \dots, n-1; \\
	0, &\text{ otherwise.}
\end{cases}
\]
By Proposition~\ref{prop:cablesofD} and the fact that $\varphi_j$ is a homomorphism (Theorem~\ref{thm:homs}), we have that \[\varphi_n(K_n) = \varphi_n(D_{n,n+1}) - \varphi_n(T_{n,n+1}) = 1\] and $\varphi_j(K_n) = 0$ if $j > n$. The theorem now follows from a straightforward linear algebra argument; see, for example, \cite[Lemma 6.4]{OSS}.
\end{proof}

\section{Concordance genus and concordance unknotting number}\label{sec:gcuc}
In this section, we discuss applications of our homomorphisms to concordance genus and concordance unknotting number.
\subsection{Concordance genus}
Recall that knot Floer homology detects genus \cite{OSgenus}. Using the conventions and notation from Section \ref{background}, we have that
\[ g(K) = \frac{1}{2} \max \{ A([x]) -A([y])  \mid [x], [y] \neq 0 \in H_*(\CFKUV(K)/(U, V)) \}. \]

\begin{proof}[Proof of Theorem \ref{thm:gcuc} \eqref{it:gc}]
Suppose that $K'$ is concordant to $K$. Let $N = N(K) = N(K') = \max \{ j \mid \varphi_j(K) \neq 0\}$.  By Theorem \ref{thm:char} and Corollary \ref{cor:splitting}, we have that there exist $[x], [y] \neq 0\in H_*(\CFKUV(K')/(U,V))$ with $\gr(x) - \gr(y) = (1, 1-2N)$ or $(1-2N, 1)$, depending on the sign of $\varphi_N(K)$. In either case
\[ |A(x) - A(y)| = N, \]
implying that $g(K') \geq N/2$. Thus, $g_c(K) \geq N/2$, as desired.
\end{proof}

\subsection{Concordance unknotting number}
We recall the following definitions and results from \cite{AlishahiEftekharyunknotting}. (The results are originally stated over the ring $\F[U, V]$; quotienting by $UV$ yields the results as stated here.)

Let $u'(K)$ be the least integer $m$ such that there exist grading-homogenous $\cR$-equivariant chain maps
\[ f \co \CFKUV(K) \to \cR \quad \text{ and } \quad g \co \cR \to \CFKUV(K) \]
such that $g \circ f$ is homotopic to multiplication by $U^m$ and $f \circ g$ is multiplication by $U^m$.

\begin{theorem}[{\cite[Theorem 1.1]{AlishahiEftekharyunknotting}}]
The integer $u'(K)$ is a lower bound for the unknotting number $u(K)$.
\end{theorem}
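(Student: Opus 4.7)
The plan is to prove $u'(K) \le u(K)$ by constructing, from any sequence of $m$ crossing changes that unknots $K$, grading-homogeneous $\cR$-equivariant chain maps
\[ F \co \CFKUV(K) \to \cR \quad \text{and} \quad G \co \cR \to \CFKUV(K) \]
with both $G \circ F$ and $F \circ G$ chain homotopic to multiplication by $U^m$. Minimizing over unknotting sequences of $K$ then yields the desired inequality.

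The main ingredient is a crossing-change map. For knots $K$ and $K'$ differing by a single crossing change, I would associate a pair of grading-homogeneous $\cR$-equivariant chain maps $\phi \co \CFKUV(K) \to \CFKUV(K')$ and $\psi \co \CFKUV(K') \to \CFKUV(K)$. These can be built either via Zemke's link-cobordism functoriality applied to the saddle cobordisms effecting the crossing change, or via the surgery exact triangle involving $K$, $K'$, and their oriented resolution. The critical computation is that both $\psi \circ \phi$ and $\phi \circ \psi$ are chain homotopic to multiplication by $U$: composing a crossing change with its reverse corresponds to a pair of cancelling bands whose resulting immersed-disk cobordism has a single positive double point, and such a cobordism is known to induce multiplication by $U$ on $\CFKUV$.

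Given the crossing-change maps, the rest is a direct iteration. From an unknotting sequence $K = K_0 \to K_1 \to \cdots \to K_m = O$, compose the crossing-change maps at each step to produce $F$ and $G$, using $\CFKUV(O) = \cR$. Repeatedly applying the $U$-identity at each stage shows that both $G \circ F$ and $F \circ G$ are chain homotopic to multiplication by $U^m$. Thus $u'(K) \le m$, and taking the infimum over unknotting sequences gives $u'(K) \le u(K)$.

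The principal obstacle is the precise identification of $\psi \circ \phi$ (and $\phi \circ \psi$) with $U$-multiplication, rather than some smaller power of $U$ or a mixed expression involving $V$. This requires a careful analysis of the moduli spaces associated to the cancelling-band cobordism, which is the content of Alishahi--Eftekhary's original argument. A subtler point is ensuring grading homogeneity: each crossing-change map introduces a grading shift depending on the sign of the crossing, and one must check that these shifts cancel in the final composition so that $G \circ F$ and $F \circ G$ are genuinely multiplication by $U^m$ as graded maps.
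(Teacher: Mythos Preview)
The paper does not give its own proof of this statement: it is quoted verbatim as \cite[Theorem 1.1]{AlishahiEftekharyunknotting} and used as a black box in the subsequent proof of Theorem~\ref{thm:gcuc}\eqref{it:uc}. There is therefore nothing in the paper to compare your argument against.

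That said, your sketch is an accurate outline of the Alishahi--Eftekhary strategy: build crossing-change maps $\phi,\psi$ between $\CFKUV(K_\pm)$, show $\psi\circ\phi$ and $\phi\circ\psi$ are homotopic to multiplication by $U$, and iterate along an unknotting sequence. You are also right that the substantive content lies entirely in the identification of the composite with $U$-multiplication, and you correctly flag this as the principal obstacle rather than claiming to have resolved it. As written, then, your proposal is a correct high-level summary of the cited proof rather than an independent argument; the hard analytic step (the moduli-space or cobordism computation showing the composite is exactly $U$) is deferred back to the reference, which is precisely what the present paper does as well.
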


\begin{proof}[Proof of Theorem \ref{thm:gcuc} \eqref{it:uc}]
Suppose that $K'$ is concordant to $K$. Let  $N=N(K) = N(K') = \max \{ j \mid \varphi_j(K') \neq 0\}$. Note that this implies \[U^{N-1} \Tors_U H_*(\CFKUV(K')/V) \neq 0,\] where $\Tors_U M$ denotes the $U$-torsion submodule of an $\F[U]$-module $M$.

Let $u' = u'(K')$. Then there exist grading-homogenous $\cR$-equivariant chain maps
\[ f \co \CFKUV(K') \to \cR \quad \text{ and } \quad g \co \cR \to \CFKUV(K') \]
such that $g \circ f$ is homotopic to multiplication by $U^{u'}$ and $f \circ g$ is multiplication by $U^{u'}$. Now quotient by $V$. Since $g \circ f$ factors through $\cR$, it follows that $U^{u'}$ must annihilate $\Tors_U H_*(\CFKUV(K')/V)$, i.e., $u' \geq N$. This implies that $u_c(K) \geq N$, as desired.
\end{proof}

\begin{proof}[Proof of Theorem \ref{thm:topsliceuc}]
Let $K_n$ denote $D_{n, 1} \# -D_{n-1, 1}$ for $n \in \N$, where, as above, $D$ denotes the positively-clasped, untwisted Whitehead double of the right-handed trefoil. The knots $K_n$ are topologically slice, since $D_{m,1}$ is. These knots are used in \cite[Theorem 3]{Homconcordancegenus}. In particular, by \cite[Lemma 3.1]{Homconcordancegenus}, we have that $g_4(K_n) = 1$ for all $n$. By \cite[Lemma 3.3]{Homconcordancegenus}, we have that $a_1(D_{n, 1}) = 1$ and $a_2(D_{n, 1}) = -n$. (There is a difference in sign conventions between $a_2$ in \cite{Homconcordancegenus} and the present paper.) By \cite[Lemma 3.2]{Homconcordancegenus}, we have that $|a_{2i}(D_{n,1})| \leq n$ for all $i$, with equality if and only if $i=1$ by \cite[Lemma 3.3]{Homconcordancegenus}. It follows that $\varphi_n(D_{n,1}) = 1$ and $\varphi_i(D_{n,1}) = 0$ for all $i > n$. Hence $N(K_n) = n$, and by Theorem \ref{thm:gcuc} \eqref{it:uc}, we have that $u_c(K_n) \geq n$.
\end{proof}

\section{Further Remarks}\label{sec:furtherremarks}
We conclude with some remarks on knot-like complexes.

\subsection{Realizability}
The question of which knot-like complexes can be realized by knots in $S^3$ is difficult. See \cite{HeddenWatson} and \cite{KrcatovichLspace} for some restrictions. Note that their restrictions apply to the homotopy type, rather than local equivalence type, of knot-like complexes. For example, the standard complex $C(2, -2)$ is not realizable \cite[Theorem 7]{HeddenWatson} up to homotopy, but is realizable up to local equivalence \cite[Lemma 2.1]{HomUpsilon}.

Instead, we turn to the following purely algebraic question.
\begin{question} 
Which knot-like complexes are the mod $UV$ reduction of chain complexes over $\F[U,V]$?
\end{question}
\noindent Indeed, in Section \ref{background}, we defined the complex $\CFKUV(K)$ over the ring $\cR$, but the definition works equally well over $\F[U, V]$. Thus, in order for a knot-like complex $C$ to be realizable as coming from a knot $K \subset S^3$ up to homotopy (resp. local) equivalence, it is necessary for $C$ to be homotopy (resp. locally) equivalent to a complex that is the mod $UV$ reduction of a complex over $\F[U, V]$.

Na\"ively, one may hope to ``undo'' modding out by $UV$. That is, given a standard complex $C(a_1, \dots, a_n) = ( \cR\langle x_i \rangle, \d)$, one may hope to define a chain complex over $\F[U,V]$ by $C' = (\F[U, V]\langle x_i \rangle, \d')$, where $\d'$ is obtained by extending $\d$ linearly with respect to $\F[U, V]$. However, in general, $\d'^2$ will not be zero. As the following examples show, in some cases, the failure of $\d'^2 = 0$ can be remedied, while in other cases, it is fatal.

\begin{example}
We apply the above procedure to the standard complex 
\[ C(1, -2, -1, 1, 2, -1)\] 
from Example \ref{ex:121121}. Let $C'$ be generated over $\F[U, V]$ by 
\[ x_0, x_1, x_2, x_3, x_4, x_5, x_6 \]
with nonzero differentials
\begin{align*}
	\d' x_1 &= Ux_0 + V^2 x_2 \\
	\d' x_2 &= Ux_3 \\
	\d' x_4 &= V x_3 \\
	\d' x_5 &= U^2 x_4 + V x_6.
\end{align*}
Then $\d'^2 x_1 = UV^2 x_3 \neq 0$ and $\d'^2 x_5 = U^2V x_3 \neq 0$. However, if we instead endow $C'$ with the differentials
\begin{align*}
 \d' x_1 &= Ux_0 + V^2 x_2 + UVx_4\\ 
 \d' x_2 &= Ux_3, \\ 
 \d' x_4 &= V x_3 \\
  \d' x_5 &= UVx_2 + U^2 x_4 + V x_6
 \end{align*}
then $C'$ becomes a chain complex, as desired. Note that this change to the differential is equivalent to adding diagonals arrow from $x_1$ to $x_4$ and from $x_5$ to $x_2$ in Figure \ref{fig:121121}.
\end{example}

\begin{example}
We attempt to apply the above procedure to the standard complex $C(1, 1)$, generated by $x_0, x_1$, and $x_2$ with
\[ \d x_0 = 0, \quad \d x_1 = Ux_0, \quad \d x_2 = Vx_1. \]
Then $\d' x_2 = UV x_0 \neq 0$ and there is no way to modify $\d'$ so that is squares to zero and reduces mod $UV$ to $\d$.
\end{example}

More generally, one can show that any standard complex beginning with the parameters $a_1 = 1$ and $a_2 > 0$ cannot be realized as the mod $UV$ reduction of a chain complex over $\F[U, V]$, even up to local equivalence.

\subsection{Group structure of $\KL$}
Theorem \ref{thm:char} gives us a complete description of $\KL$ as a set; namely, the elements of $\KL$ are in bijection with finite sequences of nonzero integers. A natural question is the following:
\begin{question}
Is there is an explicit description of the group structure on $\KL$?
\end{question} 

In many simple cases, the group operation in $\KL$ simply concatenates or merges the sequences associated to the standard representatives.

\begin{example}
It follows from \cite[Theorem 4]{Petkovacables} that  $C(1, -1) \otimes C(1, -1) \sim C(1, -1, 1, -1)$. More generally, 
\[ C(1, -1, 1, -1, \dots, 1, -1) \otimes C(1, -1) \sim C(1, -1, 1, -1, \dots, 1, -1) \]
where the length of the right-hand side is the sum of the lengths of the factors on the left-hand side.
\end{example}

\begin{example}
By \cite[Lemma 2.1]{HomUpsilon}, we have that $C(1, -3, 3, -1) \otimes C(2, -2) \sim C(1, -3, 2, -2, 3, -1)$. 
\end{example}
\noindent
However, in general, the group operation in $\KL$ is more complicated:

\begin{example}\label{ex:C22C11}
One can show that
\[ C(2, -2) \otimes C(1, -1) \sim C(1, -1, 2, 1, -1, -2, 1, -1). \]
\end{example}
\noindent
Note that despite the seemingly complicated product structure exhibited in Example \ref{ex:C22C11}, the standard complex representative of a product of two standard complexes is highly constrained by the fact that $\varphi_j$ is a homomorphism for each $j \in \N$.

\bibliographystyle{amsalpha}
\bibliography{bib}

\end{document}